\documentclass[reqno,11pt]{amsart}
\usepackage{a4wide}
\usepackage{amsmath,amssymb,amsthm,mathtools}
\usepackage{microtype}
\usepackage{enumitem}
\usepackage[colorlinks,linkcolor=blue,anchorcolor=red,citecolor=blue]{hyperref}
\usepackage{cite}
\usepackage{cleveref}
\usepackage{cite}
\allowdisplaybreaks
\usepackage{marginnote}

\numberwithin{equation}{section}
\usepackage{color}

\newtheorem{theorem}{Theorem}[section]
\newtheorem{proposition}[theorem]{Proposition}
\newtheorem{lemma}[theorem]{Lemma}

\theoremstyle{definition}

\newtheorem{remark}[theorem]{Remark}

\usepackage{mathrsfs,needspace,xcolor}
\newcommand{\R}{\mathbb R}

\newcommand{\dd}{\,\mathrm d}

\newcommand{\Peff}{P_{\mathrm{eff}}}
\DeclareMathOperator{\spanop}{span}
\newcommand{\la}{\lambda_D}\newcommand{\cv}{c_v}
\newcommand{\intR}{\int_{\R}}\newcommand{\dt}{\partial_t-\sigma\partial_\xi}
\newcommand{\tu}{\widetilde u}
\newcommand{\tv}{\widetilde v}\newcommand{\tz}{\widetilde\theta}
\newcommand{\tp}{\widetilde\phi}\newcommand{\pp}{P}
\newcommand{\E}{\mathcal E}\newcommand{\Gs}{\mathcal G^S}
\newcommand{\D}{\mathbf D}\newcommand{\G}{\mathbf G}
\newcommand{\B}{\mathbf B}\newcommand{\Y}{\mathbf Y}
\newcommand{\Pp}{\mathbf P}\newcommand{\Rp}{\mathcal R_\phi}
\newcommand{\norm}[1]{\left\|#1\right\|}

\begin{document}
	
\title[Shock profiles for the NSFP system under the Boltzmann relation]{Shock profiles for the Navier--Stokes--Fourier--Poisson system under the Boltzmann relation}
	
\author[R.-J. Duan]{Renjun Duan}
\address[R.-J. Duan]{Department of Mathematics, The Chinese University of Hong Kong, Shatin, Hong Kong}
\email{rjduan@math.cuhk.edu.hk}
	
\author[R.~Li]{Rui Li}
\address[R.~Li]{Department of Mathematics, The Chinese University of Hong Kong, Shatin, Hong Kong, China.}
\email{ruili001@cuhk.edu.hk}

\begin{abstract}
The one-dimensional compressible Navier--Stokes--Fourier--Poisson system under the Boltzmann relation is studied for the existence of small-amplitude viscous shock profiles connecting the Rankine--Hugoniot states of the quasineutral Euler system with effective pressure $P_{\mathrm{eff}}=\rho(\theta+1)$. This profile is unique up to translation, of Lax type, and orbitally stable in time without a zero-mass assumption on the perturbation. The proof is based on a center-manifold reduction of the traveling-wave ODE and on the method of $a$-contraction with shifts, together with a relative-entropy functional augmented by electric energy and a sharp estimate of the viscous flux associated with $P_{\mathrm{eff}}$.
\end{abstract}
	
\subjclass[2020]{35Q35, 76N06, 35C07, 35B35, 35B40}
	
\keywords{Navier--Stokes--Fourier--Poisson system, ion dynamics, shock profile, asymptotic stability, effective pressure}
\date{\today}
\maketitle
\thispagestyle{empty}
	
\tableofcontents
	
\section{Introduction}
	In the paper, we are concerned with the full compressible Navier--Stokes--Fourier--Poisson (NSFP) system for ion dynamics in plasmas under the Boltzmann relation in the one-dimensional whole line. For such system in Eulerian coordinates, the unknowns are the ion density $\rho=\rho(t,x)>0$, velocity $u=u(t,x)$, temperature $\theta=\theta(t,x)>0$ and electric potential $\phi=\phi(t,x)$ with $t\geq 0$ and $x\in \R$, that are governed by
	\begin{equation}
		\label{eq:NSFP}
		\begin{cases}
			\rho_t+(\rho u)_x=0,\\
			(\rho u)_t+(\rho u^2+p)_x-\rho\phi_x=(\mu u_x)_x,\\
			\bigl(\rho(\tfrac12 u^2+e)\bigr)_t
			+\bigl(u(\rho(\tfrac12 u^2+e)+p)\bigr)_x
			-\rho u\phi_x
			=(\mu u u_x+\kappa\theta_x)_x,\\
			\la^2\phi_{xx}=\rho-e^{-\phi}.
		\end{cases}
	\end{equation}
	Here $p=\rho\theta$ and $e=c_v\theta$ are the pressure and specific internal energy for the ideal ionic fluid, with $c_v=1/(\gamma-1)$ and $\gamma>1$. 
	The viscosity $\mu$, heat conductivity $\kappa$ and Debye length $\la$ are taken as positive constants.  
	
	A traveling shock profile is a smooth solution of the NSFP system \eqref{eq:NSFP} of the form $(\rho,u,\theta,\phi)(\zeta)$ with $\zeta=x-st$ and shock speed $s$, which connects two distinct far-field constant states
	\begin{align}
		\label{eq:far}
		\lim_{\zeta\to\pm\infty}(\rho,u,\theta,\phi)(\zeta)=(\rho_\pm,u_\pm,\theta_\pm,\phi_\pm),\qquad
		\lim_{\zeta\to\pm\infty}\phi'(\zeta)=0,
	\end{align}
	with $\rho_\pm>0$, $\theta_\pm>0$, under the quasineutral far-field assumption $\rho_\pm=e^{-\phi_\pm}$.  A Galilean translation and a density scaling reduce the left state to
	\begin{align}
		\label{eq:leftE}
		(\rho_-,u_-,\theta_-,\phi_-)=(1,0,\theta_0,0),\qquad \theta_0>0.
	\end{align}
	Dropping those second-order terms with constants \(\mu,\,\kappa,\,\la^2\) in \eqref{eq:NSFP} and imposing the quasineutral condition \(\rho=e^{-\phi}\) (hence \(\phi=-\log\rho\) and \(-\rho\phi_x=\rho_x\)) yields the quasineutral Euler system
	\begin{equation}
		\label{eq:QNEuler}
		\begin{cases}
			\rho_t+(\rho u)_x=0,\\
			(\rho u)_t+\bigl(\rho u^2+\rho(\theta+1)\bigr)_x=0,\\
			\theta_t+u\theta_x+(\gamma-1)\theta u_x=0.
		\end{cases}
	\end{equation}
	Equivalently, in primitive variables \(U=(\rho,u,\theta)\),
	\[
	U_t+\mathcal{A}(U)U_x=0,\qquad
	\mathcal{A}(U)=\begin{pmatrix}
		u & \rho & 0\\
		(\theta+1)/\rho & u & 1\\
		0 & (\gamma-1)\theta & u
	\end{pmatrix},
	\]
	with eigenvalues \(\lambda_1=u-c\), \(\lambda_2=u\), \(\lambda_3=u+c\) and ion-acoustic speed \(c=\sqrt{\gamma\theta+1}\). At the left state \eqref{eq:leftE} one has
	\begin{equation}\notag
		c_0:=\sqrt{\gamma\theta_0+1}.
	\end{equation}
	Throughout the paper, we consider the compressive $3$-shock of small strength, namely $0<s<c_0$ with $\rho_+<1$ and $\rho_+$ sufficiently close to $1$.
	
	The propagation of ion-acoustic shocks in a weakly collisional plasma is a classical problem in kinetic and fluid modeling. When the electron inertia is negligible, the electron density is determined by the electrostatic potential through the Boltzmann relation $n_e=e^{-\phi}$, and the ions may be described either kinetically by a Vlasov--Poisson--Boltzmann system or, in the fluid regime, by a Navier--Stokes--Poisson system.  In the isothermal setting, following the approach in \cite{MP}, Duan, Liu and Zhang \cite{DLZ} established the existence of small-amplitude smooth traveling waves by reducing the profile equations to a three-dimensional ODE and applying the center-manifold theorem, and proved nonlinear stability for strictly positive ion temperature under a zero-mass condition.  Kang, Kwon and Shim \cite{KKS} later obtained asymptotic stability of the same isothermal profiles without zero mass, by the method of $a$-contraction with time-dependent shifts \cite{Vasseur}. More recently, Shim \cite{ShimComposite} established the asymptotic
	stability of composite waves consisting of a shock profile and a
	rarefaction wave for the isothermal Navier--Stokes--Poisson system.
	
	Viscous plasma shocks belong to the broader study of how dissipative fluid equations resolve and stabilize shocks of hyperbolic conservation laws. The classical framework goes back to Riemann's analysis of nonlinear gas waves \cite{Riemann} and Lax's theory of hyperbolic systems \cite{Lax}. For systems regularized by artificial viscosity, Bianchini and Bressan \cite{BB} established the existence of the vanishing-viscosity limit for initial data of small total variation. The monograph of Liu \cite{LiuBook} and the survey of Matsumura \cite{MatsumuraSurvey} describe the shock-wave theory and its relation to the large-time behavior of compressible viscous fluids.
	
	Regarding the stability of viscous shocks, Il'in and Oleinik \cite{IO} obtained early large-time results for scalar equations, while Sattinger \cite{Sattinger} developed a stability theory for traveling waves of nonlinear parabolic systems. For systems of conservation laws, Matsumura and Nishihara \cite{MN} treated compressible viscous gas, and Goodman \cite{Go} treated conservation laws with artificial diffusion under the zero mass condition. The subsequent works of Liu \cite{Liu85}, Szepessy and Xin \cite{SX}, and Liu and Zeng \cite{LZ} eliminated the need for the zero-mass condition by introducing diffusion waves and constructing approximate Green's functions to derive pointwise estimates.  Mascia and Zumbrun \cite{MZ} established the nonlinear orbital stability
	of small-amplitude shock profiles for a class of dissipative symmetric hyperbolic--parabolic systems, including the compressible
	Navier--Stokes equations.
	
	
	A complementary approach uses relative entropy to compare solutions directly, following the stability framework of Dafermos \cite{Dafermos79,Dafermos96} and DiPerna \cite{DiPerna}. For inviscid scalar shocks, L\'eger \cite{Leger} obtained an $L^2$ stability estimate after allowing a translation of the reference shock. 
	Serre and Vasseur \cite{SVcontraction} identified the restrictions on unweighted $L^2$-type contraction for systems. Kang and Vasseur \cite{KVcriteria} developed criteria for contraction with different weights on the two sides of an entropic discontinuity; the application to extremal shocks is also discussed by Vasseur \cite{Vasseur}. 
	A broader account of the relative-entropy method is given by Serre and Vasseur \cite{SVsurvey}.
	
	For viscous equations, Kang and Vasseur \cite{KVscalar} established contraction with shifts for scalar shock waves, and Kang \cite{KangScalar} treated strictly convex scalar fluxes. Kang, Vasseur and Wang \cite{KVWscalar} obtained contraction for multidimensional scalar planar shocks. The same approach also applies to the viscous traveling waves of a hyperbolic-parabolic chemotaxis system, as shown by Choi, Kang, Kwon and Vasseur \cite{CKKV}. For the barotropic Navier--Stokes equations, Kang and Vasseur \cite{KV-JEMS} proved contraction for large perturbations of small shocks using the effective-velocity formulation associated with the Bresch--Desjardins entropy \cite{BD}. Estimates uniform in viscosity then led to uniqueness and stability of entropy shocks of the isentropic Euler system within a class of physical inviscid limits \cite{KVinviscid}, and to well-posedness of the two-shock Riemann problem in that class \cite{KVtwo}.
	
	For large-time asymptotics, an advantage of contraction with shifts is that it can be combined with direct energy estimates for other wave patterns. Kang, Vasseur and Wang \cite{KVWcomposite} proved stability of a composite wave consisting of a viscous shock and a rarefaction wave for the barotropic Navier--Stokes equations. 
	For the full Navier--Stokes--Fourier equations, Kang, Vasseur and Wang \cite{KVW} treated the generic combination of a viscous shock, a viscous contact wave and a rarefaction. These developments provide the fluid stability background for the additional Poisson coupling to be considered in the present paper.
	
Our goal of this paper is to treat the full one-dimensional compressible NSFP system \eqref{eq:NSFP}. We expect to prove the existence of small-amplitude shock profiles, and then their nonlinear dynamical stability in Lagrangian coordinates, without a zero-mass assumption on the initial perturbation. First of all, to deal with the existence of shock profile, a key observation is that using the Poisson equation \eqref{eq:NSFP}$_4$, one can rewrite the NSF fluid part of \eqref{eq:NSFP} as the conservative form
	\begin{equation}
		\label{eq:NSFPcons}
		\begin{cases}
			\rho_t+(\rho u)_x=0,\\[6pt]
			(\rho u)_t
			+\bigl(\rho u^2+p-\dfrac12\lambda_D^2\phi_x^2+e^{-\phi}\bigr)_x
			=(\mu u_x)_x,\\[6pt]
			\mathscr E_t+
			\left[
			u\left(\mathscr E+\rho\theta+e^{-\phi}
			+\frac{\la^2}{2}\phi_x^2\right)
			+\la^2\phi_x\phi_t
			\right]_x
			=(\mu uu_x+\kappa\theta_x)_x.
		\end{cases}
	\end{equation}
	where
	\begin{equation*}
		\mathscr E
		=\rho\left(\frac{u^2}{2}+c_v\theta-\phi\right)
		+\rho-e^{-\phi}-\frac{\la^2}{2}\phi_x^2,
	\end{equation*}
see the derivation in Sec.~\ref{sec:RH} later.
	This provides a possibility of constructing the shock profile for \eqref{eq:NSFP}. Indeed, as mentioned before, the shock profile is of the form $(\rho,u,\theta,\phi)(\zeta)$ with $\zeta=x-st$ and substituting it into \eqref{eq:NSFP} gives 
	\begin{equation}\label{eq:profile-second-order.intro}
		\left\{\begin{aligned}
			&-s\rho'+(\rho u)'=0,\\
			&-s(\rho u)'+(\rho u^2+\rho\theta)'-\rho W=\mu u'',\\
			&-s\bigl[\rho(\tfrac12u^2+c_v\theta)\bigr]'
			+\bigl[u\{\rho(\tfrac12u^2+c_v\theta)+\rho\theta\}\bigr]'
			-\rho uW=(\mu uu'+\kappa\theta')',\\
			&\phi':=W,\qquad \la^2W'=\rho-e^{-\phi},
		\end{aligned}\right.
	\end{equation}
	with 
	$$
	\rho W=\bigl(\tfrac12\la^2 W^2-e^{-\phi}\bigr)'
	$$
	$$\rho u W=\bigl(-s\phi+s(\tfrac12\la^2 W^2-e^{-\phi})\bigr)'
	=\rho(u-s)W+s\rho W=-s\phi'+s\bigl(\tfrac12\la^2 W^2-e^{-\phi}\bigr)',
	$$	
	where from \eqref{eq:profile-second-order.intro}$_1$ together with \eqref{eq:leftE} we have used the fact that
	\begin{equation}\label{eqn.ru}
		\rho(u-s)=-s.
	\end{equation}
	Note that \eqref{eqn.ru} above gives $u=s(1-\frac1\rho)$. Therefore, to solve \eqref{eq:profile-second-order.intro} it suffices to solve $Y=(\rho,\theta,\phi,W)$ which satisfies
	\begin{equation}\label{eq:vector-field.intro}
		Y'=\mathscr F(Y;s):=\begin{pmatrix}
			\dfrac{\rho^2}{\mu s}F_1(Y;s)\\[2mm]
			\dfrac1\kappa\bigl(G(Y;s)-uF_1(Y;s)\bigr)\\[2mm]
			W\\ \la^{-2}(\rho-e^{-\phi})
		\end{pmatrix},
	\end{equation}
	where we have denoted
	\begin{align*}
		F_1(\rho,\theta,\phi,W;s)
		=&-\frac{s^2}{\rho}(\rho-1)+\rho\theta-\theta_0
		+e^{-\phi}-1-\frac{\la^2}{2}W^2,\\
		G(\rho,\theta,\phi,W;s)
		=&-s\left[\rho(\tfrac12u^2+c_v\theta)-c_v\theta_0\right]
		+u\left[\rho(\tfrac12u^2+c_v\theta)+\rho\theta\right]\notag\\
		&+s\phi-s\left(\frac{\la^2}{2}W^2-e^{-\phi}+1\right),
		\qquad u=s(1-\rho^{-1}).
	\end{align*}
	Moreover, for the jump conditions, we still take $U=(\rho,u,\theta)$ and use the conservative
	variables and flux of the quasineutral Euler system as in \eqref{eq:QNEuler}:
	\begin{equation}\notag
		\begin{gathered}
			Q(U)=\begin{pmatrix}\rho\\\rho u\\\mathscr E_{\mathrm{qn}}\end{pmatrix},
			\qquad
			f(U)=\begin{pmatrix}\rho u\\\rho u^2+\Peff\\u(\mathscr E_{\mathrm{qn}}+\Peff)\end{pmatrix},
			\\[1mm]
			\mathscr E_{\mathrm{qn}}=\rho\left(\frac{u^2}{2}+c_v\theta+\log\rho\right),
		\end{gathered}
	\end{equation}
	Setting $[h]=h_+-h_-$, the Rankine--Hugoniot conditions are
	\begin{equation}\label{eq:RH.intro}
		s[Q(U)]=[f(U)],\qquad \phi_\pm=-\log\rho_\pm.
	\end{equation}

	
	
	With all preparations above, we state the first result of this paper as follows.
	
	\begin{theorem}[Existence of small-amplitude NSFP shock profiles]\label{thm:existence}
		Fix $\gamma>1$, $\theta_0>0$ and $\mu,\kappa,\la>0$.
		There exist $\varepsilon_{*}>0$, $\vartheta>0$ and a neighborhood
		$\mathcal{N}\subset\R^4$ 
		of $Y_-:=(1,\theta_0,0,0)$ 
		such that
		if the right state satisfies the Rankine--Hugoniot conditions
		\eqref{eq:RH.intro}, the Lax inequalities
		\begin{equation}\label{eq:lax}
			0<s<c_0,\qquad u_++\sqrt{\gamma\theta_++1}<s,
		\end{equation}
		and $\varepsilon:=1-\rho_+>0$,
		$\varepsilon+|\theta_+-\theta_0|\leq\varepsilon_{*}$,
		then \eqref{eq:profile-second-order.intro} or equivalently \eqref{eq:vector-field.intro} admits a smooth solution $Y^S=(\rho^S,\theta^S,\phi^S,W^S)$ joining the left state $Y_-$ to the prescribed right state, and thus the original NSFP system in Eulerian coordinates \eqref{eq:NSFP} admits a smooth traveling wave $(\rho^S,u^S,\theta^S,\phi^S)$ with $u^S=s(1-\frac{1}{\rho^S})$. 
		It is unique up to translation among profiles for which
		$(\rho^S,\theta^S,\phi^S,(\phi^S)')(\zeta)\in\mathcal N$ for all $\zeta\in\R$.
		Moreover,
		\begin{align}\label{eq:qualitative}
			{\theta^S}\geq\frac{\theta_0}{2},\qquad
			(\rho^S)'<0,\qquad (\phi^S)'>0,\qquad
			C^{-1}|(\rho^S)'|\leq(\phi^S)'\leq C|(\rho^S)'|.
		\end{align}
		After fixing the translation by ${\rho^S}(0)=1-\varepsilon/2$,
		for every integer $k\geq0$, we have
		\begin{equation}\label{eq:decay}
			\left|\partial_\zeta^k\bigl(
			{\rho^S}-\rho_\pm,{u^S}-u_\pm,
			{\theta^S}-\theta_\pm,{\phi^S}-\phi_\pm\bigr)(\zeta)\right|
			\leq C_k\varepsilon^{k+1}e^{-\vartheta\varepsilon|\zeta|},
			\qquad \pm\zeta\geq0.
		\end{equation}
		The neighborhood $\mathcal N$, the strength bound $\varepsilon_{*}$ and the constants
		$C,C_k,\vartheta$ are independent of $\varepsilon$.
			\end{theorem}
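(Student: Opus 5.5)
We plan a center-manifold reduction of the four-dimensional autonomous system \eqref{eq:vector-field.intro}, in the spirit of \cite{MP,DLZ}, with the shock speed $s$ treated as a parameter. The first step is the linearization of $\mathscr F(\cdot;s)$ at $Y_-=(1,\theta_0,0,0)$ for $s$ near the critical value $c_0$. Since $\partial_\phi F_1=-e^{-\phi}$ and the last two components give $\phi'=W$, $\la^2W'=\rho-e^{-\phi}$, the Jacobian has a block structure. Its determinant vanishes exactly when $s^2=\gamma\theta_0+1=c_0^2$; this reflects the fact that the reduced (quasineutral) sound speed is $c_0$, because $e^{-\phi}\approx\rho$ contributes the extra "$+1$" in $\Peff$. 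At $s=c_0$ one eigenvalue is zero. The remaining three are of the form $\pm\la^{-1}(\cdot)$ from the Poisson pair together with a nonzero eigenvalue from the heat/viscous part. We will verify that none of these three is purely imaginary, so that they are hyperbolic and bounded away from $0$ uniformly in $s$ near $c_0$.

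Next we extend the system by $s'=0$ and apply the center-manifold theorem at $(Y_-,c_0)$. This yields a locally invariant two-dimensional manifold, which contains a one-dimensional curve $\mathcal M_s$ for each fixed $s$, inside a neighborhood $\mathcal N$ whose size is independent of $\varepsilon$. All bounded orbits staying in $\mathcal N$ lie on $\mathcal M_s$. The equilibria of \eqref{eq:vector-field.intro} near $Y_-$ are exactly the states satisfying $\phi=-\log\rho$, $W=0$, $F_1=0$, $G=0$, which are the Rankine--Hugoniot relations \eqref{eq:RH.intro}. Hence the right state lies on $\mathcal M_s$. Parametrizing $\mathcal M_s$ by $\rho$ (equivalently $\varepsilon$), we reduce the flow to a scalar ODE
\[
\rho'=h(\rho;s)=a\,(\rho-1)(\rho-\rho_+)\bigl(1+O(|\rho-1|+|\rho_+-1|)\bigr),\qquad a>0 .
\]
We obtain this from a Taylor expansion that uses the genuine nonlinearity of the $3$-field of \eqref{eq:QNEuler}; the nonzero coefficient $a$ is the standard quantity $\nabla\lambda_3\cdot r_3\neq 0$ for the effective pressure. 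The Lax inequalities \eqref{eq:lax} and $\rho_+<1$ fix the sign so that $h<0$ between $\rho_+$ and $1$. Integrating the scalar ODE gives a monotone heteroclinic orbit with $\rho'<0$, and this orbit is unique up to translation within $\mathcal N$.

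The qualitative properties then follow from the graph representation of $\mathcal M_s$. Since $\phi=-\log\rho+O(|\rho-1|^2)$, we get $\phi'=W=-\rho'/\rho\,(1+O(\varepsilon))$, which gives $\phi'>0$ and the comparability in \eqref{eq:qualitative}. The bound $\theta^S\ge\theta_0/2$ follows because $|\theta-\theta_0|\lesssim\varepsilon_*$. For the decay estimate \eqref{eq:decay}, we use that $h'(\rho_\pm)\sim \mp a\varepsilon$. Explicit integration of the logistic-type equation then gives $|\rho^S-\rho_\pm|\le C\varepsilon e^{-\vartheta\varepsilon|\zeta|}$. The derivative bounds follow by differentiating $\rho'=h(\rho)$, since each derivative brings a further factor $h'=O(\varepsilon)$, and the other components inherit these bounds through the smooth graph of $\mathcal M_s$.

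We expect the main obstacle to be the second-order computation on the center manifold. Showing that the coefficient $a$ is nonzero, with the correct sign, requires expanding $F_1$, $G$ and the graph of $\mathcal M_s$ to quadratic order. One must also check that the Poisson coupling and the heat flux enter only through $\Peff$ and do not cancel the nonlinearity. Our first attempt will be to compute the center direction $r=(1,\theta_\rho,-1,0)$ at $s=c_0$ and then evaluate the reduced vector field as $\ell\cdot D^2\mathscr F(r,r)$, with $\ell$ the left null vector. We will then compare the result with $\nabla\lambda_3\cdot r_3$ for the $3$-field of \eqref{eq:QNEuler}.
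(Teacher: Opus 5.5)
Your outline is correct and follows the same architecture as the paper: the speed is adjoined as a frozen variable, a two-dimensional center manifold at $(Y_-,c_0)$ is charted by $\eta=\rho-1$, the flow reduces to a scalar equation with zeros at $0$ and $-\varepsilon$, and the profile is lifted through the graph.

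The real difference is in the step you single out as the main obstacle, the quadratic coefficient $a$. The paper never expands $\mathscr F$ to second order. It factors $\widehat g(\eta,\varepsilon)=\eta(\eta+\varepsilon)b(\eta,\varepsilon)$ and reads $b(0,0)$ off the identity $\varepsilon b(0,\varepsilon)=\widehat g_\eta(0,\varepsilon)=\mu_c(s(\varepsilon))$. This combines two first-order facts:
\begin{itemize}
\item the eigenvalue perturbation $\mu_c'(c_0)=-\mathfrak p_s/\mathfrak p_\Lambda=-2/D$;
\item the Hugoniot expansion $s'(0)=-\Gamma/2$ of the quasineutral Euler system.
\end{itemize}
The paper then excludes interior zeros by a separate argument, the monotonicity of $s_H$.

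Your direct computation also goes through, and it is short. The left null vector of $A(c_0)$ is $\ell_0=(1,\beta_0/\chi_{\mathrm h,0},0,\beta_0/\nu)$, with $\ell_0\cdot R_0=D/\mu$. Expanding $\mathscr F(Y_-+tR_0;c_0)$ to order $t^2$, the $\rho$, $\theta$ and $W$ rows contribute $((4\gamma-2)\theta_0+3)/(\mu c_0)$, $(\gamma-1)(\gamma-2)\theta_0/(\mu c_0)$ and $-1/(\mu c_0)$ to $\ell_0\cdot D^2\mathscr F[R_0,R_0]$. These sum to $2\Gamma/\mu$. Hence
\[
a=\tfrac12 g_{\eta\eta}(0,c_0)=\frac{\ell_0\cdot D^2\mathscr F[R_0,R_0]}{2\,\ell_0\cdot R_0}=\frac{\Gamma}{D}>0 .
\]
So $a$ is $\nabla\lambda_3\cdot r_3$ divided by the effective viscosity $D$, not $\nabla\lambda_3\cdot r_3$ itself. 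Its sign comes from this computation, not from the Lax inequalities; once $a>0$ is known, the hypothesis $\rho_+<1$ alone fixes the direction of the orbit.

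What each route buys:
\begin{itemize}
\item Your route gives algebraic simplicity of the zero eigenvalue for free, since $\ell_0\cdot R_0\neq0$. The implicit function theorem applied to $h/(\rho-1)$, using only $a\neq0$, then gives the second equilibrium curve on the manifold and rules out interior zeros at once. This makes the $s_H$-monotonicity lemma unnecessary.
\item The paper's route needs only first-order information on $\mathscr F$, together with a Hugoniot computation it must do anyway for the Lax inequalities.
\end{itemize}

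Two points your sketch leaves implicit.
\begin{itemize}
\item You must show the hypotheses force $s$ to be near $c_0$, so that $(Y_+,s)$ lies in the containment neighborhood. The energy relation is linear in $\theta$ and fixes $\theta_+=\theta_H(\rho_+)$; the momentum relation then gives $s^2\to c_0^2$.
\item A center manifold is only $C^r$, on an $r$-dependent neighborhood, not a ``smooth graph''. So \eqref{eq:decay} for every $k$, with $\varepsilon_*$ and $\vartheta$ independent of $k$, needs the paper's device: use a $C^{k+4}$ manifold for $\varepsilon<\varepsilon_k$, and the crude bound $C_k|Y-Y_\pm|$ for $\varepsilon\ge\varepsilon_k$.
\end{itemize}
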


	To study the dynamical stability of the shock profiles obtained in
	Theorem \ref{thm:existence} for the NSFP system \eqref{eq:NSFP}, we introduce the Lagrangian mass coordinates in terms of
	$\dd m=\rho\dd x-\rho u\dd t$.
	For simplicity, we still write $x$ in place of $m$. Therefore, from \eqref{eq:NSFP},  
	setting the specific volume $v=1/\rho$ and using the same notation
	for the transformed unknowns, we have the following NSFP system in Lagrangian coordinates:
	\begin{equation}\label{eq:NSP-1}
		\left\{\begin{aligned}
			&v_t-u_x=0,\\
			&u_t+p_x=\mu\left(\frac{u_x}{v}\right)_x
			+\frac{\phi_x}{v},\qquad p=\frac\theta v,\\
			&\cv\theta_t+p u_x
			=\kappa\left(\frac{\theta_x}{v}\right)_x+\mu\frac{u_x^2}{v},\\
			&\la^2\left(\frac{\phi_x}{v}\right)_x=1-ve^{-\phi},
		\end{aligned}\right.
	\end{equation}
	with $t\geq 0$ and $x\in \R$.
	We supplement \eqref{eq:NSP-1} with initial data
	\begin{equation}\label{lnspf.in}
		(v,u,\theta)(0,x)=(v_{\text{in}},u_{\text{in}},\theta_{\text{in}})(x),\quad x\in \R.
	\end{equation}
	For \eqref{eq:NSP-1}, we still denote the small-amplitude compressive $3$-shock by $(v^S,u^S,\theta^S,\phi^S)(\cdot)$ with $v_+>1$, strength
	$
	\delta_S:=v_+-v_-=\frac{\varepsilon}{1-\varepsilon}, 
	$
	and the shock speed $\sigma=(s-u_-)/v_-=(s-u_+)/v_+$. For more properties of the shock profile in Lagrangian coordinates, we refer to Sect.~\ref{sec:reference} later on.   
	
	We can now state the second result of this paper concerning the large time asymptotic stability of the shock profile $(v^S,u^S,\theta^S,\phi^S)(\cdot)$ in Lagrangian coordinates.

	\begin{theorem}[Stability result]\label{thm:AC}
		Assuming all the conditions of Theorem \ref{thm:existence} and taking $\varepsilon_*>0$ to be further smaller if necessary,
		there exists $\varepsilon_0>0$ such that the following holds. Suppose that the initial data in \eqref{lnspf.in} satisfy
		\begin{align*}
			\|(v_{\text{in}}-v^{S},u_{\text{in}}-u^{S},\theta_{\text{in}}-\theta^{S})\|_{H^2}<\varepsilon_0.
		\end{align*}
		Then the full compressible NSFP system \eqref{eq:NSP-1} in Lagrangian coordinates admits a unique global-in-time solution $(v,u,\theta,\phi)(t,x)$ for $t\geq 0$ and $x\in \R$, and there exists an absolutely continuous shift function $X(t): [0,+\infty)\to \R$ with $X(0)=0$
		such that
		\begin{gather*}
			(v,u,\theta)(t,x)-(v^{S},u^{S},\theta^{S})(x-\sigma t-X(t))\in C([0,+
			\infty);H^2(\R)),\\
			\phi(t,x)-\phi^{S}(x-\sigma t-X(t))\in C([0,+
			\infty);H^3(\R)).
		\end{gather*}
		Moreover, the solution $(v,u,\theta,\phi)(t,x)$ and shift function $X(t)$ satisfy
		\begin{align*}
			\lim_{t\rightarrow+\infty}\| (v,u,\theta,\phi)(t,\cdot)-(v^{S},u^{S},\theta^{S},\phi^{S})(\cdot-\sigma t-X(t))\|_{L^\infty}=0,
		\end{align*}
		and
		\begin{align*}
			\lim_{t\rightarrow+\infty}|\dot{X}(t)|=0.
		\end{align*}
	\end{theorem}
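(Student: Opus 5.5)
The proof will follow the $a$-contraction with shifts method of Kang--Vasseur--Wang \cite{KVW}, adapted to the Poisson coupling as in \cite{KKS}, combined with a continuation argument. First we establish local existence in $H^2$ for the perturbation; the potential $\phi$ is recovered from the elliptic equation $\la^2(\phi_x/v)_x=1-ve^{-\phi}$ with $H^3$ regularity, using the monotonicity of $\phi\mapsto ve^{-\phi}$. The global statement will then follow from an a priori estimate. Set $(\tv,\tu,\tz,\tp)(t,x)=(v^S,u^S,\theta^S,\phi^S)(x-\sigma t-X(t))$, and let $\delta_S$ and $\varepsilon_1:=\sup_{[0,T]}\|(v-\tv,u-\tu,\theta-\tz)\|_{H^2}$ be small. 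The a priori bound to prove is
\[
\sup_{[0,T]}\|(v-\tv,u-\tu,\theta-\tz)\|_{H^2}^2+\|\phi-\tp\|_{H^3}^2+\int_0^T\bigl(\delta_S|\dot X|^2+\mathcal G^S+\D\bigr)\dd t\le C\|(v_{\text{in}}-v^S,u_{\text{in}}-u^S,\theta_{\text{in}}-\theta^S)\|_{H^2}^2,
\]
where $\mathcal G^S$ is the good term localized on $|\tv_x|$ and $\D$ collects the diffusion terms. The limits as $t\to\infty$ then follow in the standard way: the time derivative of the $L^2$ norm of the derivatives is integrable, hence the $L^2$ norm tends to zero, and interpolation gives the $L^\infty$ convergence. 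Similarly, $|\dot X|\le C\|(v-\tv,u-\tu,\theta-\tz)\|_{L^\infty}$ gives $\dot X\to0$.

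The core step is the weighted relative-entropy estimate at the zeroth order. The entropy $\eta$ is the physical one of the Navier--Stokes--Fourier system, augmented by the electric part:
\[
\E=\intR a\Bigl(\tfrac12|u-\tu|^2+R\,\Phi(v/\tv)+\cv\tz\,\Phi(\theta/\tz)\Bigr)\dd x+\intR a\Bigl(\tfrac{\la^2}{2}\tfrac{|(\phi-\tp)_x|^2}{v}+\text{(relative potential energy of }e^{-\phi}\text{ w.r.t. }e^{-\tp})\Bigr)\dd x .
\]
Here $\Phi(z)=z-1-\log z$. The weight $a(\xi)=1+\frac{\lambda}{\delta_S}(\pp(\tv,\tz)-\pp_-)$ is built from the effective pressure $\Peff$ of the quasineutral system, with $\delta_S\ll\lambda\ll1$. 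The weight is increasing across the $3$-shock, so that $a'\tilde\sigma$-terms have a good sign. The shift is defined by $\dot X=-\frac{M}{\delta_S}\intR a\,\tu_\xi(\cdot)\,\partial_\xi(\ldots)$ in the usual form, so that the bad term $\dot X\,\Y$ is absorbed into $-\frac{\delta_S}{M}|\dot X|^2$.

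Next we identify the hyperbolic part, which in the Poisson case should be driven by $\Peff=\theta/v+e^{-\phi}$-type pressure perturbations. The parabolic part comes from $\mu u_x/v$ and $\kappa\theta_x/v$. The Poisson part consists of the terms $\phi_x/v$ and $\phi$-relative energy. These will be controlled by testing the Poisson equation against $a(\phi-\tp)$ and $a(v-\tv)$. Since quasineutrality makes $\phi-\tp\approx-(v-\tv)$ up to $\la^2$-derivative corrections, they are rewritten as $O(\delta_S)$-weighted cross terms plus higher derivatives.

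The main obstacle, as the abstract hints, is the sharp Poincaré-type estimate of Kang--Vasseur on the shock layer. Following \cite{KVW}, we introduce the variable $y=(\pp(\tv,\tz)-\pp_-)/\delta_P\in(0,1)$ and the combination $w=\pp-\tilde\pp$ (effective pressure perturbation minus its average). We need to show that the leading bad terms $\frac{1}{\delta_S}\int a_\xi|w|^2$ and the cubic term are dominated by the good terms $\sigma\int a_\xi|w|^2$ plus the diffusion $\int |\partial_\xi w|^2$. The diffusion is available only through $u$ and $\theta$, so $w$ must be expressed via the viscous flux $\mu u_x/v$ plus a remainder controlled by $\la^2$ and $\kappa$ terms. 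This is the place where the additional $e^{-\phi}$ part of $\Peff$ must be handled: we would first try to bound $e^{-\phi}-e^{-\tp}$ by $v-\tv$ plus $\la^2(\phi-\tp)_{xx}$, and then apply the sharp Poincaré inequality on $[0,1]$ with constant $\frac12$, as in \cite{KV-JEMS}.

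Finally, the higher-order $H^1,H^2$ estimates are unweighted energy estimates. They are closed using the zeroth-order dissipation, the smallness of $\delta_S$ and $\varepsilon_1$, and the derivative decay \eqref{eq:decay} of the profile. The estimates for $\phi$ come from elliptic regularity of the Poisson equation.
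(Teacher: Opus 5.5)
Your overall architecture (a weighted relative entropy with an electric correction, a shift ODE, a Poincar\'e inequality on the layer, derivative energies, elliptic estimates, continuation) is the paper's. The gap is in the step you yourself flag as the core one: as you set it up, it would not close.

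\emph{No diffusion on $w$.} In the paper's notation (bars for the shifted profile, tildes for perturbations), the zeroth-order dissipation is $\D=\intR a\bigl(\frac{\mu}{v}\tu_\xi^2+\frac{\kappa}{v\theta}\tz_\xi^2\bigr)\dd\xi$. It contains no $\tv_\xi$, so at this level there is no diffusion $\int|\partial_\xi w|^2$ for $w=\pp-\bar\pp$. Getting one would require a Bresch--Desjardins effective-velocity reformulation, which you do not set up.

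\emph{How the paper proceeds instead.} It takes $\tu$ as the principal variable. Completing squares in $\B_2-\G$ produces $\G_1$ and $\G_2$, which tie $\tv$ to $-\tu/\sigma^*$ and $\tz$ to $\frac{(\gamma-1)\theta_-}{v_-\sigma^*}\tu$ on the layer. This gives $\B_1\le\alpha^*(1+o(1))\delta_S\int_0^1\tu^2\dd y+C\delta_S^{1/4}(\G_1+\G_2)$.

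\emph{Heat conduction is not a remainder.} Lemma~\ref{lem:sharp} gives the sharp layer estimate $\frac{\mu}{\bar v}\frac{y_\xi}{y(1-y)}=r\alpha^*\delta_S+O(\delta_S^2)$, with $r=\frac{\mu(\gamma\theta_-+1)}{\mu(\gamma\theta_-+1)+\kappa(\gamma-1)^2\theta_-}$. It comes from the center-manifold normal form $\eta'=\eta(\eta+\varepsilon)b$ with $b(0,0)=\Gamma/D$ and $D=\mu+\kappa\theta_0/(\cv^2c_0^2)$. Viscosity plus the sharp weighted Poincar\'e inequality therefore yields only $\D_1\ge 2r\alpha^*(1-o(1))\delta_S\int_0^1|\tu-\bar w|^2\dd y$. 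This cannot in general dominate $\B_1$ once $r\le1/2$, for instance when $\gamma$ or $\kappa/\mu$ is large.

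\emph{Where the missing fraction comes from.} It comes from $\D_2$. Apply the Poincar\'e inequality to $\tz$ as well and use $\tz\approx\frac{(\gamma-1)\theta_-}{v_-\sigma^*}\tu$. This restores exactly $2\alpha^*\delta_S\int_0^1\tu^2\dd y$, minus a mean term in $\bar w^2$. That mean term is cancelled by the shift, which is why the integrand in \eqref{eq:shiftODE} and the constant $M$ in \eqref{eq:M} must be chosen precisely.

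\emph{What your sketch misses here.} You leave $M$ and the shift unspecified. You relegate the $\kappa$ contribution to a remainder. You never identify the sharp layer constant on which the matching depends; the margin is only a factor $2$, reduced to $3/2$ once $\frac14\D$ is reserved. Conversely, the $\la^2$ and $e^{-\phi}$ terms you put at the heart of this step do not enter it in the paper. Writing $\Peff=(\theta+1)/v$ and isolating $F=\frac1v-e^{-\phi}+\frac{\la^2\phi_\xi^2}{2v^2}=\la^2\bigl(\frac{\phi_{\xi\xi}}{v^2}-\frac{v_\xi\phi_\xi}{v^3}+\frac{\phi_\xi^2}{2v^2}\bigr)$ makes the leading-order problem purely fluid. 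The electric terms are handled afterwards by the correction $\eta_\phi$ and the Poisson estimates.

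Smaller points follow.
\begin{itemize}
\item Your weight $a=1+\frac{\lambda}{\delta_S}(\pp(\bar v,\bar\theta)-\pp_-)$ is decreasing, not increasing, because $\pp=(\theta+1)/v$ decreases across the $3$-shock. You need $\pp_--\pp$; the paper uses $a=1+\delta_S^{-1/2}(\bar v-v_-)$.
\item In Lagrangian variables quasineutrality reads $\phi=\log v$, so $\tp\approx\tv/\bar v$, not $-\tv$.
\item Your physical fluid entropy (coefficient $\bar\theta$ in front of $\Phi(v/\bar v)$) plus a physical electric energy is a possible alternative to the paper's $H_0$ (coefficient $\bar\theta+1$) plus $\eta_\phi$. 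With that choice, however, the electron pressure enters the leading terms only through $\tp$, so the conversion to $\tv/\bar v$ must be carried out on the layer with explicit errors. The weighted electric energy also produces time derivatives of $\tp$, which require a time-differentiated Poisson estimate, as for $\chi$ in Lemma~\ref{lem:time}. Your outline does not address that estimate.
\end{itemize}
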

	
In what follows we present the main difficulties and the corresponding strategies for the two theorems. For Theorem~\ref{thm:existence}, the first key obstacle is that the electric force $-\rho\phi_x$ and the electric work $-\rho u\phi_x$ in \eqref{eq:NSFP} are not of the flux form. Without a conservative structure, the Rankine--Hugoniot conditions of a traveling wave cannot be compared with those of a hyperbolic system. The key observations are those of Lemma~\ref{lem:momentum} in Section~\ref{sec:conservative}:
\[
\rho\phi_x=\partial_x\Bigl(\tfrac12\la^2\phi_x^2-e^{-\phi}\Bigr),\qquad
\rho u\phi_x
=\partial_t\Bigl(\rho\phi+\tfrac12\la^2\phi_x^2+e^{-\phi}\Bigr)
+\partial_x\bigl(\rho u\phi-\la^2\phi_x\phi_t\bigr).
\]
They convert \eqref{eq:NSFP} into the viscous conservation laws \eqref{eq:NSFPcons}. After the traveling-wave reduction of Section~\ref{sec:profile}, the Rankine--Hugoniot conditions \eqref{eq:RH} coincide with those of the quasineutral Euler system \eqref{eq:QNEuler} with effective pressure $P_{\mathrm{eff}}=\rho(\theta+1)$. Integrating the profile equations from the left state produces the first-order system \eqref{eq:vector-field.intro} in $Y=(\rho,\theta,\phi,W)$. Equilibria of $\mathscr F$ are precisely the quasineutral Rankine--Hugoniot states of Lemma~\ref{lem:equilibria}. The second difficulty is the spectral issue. At the sonic left state $Y_-=(1,\theta_0,0,0)$, Lemma~\ref{lem:kernel} shows that $A(c_0)=D_Y\mathscr F(Y_-;c_0)$ has a simple zero eigenvalue with kernel
\[
\ker A(c_0)=\operatorname{span}\{R_0\},\qquad
R_0=\bigl(1,(\gamma-1)\theta_0,-1,0\bigr)^{\mathsf T},
\]
while Lemma~\ref{lem:inertia} supplies two stable eigenvalues and one unstable eigenvalue. A direct hyperbolic shooting is therefore unavailable. The speed-extended system \eqref{eq:extended} of Section~\ref{sec:spectrum} adjoins $\tau'=0$; Proposition~\ref{prop:extended-spectrum} then yields a two-dimensional center subspace
\[
E^c=\operatorname{span}\bigl\{(R_0,0),(0,0,0,0,1)^{\mathsf T}\bigr\}
\]
and a uniform hyperbolic gap. Section~\ref{sec:center} applies the center-manifold theorem of Appendix~\ref{app:center} and reduces the dynamics to $\eta'=g(\eta,\tau)$ on that manifold, with $\eta=\rho-1$ and $\tau$ frozen, see \eqref{eq:reduced-tau}. Restricting to the Rankine--Hugoniot speed $\tau=s(\varepsilon)$ with
\[
s(\varepsilon)=c_0-\tfrac12\Gamma\varepsilon+O(\varepsilon^2),\qquad
\Gamma=\frac{\gamma(\gamma+1)\theta_0+2}{2c_0}>0,
\]
we obtain the scalar equation $\eta'=\widehat g(\eta,\varepsilon)$. In Section~\ref{sec:heteroclinic}, we exclude interior zeros and combine this fact with $\widehat g_\eta(0,\varepsilon)>0$ to construct a heteroclinic from $0$ to $-\varepsilon$, unique up to translation. Lifting this orbit by \eqref{eq:lift-solves-system} produces the NSFP profile.

For Theorem~\ref{thm:AC}, the stability argument involves three main difficulties. Indeed, there is no zero-mass assumption, so the translational kernel of the shock must be removed dynamically. The temperature enters the hyperbolic flux as an independent unknown, while heat conduction contributes to parabolic dissipation. The electric work in the energy equation produces remainders that are not controlled by the fluid viscosity alone. Section~\ref{sec:AC} is devoted to the full proof of Theorem~\ref{thm:AC}. The argument is carried out  for \eqref{eq:NSP} in Lagrangian coordinates, and further for \eqref{eq:shiftedNSP} relative to the shifted profile as in \eqref{eq:shiftedprofile}. The strategy is $a$-contraction with the time-dependent shift function $X(t)$ to be explicitly defined in Section~\ref{sec:shift}. The weight
\begin{equation*}
a(\xi)=1+\delta_S^{-1/2}\bigl(v^S(\xi-X(t))-v_-\bigr)
\end{equation*}
in \eqref{eq:weight} satisfies $a_\xi=\delta_S^{-1/2}\bar v_\xi>0$ and makes the shock layer dissipative. The relative entropy of Section~\ref{sec:relative} is
\[
H_0=\frac{\tilde u^2}{2}+(\bar\theta+1)\Phi\Bigl(\frac{v}{\bar v}\Bigr)
+\frac{\bar\theta}{\gamma-1}\Phi\Bigl(\frac{\theta}{\bar\theta}\Bigr),\qquad
\Phi(z)=z-1-\log z,
\]
augmented by the electric energy $\eta_\phi$ in \eqref{eq:entropy}, which contains the mixed term $(\la^2/\bar v^2)\tilde v\,\tilde\phi_{\xi\xi}$. The sum $H_0+\eta_\phi$ is coercive by \eqref{eq:coercive}. Lemma~\ref{lem:identity} then yields the identity
\[
\frac{\mathrm d}{\mathrm d t}\mathcal E=\dot X\,\mathcal Y+\mathcal J^{\mathrm{bad}}-\mathcal J^{\mathrm{good}},
\qquad
\mathcal E(t)=\int_{\mathbb R}a(H_0+\eta_\phi)\,\mathrm d\xi.
\]
Section~\ref{sec:decomp} splits the flux into good terms $ \G, \D$, remainders $\B_i,\Y_i$ and shift terms, and the ODE \eqref{eq:shiftODE} is chosen so that
\[
\dot X=-\frac{M}{\delta_S}(\Y_1+\Y_2+\Y_3)
\]
converts the translational kernel into the dissipation $-(\delta_S/M)|\dot X|^2$. The remaining leading combination $-\frac{\delta_S}{2M}|\dot X|^2+\B_1+\B_2-\G-\frac34\D$ is estimated in Section~\ref{sec:leading}.  A key input from the center-manifold reduction is the sharp diffusion estimate of Lemma \ref{lem:sharp} in Appendix \ref{app:sharp}, obtained from the scalar profile equation \eqref{eq:scalar-profile}, \eqref{eq:phi-monotonicity}, and its factorization \eqref{eq:factorisation}. This estimate identifies the leading coefficient of the viscous dissipation in the normalized shock coordinate. Combined with the thermal and shift estimates and a weighted Poincar\'e inequality, it yields Lemma~\ref{lem:leading}.
The lower-order remainders are absorbed in Section~\ref{sec:zeroth}; spatial and time-differentiated Poisson estimates in Section~\ref{sec:poisson} control $\tilde\phi$ and the electric fluxes in $\mathcal R_\phi$; first- and second-order fluid energies are closed in Section~\ref{sec:derivatives}. Section~\ref{sec:closure} assembles these bounds into Proposition~\ref{thm:closure} and concludes the orbital stability of Theorem~\ref{thm:AC}.
		
	The rest of this paper is organized as follows. Section~\ref{sec:existence} is devoted to the proof of Theorem \ref{thm:existence}. We derive the conservative identities and the traveling-wave ODE, identify the Rankine–Hugoniot conditions, and construct the profiles and center-manifold reduction. We then establish their qualitative properties and uniqueness. In Section~\ref{sec:AC}, we prove Theorem \ref{thm:AC} by combining weighted relative-entropy estimates with higher-order fluid energy estimates and elliptic estimates for the Poisson equation. Appendix~\ref{app:center} recalls the center-manifold theorem, and Appendix~\ref{app:sharp} establishes sharp estimates for the diffusion terms.

	\section{Existence of small-amplitude shock profiles}\label{sec:existence}

This section proves Theorem~\ref{thm:existence}. In Section~\ref{sec:conservative} the electric force is rewritten in divergence form, so that the momentum and energy equations become viscous conservation laws. Section~\ref{sec:profile} substitutes the traveling-wave ansatz and integrates the resulting system: the Rankine--Hugoniot conditions reduce to those of the quasineutral Euler system with effective pressure $P_{\mathrm{eff}}=\rho(\theta+1)$, and the profile equations become a first-order autonomous ODE in $(\rho,\theta,\phi,W)$ whose equilibria are the quasineutral Rankine--Hugoniot states. Section~\ref{sec:spectrum} analyses the linearization at the sonic left state, which has a simple zero eigenvalue; adjoining the shock speed as a frozen parameter produces a two-dimensional center subspace. A $C^r$ center-manifold reduction is carried out in Section~\ref{sec:center}. Section~\ref{sec:heteroclinic} reduces the dynamics to a scalar ODE and constructs a unique small-amplitude heteroclinic, whose sign is controlled by genuine nonlinearity. Section~\ref{sec:qualitative} records the Lax inequalities, the monotonicity and exponential decay of the profile, and uniqueness up to translation. The argument is collected in Section~\ref{sec:proof-main}.
	
	\subsection{Conservative structure}\label{sec:conservative}
	\begin{lemma}\label{lem:momentum}
		For the solution of \eqref{eq:NSFP}, it holds
		\begin{align*}
			&\rho\phi_x=\partial_x\left(\frac{\la^2}{2}\phi_x^2-e^{-\phi}\right),\\
			&\rho u\phi_x
			=\partial_t\left(\rho\phi+\frac{\la^2}{2}\phi_x^2+e^{-\phi}\right)
			+\partial_x\left(\rho u\phi-\la^2\phi_x\phi_t\right).
		\end{align*}
	\end{lemma}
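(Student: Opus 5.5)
Both identities follow by direct computation. The only inputs are the Poisson equation \eqref{eq:NSFP}$_4$, written as $\rho=\la^2\phi_{xx}+e^{-\phi}$, and the continuity equation $\rho_t+(\rho u)_x=0$; the fluid momentum and energy equations are never used. The plan is to substitute for $\rho$ in the quadratic electric term and recognise exact derivatives.

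For the first identity, multiply the Poisson relation by $\phi_x$:
\[
\rho\phi_x=\la^2\phi_{xx}\phi_x+e^{-\phi}\phi_x .
\]
Here $\la^2\phi_{xx}\phi_x=\partial_x(\tfrac{\la^2}{2}\phi_x^2)$ and $e^{-\phi}\phi_x=-\partial_x e^{-\phi}$, which gives the claim at once.

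For the second identity, write $\rho u\phi_x=(\rho u\phi)_x-(\rho u)_x\phi$ and use continuity to replace $-(\rho u)_x$ by $\rho_t$. This gives
\[
\rho u\phi_x=(\rho u\phi)_x+\rho_t\phi=(\rho u\phi)_x+(\rho\phi)_t-\rho\phi_t .
\]
It then remains to express $\rho\phi_t$ through the Poisson equation:
\[
\rho\phi_t=\la^2\phi_{xx}\phi_t+e^{-\phi}\phi_t .
\]
The second term equals $-\partial_t e^{-\phi}$. For the first term, integrate by parts in $x$:
\[
\la^2\phi_{xx}\phi_t=\partial_x(\la^2\phi_x\phi_t)-\la^2\phi_x\phi_{xt}=\partial_x(\la^2\phi_x\phi_t)-\partial_t\bigl(\tfrac{\la^2}{2}\phi_x^2\bigr).
\]
Substituting back yields
\[
\rho u\phi_x=\partial_t\Bigl(\rho\phi+\tfrac{\la^2}{2}\phi_x^2+e^{-\phi}\Bigr)+\partial_x\bigl(\rho u\phi-\la^2\phi_x\phi_t\bigr),
\]
which is exactly the stated formula.

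No step is a real obstacle. The only points needing care are the signs in the integration by parts, and the observation that $\rho\phi_t$ must be converted using Poisson rather than continuity. The computation needs enough regularity for $\phi_{xt}$ to exist and commute; this holds for the smooth solutions considered.
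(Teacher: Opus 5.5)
Your proposal is correct and follows essentially the same route as the paper: the first identity comes from multiplying the Poisson equation by $\phi_x$, and the second from using continuity to write $\rho u\phi_x=\partial_x(\rho u\phi)+\partial_t(\rho\phi)-\rho\phi_t$, then substituting $\rho=\la^2\phi_{xx}+e^{-\phi}$ and applying the product rule. Your signs all check out.
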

	\begin{proof}
		Multiply $\la^2\phi_{xx}=\rho-e^{-\phi}$ by $\phi_x$ and use
		$(e^{-\phi})_x=-e^{-\phi}\phi_x$ to obtain the first identity.
		For the second, the mass and Poisson equations give
		\begin{equation*}
			\begin{aligned}
				\rho u\phi_x
				&=\partial_x(\rho u\phi)+\phi\rho_t=\partial_t(\rho\phi)+\partial_x(\rho u\phi)
				-(\la^2\phi_{xx}+e^{-\phi})\phi_t.
			\end{aligned}
		\end{equation*}
		Rewriting the last term by the product rule yields the result.
	\end{proof}
	
	\subsection{Traveling-wave ODE and Rankine--Hugoniot conditions}\label{sec:profile}
	\subsubsection{Substitution}
	Set $W=\phi'$ and substitute $\zeta=x-st$ into \eqref{eq:NSFP} so as to have
	\begin{equation}\label{eq:profile-second-order}
		\left\{\begin{aligned}
			&-s\rho'+(\rho u)'=0,\\
			&-s(\rho u)'+(\rho u^2+\rho\theta)'-\rho W=\mu u'',\\
			&-s\bigl[\rho(\tfrac12u^2+c_v\theta)\bigr]'
			+\bigl[u\{\rho(\tfrac12u^2+c_v\theta)+\rho\theta\}\bigr]'
			-\rho uW=(\mu uu'+\kappa\theta')',\\
			&\phi'=W,\qquad \la^2W'=\rho-e^{-\phi}.
		\end{aligned}\right.
	\end{equation}
	The far field conditions are given in \eqref{eq:far}. We also assume that 	$u'$ and $\theta'$ tend to zero at both ends.

	\subsubsection{Mass flux}
	Integrating the first equation of \eqref{eq:profile-second-order} and using \eqref{eq:leftE} gives
	\begin{equation}\label{eq:mass}
		\rho(u-s)=-s,\qquad
		u=s\left(1-\frac1\rho\right),\qquad
		u'=s\rho^{-2}\rho'.
	\end{equation}
	\subsubsection{Integrated momentum}
	By Lemma \ref{lem:momentum}, integration from $-\infty$ gives
	\begin{equation}\label{eq:integrated-momentum}
		\mu u'=-s\rho u+\rho u^2+\rho\theta+e^{-\phi}
		-1-\theta_0-\frac{\la^2}{2}W^2=F_1,
	\end{equation}
	where, after using \eqref{eq:mass},
	\begin{equation}\label{eq:F1}
		F_1(\rho,\theta,\phi,W;s)
		=-\frac{s^2}{\rho}(\rho-1)+\rho\theta-\theta_0
		+e^{-\phi}-1-\frac{\la^2}{2}W^2.
	\end{equation}
	Using $u'=s\rho^{-2}\rho'$ in
	\eqref{eq:integrated-momentum}, we obtain, for $s\ne0$,
	\begin{equation}\label{eq:rho-ode}
		\rho'=\frac{\rho^2}{\mu s}F_1.
	\end{equation}
	\subsubsection{Integrated energy}
	Applying the second identity of Lemma \ref{lem:momentum} to the
	traveling wave and using \eqref{eq:mass}, integration of the energy
	equation in \eqref{eq:profile-second-order} from the left state gives
	\begin{equation*}
		\mu uu'+\kappa\theta'=G.
	\end{equation*}
	Using $\mu u'=F_1$, we obtain
	\begin{equation}\label{eq:theta-ode}
		\kappa\theta'=G-uF_1,
	\end{equation}
	\begin{equation}\label{eq:G}
		\begin{aligned}
			G(\rho,\theta,\phi,W;s)
			=&-s\left[\rho(\tfrac12u^2+c_v\theta)-c_v\theta_0\right]
			+u\left[\rho(\tfrac12u^2+c_v\theta)+\rho\theta\right]\\
			&+s\phi-s\left(\frac{\la^2}{2}W^2-e^{-\phi}+1\right).
		\end{aligned}
	\end{equation}
	
	\subsubsection{Rankine--Hugoniot conditions}\label{sec:RH}
	By Lemma \ref{lem:momentum}, the momentum equation takes the conservative form
	\begin{equation*}
		(\rho u)_t+
		\left(\rho u^2+\rho\theta+e^{-\phi}
		-\frac{\la^2}{2}\phi_x^2\right)_x=\mu u_{xx}.
	\end{equation*}
	At a quasineutral constant state, $e^{-\phi}=\rho$ and
	$\phi_x=0$, so we define the effective pressure as
	\begin{equation*}
		\Peff(\rho,\theta)=\rho(\theta+1).
	\end{equation*}
	Recall the energy density in \eqref{eq:NSFPcons},
	\begin{equation*}
		\mathscr E
		=\rho\left(\frac{u^2}{2}+c_v\theta-\phi\right)
		+\rho-e^{-\phi}-\frac{\la^2}{2}\phi_x^2.
	\end{equation*}
	By Lemma \ref{lem:momentum}, the energy equation reads
	\begin{equation}\label{eq:energy-conservative}
		\mathscr E_t+
		\left[
		u\left(\mathscr E+\rho\theta+e^{-\phi}
		+\frac{\la^2}{2}\phi_x^2\right)
		+\la^2\phi_x\phi_t
		\right]_x
		=(\mu uu_x+\kappa\theta_x)_x.
	\end{equation}
	At the
	quasineutral constant state with $\phi=-\log\rho$ and
	$\phi_x=\phi_t=0$, one has 
	\begin{equation*}
		\mathscr E_{\mathrm{qn}}:=\rho\left(\frac{u^2}{2}+c_v\theta+\log\rho\right).
	\end{equation*}
	The energy flux in \eqref{eq:energy-conservative} then reduces to
	$u(\mathscr E_{\mathrm{qn}}+\Peff)$.
	Thus, with $U=(\rho,u,\theta)$, the corresponding conservative
	variables and flux at the end states are
	\begin{equation}\label{eq:effective-Q}
		Q(U)=\begin{pmatrix}\rho\\\rho u\\\mathscr E_{\mathrm{qn}}\end{pmatrix},
		\qquad
		f(U)=\begin{pmatrix}\rho u\\\rho u^2+\Peff\\u(\mathscr E_{\mathrm{qn}}+\Peff)\end{pmatrix}.
	\end{equation}
	Integrating the traveling-wave conservation laws between the
	two constant end states, where the derivative terms vanish, gives
	the Rankine--Hugoniot conditions, with $[h]=h_+-h_-$,
	\begin{equation}\label{eq:RH}
		s[Q(U)]=[f(U)],\qquad \phi_\pm=-\log\rho_\pm.
	\end{equation}
		
	\subsubsection{The Hugoniot curve} 
	Set
	\begin{equation}\label{def.Gamma.ad} 
	\Gamma:=\frac{\gamma(\gamma+1)\theta_0+2}{2c_0}>0
	\end{equation}
	in order to establish genuine nonlinearity of the right-going
	acoustic field at the left state; see its exact derivation in \eqref{eq:Hugoniot-Gamma-computation} later.
	
	\begin{lemma}\label{lem:Hugoniot} 
		There are $\varepsilon_H>0$ and smooth maps
		$\varepsilon\mapsto(s,u_+,\theta_+,\phi_+)$ for
		$|\varepsilon|<\varepsilon_H$, with $\rho_+=1-\varepsilon$, such that
		\eqref{eq:RH} holds and
		\begin{equation}\label{eq:Hugoniot-expansion}
			\begin{aligned}
				s(\varepsilon)&=c_0-\frac{\Gamma}{2}\varepsilon+O(\varepsilon^2),
				&u_+(\varepsilon)&=-c_0\varepsilon+O(\varepsilon^2),\\
				\theta_+(\varepsilon)&=\theta_0-(\gamma-1)\theta_0\varepsilon+O(\varepsilon^2),
				&\phi_+(\varepsilon)&=\varepsilon+O(\varepsilon^2),
			\end{aligned}
		\end{equation}
		where $\Gamma$ is defined in \eqref{def.Gamma.ad}.
		This is the unique nontrivial local Hugoniot branch with speed near $c_0$.
		For sufficiently small  $\varepsilon>0$, it satisfies \eqref{eq:lax}.
	\end{lemma}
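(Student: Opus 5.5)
The plan is to solve the Rankine--Hugoniot system \eqref{eq:RH} by an implicit-function argument after removing the trivial branch. With $\rho_-=1$, $u_-=0$, $\theta_-=\theta_0$, the mass condition gives $s(\rho_+-1)=\rho_+u_+$, so $u_+=-s\varepsilon/(1-\varepsilon)$. This is exactly \eqref{eq:mass} evaluated at the right state.

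Substituting into the momentum condition gives
\[
s^2\varepsilon/(1-\varepsilon)=P_{\mathrm{eff}}(1,\theta_0)-P_{\mathrm{eff}}(1-\varepsilon,\theta_+),
\]
that is,
\[
\frac{s^2\varepsilon}{1-\varepsilon}=\theta_0+1-(1-\varepsilon)(\theta_++1).
\]
For the energy condition, I would work with the specific-energy form, which is equivalent for the flux \eqref{eq:effective-Q}. With $m=-s$ the mass flux relative to the shock and $v=1/\rho$, this reads
\[
c_v[\theta]+[\log\rho]+\tfrac12\bigl(P_{\mathrm{eff},+}+P_{\mathrm{eff},-}\bigr)[v]=0 .
\]
The term $\log\rho$ plays the role of the internal-energy contribution of the electrons.

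Call the resulting two equations $\mathcal H(\varepsilon,\theta_+,s)=0$. The energy equation does not involve $s$. Its $\theta_+$-derivative at $\varepsilon=0$ is $c_v\neq0$, so it determines $\theta_+=\theta_+(\varepsilon)$ smoothly.

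The momentum equation is then divided by $\varepsilon$. The quotient is smooth, and at $\varepsilon=0$ it reads $s^2=\theta_0+1-\theta_+'(0)$. Its $s$-derivative is $2s\neq0$ near $c_0$, so the implicit function theorem gives $s(\varepsilon)$ near $c_0$ as a smooth map. Since the only other solutions have $\varepsilon=0$ (the trivial states), this is the unique nontrivial branch with speed near $c_0$. Finally $\phi_+=-\log(1-\varepsilon)=\varepsilon+O(\varepsilon^2)$.

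The expansions come from Taylor expansion in $\varepsilon$. At first order, the energy relation gives
\[
c_v\theta_+'(0)-1+\tfrac12\cdot 2(\theta_0+1)=0,
\]
so $\theta_+'(0)=-\theta_0/c_v=-(\gamma-1)\theta_0$. Then
\[
s(0)^2=\theta_0+1+(\gamma-1)\theta_0=c_0^2,
\]
and $u_+=-c_0\varepsilon+O(\varepsilon^2)$.

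The second-order coefficient of $s$ is the main computational obstacle. I would expand the energy relation to $O(\varepsilon^2)$ to get $\theta_+''(0)$. I would then expand the momentum relation to second order: $s^2(\varepsilon+\varepsilon^2)=\theta_0+1-(1-\varepsilon)(\theta_++1)+O(\varepsilon^3)$. This gives $2c_0s'(0)$, which should equal $-\tfrac12\cdot 2c_0\Gamma$ with $\Gamma$ as in \eqref{def.Gamma.ad}. A cleaner route to the same coefficient is the classical fact that along the Hugoniot curve $s=\tfrac12(\lambda_3(U_-)+\lambda_3(U_+))+O(\varepsilon^2)$. With $\lambda_3=u+\sqrt{\gamma\theta+1}$, this gives
\[
s=c_0+\tfrac12\Bigl(-c_0-\frac{\gamma(\gamma-1)\theta_0}{2c_0}\Bigr)\varepsilon+O(\varepsilon^2)=c_0-\frac{\Gamma}{2}\varepsilon+O(\varepsilon^2),
\]
since $2c_0^2+\gamma(\gamma-1)\theta_0=\gamma(\gamma+1)\theta_0+2$. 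I would use this identity as a check on the direct expansion.

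For the Lax inequalities \eqref{eq:lax}, note that $s<c_0$ for small $\varepsilon>0$ because $\Gamma>0$. Moreover,
\[
\lambda_3(U_+)=c_0-\Gamma\varepsilon+O(\varepsilon^2)<s,
\]
because the expansion above gives $\lambda_3(U_+)-s=-\tfrac{\Gamma}{2}\varepsilon+O(\varepsilon^2)$. Positivity of $s$ holds since $s$ is close to $c_0>0$.
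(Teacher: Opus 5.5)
Your proposal is correct, but it takes a different route from the paper.

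\textbf{Your route.} You eliminate $u_+$ through the mass flux $m=-s$. You then replace the energy jump by its specific form $c_v[\theta]+[\log\rho]+\tfrac12(P_{\mathrm{eff},+}+P_{\mathrm{eff},-})[\rho^{-1}]=0$, which is valid since $s\neq0$ near $c_0$. This relation is linear in $\theta_+$, with coefficient $c_v+\varepsilon/2$, and does not involve $s$. So $\theta_+(\varepsilon)$ and $s(\varepsilon)$ are essentially explicit, with $s^2$ equal to $(1-\varepsilon)$ times the momentum relation divided by $\varepsilon$.

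\textbf{The step you left as ``should equal''.} The direct second-order expansion does close. The energy relation gives $\theta_+''(0)=(\gamma-1)(\gamma-2)\theta_0$. The $\varepsilon^2$ coefficient of the momentum relation then gives $2c_0s'(0)=\theta_+'(0)-\tfrac12\theta_+''(0)-c_0^2=-\tfrac12\bigl(\gamma(\gamma+1)\theta_0+2\bigr)=-c_0\Gamma$. So the Lax midpoint formula serves only as a cross-check, not as a needed input.

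\textbf{The paper's route.} The paper works with the abstract conservative structure $Df=DQ\,A_E$.
\begin{enumerate}
\item It reads off $\partial_\varepsilon U(0)=-r$ from the first derivative.
\item It differentiates twice and projects with $\ell\,DQ_-^{-1}$ to get $-2\partial_\varepsilon s(0)=\nabla\lambda_3(U_-)\cdot r=\Gamma$. This is in effect a proof of the midpoint formula you cite.
\item It obtains existence and local uniqueness from the implicit function theorem for the divided map $\mathscr R_{\mathrm H}$, whose Jacobian determinant is $2c_vc_0^2$.
\end{enumerate}

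\textbf{Comparison.} The paper's argument is structural: it exhibits $\Gamma$ as the genuine-nonlinearity coefficient $\nabla\lambda_3\cdot r$ and would carry over to other equations of state. Yours is more elementary and gives uniqueness of $\theta_+$ with no locality restriction. It also produces at once the scalar functions $\theta_H(\rho)$ and $s_H(\rho)$, which the paper only introduces later in the proof of Lemma~\ref{lem:no-interior-zero}.
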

	
	\begin{proof}
		Differentiating the conservative
		variables $Q$ and flux $f$  in \eqref{eq:effective-Q} gives
		\begin{equation*}
			DQ(U)=\begin{pmatrix}
				1&0&0\\
				u&\rho&0\\
				\frac12u^2+c_v\theta+\log\rho+1&\rho u&\rho c_v
			\end{pmatrix},\qquad \det DQ(U)=c_v\rho^2>0,
		\end{equation*}
		\begin{equation}\label{eq:Hugoniot-flux-jacobian}
			Df(U)=DQ(U)A_E(U),\qquad
			A_E(U)=\begin{pmatrix}
				u&\rho&0\\
				(\theta+1)/\rho&u&1\\
				0&\theta/c_v&u
			\end{pmatrix}.
		\end{equation}
		The  eigenvalue and a normalized right eigenvector for $A_E(U)$ are
		\begin{gather*}
			\lambda_3(U)=u+c,\qquad c=\sqrt{\gamma\theta+1},\qquad
			r_3(U)=\begin{pmatrix}1\\c/\rho\\(\gamma-1)\theta/\rho\end{pmatrix},
			\\ A_E(U)r_3(U)=\lambda_3(U)r_3(U).
		\end{gather*}
		At the left state, let \(r\) and \(\ell\) be the right and left eigenvectors, respectively, associated with the eigenvalue \(c_0\), then it holds
		\begin{equation*}
			r=r_3(U_-)=\begin{pmatrix}1\\c_0\\(\gamma-1)\theta_0\end{pmatrix},
			\qquad
			\ell=\left(\frac{\theta_0+1}{2c_0^2},\frac1{2c_0},\frac1{2c_0^2}\right).
		\end{equation*}

		We first compute the derivatives of a $C^2$ Hugoniot curve; its existence is established as follows. With $\rho(\varepsilon)=1-\varepsilon$, it satisfies
		\begin{equation}\label{eq:Hugoniot-curve-RH}
			s(\varepsilon)\bigl[Q(U(\varepsilon))-Q(U_-)\bigr]
			=f(U(\varepsilon))-f(U_-),\qquad U(0)=U_-.
		\end{equation}
		Differentiation with respect to $\varepsilon$ gives
		\begin{equation}\label{eq:Hugoniot-first-derivative}
			(\partial_\varepsilon s)\bigl[Q(U)-Q(U_-)\bigr]
			+sDQ(U)\partial_\varepsilon U
			=Df(U)\partial_\varepsilon U.
		\end{equation}
		Multiplying \eqref{eq:Hugoniot-first-derivative}$|_{\varepsilon=0}$ by
		$DQ(U_-)^{-1}$ and using \eqref{eq:Hugoniot-flux-jacobian}, since the first term vanishes, one has
		\begin{equation*}
			\bigl(A_E(U_-)-s(0)I\bigr)\partial_\varepsilon U(0)=0.
		\end{equation*}
		Due to $\partial_\varepsilon\rho(0)=-1$, this vector is nonzero.
		The eigenvalues at $U_-$ are $-c_0,0,c_0$; the branch with speed near
		$c_0$ therefore has $s(0)=c_0$. Its eigenspace is spanned by $r$.
		The density normalization fixes the multiple:
		\begin{equation}\label{eq:Hugoniot-tangent}
			\partial_\varepsilon U(0)=-r,
			\qquad
			\partial_\varepsilon u_+(0)=-c_0,
			\qquad
			\partial_\varepsilon\theta_+(0)=-(\gamma-1)\theta_0.
		\end{equation}
		Quasineutrality and the mass relation give, respectively,
		\begin{equation*}
			\begin{aligned}
				\phi_+(\varepsilon)&=-\log(1-\varepsilon)
				=\varepsilon+\tfrac12\varepsilon^2+O(\varepsilon^3),\\
				u_+(\varepsilon)&=s(\varepsilon)\left(1-\frac1{1-\varepsilon}\right)
				=-\frac{s(\varepsilon)\varepsilon}{1-\varepsilon}
				=-c_0\varepsilon+O(\varepsilon^2).
			\end{aligned}
		\end{equation*}
		
		We now compute the derivative of the speed. Differentiate \eqref{eq:Hugoniot-first-derivative} once more and set
$\varepsilon=0$. Write $DQ_-=DQ(U_-)$ and similarly for $Df_-$ and
		the second derivatives. Since $\partial_\varepsilon U(0)=-r$, we have
		\begin{equation}\label{eq:Hugoniot-second-derivative}
			-2\partial_\varepsilon s(0)DQ_-r+c_0D^2Q_-[r,r]
			+c_0DQ_-\partial_\varepsilon^2U(0)
			=D^2f_-[r,r]+Df_-\partial_\varepsilon^2U(0).
		\end{equation}
		Multiplying \eqref{eq:Hugoniot-second-derivative} on the left by
		$\ell DQ_-^{-1}$ cancels the two terms containing
		$\partial_\varepsilon^2U(0)$, because
		$\ell DQ_-^{-1}Df_-=\ell A_E(U_-)=c_0\ell$. Thus, one has
		\begin{equation}\label{eq:Hugoniot-speed-projection}
			-2\partial_\varepsilon s(0)
			=\ell DQ_-^{-1}\bigl(D^2f_-[r,r]-c_0D^2Q_-[r,r]\bigr).
		\end{equation}
		To evaluate the right-hand side, differentiating
		$Df(U)r_3(U)=\lambda_3(U)DQ(U)r_3(U)$ in the direction $r$ at $U_-$ to obtain
		\begin{equation*}
			\begin{aligned}
				D^2f_-[r,r]+Df_-\bigl(Dr_3(U_-)r\bigr)
				=&\bigl(\nabla\lambda_3(U_-)\cdot r\bigr)DQ_-r\\
				&+c_0D^2Q_-[r,r]+c_0DQ_-\bigl(Dr_3(U_-)r\bigr).
			\end{aligned}
		\end{equation*}
		Multiplying by $\ell DQ_-^{-1}$ cancels the terms containing
		$Dr_3(U_-)r$ and gives
		\begin{equation}\label{eq:Hugoniot-Gamma-computation}
			\begin{aligned}
				&\ell DQ_-^{-1}\bigl(D^2f_-[r,r]-c_0D^2Q_-[r,r]\bigr)\\
				&=\nabla\lambda_3(U_-)\cdot r
				=c_0+\frac{\gamma(\gamma-1)\theta_0}{2c_0}=\frac{2(\gamma\theta_0+1)+\gamma(\gamma-1)\theta_0}{2c_0}
				=\Gamma.
			\end{aligned}
		\end{equation}
		Combining \eqref{eq:Hugoniot-speed-projection} and
		\eqref{eq:Hugoniot-Gamma-computation} yields
		\begin{equation}\label{111}
			\partial_\varepsilon s(0)=-\frac\Gamma2,
			\qquad s(\varepsilon)=c_0-\frac\Gamma2\varepsilon+O(\varepsilon^2).
		\end{equation}
		
		We are now ready to establish the existence and local uniqueness of the curve.
		Write $U=U_-+\varepsilon V$, with the density component $V^\rho=-1$.
		For $\varepsilon\ne0$, dividing \eqref{eq:Hugoniot-curve-RH} by
		$\varepsilon$ and using the fundamental theorem of calculus gives
		\begin{equation*}
			\mathscr R_{\mathrm H}(\varepsilon,s,V)
			:=\int_0^1\bigl[Df(U_-+t\varepsilon V)
			-sDQ(U_-+t\varepsilon V)\bigr]V\dd t=0.
		\end{equation*}
		This equation is smooth at $\varepsilon=0$, and
		$\mathscr R_{\mathrm H}(0,c_0,-r)=DQ_-(A_E(U_-)-c_0I)(-r)=0$, since $V=-r$.
		For variations with $\delta V^\rho=0$, its derivative is
		\begin{equation*}
			D_{(s,V)}\mathscr R_{\mathrm H}(0,c_0,-r)[\delta s,\delta V]
			=DQ_-\bigl[(A_E(U_-)-c_0I)\delta V+\delta s\,r\bigr].
		\end{equation*}
		In the coordinates $(s,V^u,V^\theta)$, this is the matrix
		\begin{equation*}
			D_{(s,V^u,V^\theta)}\mathscr R_{\mathrm H}(0,c_0,-r)
			=DQ_-\begin{pmatrix}
				1&1&0\\
				c_0&-c_0&1\\
				\theta_0/c_v&\theta_0/c_v&-c_0
			\end{pmatrix}.
		\end{equation*}
		Since $\det DQ_-=c_v$, expansion along the first row gives
		\begin{equation*}
			\det D_{(s,V^u,V^\theta)}\mathscr R_{\mathrm H}(0,c_0,-r)
			=c_v\left[\left(c_0^2-\frac{\theta_0}{c_v}\right)
			-\left(-c_0^2-\frac{\theta_0}{c_v}\right)\right]
			=2c_vc_0^2>0.
		\end{equation*}
		The derivative is invertible. The implicit function theorem therefore
		gives unique smooth functions $s(\varepsilon)$ and $V(\varepsilon)$
		near $(c_0,-r)$, justifying the preceding differentiations.
		
	In the end, we prove the Lax inequalities.
		By \eqref{eq:Hugoniot-tangent}, expansion of the right sound speed gives
		\begin{equation*}
			\begin{aligned}
				\sqrt{\gamma\theta_++1}
				&=\sqrt{c_0^2-\gamma(\gamma-1)\theta_0\varepsilon+O(\varepsilon^2)}=c_0-\frac{\gamma(\gamma-1)\theta_0}{2c_0}\varepsilon
				+O(\varepsilon^2).
			\end{aligned}
		\end{equation*}
		Adding $u_+=-c_0\varepsilon+O(\varepsilon^2)$ and using
		\eqref{eq:Hugoniot-Gamma-computation}, we obtain
		\begin{equation*}
			\begin{aligned}
				\lambda_3(U_+)&=u_++\sqrt{\gamma\theta_++1}
				=c_0-\Gamma\varepsilon+O(\varepsilon^2),\\
				s-\lambda_3(U_+)&=
				\left(c_0-\frac\Gamma2\varepsilon\right)
				-\left(c_0-\Gamma\varepsilon\right)+O(\varepsilon^2)
				=\frac\Gamma2\varepsilon+O(\varepsilon^2),\\
				\lambda_3(U_-)-s&=c_0-s
				=\frac\Gamma2\varepsilon+O(\varepsilon^2).
			\end{aligned}
		\end{equation*}
		Both gaps are positive for sufficiently small $\varepsilon>0$.
		Also $s=c_0+O(\varepsilon)>0$. Hence \eqref{eq:lax} holds, and
		$\rho_+=1-\varepsilon<\rho_-$. This completes the proof of Lemma \ref{lem:Hugoniot}.
	\end{proof}

	\subsubsection{The first-order system}
	Note that \eqref{eq:rho-ode} and \eqref{eq:theta-ode} together with the Poisson equation give
	\begin{equation}\label{eq:vector-field}
		Y'=\mathscr F(Y;s),\qquad Y=(\rho,\theta,\phi,W),\qquad
		\mathscr F(Y;s)=\begin{pmatrix}
			\dfrac{\rho^2}{\mu s}F_1(Y;s)\\[2mm]
			\dfrac1\kappa\bigl(G(Y;s)-uF_1(Y;s)\bigr)\\[2mm]
			W\\ \la^{-2}(\rho-e^{-\phi})
		\end{pmatrix}.
	\end{equation}
	\begin{lemma}\label{lem:equilibria}
		For $s\ne0$, $\mathscr F$ is smooth on $\{\rho>0\}$ and $\mathscr F(Y_-;s)=0$.
		Its equilibria are precisely the quasineutral states
		$Y=(\rho,\theta,-\log\rho,0)$ satisfying \eqref{eq:RH} with
		$u=s(1-\rho^{-1})$. A heteroclinic of \eqref{eq:vector-field}
		from $Y_-$ to $Y_+$ reconstructs a traveling wave of \eqref{eq:NSFP}.
	\end{lemma}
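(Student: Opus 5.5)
Smoothness is immediate. On $\{\rho>0\}$ with $s\ne0$, the quantity $u=s(1-\rho^{-1})$ is smooth. Then $F_1$ and $G$ are polynomial expressions in $\rho,\rho^{-1},\theta,\phi,W,e^{-\phi}$, and the prefactors $\rho^2/(\mu s)$, $1/\kappa$, $\la^{-2}$ are smooth. To check $\mathscr F(Y_-;s)=0$, evaluate at $Y_-=(1,\theta_0,0,0)$, where $u=0$:
\[
F_1=0+\theta_0-\theta_0+1-1=0,\qquad G=-s(c_v\theta_0-c_v\theta_0)+0+0-s(0-1+1)=0.
\]
The last two components are $0$ and $\la^{-2}(1-1)=0$.

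For the characterization of equilibria, I would first read off the last two components. They give $W=0$ and $\rho=e^{-\phi}$, i.e.\ $\phi=-\log\rho$. Since $\rho>0$ and $s\ne0$, the first component forces $F_1=0$, and then the second forces $G=0$. It remains to show that $F_1=G=0$, with $W=0$, $\phi=-\log\rho$ and $u=s(1-\rho^{-1})$, is exactly \eqref{eq:RH} between $U_-=(1,0,\theta_0)$ and $U=(\rho,u,\theta)$.

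\emph{Mass.} The mass component of \eqref{eq:RH} is $s(\rho-1)=\rho u$, which is equivalent to the mass relation $u=s(1-\rho^{-1})$.

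\emph{Momentum.} Using $\rho(u-s)=-s$, we get $-s\rho u+\rho u^2=\rho u(u-s)=-su=-s^2(\rho-1)/\rho$. Hence $F_1|_{W=0,e^{-\phi}=\rho}=-s\rho u+\rho u^2+\rho(\theta+1)-(\theta_0+1)$. This is precisely $[f_2]-s[Q_2]$, with $\Peff=\rho(\theta+1)$, since $Q_2=\rho u$ vanishes at $U_-$.

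\emph{Energy.} With $\phi=-\log\rho$, $W=0$ and $e^{-\phi}=\rho$, the function $G$ becomes
\[
-s\bigl[\rho(\tfrac12u^2+c_v\theta)-c_v\theta_0\bigr]+u\bigl[\rho(\tfrac12u^2+c_v\theta)+\rho\theta\bigr]-s\log\rho+s(\rho-1).
\]
On the other side, $[f_3]-s[Q_3]=u(\mathscr E_{\mathrm{qn}}+\rho(\theta+1))-s(\mathscr E_{\mathrm{qn}}-c_v\theta_0)$, where $\mathscr E_{\mathrm{qn}}=\rho(\tfrac12u^2+c_v\theta+\log\rho)$. The difference between the two expressions consists of the $\log\rho$ and $\rho u$ terms. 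Its log part is $(u-s)\rho\log\rho+s\log\rho=-s\log\rho+s\log\rho=0$. Its remaining part is $u\rho-s(\rho-1)=0$ by the mass relation. So $G$ equals $[f_3]-s[Q_3]$ at such states, and the two conditions coincide. Conversely, any such quasineutral RH state makes all four components vanish. This bookkeeping of the $\log\rho$ and $e^{-\phi}$ terms is the only place where care is needed, and it is the main (mild) obstacle.

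For the reconstruction, suppose $Y(\zeta)$ is a heteroclinic from $Y_-$ to $Y_+$, and set $u=s(1-\rho^{-1})$. Then \eqref{eq:profile-second-order}$_1$ holds, because $\rho(u-s)=-s$ is constant. The fourth line of \eqref{eq:profile-second-order} holds by construction. The first component gives $\mu u'=\mu s\rho^{-2}\rho'=F_1$. Differentiating, and using Lemma~\ref{lem:momentum} in traveling-wave form together with $\phi'=W$ and the Poisson relation, recovers the momentum equation. Likewise, $\kappa\theta'=G-uF_1$ gives $\mu uu'+\kappa\theta'=G$. Differentiating this and applying the second identity of Lemma~\ref{lem:momentum} (with $\phi_t=-s\phi'$) yields the energy equation. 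The limits $Y\to Y_\pm$ give \eqref{eq:far}, since $W=\phi'\to0$. The derivatives $u'=F_1/\mu\to0$ and $\theta'\to0$ also tend to zero because $F_1,G$ vanish at the equilibria. Hence $(\rho,u,\theta,\phi)(x-st)$ solves \eqref{eq:NSFP}.
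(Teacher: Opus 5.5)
Your proposal is correct and follows the same route as the paper. You read off $W=0$, $\phi=-\log\rho$ and $F_1=G=0$ from the components. You then identify $F_1$ and $G$ with the momentum and energy jumps of \eqref{eq:RH}; your explicit check that the $\log\rho$ and $e^{-\phi}$ terms cancel through $\rho(u-s)=-s$ fills in what the paper only asserts. Finally, you recover \eqref{eq:profile-second-order} by differentiating $\mu u'=F_1$ and $\mu uu'+\kappa\theta'=G$, using the mass relation and the Poisson equation.
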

	\begin{proof}
		Smoothness and $\mathscr F(Y_-;s)=0$ follow directly
		from the definition. The equilibria are characterized by
		$W=0$, $\phi=-\log\rho$, and $F_1=G=0$.
		With $u=s(1-\rho^{-1})$, the mass jump condition holds
		automatically, while \eqref{eq:F1} and \eqref{eq:G}
		identify $F_1=G=0$ with the momentum and energy
		components of \eqref{eq:RH}.
		Finally, along a heteroclinic orbit,
		$\mu u'=F_1$ and $\mu uu'+\kappa\theta'=G$.
		Differentiating these identities and using
		\eqref{eq:mass} together with the last two components
		of \eqref{eq:vector-field} recovers
		\eqref{eq:profile-second-order}.
	\end{proof}
	\begin{remark}\label{rem:left-line}
		The left equilibrium is fixed while $s$ varies. Thus
		$\partial_s\mathscr F(Y_-;s)=0$, which will make the speed-extended Jacobian
		block diagonal.
	\end{remark}
	
	\subsection{Spectral analysis at the sonic state}\label{sec:spectrum}
	The traveling-wave problem has been reduced to a heteroclinic of the first-order system \eqref{eq:vector-field} connecting the quasineutral Rankine--Hugoniot states. In this subsection we linearize the vector field at the left sonic state $Y_-$ and determine the spectral structure of $A(s)=D_Y\mathscr F(Y_-;s)$. A simple zero eigenvalue appears at $s=c_0$; adjoining the shock speed then produces the two-dimensional center subspace to be used in Section~\ref{sec:center}.
	
	\subsubsection{Linearization}
	Put $q=\rho-1$ and $\Theta=\theta-\theta_0$.
	Differentiating the vector field \eqref{eq:vector-field}
	with respect to $Y$ and evaluating at $Y_-=(1,\theta_0,0,0)$,
	we obtain
	\begin{equation}\label{eq:Jacobian}
		A(s):=D_Y\mathscr F(Y_-;s)=
		\begin{pmatrix}
			\alpha&\beta&-\beta&0\\
			\delta&-\chi_{\mathrm h}&0&0\\
			0&0&0&1\\
			\nu&0&\nu&0
		\end{pmatrix},
	\end{equation}
	\begin{equation}\label{eq:coefficients}
		\alpha=\frac{\theta_0-s^2}{\mu s},\quad
		\beta=\frac1{\mu s},\quad
		\delta=\frac{s\theta_0}{\kappa},\quad
		\chi_{\mathrm h}=\frac{sc_v}{\kappa},\quad
		\nu=\la^{-2}.
	\end{equation}
	In the following calculation $s>0$ is near $c_0$, so
	$\beta,\delta,\chi_{\mathrm h},\nu>0$.
	\subsubsection{The characteristic polynomial}
	\begin{lemma}\label{lem:polynomial}
		The eigenvalues of $A(s)$ are the zeros of
		\begin{equation}\label{eq:polynomial}
			\begin{aligned}
				\mathfrak p(\Lambda;s)&:=-\det(\Lambda I-A(s))\\
				&=(\alpha-\Lambda)(\chi_{\mathrm h}+\Lambda)(\Lambda^2-\nu)
				+\beta\delta(\Lambda^2-\nu)-\beta\nu(\chi_{\mathrm h}+\Lambda)\\
				&=-\Lambda^4+(\alpha-\chi_{\mathrm h})\Lambda^3
				+(\alpha\chi_{\mathrm h}+\nu+\beta\delta)\Lambda^2-\nu(\alpha-\chi_{\mathrm h}+\beta)\Lambda
				-\nu(\alpha\chi_{\mathrm h}+\beta\delta+\beta\chi_{\mathrm h}).
			\end{aligned}
		\end{equation}
	\end{lemma}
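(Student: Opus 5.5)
The plan is to compute $\det(\Lambda I-A(s))$ for the explicit $4\times4$ matrix \eqref{eq:Jacobian} directly, using its sparsity. First I will use that $\mathfrak p(\Lambda;s)=-\det(\Lambda I-A)=-\det(A-\Lambda I)$, since the dimension is even. So it suffices to show that $\det(A-\Lambda I)$ equals minus the first line of \eqref{eq:polynomial}. The second line then follows by expanding the products.

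To evaluate $\det(A-\Lambda I)$, I will expand along the second row of
\[
A-\Lambda I=\begin{pmatrix}\alpha-\Lambda&\beta&-\beta&0\\ \delta&-\chi_{\mathrm h}-\Lambda&0&0\\ 0&0&-\Lambda&1\\ \nu&0&\nu&-\Lambda\end{pmatrix}.
\]
This row has only two nonzero entries.

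The $(2,2)$ entry $-(\chi_{\mathrm h}+\Lambda)$ carries the sign $+$. Its minor is the $3\times3$ determinant with rows $(\alpha-\Lambda,-\beta,0)$, $(0,-\Lambda,1)$, $(\nu,\nu,-\Lambda)$. Expanding along the first row gives
\[
(\alpha-\Lambda)(\Lambda^2-\nu)+\beta(0\cdot(-\Lambda)-\nu)=(\alpha-\Lambda)(\Lambda^2-\nu)-\beta\nu .
\]
So this contribution is $-(\chi_{\mathrm h}+\Lambda)\bigl[(\alpha-\Lambda)(\Lambda^2-\nu)-\beta\nu\bigr]$.

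The $(2,1)$ entry $\delta$ carries the sign $-$. Its minor has rows $(\beta,-\beta,0)$, $(0,-\Lambda,1)$, $(0,\nu,-\Lambda)$. Expanding along the first column, it equals $\beta(\Lambda^2-\nu)$. So this contribution is $-\beta\delta(\Lambda^2-\nu)$.

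Adding the two contributions gives
\[
\det(A-\Lambda I)=-\bigl[(\alpha-\Lambda)(\chi_{\mathrm h}+\Lambda)(\Lambda^2-\nu)+\beta\delta(\Lambda^2-\nu)-\beta\nu(\chi_{\mathrm h}+\Lambda)\bigr].
\]
This is exactly minus the first expression in \eqref{eq:polynomial}, as required.

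For the monomial form, I first expand $(\alpha-\Lambda)(\chi_{\mathrm h}+\Lambda)=-\Lambda^2+(\alpha-\chi_{\mathrm h})\Lambda+\alpha\chi_{\mathrm h}$. Multiplying by $\Lambda^2-\nu$ gives
\[
-\Lambda^4+(\alpha-\chi_{\mathrm h})\Lambda^3+(\alpha\chi_{\mathrm h}+\nu)\Lambda^2-\nu(\alpha-\chi_{\mathrm h})\Lambda-\nu\alpha\chi_{\mathrm h}.
\]
The term $\beta\delta(\Lambda^2-\nu)$ then adds $\beta\delta\Lambda^2-\beta\delta\nu$. The term $-\beta\nu(\chi_{\mathrm h}+\Lambda)$ adds $-\beta\nu\Lambda-\beta\nu\chi_{\mathrm h}$. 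Collecting powers of $\Lambda$ yields the stated coefficients.

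No step is a genuine obstacle; the only risk is sign bookkeeping in the cofactor expansion. As an independent check, I will evaluate at $\Lambda=0$: a direct expansion gives $\det A=\nu(\alpha\chi_{\mathrm h}+\beta\delta+\beta\chi_{\mathrm h})$. This matches $\mathfrak p(0)=-\det(-A)=-\det A$.
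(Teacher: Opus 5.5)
Your proposal is correct and is essentially the paper's argument: a cofactor expansion of the sparse $4\times4$ determinant, then collecting powers of $\Lambda$, and your check at $\Lambda=0$ matches the constant term. The only difference is bookkeeping: the paper expands $\det(\Lambda I-A)$ along its third (Poisson) row, which makes the factor $\Lambda^2-\nu$ appear at once, whereas you expand $\det(A-\Lambda I)$ along the second row and use that the dimension is even; both give the same factored form.
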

	\begin{proof}
		
		Expanding \(\det(\Lambda I-A)\) along its third row gives
		\begin{equation*}
			\begin{aligned}
				\det(\Lambda I-A)
				&=\Lambda\begin{vmatrix}
					\Lambda-\alpha&-\beta&0\\
					-\delta&\Lambda+\chi_{\mathrm h}&0\\
					-\nu&0&\Lambda
				\end{vmatrix}
				+\begin{vmatrix}
					\Lambda-\alpha&-\beta&\beta\\
					-\delta&\Lambda+\chi_{\mathrm h}&0\\
					-\nu&0&-\nu
				\end{vmatrix}\\
				&=\Lambda^2\bigl[(\Lambda-\alpha)(\Lambda+\chi_{\mathrm h})-\beta\delta\bigr]
				-\nu\bigl[(\Lambda-\alpha)(\Lambda+\chi_{\mathrm h})-\beta\delta\bigr]
				+\beta\nu(\Lambda+\chi_{\mathrm h})\\
				&=(\Lambda^2-\nu)\bigl[(\Lambda-\alpha)(\Lambda+\chi_{\mathrm h})-\beta\delta\bigr]
				+\beta\nu(\Lambda+\chi_{\mathrm h})=-\mathfrak p(\Lambda;s).
			\end{aligned}
		\end{equation*}
		Finally,
		\begin{equation*}
			\begin{aligned}
				&(\alpha-\Lambda)(\chi_{\mathrm h}+\Lambda)
				=-\Lambda^2+(\alpha-\chi_{\mathrm h})\Lambda+\alpha\chi_{\mathrm h},\\
				&(\alpha-\Lambda)(\chi_{\mathrm h}+\Lambda)(\Lambda^2-\nu)
				=-\Lambda^4+(\alpha-\chi_{\mathrm h})\Lambda^3+(\alpha\chi_{\mathrm h}+\nu)\Lambda^2-\nu(\alpha-\chi_{\mathrm h})\Lambda-\nu\alpha\chi_{\mathrm h}.
			\end{aligned}
		\end{equation*}
		Adding $\beta\delta\Lambda^2-\beta\delta\nu-\beta\nu\chi_{\mathrm h}-\beta\nu\Lambda$
		gives the expanded form in \eqref{eq:polynomial}.
	\end{proof}
	
	\subsubsection{The sonic kernel}
	\begin{lemma}\label{lem:kernel}
		The constant term of $\mathfrak p(\Lambda;s)$ satisfies
		\begin{equation}\label{eq:p-zero}
			\mathfrak p(0;s)=-\frac{\nu c_v}{\mu\kappa}(c_0^2-s^2).
		\end{equation}
		Thus $\mathfrak p(0;s)=0$ if and only if $s^2=c_0^2$.
		At $s=c_0$, zero is an algebraically simple eigenvalue, and
		\begin{equation}\label{eq:kernel}
			\ker A(c_0)=\spanop\{R_0\},\qquad
			R_0=(1,(\gamma-1)\theta_0,-1,0)^{\mathsf T}.
		\end{equation}
	\end{lemma}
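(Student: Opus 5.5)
We argue directly from the explicit polynomial of Lemma~\ref{lem:polynomial} and the explicit Jacobian \eqref{eq:Jacobian}. First, the constant term: by \eqref{eq:polynomial},
\[
\mathfrak p(0;s)=-\nu(\alpha\chi_{\mathrm h}+\beta\delta+\beta\chi_{\mathrm h}).
\]
Substituting the coefficients \eqref{eq:coefficients} gives
\[
\alpha\chi_{\mathrm h}=\frac{c_v(\theta_0-s^2)}{\mu\kappa},\qquad
\beta\delta=\frac{\theta_0}{\mu\kappa},\qquad
\beta\chi_{\mathrm h}=\frac{c_v}{\mu\kappa}.
\]
The sum is therefore
\[
\frac{c_v}{\mu\kappa}\bigl(\theta_0-s^2+\theta_0/c_v+1\bigr)
=\frac{c_v}{\mu\kappa}\bigl(\gamma\theta_0+1-s^2\bigr),
\]
using $1+1/c_v=\gamma$. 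This is \eqref{eq:p-zero}, and since $\nu c_v/(\mu\kappa)>0$ it vanishes exactly when $s^2=c_0^2$.

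Next, the kernel at $s=c_0$. We solve $A(c_0)Z=0$ row by row. Row 3 gives $W=0$. Row 4 gives $\nu(q+\phi)=0$, so $\phi=-q$. Row 2 gives $\delta q=\chi_{\mathrm h}\Theta$, so $\Theta=(\delta/\chi_{\mathrm h})q=(\theta_0/c_v)q=(\gamma-1)\theta_0q$. Row 1 is then automatically satisfied: it reads $\alpha+\beta(\gamma-1)\theta_0+\beta=0$, which amounts to $\theta_0-c_0^2+(\gamma-1)\theta_0+1=0$, true by the definition of $c_0$. This row is consistent with $\det A(c_0)=\mathfrak p(0;c_0)=0$. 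The solution space is one-dimensional, since $q$ determines everything, and it is spanned by $R_0$, giving \eqref{eq:kernel}.

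Finally, algebraic simplicity, which is the only step needing care. We show $\partial_\Lambda\mathfrak p(0;c_0)=-\nu(\alpha-\chi_{\mathrm h}+\beta)\neq0$. At $s=c_0$,
\[
\alpha+\beta=\frac{\theta_0+1-c_0^2}{\mu c_0}=-\frac{(\gamma-1)\theta_0}{\mu c_0}<0,
\]
and $\chi_{\mathrm h}>0$. Hence $\alpha-\chi_{\mathrm h}+\beta<0$, so the linear coefficient is strictly positive and nonzero. Zero is therefore a simple root of $\mathfrak p(\cdot;c_0)$, and thus an algebraically simple eigenvalue of $A(c_0)$. This sign computation is short; the main point is noticing that $\gamma>1$ makes $\alpha+\beta$ negative at the sonic speed.
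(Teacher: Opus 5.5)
Your proof is correct and follows the paper's argument essentially line by line: the constant term comes from the same substitution of the coefficients using $1+1/c_v=\gamma$, the kernel from the same row-by-row solution of $A(c_0)Z=0$, and algebraic simplicity from the same sign check $\partial_\Lambda\mathfrak p(0;c_0)=\nu(\chi_{\mathrm h}-\alpha-\beta)>0$. The only blemish is cosmetic: since $\mathfrak p(\Lambda;s)=-\det(\Lambda I-A(s))$ and $A$ is $4\times4$, one has $\det A(c_0)=-\mathfrak p(0;c_0)$ rather than $+\mathfrak p(0;c_0)$, which is harmless here because both vanish.
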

	\begin{proof}
		Set $\Lambda=0$ in \eqref{eq:polynomial}:
		\begin{equation*}
			\mathfrak p(0;s)=-\nu\alpha\chi_{\mathrm h}-\nu\beta\delta-\nu\beta\chi_{\mathrm h}
			=-\nu(\alpha\chi_{\mathrm h}+\beta\delta+\beta\chi_{\mathrm h}).
		\end{equation*}
		Substitution of \eqref{eq:coefficients} gives each product separately:
		\begin{equation*}
			\begin{aligned}
				\alpha\chi_{\mathrm h}&=\frac{\theta_0-s^2}{\mu s}\frac{sc_v}{\kappa}
				=\frac{c_v(\theta_0-s^2)}{\mu\kappa},\qquad
				\beta\delta=\frac1{\mu s}\frac{s\theta_0}{\kappa}
				=\frac{\theta_0}{\mu\kappa},\qquad
				\beta\chi_{\mathrm h}=\frac1{\mu s}\frac{sc_v}{\kappa}
				=\frac{c_v}{\mu\kappa}.
			\end{aligned}
		\end{equation*}
		Since $c_v(\gamma-1)=1$, we have $c_v+1=\gamma c_v$. Adding these terms gives
		\begin{equation*}
			\begin{aligned}
				\alpha\chi_{\mathrm h}+\beta\delta+\beta\chi_{\mathrm h}
				=&\frac{c_v(\theta_0-s^2)+\theta_0+c_v}{\mu\kappa}=\frac{(c_v+1)\theta_0+c_v(1-s^2)}{\mu\kappa}\\
				=&\frac{c_v(\gamma\theta_0+1-s^2)}{\mu\kappa}
				=\frac{c_v(c_0^2-s^2)}{\mu\kappa}.
			\end{aligned}
		\end{equation*}
		This proves \eqref{eq:p-zero}.
		
		To compute $\ker A(c_0)$, we evaluate the matrix in
		\eqref{eq:Jacobian} at $s=c_0$. We denote the
		corresponding values of the coefficients in \eqref{eq:coefficients} by
		\begin{align*}
			\alpha_0:=\alpha(c_0),\qquad
			\beta_0:=\beta(c_0),\qquad
			\delta_0:=\delta(c_0),\qquad
			\chi_{\mathrm h,0}:=\chi_{\mathrm h}(c_0).
		\end{align*}
		The equation $A(c_0)(q,\Theta,\phi,W)^{\mathsf T}=0$
		is equivalent to
		\begin{equation*}
			\alpha_0q+\beta_0\Theta-\beta_0\phi=0,\qquad
			\delta_0 q-\chi_{\mathrm h,0}\Theta=0,\qquad W=0,\qquad \nu(q+\phi)=0.
		\end{equation*}
		Then, we have
		\begin{equation*}
			\begin{aligned}
				&\phi=-q,\qquad\Theta=\frac{\delta_0}{\chi_{\mathrm h,0}} q
				=\frac{c_0\theta_0/\kappa}{c_0c_v/\kappa}q
				=\frac{\theta_0}{c_v}q=(\gamma-1)\theta_0q,\\
				&\alpha_0q+\beta_0\Theta-\beta_0\phi=\alpha_0q+\beta_0(\gamma-1)\theta_0q-\beta_0(-q)
				=\frac{\theta_0-c_0^2+(\gamma-1)\theta_0+1}{\mu c_0}q=0.
			\end{aligned}
		\end{equation*}
		Thus $q$ is free and every kernel vector equals $qR_0$, proving
		\eqref{eq:kernel}. To establish algebraic simplicity, return to
		general $s$ and differentiate the factored polynomial with respect
		to $\Lambda$:
		\begin{equation*}
			\begin{aligned}
				&	\mathfrak p_\Lambda(0;s)
				=\nu(\chi_{\mathrm h}-\alpha-\beta).
			\end{aligned}
		\end{equation*}
		At $s=c_0$, the factor in parentheses is
		\begin{equation}\label{eq:simple-zero}
			\begin{aligned}
					\mathfrak p_\Lambda(0;c_0)=\nu(\chi_{\mathrm h,0}-\alpha_0-\beta_0)=\nu(\frac{c_0c_v}{\kappa}+\frac{(\gamma-1)\theta_0}{\mu c_0})>0.
			\end{aligned}
		\end{equation}
		The last inequality proves that the zero of $\mathfrak p$ is simple. Since
		$\mathfrak p=-\det(\Lambda I-A)$, zero has algebraic multiplicity one as an
		eigenvalue of $A(c_0)$.
	\end{proof}
	
	\subsubsection{Exclusion of nonzero imaginary eigenvalues}
	\begin{lemma}\label{lem:no-imaginary}
		The matrix $A(c_0)$ has no eigenvalue on $i\R\setminus\{0\}$.
		Moreover, there exist constants $r_*,d_*,C_*>0$ and a smooth
		real-valued function $\mu_c(s)$, defined for $|s-c_0|<r_*$,
		such that $\mu_c(s)$ is an algebraically simple eigenvalue of
		$A(s)$ and
		\begin{equation*}
			\mu_c(c_0)=0,\qquad
			|\mu_c(s)|\le C_*|s-c_0|,\qquad |\mu_c(s)|<d_*.
		\end{equation*}
		Every other eigenvalue $\Lambda$ of $A(s)$ satisfies
		$
		|\operatorname{Re}\Lambda|\ge d_*.
		$
	\end{lemma}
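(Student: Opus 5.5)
Substitute $\Lambda=i\omega$ with $\omega\in\R\setminus\{0\}$ into the factored form of $\mathfrak p$ from Lemma~\ref{lem:polynomial} at $s=c_0$, and show it cannot vanish. Writing $\Lambda=i\omega$, we have $\Lambda^2-\nu=-(\omega^2+\nu)<0$. Then $\mathfrak p(i\omega;c_0)=0$ reads
\begin{equation*}
-(\omega^2+\nu)\bigl[(\alpha_0-i\omega)(\chi_{\mathrm h,0}+i\omega)+\beta_0\delta_0\bigr]-\beta_0\nu(\chi_{\mathrm h,0}+i\omega)=0 .
\end{equation*}
The imaginary part gives
\begin{equation*}
-(\omega^2+\nu)\,\omega(\alpha_0-\chi_{\mathrm h,0})-\beta_0\nu\omega=0 .
\end{equation*}
Dividing by $\omega\neq0$, this becomes $(\omega^2+\nu)(\chi_{\mathrm h,0}-\alpha_0)=\beta_0\nu$. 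The real part gives
\begin{equation*}
(\omega^2+\nu)\bigl(\alpha_0\chi_{\mathrm h,0}+\omega^2+\beta_0\delta_0\bigr)+\beta_0\nu\chi_{\mathrm h,0}=0 .
\end{equation*}
From Lemma~\ref{lem:kernel}, $\alpha_0\chi_{\mathrm h,0}+\beta_0\delta_0+\beta_0\chi_{\mathrm h,0}=0$, so $\alpha_0\chi_{\mathrm h,0}+\beta_0\delta_0=-\beta_0\chi_{\mathrm h,0}$. Substituting, the real part becomes
\begin{equation*}
(\omega^2+\nu)\omega^2-\beta_0\chi_{\mathrm h,0}\,\omega^2=0,
\qquad\text{i.e.}\qquad \omega^2+\nu=\beta_0\chi_{\mathrm h,0}.
\end{equation*}
Insert this into the imaginary-part relation: $\beta_0\chi_{\mathrm h,0}(\chi_{\mathrm h,0}-\alpha_0)=\beta_0\nu$. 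Together with $\omega^2=\beta_0\chi_{\mathrm h,0}-\nu>0$, this gives $\chi_{\mathrm h,0}(\chi_{\mathrm h,0}-\alpha_0)<\chi_{\mathrm h,0}$, hence $\chi_{\mathrm h,0}-\alpha_0<1$. That alone is not a contradiction, so instead I eliminate $\nu$. The two relations give
\begin{equation*}
\chi_{\mathrm h,0}(\chi_{\mathrm h,0}-\alpha_0)=\nu
\quad\text{and}\quad
\omega^2=\chi_{\mathrm h,0}\bigl(\beta_0-\chi_{\mathrm h,0}+\alpha_0\bigr).
\end{equation*}
Since $\alpha_0=(\theta_0-c_0^2)/(\mu c_0)$, we have $\alpha_0+\beta_0=(\theta_0+1-c_0^2)/(\mu c_0)=-(\gamma-1)\theta_0/(\mu c_0)<0$. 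Therefore $\omega^2=\chi_{\mathrm h,0}(\alpha_0+\beta_0-\chi_{\mathrm h,0})<0$, a contradiction. This is the same sign fact that appeared in \eqref{eq:simple-zero}, so it is consistent. The only real care needed is this algebra: keep the identity from Lemma~\ref{lem:kernel} explicit and check the signs of $\alpha_0+\beta_0$ and $\chi_{\mathrm h,0}$.

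For the perturbative part, zero is an algebraically simple root of $\mathfrak p(\cdot;c_0)$ with $\mathfrak p_\Lambda(0;c_0)>0$ by \eqref{eq:simple-zero}. The implicit function theorem applied to the real polynomial $\mathfrak p(\Lambda;s)$, whose coefficients are smooth in $s$ near $c_0>0$, gives a smooth real root $\mu_c(s)$ with $\mu_c(c_0)=0$. Explicitly, $\mu_c'(c_0)=-\mathfrak p_s(0;c_0)/\mathfrak p_\Lambda(0;c_0)$. This yields $|\mu_c(s)|\le C_*|s-c_0|$, and simplicity persists for $s$ close to $c_0$.

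For the spectral gap, the other three roots of $\mathfrak p(\cdot;c_0)$ lie off $i\R$ by the first part, and they are nonzero by simplicity. Let $4d_*$ be the minimum of $|\operatorname{Re}\Lambda|$ over these three roots. Roots of a polynomial with leading coefficient $-1$ depend continuously on the coefficients. Hence, after shrinking $r_*$, the three other roots of $\mathfrak p(\cdot;s)$ have $|\operatorname{Re}\Lambda|\ge 2d_*\ge d_*$. Shrinking $r_*$ further also gives $|\mu_c(s)|\le C_*r_*<d_*$, which completes the plan.
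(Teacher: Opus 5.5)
Your proof is correct. Its second and third steps match the paper's: the implicit function theorem applied to $\mathfrak p$ with $\mathfrak p_\Lambda(0;c_0)>0$ gives the real root $\mu_c(s)$, and continuity of the remaining three roots gives the uniform gap.

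The difference lies only in how you exclude $i\R\setminus\{0\}$. The paper uses the imaginary part alone:
$\operatorname{Im}\mathfrak p(i\omega;c_0)=\omega\bigl[(\chi_{\mathrm h,0}-\alpha_0)\omega^2+\nu(\chi_{\mathrm h,0}-\alpha_0-\beta_0)\bigr]$.
By \eqref{eq:simple-zero} the bracket is strictly positive, so there is no root with $\omega\neq0$.

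Your own imaginary-part relation $(\omega^2+\nu)(\chi_{\mathrm h,0}-\alpha_0)=\beta_0\nu$ is already impossible for the same reason. Its left side is at least $\nu(\chi_{\mathrm h,0}-\alpha_0)$, which exceeds $\nu\beta_0$. The detour through the real part and the kernel identity $\alpha_0\chi_{\mathrm h,0}+\beta_0\delta_0+\beta_0\chi_{\mathrm h,0}=0$ is valid, but it ends at the same sign fact $\alpha_0+\beta_0-\chi_{\mathrm h,0}<0$.

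There is also a slip in your discarded aside. From $\chi_{\mathrm h,0}(\chi_{\mathrm h,0}-\alpha_0)=\nu<\beta_0\chi_{\mathrm h,0}$ you get $\chi_{\mathrm h,0}-\alpha_0<\beta_0$, not $\chi_{\mathrm h,0}-\alpha_0<1$. That inequality is already the contradiction with \eqref{eq:simple-zero} you were looking for.

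The paper's route has one advantage: it never uses $\mathfrak p(0;c_0)=0$. So the same positivity shows directly that $A(s)$ has no nonzero purely imaginary eigenvalue for every $s$ near $c_0$, since $\chi_{\mathrm h}-\alpha-\beta>0$ persists there. Your elimination, by contrast, is tied to $s=c_0$. The lemma only needs $s=c_0$ plus continuity, so both arguments suffice.
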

	\begin{proof}
		First consider $s=c_0$. By \eqref{eq:simple-zero} and $\beta_0>0$,
		\begin{equation*}
			\chi_{\mathrm h,0}-\alpha_0-\beta_0>0,
			\qquad \chi_{\mathrm h,0}-\alpha_0>0.
		\end{equation*}
		For $\omega\in\R$, the polynomial \eqref{eq:polynomial} gives
		\begin{equation*}
			\begin{aligned}
				\operatorname{Im}\mathfrak p(i\omega;c_0)
				&=\omega\bigl[
				(\chi_{\mathrm h,0}-\alpha_0)(\omega^2+\nu)
				-\beta_0\nu\bigr]\\
				&=\omega\bigl[
				(\chi_{\mathrm h,0}-\alpha_0)\omega^2
				+\nu(\chi_{\mathrm h,0}-\alpha_0-\beta_0)\bigr].
			\end{aligned}
		\end{equation*}
		Since $\nu>0$, the bracket is strictly positive.
		Hence $\mathfrak p(i\omega;c_0)\ne0$ whenever $\omega\ne0$,
		which proves the first assertion.
		
		By Lemma \ref{lem:kernel},
		$\mathfrak p(0;c_0)=0$ and $\mathfrak p_\Lambda(0;c_0)>0$.
		Applying the implicit function theorem to the real equation
		$\mathfrak p(\Lambda;s)=0$ gives a smooth real root $\mu_c(s)$
		with $\mu_c(c_0)=0$. It remains algebraically simple because
		$\mathfrak p_\Lambda(\mu_c(s);s)\ne0$ for $s$ close to $c_0$.
		Smoothness yields $|\mu_c(s)|\le C_*|s-c_0|$ after restricting
		$s$ to a sufficiently small neighborhood of $c_0$.
		
		The other eigenvalues of $A(c_0)$ have nonzero real parts.
		Set
		\begin{equation*}
			d_*:=\frac12\min\bigl\{
			|\operatorname{Re}\Lambda|:
			\Lambda\in\operatorname{spec}A(c_0),\ \Lambda\ne0
			\bigr\}>0.
		\end{equation*}
		By continuity of polynomial roots, there is $r_*>0$ such that,
		for $|s-c_0|<r_*$, every eigenvalue other than $\mu_c(s)$ satisfies
		$|\operatorname{Re}\Lambda|\ge d_*$.
		Decreasing $r_*$ further, we also have $|\mu_c(s)|<d_*$.
		This proves the uniform spectral gap and the assertion about
		the strip.  
	\end{proof}
	
	\subsubsection{Stable and unstable dimensions}
	\begin{lemma}\label{lem:inertia}
		The spectrum of $A(c_0)$ consists of a simple zero, two eigenvalues
		with negative real part, and one positive real eigenvalue.
		The small eigenvalue of $A(s)$, denoted by $\mu_c(s)$, satisfies
		\begin{equation}\label{eq:small-eigenvalue}
			\mu_c(s)=\frac{c_0^2-s^2}{c_0 D}+O((s-c_0)^2),\qquad
			D:=\mu+\frac{\kappa\theta_0}{c_v^2c_0^2}>0.
		\end{equation}
		In particular, $\mu_c(s(\varepsilon))=(\Gamma/D)\varepsilon
		+O(\varepsilon^2)>0$ for small $\varepsilon>0$.
	\end{lemma}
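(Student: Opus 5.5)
The plan is to work with the quartic $\mathfrak p(\Lambda;c_0)$ of Lemma~\ref{lem:polynomial}. Since $\mathfrak p(0;c_0)=0$, factor it as $\mathfrak p(\Lambda;c_0)=\Lambda\,\mathfrak q(\Lambda)$ with a real cubic
\[
\mathfrak q(\Lambda)=-\Lambda^3+(\alpha_0-\chi_{\mathrm h,0})\Lambda^2+(\alpha_0\chi_{\mathrm h,0}+\nu+\beta_0\delta_0)\Lambda-\nu(\alpha_0-\chi_{\mathrm h,0}+\beta_0).
\]
By \eqref{eq:simple-zero} we have $\mathfrak q(0)=\mathfrak p_\Lambda(0;c_0)>0$, and $\mathfrak q(\Lambda)\to-\infty$ as $\Lambda\to+\infty$. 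Hence $\mathfrak q$ has a positive real root $\Lambda_+$. The product of the three roots of $\mathfrak q$ equals $\mathfrak q(0)/1>0$, using that the leading coefficient is $-1$: indeed $-\prod(\Lambda-\Lambda_i)$ at $0$ gives $\prod\Lambda_i=\mathfrak q(0)$.

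Next I will show that the other two roots have negative real part. If they are complex conjugate, their product is $|\cdot|^2>0$. Their sum equals $(\alpha_0-\chi_{\mathrm h,0})-\Lambda_+$, which is negative because $\chi_{\mathrm h,0}-\alpha_0>0$ (shown in the proof of Lemma~\ref{lem:no-imaginary}). So their real parts are negative. If they are real, their product is $\mathfrak q(0)/\Lambda_+>0$, so they have the same sign. Their sum is negative, so both are negative. None lies on the imaginary axis by Lemma~\ref{lem:no-imaginary}. This gives the inertia $(2,1)$ together with the simple zero. Uniqueness of the positive root follows from the same count.

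For the small eigenvalue, I will differentiate the identity $\mathfrak p(\mu_c(s);s)=0$ from Lemma~\ref{lem:no-imaginary}. This gives $\mu_c'(c_0)=-\partial_s\mathfrak p(0;c_0)/\mathfrak p_\Lambda(0;c_0)$. From \eqref{eq:p-zero}, $\partial_s\mathfrak p(0;s)|_{c_0}=\frac{2\nu c_v c_0}{\mu\kappa}$. Since $\mathfrak p(0;s)$ is exactly $-\frac{\nu c_v}{\mu\kappa}(c_0^2-s^2)$, I get
\[
\mu_c(s)=\frac{\nu c_v(c_0^2-s^2)}{\mu\kappa\,\mathfrak p_\Lambda(0;c_0)}+O((s-c_0)^2).
\]
Now I substitute \eqref{eq:simple-zero}:
\[
\mu\kappa\,\frac{\mathfrak p_\Lambda(0;c_0)}{\nu}=\mu c_0c_v+\frac{\kappa(\gamma-1)\theta_0}{c_0}=c_vc_0\Bigl(\mu+\frac{\kappa\theta_0}{c_v^2c_0^2}\Bigr)=c_vc_0D,
\]
using $\gamma-1=1/c_v$. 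This yields \eqref{eq:small-eigenvalue}.

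Finally, with $s(\varepsilon)=c_0-\frac\Gamma2\varepsilon+O(\varepsilon^2)$ from Lemma~\ref{lem:Hugoniot}, we have $c_0^2-s^2=c_0\Gamma\varepsilon+O(\varepsilon^2)$. Therefore $\mu_c=(\Gamma/D)\varepsilon+O(\varepsilon^2)>0$.

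The main obstacle is only bookkeeping: making sure the sum of the stable pair is negative. This rests on $\chi_{\mathrm h,0}>\alpha_0$, which holds since $\alpha_0<0$ because $c_0^2>\theta_0$.
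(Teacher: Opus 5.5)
Your proof is correct and follows essentially the same route as the paper. You factor out the zero root, use the sign of the cubic's constant term ($=\mathfrak p_\Lambda(0;c_0)>0$) and of its root sum ($\alpha_0-\chi_{\mathrm h,0}<0$) to get exactly one positive root and a stable pair, and then obtain $\mu_c$ by implicit differentiation of $\mathfrak p(\mu_c(s);s)=0$, identifying $D$ via $c_v(\gamma-1)=1$. The differences are cosmetic: you locate the positive root by the intermediate value theorem before treating the remaining pair, and you write the first-order term as $-\mathfrak p(0;s)/\mathfrak p_\Lambda(0;c_0)$ rather than computing $\mu_c'(c_0)=-2/D$.
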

	\begin{proof}
		The zero eigenvalue is simple by Lemma \ref{lem:kernel}, and Lemma
		\ref{lem:no-imaginary} excludes every other eigenvalue on the imaginary
		axis. It remains to determine the signs of the real parts. At $s=c_0$,
		factor the zero root from \eqref{eq:polynomial}:
		\begin{align*}
			\mathfrak p(\Lambda;c_0)=-\Lambda\big[&\Lambda^3-(\alpha_0-\chi_{\mathrm h,0})\Lambda^2
			-(\alpha_0\chi_{\mathrm h,0}+\nu+\beta_0\delta_0)\Lambda
			+\nu(\alpha_0-\chi_{\mathrm h,0}+\beta_0)\big].                 
		\end{align*}
		Thus the three nonzero eigenvalues are the roots of the monic cubic
		inside the brackets. Write it as $\Lambda^3+a\Lambda^2+b\Lambda+c=0$.
		Only the signs of $a$ and $c$ are needed. First,
		\begin{equation*}
			a=\chi_{\mathrm h,0}-\alpha_0>0,
			\qquad
			\alpha_0=\frac{\theta_0-c_0^2}{\mu c_0}
			=-\frac{(\gamma-1)\theta_0+1}{\mu c_0}<0.
		\end{equation*}
		For the constant term,
		\begin{align*}
			&\alpha_0-\chi_{\mathrm h,0}+\beta_0
			=\frac{\theta_0-c_0^2}{\mu c_0}
			-\frac{c_0c_v}{\kappa}+\frac{1}{\mu c_0} =-\frac{(\gamma-1)\theta_0}{\mu c_0}
			-\frac{c_0c_v}{\kappa}<0,                                  \\
			&c=\nu(\alpha_0-\chi_{\mathrm h,0}+\beta_0)<0.                              
		\end{align*}
		Let the three roots be $\Lambda_1,\Lambda_2,\Lambda_3$. Vieta's formulas
		give
		\begin{equation}\label{eq:cubic-vieta}
			\Lambda_1+\Lambda_2+\Lambda_3=-a<0,
			\qquad
			\Lambda_1\Lambda_2\Lambda_3=-c>0.
		\end{equation}
		If all roots are real, the positive product implies either three
		positive roots or one positive and two negative roots. The first option
		would make their sum positive, contrary to \eqref{eq:cubic-vieta}.
		Hence there is one positive root and two negative roots.
		
		If there is one real root and a complex conjugate pair, the product of
		the pair is its squared modulus. The positive product in
		\eqref{eq:cubic-vieta} therefore forces the real root to be positive.
		The negative sum then forces the conjugate pair to have negative real
		part. Thus, in either case, the three nonzero eigenvalues consist of one
		positive real eigenvalue and two eigenvalues with negative real part.
		
		We finally compute the small eigenvalue. The implicit function theorem,
		already used in Lemma \ref{lem:no-imaginary}, gives a real smooth root
		$\mu_c(s)$ with $\mu_c(c_0)=0$. By \eqref{eq:p-zero},
		\eqref{eq:simple-zero}, and the definition of $D$ \eqref{eq:small-eigenvalue},
		\begin{align*}
			\mathfrak p_\Lambda(0;c_0)
			&=\nu(\chi_{\mathrm h,0}-\alpha_0-\beta_0)
			=\frac{\nu c_vc_0}{\mu\kappa}
			\left(\mu+\frac{\kappa\theta_0}{c_v^2c_0^2}\right)
			=\frac{\nu c_vc_0}{\mu\kappa}D,                            \qquad
			\mathfrak p_s(0;c_0)=\frac{2\nu c_vc_0}{\mu\kappa}.                   
		\end{align*}
		Differentiating $\mathfrak p(\mu_c(s);s)=0$ at $s=c_0$ now gives
		\begin{equation*}
			\mu_c'(c_0)=-\frac{\mathfrak p_s(0;c_0)}{\mathfrak p_\Lambda(0;c_0)}=-\frac{2}{D}.
		\end{equation*}
		Consequently,
		\begin{align}
			&\mu_c(s)
			=-\frac{2}{D}(s-c_0)+O((s-c_0)^2)=\frac{c_0^2-s^2}{c_0D}+O((s-c_0)^2),\notag
			\\
			&\mu_c(s(\varepsilon))
			=\frac{\Gamma}{D}\varepsilon+O(\varepsilon^2)>0.           \label{eq:small-root-shock}
		\end{align}
		The last identity uses Lemma \ref{lem:Hugoniot}.
	\end{proof}
	
	\subsubsection{The speed-extended system}
	Adjoin the speed in \eqref{eq:vector-field} as a frozen variable:
	\begin{equation}\label{eq:extended}
		Y'=\mathscr F(Y;\tau),\qquad \tau'=0.
	\end{equation}
	\begin{proposition}\label{prop:extended-spectrum}
		At \((Y_-,c_0)\), the linearization of system \eqref{eq:extended} has a two-dimensional center subspace \(E^c\), a two-dimensional stable subspace \(E^s\), and a one-dimensional unstable subspace \(E^u\). The zero eigenvalue is semisimple, and
		\begin{equation}\label{eq:extended-center}
			E^c=\spanop\{(R_0,0),(0,0,0,0,1)^{\mathsf T}\}.
		\end{equation}
		Moreover, there is $\omega_0>0$, depending only on the fixed physical
		parameters, such that
		\begin{equation}\label{eq:extended-gap}
			\Re\Lambda\big|_{E^s}\le-\omega_0<0,
			\qquad
			\Re\Lambda\big|_{E^u}\ge\omega_0>0.
		\end{equation}
	\end{proposition}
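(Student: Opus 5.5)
The plan is to write the linearization of \eqref{eq:extended} at $(Y_-,c_0)$ in block form and read off its spectrum from Lemmas~\ref{lem:kernel}--\ref{lem:inertia}. Differentiating $(Y,\tau)\mapsto(\mathscr F(Y;\tau),0)$ gives
\begin{equation*}
\mathcal L:=\begin{pmatrix} A(c_0) & \partial_\tau\mathscr F(Y_-;c_0)\\ 0 & 0\end{pmatrix}
=\begin{pmatrix} A(c_0) & 0\\ 0 & 0\end{pmatrix},
\end{equation*}
where the upper-right block vanishes by Remark~\ref{rem:left-line}: $\mathscr F(Y_-;\tau)=0$ for all $\tau$ near $c_0$, hence $\partial_\tau\mathscr F(Y_-;c_0)=0$. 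So $\mathcal L$ is block diagonal, and its characteristic polynomial is $\Lambda\det(\Lambda I-A(c_0))$.

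By Lemma~\ref{lem:inertia}, $A(c_0)$ has a simple zero eigenvalue, two eigenvalues $\Lambda_1,\Lambda_2$ with negative real part, and one positive eigenvalue $\Lambda_3$. Lemma~\ref{lem:no-imaginary} excludes other points of $i\R$. Hence $\mathcal L$ has eigenvalue $0$ with algebraic multiplicity two, together with $\Lambda_1,\Lambda_2,\Lambda_3$. The generalized eigenspaces of the stable and unstable groups sit in the first block, $E^s\oplus E^u\subset\R^4\times\{0\}$, of dimensions $2$ and $1$ respectively.

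For the center subspace, note that $\mathcal L(R_0,0)=(A(c_0)R_0,0)=0$ by \eqref{eq:kernel}, and $\mathcal L(0,0,0,0,1)^{\mathsf T}=0$ since the fifth column of $\mathcal L$ vanishes. These two vectors are linearly independent, so the geometric multiplicity of $0$ is at least two. It equals the algebraic multiplicity two, so $0$ is semisimple and the generalized kernel equals the kernel, which proves \eqref{eq:extended-center}. Equivalently, any $(Z,z)\in\ker\mathcal L^2$ satisfies $A(c_0)^2Z=0$, hence $Z\in\spanop\{R_0\}$ because $0$ is algebraically simple for $A(c_0)$.

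For the gap \eqref{eq:extended-gap}, take $\omega_0:=\min\{|\Re\Lambda_j|: j=1,2,3\}>0$. These are eigenvalues of the fixed matrix $A(c_0)$, which depends only on $\gamma,\theta_0,\mu,\kappa,\la$, so $\omega_0$ depends only on the physical parameters; one may also take $\omega_0=2d_*$ from Lemma~\ref{lem:no-imaginary}. The restricted spectra are $\{\Lambda_1,\Lambda_2\}$ on $E^s$ and $\{\Lambda_3\}$ on $E^u$, which gives the inequalities.

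No step is a serious obstacle. The only point that needs care is semisimplicity. It rests on the vanishing of the off-diagonal block $\partial_\tau\mathscr F(Y_-;c_0)$; without it, a Jordan block could couple $(R_0,0)$ and the $\tau$-direction. That vanishing is exactly Remark~\ref{rem:left-line}, so I would verify it by observing that $F_1(Y_-;\tau)$, $G(Y_-;\tau)$ and $u=\tau(1-\rho^{-1})$ all vanish at $\rho=1$, $\theta=\theta_0$, $\phi=W=0$ identically in $\tau$.
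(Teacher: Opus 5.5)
Your proposal is correct and follows the paper's proof step by step. The off-diagonal block $\partial_\tau\mathscr F(Y_-;c_0)$ vanishes by Remark~\ref{rem:left-line}, the kernel vectors $(R_0,0)$ and $(0,0,0,0,1)^{\mathsf T}$ span the center space, and Lemma~\ref{lem:inertia} gives the hyperbolic splitting and the gap. Your explicit multiplicity count states the semisimplicity claim more precisely than the paper's wording, but it is the same argument: algebraic multiplicity two, read off from $\Lambda\det(\Lambda I-A(c_0))$, equals the geometric multiplicity, or equivalently $\ker\mathcal L^2=\ker\mathcal L$.
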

	\begin{proof}
		In the coordinates $(Y,\tau)$, the Jacobian of \eqref{eq:extended} is
		\begin{equation}\label{eq:extended-jacobian}
			D_{(Y,\tau)}(\mathscr F(Y;\tau),0)\big|_{(Y_-,c_0)}
			=\begin{pmatrix}
				A(c_0)&\partial_s\mathscr F(Y_-;c_0)\\
				0&0
			\end{pmatrix}.
		\end{equation}
		Remark \ref{rem:left-line} states that $\mathscr F(Y_-;s)=0$ for every $s\ne0$.
		Differentiating this identity in $s$ gives
		$\partial_s\mathscr F(Y_-;c_0)=0$. Hence \eqref{eq:extended-jacobian} reduces to
		the block diagonal matrix
		$$
		\operatorname{diag}(A(c_0),0).
		$$
		Lemma \ref{lem:kernel} gives a simple zero eigenvalue of $A(c_0)$ with
		eigenvector $R_0$, while the appended scalar block gives the independent
		eigenvector $(0,0,0,0,1)^{\mathsf T}$. Since the off-diagonal block in
		\eqref{eq:extended-jacobian} vanishes, neither vector generates a
		generalised eigenvector coupling the two blocks. The double zero is
		therefore semisimple and its eigenspace is \eqref{eq:extended-center}.
		Lemma \ref{lem:inertia} supplies two stable directions and one unstable
		direction. Their finitely many nonzero eigenvalues are separated from
		the imaginary axis, which yields \eqref{eq:extended-gap} after decreasing
		$\omega_0$ if necessary.
	\end{proof}
	
	\subsection{Center-manifold reduction}\label{sec:center}
	Proposition~\ref{prop:extended-spectrum} supplies a two-dimensional center subspace of the speed-extended system \eqref{eq:extended} at $(Y_-,c_0)$, together with a uniform spectral gap on the hyperbolic complement. In this subsection we apply the center-manifold theorem of Appendix~\ref{app:center} and reduce the local dynamics to that manifold. 
	Fix a finite integer $r\geq6$. By the center-manifold theorem in
	Appendix \ref{app:center}, we have
	
	\begin{proposition}\label{prop:center}
		Near $(Y_-,c_0)$, system \eqref{eq:extended} has a $C^r$
		two-dimensional locally invariant manifold parameterised by
		\begin{equation}\label{eq:center-chart}
			(Y,\tau)=(Y(\eta,\tau),\tau),\qquad
			Y(\eta,\tau)=Y_-+\eta R_0+H(\eta,\tau),\qquad H^\rho\equiv0,
		\end{equation}
		\begin{equation}\label{eq:center-tangency}
			H(0,\tau)=0,\qquad DH(0,c_0)=0,
		\end{equation}
		\begin{equation}\label{eq:center-bounds}
			|H(\eta,\tau)|\leq C|\eta|(|\eta|+|\tau-c_0|),\qquad
			|H_\eta(\eta,\tau)|\leq C(|\eta|+|\tau-c_0|).
		\end{equation}
		Here $H^\rho$ is the first component of $H$. There is a fixed smaller neighborhood in which every complete orbit of
		\eqref{eq:extended} lies on this manifold. On the manifold the equations are
		\begin{equation}\label{eq:reduced-tau}
			\begin{aligned}
				&\eta'=g(\eta,\tau),\qquad \tau'=0,
				\qquad g(\eta,\tau)=\mathscr F^\rho(Y(\eta,\tau);\tau),\\
				&\mathscr F(Y(\eta,\tau);\tau)=g(\eta,\tau)Y_\eta(\eta,\tau).
			\end{aligned}
		\end{equation}
		Here $\mathscr F^\rho$ is the first component of $\mathscr F$. In particular, 
		$\rho=1+\eta$ in this chart.
	\end{proposition}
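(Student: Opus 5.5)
Apply the center-manifold theorem of Appendix~\ref{app:center} to the speed-extended system \eqref{eq:extended} at the equilibrium $(Y_-,c_0)$. Proposition~\ref{prop:extended-spectrum} supplies exactly the hypotheses: a semisimple double zero with center subspace $E^c$ as in \eqref{eq:extended-center}, and a uniform gap $\omega_0$ on $E^s\oplus E^u$. We first cut off the nonlinearity outside a small ball, so that the global Lipschitz constant of the nonlinear part is small compared with $\omega_0$; smoothness of $\mathscr F$ on $\{\rho>0\}$ near $s=c_0$ (Lemma~\ref{lem:equilibria}) makes this possible. The theorem then yields a $C^r$ locally invariant graph over $E^c$, tangent to $E^c$ at $(Y_-,c_0)$, of the form $(Y,\tau)=(Y_-+\eta R_0+\tilde H(\eta,\tau),\tau)$ with $\tilde H$ valued in the spectral complement $E^s\oplus E^u$. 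It also gives the property that every orbit staying in a small neighborhood for all $\zeta\in\R$ lies on the manifold. Since $\tau'=0$ is linear and $\tau$ is itself a center coordinate, we may take the $\tau$-component of the graph to be exactly $\tau$; the fibres $\{\tau=\text{const}\}$ are invariant.

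Next we reparametrize so that $H^\rho\equiv0$. The first component of $R_0$ equals $1$, so the map $(\eta,\tau)\mapsto(\rho-1,\tau)$ restricted to the manifold has derivative equal to the identity at the base point. By the inverse function theorem we can therefore use $\rho-1$ itself as the coordinate $\eta$. Writing the manifold as $Y_-+\eta R_0+H(\eta,\tau)$ with $\eta=\rho-1$ forces $H^\rho=0$, and tangency still gives $DH(0,c_0)=0$.

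For $H(0,\tau)=0$ we use Remark~\ref{rem:left-line}. The line $\{(Y_-,\tau)\}$ consists of equilibria, so it is a complete orbit set near the base point and hence lies on the center manifold; in the $\rho$-chart it corresponds to $\eta=0$. The bounds \eqref{eq:center-bounds} follow from Taylor's formula with $H\in C^r$, $r\ge2$. Since $H(0,\tau)=0$, we have $H=\eta\int_0^1H_\eta(t\eta,\tau)\,dt$. Since $H_\eta(0,c_0)=0$, we get $|H_\eta|\le C(|\eta|+|\tau-c_0|)$, and then the bound on $H$ follows.

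Finally, invariance means that along solutions on the manifold $Y(\zeta)=Y(\eta(\zeta),\tau)$, so that $Y'=Y_\eta\,\eta'$. The $\rho$-component of $Y_\eta$ is $1$, hence $\eta'=\rho'=\mathscr F^\rho(Y(\eta,\tau);\tau)=g$. Substituting back gives $\mathscr F(Y(\eta,\tau);\tau)=gY_\eta$.

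The main obstacle is making the reparametrization and the equilibrium-line argument compatible with the non-uniqueness of center manifolds and with the cutoff. The line of equilibria must lie inside the region where the cutoff is inactive, and its containment must use the ``all bounded orbits lie on the manifold'' clause rather than uniqueness of the manifold. We would therefore fix the neighborhood sizes first and only then invoke that clause.
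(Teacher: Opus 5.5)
Your proposal is correct and follows essentially the same route as the paper: the center-manifold theorem with its containment clause, the chart $(\eta,\tau)=(\rho-1,\tau)$ obtained from the inverse function theorem, and the line of equilibria $(Y_-,\tau)$ to get $H(0,\tau)=0$, followed by the Taylor/mean-value bounds and the chain rule for the reduced equation. The one loosely justified step is the claim that the $\tau$-component of the graph is exactly $\tau$; it holds because the block-diagonal Jacobian $\operatorname{diag}(A(c_0),0)$ forces $E^s\oplus E^u\subset\R^4\times\{0\}$, and it is in any case absorbed by the chart $(\rho-1,\tau)$, which is how the paper handles it.
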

	
	\begin{proof}
		Proposition \ref{prop:extended-spectrum} gives the splitting
		$E^c\oplus E^s\oplus E^u$, with a two-dimensional center space and a
		uniform spectral gap on the hyperbolic space. Theorem \ref{thm:center}
		therefore supplies a two-dimensional $C^r$ locally invariant manifold,
		tangent to $E^c$ at $(Y_-,c_0)$, together with the stated containment
		property.

		It remains to choose coordinates on this manifold. Define
		$
		\Pi(Y,\tau)=(\rho-1,\tau).
		$
		Let $e_5=(0,0,0,0,1)^{\mathsf T}$. By
		\eqref{eq:extended-center} and $R_0^\rho=1$, the restriction
		\[
		D\Pi(Y_-,c_0)\big|_{E^c}:E^c\longrightarrow\R^2
		\]
		maps $(R_0,0)$ and $e_5$ to the standard basis of $\R^2$,
		and is therefore an isomorphism. Since the manifold is tangent
		to $E^c$ at $(Y_-,c_0)$, the inverse function theorem, applied
		to the restriction of $\Pi$ to this manifold, makes
		$(\eta,\tau)=(\rho-1,\tau)$ a local $C^r$ coordinate chart.
		In these coordinates, $\rho=1+\eta$, and the manifold has
		the representation \eqref{eq:center-chart}. In particular,
		$H^\rho\equiv0$ follows from this choice of coordinates.

		For every $\tau$ near $c_0$, the constant complete orbit $(Y_-,\tau)$
		lies on the manifold by the containment property. Its coordinates are
		$(0,\tau)$, so
		\[
		Y(0,\tau)=Y_-,\qquad H(0,\tau)=0,\qquad H_\tau(0,\tau)=0.
		\]
		The tangent $(Y_\eta(0,c_0),0)$ belongs to $E^c$. Its first and fifth
		components are $1$ and $0$, respectively. In the basis
		\eqref{eq:extended-center}, its coefficients are therefore $1$ and $0$,
		and hence
		\[
		Y_\eta(0,c_0)=R_0
		=\bigl(1,(\gamma-1)\theta_0,-1,0\bigr)^{\mathsf T}.
		\]
		Differentiating \eqref{eq:center-chart} with respect to $\eta$ gives
		$H_\eta(0,c_0)=Y_\eta(0,c_0)-R_0=0$. Together with
		$H(0,\tau)=0$ and $H_\tau(0,c_0)=0$, this proves
		\eqref{eq:center-tangency}.
		
		Since $H$ is $C^2$, the mean value estimate on a smaller
		neighborhood yields
		\[
		|H_\eta(\eta,\tau)|
		=|H_\eta(\eta,\tau)-H_\eta(0,c_0)|
		\leq C(|\eta|+|\tau-c_0|).
		\]
		Using $H(0,\tau)=0$, we further obtain
		\[
		\begin{aligned}
			H(\eta,\tau)&=\eta\int_0^1 H_\eta(t\eta,\tau)\,dt,\\
			|H(\eta,\tau)|
			&\leq C|\eta|\int_0^1(t|\eta|+|\tau-c_0|)\,dt
			\leq C|\eta|(|\eta|+|\tau-c_0|),
		\end{aligned}
		\]
		which proves \eqref{eq:center-bounds}.
		
		Along a solution in the locally invariant manifold, differentiate
 parameterization with respect to $\zeta$. The chain rule gives
		\begin{align*}
			\begin{pmatrix}\mathscr F^\rho\\\mathscr F^\theta\\\mathscr F^\phi\\\mathscr F^W\\0\end{pmatrix}=\begin{pmatrix} \rho'\\\theta'\\\phi'\\W'\\\tau'\end{pmatrix}
			=\begin{pmatrix}
				1&0\\ \theta_\eta&\theta_\tau\\ \phi_\eta&\phi_\tau\\
				W_\eta&W_\tau\\0&1
			\end{pmatrix}
			\begin{pmatrix}\eta'\\\tau'\end{pmatrix}.
		\end{align*}
		Here $\mathscr F$ is evaluated at $(Y(\eta,\tau);\tau)$.
		The last row gives $\tau'=0$, and the first gives $\eta'=\mathscr F^\rho$.
		Substitution into the other rows yields
		\[
		\mathscr F(Y(\eta,\tau);\tau)
		=\mathscr F^\rho(Y(\eta,\tau);\tau)
		\begin{pmatrix}1\\\theta_\eta\\\phi_\eta\\W_\eta\end{pmatrix}
		=\mathscr F^\rho(Y(\eta,\tau);\tau)Y_\eta(\eta,\tau).
		\]
		Thus $g=\mathscr F^\rho$ is the unique scalar satisfying $\mathscr F=gY_\eta$, proving
		\eqref{eq:reduced-tau}. Since $\tau'=0$, each fixed-$\tau$ slice is a locally invariant curve of the original four-dimensional system. This completes the proof of Proposition \ref{prop:center}.
	\end{proof}
	
	We now choose the Rankine--Hugoniot speed
	$\tau=s(\varepsilon)$ and write
	$\widehat g(\eta,\varepsilon)=g(\eta,s(\varepsilon))$.
	By \eqref{eq:Hugoniot-expansion},
	\[
	Y_+(\varepsilon)=Y_--\varepsilon R_0+O(\varepsilon^2),
	\]
	so, for sufficiently small $\varepsilon$, both
	$(Y_-,s(\varepsilon))$ and
	$(Y_+(\varepsilon),s(\varepsilon))$ lie on the center manifold.
	Since $\eta=\rho-1$, we have
	\begin{equation}\label{eq:reduced-endpoints}
		Y(0,s(\varepsilon))=Y_-,
		\qquad
		Y(-\varepsilon,s(\varepsilon))=Y_+(\varepsilon),
		\qquad
		\widehat g(0,\varepsilon)
		=\widehat g(-\varepsilon,\varepsilon)=0.
	\end{equation}

	\subsection{The reduced scalar equation and the heteroclinic}\label{sec:heteroclinic}
	Both Rankine--Hugoniot states now lie on the fixed-speed slice $\tau=s(\varepsilon)$ of the center manifold, and $\widehat g(\cdot,\varepsilon)$ vanishes at the endpoints $\eta=0$ and $\eta=-\varepsilon$. A small heteroclinic of \eqref{eq:vector-field} is therefore equivalent to a solution of the scalar equation $\eta'=\widehat g(\eta,\varepsilon)$ connecting $0$ to $-\varepsilon$. In this subsection we show that $\widehat g$ has no interior zero on $[-\varepsilon,0]$, determine its sign from genuine nonlinearity, and construct the unique small-amplitude orbit.
	
	\begin{lemma}\label{lem:no-interior-zero}
		For sufficiently small $\varepsilon>0$,
		$\widehat g(\cdot,\varepsilon)$ vanishes on $[-\varepsilon,0]$ only at the two
		endpoints.
	\end{lemma}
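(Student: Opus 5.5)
The plan is to show that zeros of $\widehat g(\cdot,\varepsilon)$ near the origin are exactly equilibria of \eqref{eq:vector-field} on the slice $\tau=s(\varepsilon)$. Then Lemma~\ref{lem:equilibria} and the local uniqueness part of Lemma~\ref{lem:Hugoniot} leave only two of them. As a cross-check, I would also give a direct quantitative argument through a quadratic expansion of $\widehat g$.

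\emph{Step 1 (zeros are equilibria).} By \eqref{eq:reduced-tau}, $\mathscr F(Y(\eta,\tau);\tau)=g(\eta,\tau)Y_\eta(\eta,\tau)$, and $Y_\eta$ has first component $1$. Hence $g(\eta,\tau)=0$ holds if and only if $Y(\eta,\tau)$ is an equilibrium of $\mathscr F(\cdot;\tau)$. Suppose $\eta\in[-\varepsilon,0]$ satisfies $\widehat g(\eta,\varepsilon)=0$. Then $Y(\eta,s(\varepsilon))$ is an equilibrium with $\rho=1+\eta$. By Lemma~\ref{lem:equilibria}, it is a quasineutral Rankine--Hugoniot state connected to $U_-$ with speed $s(\varepsilon)$.

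\emph{Step 2 (counting RH states).} Let $\eta\neq0$ and write $\eta=-\varepsilon'$. Then $(s(\varepsilon),U)$ with $\rho=1-\varepsilon'$ solves $\mathscr R_{\mathrm H}(\varepsilon',s,V)=0$. Here $V$ is close to $-r$ because of \eqref{eq:center-bounds}: indeed $V=(Y(\eta,\tau)-Y_-)/\eta$ restricted to the $(\rho,\theta)$ components equals $-R_0+O(\varepsilon)$, and $u$ is determined by the mass relation. The implicit function uniqueness in Lemma~\ref{lem:Hugoniot} then forces $s(\varepsilon)=s(\varepsilon')$. Since $\partial_\varepsilon s(0)=-\Gamma/2\neq0$, the map $s$ is injective near $0$, so $\varepsilon'=\varepsilon$. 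This shows that the only zeros are $\eta=0$ and $\eta=-\varepsilon$.

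\emph{Alternative quantitative argument.} One can Taylor expand $g(\eta,\tau)=\mu_c(\tau)\eta+\tfrac12 g_{\eta\eta}(0,c_0)\eta^2+O(|\eta|(|\eta|+|\tau-c_0|)^2)$. Here $g_\eta(0,\tau)=\mu_c(\tau)$ because the tangent to the slice at $Y_-$ is a center eigenvector. Using \eqref{eq:reduced-endpoints} and \eqref{eq:small-root-shock}, one gets $g_{\eta\eta}(0,c_0)=2\Gamma/D$. This yields
\[
\widehat g(\eta,\varepsilon)=\frac{\Gamma}{D}\,\eta(\eta+\varepsilon)+O\bigl(|\eta|\varepsilon^2\bigr),
\]
and the same equation divided by $\eta(\eta+\varepsilon)$ shows that $\widehat g$ is strictly negative on the interior.

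\emph{Main obstacle.} The difficult step is the uniform error control. For the uniqueness route, it is verifying that $V$ lies within the IFT neighborhood uniformly in $\varepsilon'\le\varepsilon$. For the expansion route, it is the remainder near $\eta\approx-\varepsilon$, where $|\eta+\varepsilon|$ is small. I would handle that by writing $\widehat g(\eta,\varepsilon)=\eta(\eta+\varepsilon)k(\eta,\varepsilon)$ with $k$ continuous. This factorization holds because both endpoint zeros are simple: $\widehat g_\eta(0,\varepsilon)=\mu_c>0$, and a symmetric computation holds at $-\varepsilon$. Then $k(0,0)=\Gamma/D>0$ gives the claim.
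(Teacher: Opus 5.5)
Your proposal is correct. Steps 1--2 follow the same outline as the paper's proof. A zero of $\widehat g$ is an equilibrium, hence a quasineutral Rankine--Hugoniot state at speed $s(\varepsilon)$. Genuine nonlinearity, $\partial_\varepsilon s(0)=-\Gamma/2\neq0$, then pins that state down. The difference lies in how you get local uniqueness of such states:
\begin{itemize}
\item The paper eliminates $\theta$ explicitly through \eqref{eq:RH-energy}, which is linear in $\theta$. It then reads off a strictly increasing scalar speed function $s_H(\rho)$, so it only needs $\rho_*$ near $1$.
\item You reuse the implicit-function uniqueness of Lemma~\ref{lem:Hugoniot}. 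This needs $(s(\varepsilon),V)$ to lie in the IFT neighbourhood of $(c_0,-r)$. That is supplied uniformly by \eqref{eq:center-bounds} together with $V^u=-s/(1+\eta)$. So the obstacle you flag is already resolved by your own observation. One sign slip: $(Y-Y_-)/\eta=R_0+O(\varepsilon)$, and it is $V=-(U-U_-)/\eta$ whose $(\rho,\theta)$ part is $-R_0+O(\varepsilon)$.
\end{itemize}

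Your alternative argument is a genuinely different and more economical route. Apply Hadamard's lemma twice, using only the endpoint identities \eqref{eq:reduced-endpoints}. This gives $\widehat g=\eta(\eta+\varepsilon)k$ with $k$ jointly $C^{r-2}$ in $(\eta,\varepsilon)$. Simplicity of the endpoint zeros is not needed for this factorization. Simplicity at $-\varepsilon$ is a consequence of $k>0$, not an input. Moreover, $\varepsilon k(0,\varepsilon)=\widehat g_\eta(0,\varepsilon)=\mu_c(s(\varepsilon))$ gives $k(0,0)=\Gamma/D>0$ directly. So the quadratic Taylor expansion and the computation of $g_{\eta\eta}$ are superfluous.

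This route avoids classifying Rankine--Hugoniot states altogether. It also yields the interior sign of Lemma~\ref{lem:scalar-sign} at the same stroke. It is exactly the factorization \eqref{eq:factorisation}, which the paper introduces only later in Lemma~\ref{lem:decay}. It is not circular, since the slope identity in Lemma~\ref{lem:scalar-sign} does not use the present lemma.

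The trade-off is in the inputs. The Rankine--Hugoniot route (the paper's, or your Step 2) needs only the Hugoniot-curve data. The factorization route also needs the spectral slope $\mu_c'(c_0)=-2/D$ from Lemma~\ref{lem:inertia}, although that is available anyway.
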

	\begin{proof}
		Suppose $\widehat g(\eta_*,\varepsilon)=0$ for some
		$\eta_*\in[-\varepsilon,0]$, and set
		$Y_*=Y(\eta_*,s(\varepsilon))$. The invariance identity in
		\eqref{eq:reduced-tau} gives $\mathscr F(Y_*;s(\varepsilon))=0$.
		With $Y_*=(\rho_*,\theta_*,\phi_*,W_*)$,
		the vector field \eqref{eq:vector-field} gives
		\begin{equation*}
			W_*=0,\qquad \rho_*=e^{-\phi_*},\qquad F_1(Y_*;s(\varepsilon))=0, \qquad G(Y_*;s(\varepsilon))=0.
		\end{equation*}
		By Lemma
		\ref{lem:equilibria}, $Y_*$ is therefore a quasineutral
		Rankine--Hugoniot state at the fixed speed $s(\varepsilon)$.
		If $\rho_*=1$, then $\eta_*=0$ by the exact density
		coordinate. Suppose now that $\rho_*\ne1$.

		To identify this state, we first write the
		Rankine--Hugoniot conditions relative to the fixed
		left state in scalar form. For a general quasineutral
		state $(\rho,u,\theta,\phi)$ and speed $s$, substitution of
		the mass relation \eqref{eq:mass} gives
		\begin{equation}\notag
			\frac{G}{s}=-c_v(\theta-\theta_0)-\log\rho
			-\frac12\bigl[\rho(\theta+1)+\theta_0+1\bigr](\rho^{-1}-1)
			+\frac{\rho-1}{2\rho}F_1.
		\end{equation}
		Thus $F_1=G=0$ gives the energy relation \eqref{eq:RH-energy} below.
		Together with the mass and momentum relations, we obtain
		\begin{align}
			&u=s(1-\rho^{-1}),\qquad
			\phi=-\log\rho,
			\label{eq:RH-mass}\\
			&s^2(\rho^{-1}-1)+\rho(\theta+1)-(\theta_0+1)=0,
			\label{eq:RH-momentum}\\
			&c_v(\theta-\theta_0)+\log\rho
			+\frac12\bigl[\rho(\theta+1)+\theta_0+1\bigr]
			(\rho^{-1}-1)=0.
			\label{eq:RH-energy}
		\end{align}
		The energy relation
		\eqref{eq:RH-energy} is linear in $\theta$, with coefficient
		$c_v+(1-\rho)/2>0$ near $\rho=1$. Hence every nearby
		Rankine--Hugoniot state satisfies $\theta=\theta_H(\rho)$,
		where
		\begin{equation}\notag
			\theta_H(\rho)=
			\frac{c_v\theta_0-\log\rho-\frac12(1-\rho)
				-\frac{\theta_0+1}{2\rho}(1-\rho)}
			{c_v+\frac12(1-\rho)}.
		\end{equation}
		For $\rho\ne1$, the momentum relation
		\eqref{eq:RH-momentum} then determines the positive speed
		$s_H(\rho)$ through
		\begin{equation}\notag
			s_H(\rho)^2
			=\rho\,\frac{\rho(\theta_H(\rho)+1)-(\theta_0+1)}
			{\rho-1}.
		\end{equation}
		Applying them to the branch of Lemma \ref{lem:Hugoniot}
		gives $s_H(\rho)=s(1-\rho)$. Thus $s_H$ extends smoothly
		to $\rho=1$ with $s_H(1)=c_0$, and by \eqref{111}, we have
		\begin{equation}\notag
			\partial_\rho s_H(1)
			=-\partial_\varepsilon s(0)=\frac{\Gamma}{2}>0.
		\end{equation}
		After shrinking the neighborhood, $s_H$ is therefore
		strictly increasing.
		
		Since $Y_*$ has speed $s(\varepsilon)$, we obtain
		\begin{equation}\notag
			\theta_*=\theta_H(\rho_*),\qquad
			s_H(\rho_*)=s(\varepsilon)=s_H(1-\varepsilon).
		\end{equation}
		Strict monotonicity forces $\rho_*=1-\varepsilon$,
		and hence $\eta_*=-\varepsilon$.
		Thus the two zeros in \eqref{eq:reduced-endpoints}
		exhaust the closed segment, and no interior zero
		is possible.
	\end{proof}
	
	\begin{lemma}\label{lem:scalar-sign}
		For sufficiently small $\varepsilon>0$,
		\begin{equation*}
			\widehat g_\eta(0,\varepsilon)=\mu_c(s(\varepsilon))
			\geq\frac{\Gamma}{2D}\varepsilon>0,
			\qquad \widehat g(\eta,\varepsilon)<0\quad(-\varepsilon<\eta<0).
		\end{equation*}
	\end{lemma}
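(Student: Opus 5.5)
The plan has two parts: identify $\widehat g_\eta(0,\varepsilon)$ with the small eigenvalue $\mu_c(s(\varepsilon))$, then use the absence of interior zeros to fix the sign of $\widehat g$ on the open interval.

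\emph{Derivative at the origin.} Fix $\tau=s(\varepsilon)$ and differentiate the invariance identity $\mathscr F(Y(\eta,\tau);\tau)=g(\eta,\tau)Y_\eta(\eta,\tau)$ from \eqref{eq:reduced-tau} in $\eta$ at $\eta=0$. Using $Y(0,\tau)=Y_-$ and $g(0,\tau)=0$, which follows from \eqref{eq:reduced-endpoints}, this gives
\[
A(\tau)Y_\eta(0,\tau)=g_\eta(0,\tau)\,Y_\eta(0,\tau).
\]
Since the first component of $Y_\eta(0,\tau)$ equals $1$ (because $H^\rho\equiv0$), the vector $Y_\eta(0,\tau)$ is nonzero. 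Hence $g_\eta(0,\tau)$ is an eigenvalue of $A(\tau)$.

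By \eqref{eq:center-bounds}, $g_\eta(0,\tau)\to0$ as $\tau\to c_0$, since $g_\eta(0,\tau)$ is continuous and vanishes at $\tau=c_0$, where $A(c_0)R_0=0$. Every eigenvalue of $A(\tau)$ other than $\mu_c(\tau)$ has $|\operatorname{Re}\Lambda|\ge d_*$ by Lemma~\ref{lem:no-imaginary}. Therefore, for $\tau$ near $c_0$, $g_\eta(0,\tau)=\mu_c(\tau)$. Evaluating at $\tau=s(\varepsilon)$ and applying \eqref{eq:small-root-shock} gives
\[
\widehat g_\eta(0,\varepsilon)=\frac{\Gamma}{D}\varepsilon+O(\varepsilon^2)\ge\frac{\Gamma}{2D}\varepsilon
\]
for small $\varepsilon$.

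\emph{Sign on the interval.} By Lemma~\ref{lem:no-interior-zero}, $\widehat g(\cdot,\varepsilon)$ has no zero in $(-\varepsilon,0)$, so it has a constant sign there. Taylor expansion gives $\widehat g(\eta,\varepsilon)=\widehat g_\eta(0,\varepsilon)\eta+O(\eta^2)$. The error is $O(\eta^2)$ uniformly in $\varepsilon$, because $g$ is $C^2$, being $C^r$ on a fixed neighborhood. For $\eta=-t$ with $0<t$ small relative to $\varepsilon$, say $t\le c\varepsilon$ with $c$ small compared with $\Gamma/(2D\|g_{\eta\eta}\|_\infty)$, this yields
\[
\widehat g(-t,\varepsilon)\le-\frac{\Gamma}{2D}\varepsilon t+Ct^2<0.
\]
Such points lie in $(-\varepsilon,0)$. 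Combined with the constant sign, this gives $\widehat g<0$ on all of $(-\varepsilon,0)$.

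\emph{Main obstacle.} The delicate step is identifying $g_\eta(0,\tau)$ with $\mu_c$ rather than with some other eigenvalue. This rests on the smallness of $g_\eta(0,\tau)$ from the tangency \eqref{eq:center-tangency} together with the uniform spectral gap of Lemma~\ref{lem:no-imaginary}. A secondary point is making the Taylor remainder uniform in $\varepsilon$, so that a sign-determining point exists inside the interval. This is ensured by the $C^r$ regularity of the center manifold on a fixed neighborhood.
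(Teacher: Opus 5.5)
Your proposal is correct and follows essentially the same route as the paper. You differentiate the invariance identity at $\eta=0$ to obtain an eigenvalue of $A(s)$, identify it with $\mu_c$ using its smallness and the spectral gap, and then combine the local Taylor sign with Lemma~\ref{lem:no-interior-zero}.

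The differences are cosmetic. You use $H^\rho\equiv0$ to see $Y_\eta(0,\tau)\neq0$. Your uniform-in-$\varepsilon$ Taylor remainder is more than needed: for each fixed $\varepsilon$, the pointwise derivative $\widehat g_\eta(0,\varepsilon)>0$ already gives negative values of $\widehat g$ arbitrarily close to $0$ inside $(-\varepsilon,0)$.
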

	\begin{proof}
		From
		\eqref{eq:center-chart}--\eqref{eq:center-bounds}, it is direct to see that
		\begin{equation*}
			Y_\eta(0,s)=R_0+O(|s-c_0|)\ne0.
		\end{equation*}
		Differentiate the invariance identity
		$\mathscr F(Y(\eta,s);s)=g(\eta,s)Y_\eta(\eta,s)$ with respect to $\eta$.
		At $\eta=0$, the identities $Y(0,s)=Y_-$ and
		$\mathscr F(Y_-;s)=0$ give $g(0,s)=0$. The term containing $g$ therefore
		vanishes, and by \eqref{eq:Jacobian}, we obtain
		\begin{equation*}
			A(s)Y_\eta(0,s)=g_\eta(0,s)Y_\eta(0,s).
		\end{equation*}
		Thus $g_\eta(0,s)$ is an eigenvalue of $A(s)$ converging to the simple
		zero eigenvalue of $A(c_0)$. It must be the unique small real branch
		$\mu_c(s)$ from Lemma \ref{lem:inertia}. At the Hugoniot speed,
		\eqref{eq:small-root-shock} yields
		\begin{equation}\label{eq:reduced-left-slope}
			\widehat g_\eta(0,\varepsilon)
			=\frac{\Gamma}{D}\varepsilon+O(\varepsilon^2)
			\geq\frac{\Gamma}{2D}\varepsilon>0
		\end{equation}
		after decreasing the strength bound. Taylor expansion at $\eta=0$ now gives
		\begin{equation}\label{eq:reduced-local-sign}
			\widehat g(\eta,\varepsilon)
			=\widehat g_\eta(0,\varepsilon)\eta+o(|\eta|)<0
			\qquad(\eta<0,\ |\eta|\ \hbox{small}).
		\end{equation}
		The continuous function $\widehat g(\cdot,\varepsilon)$ has no zero in the
		connected interval $(-\varepsilon,0)$ by Lemma
		\ref{lem:no-interior-zero}. It cannot change sign there. The local sign
		in \eqref{eq:reduced-local-sign} therefore holds on the whole interval.
	\end{proof}
	
	\begin{proposition}\label{prop:heteroclinic}
		For sufficiently small $\varepsilon>0$, the scalar initial-value problem
		\begin{equation}\label{eq:scalar-profile}
			\eta'=\widehat g(\eta,\varepsilon),
			\qquad
			\eta(0)=-\varepsilon/2
		\end{equation}
		has a unique complete solution satisfying
		\begin{equation}\label{eq:scalar-limits}
			-\varepsilon<\eta(\zeta)<0,\qquad
			\eta'(\zeta)<0,\qquad
			\lim_{\zeta\to-\infty}\eta(\zeta)=0,\qquad
			\lim_{\zeta\to+\infty}\eta(\zeta)=-\varepsilon.
		\end{equation}
		Moreover, the heteroclinic solution is unique up to translation.
	\end{proposition}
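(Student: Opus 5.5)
This is a one-dimensional autonomous ODE argument resting on Lemma~\ref{lem:scalar-sign}. We fix $\varepsilon$ small, so that $\widehat g(\cdot,\varepsilon)$ is $C^1$ (indeed $C^{r-1}$) on a neighborhood of $[-\varepsilon,0]$ inside the chart of Proposition~\ref{prop:center}. It vanishes at $\eta=0$ and $\eta=-\varepsilon$ by \eqref{eq:reduced-endpoints}, and it is strictly negative on $(-\varepsilon,0)$.

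\textbf{Existence.} Picard--Lindel\"of gives a unique local solution of \eqref{eq:scalar-profile}. The constant functions $0$ and $-\varepsilon$ are solutions, so by uniqueness the trajectory starting at $-\varepsilon/2$ can never reach either endpoint in finite $\zeta$. It therefore stays in $(-\varepsilon,0)$ on its maximal interval. Being bounded, it is global: a maximal solution of a $C^1$ scalar ODE that remains in a compact set exists for all $\zeta\in\R$. On $(-\varepsilon,0)$ we have $\eta'=\widehat g(\eta,\varepsilon)<0$, so $\eta$ is strictly decreasing. Monotone bounded functions have limits $\eta_\pm$ as $\zeta\to\pm\infty$. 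Each limit must be a zero of $\widehat g$, since otherwise $|\eta'|$ would stay bounded below near that end, contradicting boundedness. By Lemma~\ref{lem:no-interior-zero} the only zeros in $[-\varepsilon,0]$ are $0$ and $-\varepsilon$. Monotonicity together with $\eta(0)=-\varepsilon/2$ then forces $\eta_-=0$ and $\eta_+=-\varepsilon$. This gives all of \eqref{eq:scalar-limits}.

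\textbf{Uniqueness up to translation.} Let $\tilde\eta$ be any complete solution with values in $[-\varepsilon,0]$ tending to $0$ at $-\infty$ and to $-\varepsilon$ at $+\infty$. It cannot be constant, so by uniqueness of solutions it never touches the equilibria and lies in $(-\varepsilon,0)$. There it is strictly decreasing, so it attains the value $-\varepsilon/2$ at exactly one point $\zeta_0$. Then $\tilde\eta(\cdot+\zeta_0)$ solves \eqref{eq:scalar-profile}, and uniqueness of the initial value problem gives $\tilde\eta(\cdot+\zeta_0)=\eta$.

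The step needing the most care is relating this scalar uniqueness to the uniqueness claimed in the theorem. Any profile staying in the small neighborhood $\mathcal N$ is a complete orbit, hence lies on the center manifold by the containment property of Proposition~\ref{prop:center}. On the slice $\tau=s(\varepsilon)$ it is described by $\eta=\rho-1$. Here I would only note that this reduction is legitimate because $\rho=1+\eta$ is an exact coordinate ($H^\rho\equiv0$). The scalar argument above then applies verbatim.
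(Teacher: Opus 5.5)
Your proposal is correct and follows essentially the same route as the paper. Uniqueness for the scalar initial-value problem keeps the orbit off the equilibria $0$ and $-\varepsilon$, Lemma~\ref{lem:scalar-sign} gives strict decrease and global existence, the limits are identified via Lemma~\ref{lem:no-interior-zero}, and uniqueness up to translation follows by locating $\zeta_0$ with $\tilde\eta(\zeta_0)=-\varepsilon/2$. Your final paragraph on lifting to profiles in $\mathcal N$ is not needed for this proposition, since the paper handles that step separately in Lemma~\ref{lem:uniqueness}.
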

	
	\begin{proof}
		Since $\widehat g(\cdot,\varepsilon)$ is $C^1$ on a neighborhood of
		$[-\varepsilon,0]$, the initial-value problem \eqref{eq:scalar-profile}
		has a unique maximal solution. The endpoint identities
		\eqref{eq:reduced-endpoints} and uniqueness prevent this solution from
		reaching either $0$ or $-\varepsilon$ in finite time. Hence
		$-\varepsilon<\eta<0$, and Lemma \ref{lem:scalar-sign} gives
		$\eta'<0$. Since the solution remains in the compact interval
		$[-\varepsilon,0]$, the continuation theorem extends it to all
		$\zeta\in\R$.
		
		Monotonicity and boundedness give limits
		\begin{equation*}
			L_-:=\lim_{\zeta\to-\infty}\eta(\zeta),\qquad
			L_+:=\lim_{\zeta\to+\infty}\eta(\zeta),\qquad
			L_\pm\in[-\varepsilon,0].
		\end{equation*}
		Both limits must be zeros of $\widehat g(\cdot,\varepsilon)$;
		otherwise $|\eta'|$ would be bounded away from zero near the
		corresponding limit, contradicting convergence. By Lemma
		\ref{lem:no-interior-zero}, strict decrease, and
		$\eta(0)=-\varepsilon/2$, we obtain $L_-=0$ and $L_+=-\varepsilon$.
		This proves \eqref{eq:scalar-limits}.
		
		Let $\widetilde\eta$ be another complete solution with the same
		end-state limits. By continuity, there exists $\zeta_0\in\R$ such that
		$\widetilde\eta(\zeta_0)=-\varepsilon/2$. Uniqueness for
		\eqref{eq:scalar-profile} then gives
		$\widetilde\eta(\zeta+\zeta_0)=\eta(\zeta)$ for all $\zeta\in\R$.
		This proves uniqueness up to translation.
	\end{proof}
	
	We are ready to give the construction of the traveling-wave profile.
	Let $\eta$ be the solution in Proposition \ref{prop:heteroclinic}, and set
	\[
	\bar Y(\zeta):=Y(\eta(\zeta),s(\varepsilon))
	=(\rho^S,\theta^S,\phi^S,W^S)(\zeta).
	\]
	By \eqref{eq:reduced-tau} and \eqref{eq:scalar-profile},
	\begin{equation}\label{eq:lift-solves-system}
		\bar Y'=Y_\eta(\eta,s(\varepsilon))\eta'
		=\widehat g(\eta,\varepsilon)Y_\eta(\eta,s(\varepsilon))
		=\mathscr F(\bar Y;s(\varepsilon)).
	\end{equation}
	The limits in \eqref{eq:scalar-limits} and the endpoint identities
	\eqref{eq:reduced-endpoints} show that $\bar Y$ connects
	$Y_-$ to $Y_+(\varepsilon)$. Since
	$\rho^S=1+\eta\in(1-\varepsilon,1)$ and $\mathscr F$ is smooth on
	$\{\rho>0\}$, equation \eqref{eq:lift-solves-system} yields
	$\bar Y\in C^\infty(\R)$. With
	$u^S=s(\varepsilon)(1-1/\rho^S)$ given by the mass relation
	\eqref{eq:mass}, Lemma \ref{lem:equilibria} shows that
	$(\rho^S,u^S,\theta^S,\phi^S)$ is a smooth traveling-wave profile
	for \eqref{eq:NSFP} with speed $s(\varepsilon)$.
	
	\subsection{Qualitative properties and uniqueness}\label{sec:qualitative}
	The preceding subsection produces a smooth traveling-wave profile $(\rho^S,u^S,\theta^S,\phi^S)$ of speed $s(\varepsilon)$ connecting $Y_-$ to $Y_+(\varepsilon)$. In this subsection we record its monotonicity, a uniform lower bound on the temperature, the exponential decay to the end states, and uniqueness up to translation in a neighborhood of $Y_-$.
	\begin{lemma}\label{lem:monotonicity}
		The traveling-wave profile constructed above satisfies
		$(\rho^S)'<0$, $(\phi^S)'>0$, and
		$C^{-1}|(\rho^S)'|\leq|(\phi^S)'|\leq C|(\rho^S)'|$,
		with $C$ independent of small $\varepsilon>0$.
	\end{lemma}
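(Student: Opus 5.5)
The profile lives on the center manifold, so all derivatives are slaved to $\eta'$. By \eqref{eq:lift-solves-system} and \eqref{eq:center-chart}, $(\rho^S)'=\eta'$, and $\eta'<0$ by Proposition~\ref{prop:heteroclinic}. This gives the first claim immediately.

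For the potential we use the third component of the invariance identity \eqref{eq:reduced-tau}:
\[
(\phi^S)'=\phi_\eta(\eta,s(\varepsilon))\,\eta' .
\]
From \eqref{eq:center-chart} we have $\phi_\eta=R_0^\phi+H^\phi_\eta=-1+H^\phi_\eta$. By \eqref{eq:center-bounds}, $|H_\eta(\eta,s(\varepsilon))|\le C(|\eta|+|s(\varepsilon)-c_0|)\le C'\varepsilon$, since $|\eta|\le\varepsilon$ and $|s(\varepsilon)-c_0|=O(\varepsilon)$ by Lemma~\ref{lem:Hugoniot}. Hence $\phi_\eta\in[-1-C'\varepsilon,-1+C'\varepsilon]$, which lies in $[-2,-\tfrac12]$ for $\varepsilon$ small. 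Therefore
\[
(\phi^S)'=|\phi_\eta|\,|\eta'|>0 ,
\]
and $\tfrac12|(\rho^S)'|\le(\phi^S)'\le 2|(\rho^S)'|$. So the lemma holds with $C=2$, uniformly in $\varepsilon$.

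The only point needing care is that the constant in \eqref{eq:center-bounds} does not depend on $\varepsilon$. This holds because the manifold and its chart are built once, near $(Y_-,c_0)$, for the speed-extended system. The parameter $\varepsilon$ enters only through the choice of slice $\tau=s(\varepsilon)$ and the range of $\eta$. Both stay inside the fixed neighbourhood once $\varepsilon\le\varepsilon_*$ is small enough. No further estimate is needed.
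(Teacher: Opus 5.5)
Your proposal is correct and follows the paper's proof essentially verbatim. Both arguments write $\phi^S=-\eta+H^\phi(\eta,s(\varepsilon))$ with $\rho^S=1+\eta$, bound $|H^\phi_\eta|\le C\varepsilon$ via \eqref{eq:center-bounds}, and conclude from $\eta'<0$; the paper shrinks $\varepsilon$ until $|H^\phi_\eta|\le\tfrac12$ and obtains the constants $\tfrac12$ and $\tfrac32$, while your bracket $[-2,-\tfrac12]$ gives the same conclusion with $C=2$.
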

	\begin{proof}
		The exact density coordinate and \eqref{eq:center-chart} give
		\begin{equation}
			 \label{eq:phi-monotonicity}
			{\rho^S}=1+\eta,\qquad
			{\phi^S}=-\eta+H^\phi(\eta,s(\varepsilon)),
			\qquad
			(\rho^S)'=\eta',\quad
			(\phi^S)'=\bigl(-1+H^\phi_\eta\bigr)\eta'.
		\end{equation}
		On the orbit, $|\eta|+|s(\varepsilon)-c_0|\leq C\varepsilon$.
		The second estimate in \eqref{eq:center-bounds} therefore gives
		$|H^\phi_\eta|\leq C\varepsilon$ uniformly on
		$[-\varepsilon,0]$. Decrease the strength bound until
		$|H^\phi_\eta|\leq1/2$. Proposition \ref{prop:heteroclinic} gives
		$\eta'<0$, and hence
		\begin{equation}\notag
			(\rho^S)'<0,\qquad (\phi^S)'>0,
			\qquad
			\frac12|(\rho^S)'|\leq|(\phi^S)'|
			\leq\frac32|(\rho^S)'|.
		\end{equation}
		Thus, we proved Lemma \ref{lem:monotonicity}.
	\end{proof}
	
	\begin{lemma}\label{lem:temperature}
		For sufficiently small $\varepsilon>0$,  it holds ${\theta^S}\geq\theta_0/2$.
	\end{lemma}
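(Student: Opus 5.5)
We read the temperature off the center-manifold chart, exactly as the density and potential were read off in Lemma~\ref{lem:monotonicity}. By Proposition~\ref{prop:center} and the construction $\bar Y(\zeta)=Y(\eta(\zeta),s(\varepsilon))$, the temperature component of the profile is
\[
\theta^S(\zeta)=\theta_0+(\gamma-1)\theta_0\,\eta(\zeta)+H^\theta\bigl(\eta(\zeta),s(\varepsilon)\bigr),
\]
where $H^\theta$ denotes the $\theta$-component of $H$ and we have used $R_0^\theta=(\gamma-1)\theta_0$ from \eqref{eq:kernel}.

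First, Proposition~\ref{prop:heteroclinic} gives $-\varepsilon<\eta(\zeta)<0$ for every $\zeta$. Lemma~\ref{lem:Hugoniot} gives $|s(\varepsilon)-c_0|\le C\varepsilon$. So along the orbit $|\eta|+|s(\varepsilon)-c_0|\le C\varepsilon$, with $C$ independent of $\varepsilon$. Because $\varepsilon_*$ is small, the orbit stays inside the fixed neighborhood where the chart and the bounds \eqref{eq:center-bounds} are valid.

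Second, the first bound in \eqref{eq:center-bounds} gives
\[
|H^\theta(\eta,s(\varepsilon))|\le C|\eta|\bigl(|\eta|+|s(\varepsilon)-c_0|\bigr)\le C\varepsilon^2 .
\]
Combining this with the formula for $\theta^S$,
\[
\theta^S(\zeta)\ge \theta_0-(\gamma-1)\theta_0\varepsilon-C\varepsilon^2 .
\]

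Finally, we decrease $\varepsilon_*$ so that $(\gamma-1)\theta_0\varepsilon+C\varepsilon^2\le\theta_0/2$ for all $0<\varepsilon\le\varepsilon_*$. This gives $\theta^S\ge\theta_0/2$ uniformly in $\zeta$. The constants depend only on the fixed parameters $\gamma,\theta_0,\mu,\kappa,\la$, so the smallness condition is independent of $\varepsilon$.

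The only step needing care is checking that the constant $C$ in \eqref{eq:center-bounds} is uniform. It is fixed once the chart neighborhood is chosen, and the orbit lies in that neighborhood for every small $\varepsilon$. So no real obstacle is expected.
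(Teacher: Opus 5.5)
Your proposal is correct and matches the paper's proof essentially step for step. Both read $\theta^S=\theta_0+(\gamma-1)\theta_0\eta+H^\theta(\eta,s(\varepsilon))$ off the center-manifold chart, bound $|H^\theta|\le C\varepsilon^2$ using \eqref{eq:center-bounds} and $-\varepsilon<\eta<0$, and then shrink the strength bound; the only cosmetic difference is that the paper states the two-sided bound $|\theta^S-\theta_0|\le C\varepsilon$, while you keep only the lower bound, which is all the lemma needs.
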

	\begin{proof}
		Since the temperature component of $R_0$ is
		$(\gamma-1)\theta_0$, \eqref{eq:center-chart} gives
		\begin{equation}\notag
			{\theta^S}=\theta_0+(\gamma-1)\theta_0\eta
			+H^\theta(\eta,s(\varepsilon)),
			\qquad
			|H^\theta|\leq C|\eta|
			\bigl(|\eta|+|s(\varepsilon)-c_0|\bigr)
			\leq C\varepsilon^2.
		\end{equation}
		Because $-\varepsilon\leq\eta\leq0$, this implies
		$|{\theta^S}-\theta_0|\leq C\varepsilon$. Decrease the strength bound
		so that $C\varepsilon\leq\theta_0/2$.
	\end{proof}
	
	\begin{lemma}\label{lem:decay}
		With ${\rho^S}(0)=1-\varepsilon/2$, the estimates \eqref{eq:decay}
		hold for every integer $k\geq0$, with a common strength bound
		$\varepsilon_{*}$ and decay rate $\vartheta>0$. In addition,
		\begin{equation}\label{eq:electric-decay}
			\begin{aligned}
				|\partial_\zeta^k{W^S}(\zeta)|
				&\leq C_k\varepsilon^{k+2}e^{-\vartheta\varepsilon|\zeta|},\\
				|\partial_\zeta^k({\rho^S}-e^{-{\phi^S}})(\zeta)|
				&\leq C_k\la^2\varepsilon^{k+3}e^{-\vartheta\varepsilon|\zeta|}.
			\end{aligned}
		\end{equation}
	\end{lemma}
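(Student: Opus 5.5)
The plan is to reduce everything to the scalar ODE $\eta'=\widehat g(\eta,\varepsilon)$ of Proposition~\ref{prop:heteroclinic} and then lift the estimates through the $C^r$ chart $Y(\eta,s(\varepsilon))$ of Proposition~\ref{prop:center}.

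\textbf{Step 1: factorise the reduced vector field.} Since $\widehat g(0,\varepsilon)=\widehat g(-\varepsilon,\varepsilon)=0$, write
\[
\widehat g(\eta,\varepsilon)=\eta(\eta+\varepsilon)\,h(\eta,\varepsilon)
\]
with $h$ of class $C^{r-2}$. From Lemma~\ref{lem:scalar-sign}, $\widehat g_\eta(0,\varepsilon)=\varepsilon h(0,\varepsilon)=\tfrac{\Gamma}{D}\varepsilon+O(\varepsilon^2)$. To see that $h$ is uniformly positive on $[-\varepsilon,0]$, I would expand $\widehat g$ to second order: $\widehat g(\eta,\varepsilon)=a\varepsilon\eta+b\eta^2+O(|\eta|+\varepsilon)^3$ with $a=\Gamma/D$. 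The zero at $\eta=-\varepsilon$ then forces $b=a$, so $h=\Gamma/D+O(\varepsilon)$. Hence $h\in[\Gamma/(2D),2\Gamma/D]$ for small $\varepsilon$, uniformly in $\eta\in[-\varepsilon,0]$. This is the main obstacle: one must justify uniformity of the Taylor remainder of a $C^r$ function of two variables near $(0,c_0)$ and track $s(\varepsilon)-c_0=O(\varepsilon)$. It is why $r\ge6$ is fixed.

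\textbf{Step 2: exponential decay of $\eta$.} With $\eta(0)=-\varepsilon/2$, the comparison $|\eta'|\ge c\,|\eta|\,|\eta+\varepsilon|$ gives, for $\zeta\le0$, $\eta'\le -c\varepsilon|\eta|/2$ since $\eta+\varepsilon\ge\varepsilon/2$. Integrating yields $|\eta(\zeta)|\le \tfrac{\varepsilon}{2}e^{-c\varepsilon|\zeta|/2}$. The analogous argument for $\zeta\ge0$ gives $|\eta+\varepsilon|\le\tfrac\varepsilon2 e^{-c\varepsilon\zeta/2}$; this fixes $\vartheta=c/2$.

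For derivatives, $\eta'=\widehat g$ gives $|\eta'|\le C\varepsilon\min(|\eta|,|\eta+\varepsilon|)$. Inductively, $\eta^{(k)}$ is a sum of products of $\partial_\eta^j\widehat g$ with lower derivatives, and $|\partial_\eta\widehat g|\le C\varepsilon$ on the interval. This yields $|\eta^{(k)}|\le C_k\varepsilon^{k}\min(|\eta|,|\eta+\varepsilon|)\le C_k\varepsilon^{k+1}e^{-\vartheta\varepsilon|\zeta|}$.

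\textbf{Step 3: lift to the profile and the end-state differences.} Since $\bar Y=Y(\eta,s(\varepsilon))$ with $Y$ smooth and $Y(0,s)=Y_-$, $Y(-\varepsilon,s)=Y_+$, the mean value theorem gives $|\bar Y-Y_\pm|\le C|\eta-\eta_\pm|$. The Fa\`a di Bruno formula then transfers the bounds of Step 2 to $\partial^k_\zeta(\bar Y-Y_\pm)$. For $u^S=s(1-1/\rho^S)$ the same argument applies.

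\textbf{Step 4: the refined bounds \eqref{eq:electric-decay}.} For $W$, write $W^S=\widehat g\,Y^W_\eta(\eta,s)$. Since the $W$-component of $R_0$ vanishes, \eqref{eq:center-bounds} gives $|Y_\eta^W|\le C\varepsilon$. Combined with $|\widehat g|\le C\varepsilon^2e^{-\vartheta\varepsilon|\zeta|}$, this yields the extra factor of $\varepsilon$; derivatives follow as in Step 2.

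For the quasineutrality defect, use $\rho^S-e^{-\phi^S}=\la^2 (W^S)'$, which gains one further derivative and hence one more power of $\varepsilon$. This gives $\la^2\varepsilon^{k+3}$.
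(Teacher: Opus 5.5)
Your Steps 1–3 follow the paper's argument: Hadamard factorisation $\widehat g=\eta(\eta+\varepsilon)h$, comparison on each half-line giving $\vartheta=\Gamma/(4D)$, induction on $\partial_\zeta^k\eta$, and lifting through the chart. They are correct, but only up to a finite order, and Step 4 contains an error. A side remark on Step 1: the uniform positivity of $h$ on $[-\varepsilon,0]$ comes simply from continuity of $h\in C^{r-2}$ together with $h(0,\varepsilon)=\widehat g_\eta(0,\varepsilon)/\varepsilon\to\Gamma/D$. The second-order Taylor discussion is not the real obstacle. On its own, a remainder $O((|\eta|+\varepsilon)^3)$ divided by $\eta(\eta+\varepsilon)$ would not be uniformly $O(\varepsilon)$ near the endpoints.

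\textbf{Uniformity in $k$.} The lemma asserts \eqref{eq:decay} for every $k\ge0$ with one $\varepsilon_*$ and one $\vartheta$. The chart $Y(\eta,\tau)$ and $\widehat g$ are only $C^r$ for the fixed finite $r$ of Proposition~\ref{prop:center}. Calling $Y$ ``smooth'' in Step 3 is not justified, since center manifolds are in general only $C^r$ on neighbourhoods that may shrink as $r\to\infty$. Your Step 2 induction and the Fa\`a di Bruno lift need bounds on $\partial_\eta^j\widehat g$ and $\partial_\eta^j Y$ for $j$ up to about $k$. So you only get $k\lesssim r$, and raising $r$ makes $\varepsilon_*$ depend on $k$.

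The missing step has two regimes. For a given $k$, take a $C^{k+4}$ center manifold, which exists for $\varepsilon<\varepsilon_k$. By the containment property of Theorem~\ref{thm:center}, the already constructed profile lies on it, so your calculation runs to order $k$. For $\varepsilon_k\le\varepsilon\le\varepsilon_*$, use instead the $C^\infty$ original system: $\partial_\zeta^k\bar Y=\mathscr F_k(\bar Y;s)$, where $\mathscr F_k$ is smooth and vanishes at $Y_\pm$. Hence
$|\partial_\zeta^k\bar Y|\le C_k|\bar Y-Y_\pm|\le C_k\varepsilon e^{-\vartheta\varepsilon|\zeta|}\le C_k\varepsilon_k^{-k}\varepsilon^{k+1}e^{-\vartheta\varepsilon|\zeta|}$.
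Thus only $C_k$ changes with $k$.

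\textbf{The identity for $W^S$.} In Step 4, $W^S=\widehat g\,Y^W_\eta(\eta,s)$ is false. Differentiating $\bar Y=Y(\eta,s)$ gives $(W^S)'=\widehat g\,Y^W_\eta$, whereas $W^S=(\phi^S)'=\widehat g\,Y^\phi_\eta$ with $Y^\phi_\eta=-1+O(\varepsilon)$. Your argument would yield $|W^S|\le C\varepsilon^3$, which is wrong. By Lemma~\ref{lem:monotonicity}, $|W^S(0)|\ge\tfrac12|\eta'(0)|=\tfrac{\varepsilon^2}{8}h(-\varepsilon/2,\varepsilon)\ge\tfrac{\Gamma}{16D}\varepsilon^2$.

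The correct route is immediate: $\partial_\zeta^kW^S=\partial_\zeta^{k+1}\phi^S$, so the $W$-bound is \eqref{eq:decay} at order $k+1$. Then $\rho^S-e^{-\phi^S}=\la^2(W^S)'$ gives the defect bound from order $k+2$, as in your last line. What you actually computed, $\widehat g\,Y^W_\eta=O(\varepsilon^3)$, is a correct bound for $(W^S)'$, i.e.\ for the defect at $k=0$.
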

	\begin{proof}
		Since $\widehat g(0,\varepsilon)=\widehat g(-\varepsilon,\varepsilon)=0$ in \eqref{eq:reduced-endpoints}, by Hadamard's lemma,  we have
		\begin{equation}\label{eq:factorisation}
			\widehat g(\eta,\varepsilon)=\eta(\eta+\varepsilon)b(\eta,\varepsilon),
			\qquad b\in C^{r-2}.
		\end{equation}
		Differentiating \eqref{eq:factorisation} at the two endpoints with \eqref{eq:reduced-left-slope} gives
		\begin{equation}\notag
			\begin{aligned}
				&\varepsilon b(0,\varepsilon)=\widehat g_\eta(0,\varepsilon)
				=
				\frac{\Gamma}{D}\varepsilon+O(\varepsilon^2),\qquad \widehat g_\eta(-\varepsilon,\varepsilon)
				=-\varepsilon b(-\varepsilon,\varepsilon).
			\end{aligned}
		\end{equation}
		The first equality and \eqref{eq:small-eigenvalue} show that
		$b(0,0)=\Gamma/D>0$. By continuity, after decreasing the fixed
		neighborhood and the strength bound,
		\begin{equation*}
			\frac{\Gamma}{2D}\leq b(\eta,\varepsilon)\leq\frac{3\Gamma}{2D}
			\qquad(-\varepsilon\leq\eta\leq0).
		\end{equation*}
		
		Fix the translation by $\eta(0)=-\varepsilon/2$. By Proposition \ref{prop:heteroclinic}, for $\zeta\leq0$ one
		has $\eta+\varepsilon\ge\varepsilon/2$, and for $\zeta\ge0$, one
		has $\eta\le-\varepsilon/2$. Thus,  \eqref{eq:factorisation} gives
		\begin{equation}\notag
			\frac{(-\eta)'}{-\eta}=(\eta+\varepsilon)b
			\geq\frac{\Gamma}{4D}\varepsilon\quad(\zeta\leq0),
			\qquad
			\frac{(\eta+\varepsilon)'}{\eta+\varepsilon}=\eta b
			\leq-\frac{\Gamma}{4D}\varepsilon\quad(\zeta\geq0).
		\end{equation}
		Integrating the two inequalities from the midpoint and taking
		$\vartheta=\Gamma/(4D)$ yields
		\begin{equation}\label{eq:eta-decay}
			\begin{gathered}
				|\eta(\zeta)-\eta_\pm|
				\leq\frac{\varepsilon}{2}e^{-\vartheta\varepsilon|\zeta|},
				\qquad
				|\eta'(\zeta)|\leq C\varepsilon^2e^{-\vartheta\varepsilon|\zeta|},\\
				\pm\zeta\geq0,\qquad \eta_-=0,\qquad\eta_+=-\varepsilon.
			\end{gathered}
		\end{equation}
		
		We next justify every power of $\varepsilon$ in the higher derivatives.
		On the whole segment, Proposition \ref{prop:heteroclinic} and \eqref{eq:factorisation} give
		\begin{equation*}
			|\widehat g_\eta(\eta,\varepsilon)|\leq C\varepsilon,
			\qquad |\partial_\eta^m \widehat g(\eta,\varepsilon)|\leq C_m
			\quad(m\geq2).
		\end{equation*}
		The first two differentiations illustrate the induction:
		\begin{equation*}
			\eta''=\widehat g_\eta\eta',
			\qquad
			\eta'''=\widehat g_{\eta\eta}(\eta')^2+\widehat g_\eta\eta''.
		\end{equation*}
		Suppose the desired estimate is known below order $k$. In
		$\partial_\zeta^k\eta=\partial_\zeta^{k-1}\widehat g(\eta,\varepsilon)$, the term
		linear in the highest derivative is
		$\widehat g_\eta\partial_\zeta^{k-1}\eta$ and is bounded by
		$C_k\varepsilon^{k+1}e^{-\vartheta\varepsilon|\zeta|}$.
		Every remaining term contains a bounded derivative
		$\partial_\eta^m \widehat g$ with $m\geq2$ and a product of $m$ derivatives of
		$\eta$, whose positive orders add to $k-1$. The induction hypothesis
		gives the power
		$\varepsilon^{(k-1)+m}\leq\varepsilon^{k+1}$ and at least the same
		exponential factor. Starting with the estimate for $\eta'$ in
		\eqref{eq:eta-decay}, induction proves
		\begin{equation}\label{eq:eta-higher}
			|\partial_\zeta^k\eta(\zeta)|
			\leq C_k\varepsilon^{k+1}e^{-\vartheta\varepsilon|\zeta|},
			\qquad k\geq1.
		\end{equation}
		This is valid for every order allowed by the chosen finite regularity.
		
		We now transfer the bounds to the full profile. For $k=0$, boundedness
		of $Y_\eta$ and the exact endpoint identities imply
		$|\bar Y-Y_\pm|\leq C|\eta-\eta_\pm|$. For $k\geq1$, differentiating
		$Y(\eta(\zeta),s(\varepsilon))$ produces the leading term
		$Y_\eta\partial_\zeta^k\eta$; every other term contains at least two
		lower derivatives of $\eta$ and therefore has an additional power of
		$\varepsilon$. Thus \eqref{eq:eta-higher} gives the asserted bounds for
		${\rho^S},{\theta^S}$ and ${\phi^S}$. Since ${\rho^S}\geq1/2$, the
		mass formula ${u^S}=s(1-1/{\rho^S})$ transfers the same bounds to
		${u^S}$. This proves \eqref{eq:decay} to the same finite order.
		
		To obtain all finite orders with one $\varepsilon_{*}$, fix the
		low-regularity construction above. Given $k$, choose a $C^r$ center
		manifold with $r\geq k+4$. For $0<\varepsilon<\varepsilon_k$, its
		containment property places the already constructed complete profile on
		this manifold, so the preceding calculation applies to order $k$.
		For $\varepsilon_k\leq\varepsilon\leq\varepsilon_{*}$, successive
		$\zeta$-derivatives along the smooth original ODE are smooth functions of
		$(Y,s)$ that vanish at each equilibrium. On a fixed compact
		neighborhood the mean-value theorem bounds them by
		$C_k|Y-Y_\pm|$. The zeroth-order estimate therefore gives
		$C_k\varepsilon e^{-\vartheta\varepsilon|\zeta|}$, which is bounded by
		$C_k\varepsilon_k^{-k}\varepsilon^{k+1}
		e^{-\vartheta\varepsilon|\zeta|}$. Thus only $C_k$, not
		$\varepsilon_{*}$ or $\vartheta$, changes with $k$.
		
		Finally, ${W^S}=(\phi^S)'$ and
		${\rho^S}-e^{-{\phi^S}}=\la^2(\phi^S)''$. Applying
		\eqref{eq:decay} at orders $k+1$ and $k+2$ gives
		\eqref{eq:electric-decay}.
	\end{proof}
	
	\begin{lemma}\label{lem:uniqueness}
		There is a fixed neighborhood $\mathcal N$ of $Y_-$ such that
		any two smooth traveling-wave profiles with the prescribed
		end states, whose phase curves remain in $\mathcal N$ and
		converge to $Y_-$ and $Y_+(\varepsilon)$ at the two ends,
		differ only by translation.
	\end{lemma}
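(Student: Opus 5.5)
The plan is to use the containment property of the center manifold in Proposition~\ref{prop:center}. Choose $\mathcal N$ so small that $\mathcal N\times\{\tau:|\tau-c_0|<r\}$ lies inside the fixed smaller neighborhood in which every complete orbit of the speed-extended system \eqref{eq:extended} lies on the center manifold. Also shrink $\varepsilon_*$ so that $|s(\varepsilon)-c_0|<r$.

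Now take any smooth traveling-wave profile with the prescribed end states whose phase curve $\widetilde Y(\zeta)$ stays in $\mathcal N$. By Lemma~\ref{lem:equilibria} and the integrations \eqref{eq:mass}--\eqref{eq:theta-ode}, $\widetilde Y$ solves $Y'=\mathscr F(Y;s)$. The speed $s$ is forced by the Rankine--Hugoniot conditions; by the local uniqueness of the Hugoniot branch in Lemma~\ref{lem:Hugoniot}, it equals $s(\varepsilon)$. Hence $(\widetilde Y(\zeta),s(\varepsilon))$ is a complete orbit of \eqref{eq:extended} that remains in the containment neighborhood for all $\zeta\in\R$, so it lies on the center manifold. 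In the chart \eqref{eq:center-chart} we therefore have $\widetilde Y(\zeta)=Y(\widetilde\eta(\zeta),s(\varepsilon))$ with $\widetilde\eta=\widetilde\rho-1$. By \eqref{eq:reduced-tau}, $\widetilde\eta$ solves the scalar equation $\widetilde\eta'=\widehat g(\widetilde\eta,\varepsilon)$.

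The end-state convergence gives $\widetilde\eta\to0$ as $\zeta\to-\infty$ and $\widetilde\eta\to-\varepsilon$ as $\zeta\to+\infty$. The scalar ODE with a $C^1$ right-hand side cannot reach an equilibrium in finite time, and by Lemma~\ref{lem:no-interior-zero} there are no other zeros on $[-\varepsilon,0]$. A continuity argument, using monotonicity of scalar autonomous solutions, then confines $\widetilde\eta$ to $(-\varepsilon,0)$. Consequently $\widetilde\eta$ attains the value $-\varepsilon/2$ at some $\zeta_0$. The uniqueness part of Proposition~\ref{prop:heteroclinic} gives $\widetilde\eta(\cdot+\zeta_0)=\eta$, and lifting through the chart yields $\widetilde Y(\cdot+\zeta_0)=\bar Y$. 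Finally, $\widetilde u$ is determined by $\widetilde\rho$ through \eqref{eq:mass}, so the full profiles coincide up to translation.

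The main obstacle is the confinement step. One must exclude a solution of the scalar equation leaving $[-\varepsilon,0]$ while staying in $\mathcal N$. For a scalar autonomous ODE, however, solutions are monotone between consecutive zeros. Since the limits are $0$ and $-\varepsilon$, with no equilibria in between and the chart valid on a neighborhood of $[-\varepsilon,0]$, the orbit must be the monotone connection. A secondary point is checking that the containment neighborhood is uniform in $\tau$ near $c_0$, which holds because we work with the speed-extended system.
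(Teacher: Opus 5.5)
This is essentially the paper's proof: the complete orbit $(\widetilde Y,s(\varepsilon))$ lies on the center manifold by containment, reduces to $\widetilde\eta'=\widehat g(\widetilde\eta,\varepsilon)$, and the uniqueness-up-to-translation part of Proposition~\ref{prop:heteroclinic} is then lifted back through the chart and the mass relation. Three small remarks:
\begin{enumerate}
\item The paper fixes the speed directly from the mass jump condition, $\widetilde s=\rho_+u_+/(\rho_+-1)=s(\varepsilon)$. Your appeal to local uniqueness of the Hugoniot branch would require knowing a priori that $\widetilde s$ is near $c_0$.
\item The paper justifies the integration constants by showing $\widetilde u',\widetilde\theta'\to0$ at $-\infty$ from the integrated momentum and energy equations. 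You should include this step rather than cite \eqref{eq:mass}--\eqref{eq:theta-ode}, which were derived assuming that decay.
\item Your confinement step is correct but unnecessary. The intermediate value theorem already gives $\zeta_0$ with $\widetilde\eta(\zeta_0)=-\varepsilon/2$, and uniqueness for the scalar initial-value problem does the rest.
\end{enumerate}
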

	
	\begin{proof}
		Let $\mathcal U$ be the neighborhood in the containment
		assertion of Proposition \ref{prop:center}. Choose a fixed
		neighborhood $\mathcal N$ of $Y_-$ and an interval $I$
		containing $c_0$ such that $\mathcal N\times I\subset\mathcal U$.
		For sufficiently small $\varepsilon>0$, we have
		$s(\varepsilon)\in I$.
		Let $(\widetilde\rho,\widetilde u,\widetilde\theta,
		\widetilde\phi)$ be any profile satisfying the hypotheses,
		with speed $\widetilde s$, and write
		$
		\widetilde Y
		=
		(\widetilde\rho,\widetilde\theta,
		\widetilde\phi,\widetilde\phi').
		$
		The mass Rankine--Hugoniot condition gives
		$
		\widetilde s(\rho_+-1)=\rho_+u_+,$ and $
		\widetilde s
		=\frac{\rho_+u_+}{\rho_+-1}
		=s(\varepsilon),
		$
		since $\rho_+-1=-\varepsilon\ne0$.
		
		Integrating the mass equation gives \eqref{eq:mass}.
		The integrated momentum equation and the convergence of
		$\widetilde Y$ imply that $\widetilde u'$ has a limit at
		$-\infty$. This limit must be zero because $\widetilde u$
		converges, so the integration constant is fixed by the left
		state as in \eqref{eq:integrated-momentum}. The integrated
		energy equation similarly gives $\widetilde\theta'\to0$
		at $-\infty$ and fixes the remaining constant as in
		\eqref{eq:theta-ode}. Together with Poisson's equation,
		these identities show that
		$
		\widetilde Y'
		=
		\mathscr F(\widetilde Y;s(\varepsilon)).
		$
		
		Thus $(\widetilde Y(\zeta),s(\varepsilon))$ is a complete
		orbit of \eqref{eq:extended} contained in $\mathcal U$.
		By Proposition \ref{prop:center}, it lies on the center
		manifold constructed there. Hence, with
		$\widetilde\eta=\widetilde\rho-1$,
		\[
		\widetilde Y(\zeta)
		=
		Y(\widetilde\eta(\zeta),s(\varepsilon)),
		\quad
		\widetilde\eta'
		=
		\widehat g(\widetilde\eta,\varepsilon),\quad\lim_{\zeta\to-\infty}\widetilde\eta(\zeta)=0,
		\quad
		\lim_{\zeta\to+\infty}\widetilde\eta(\zeta)=-\varepsilon.
		\]
		By Proposition \ref{prop:heteroclinic},
		$\widetilde\eta$ is a translate of the scalar solution
		constructed there. The common parameterization
		$Y(\eta,s(\varepsilon))$ and the mass relation
		\eqref{eq:mass} then show that
		$(\widetilde\rho,\widetilde u,\widetilde\theta,\widetilde\phi)$
		is the same translate of
		$(\rho^S,u^S,\theta^S,\phi^S)$.
		Thus any two such profiles differ only by translation.
	\end{proof}

	\subsection{Proof of Theorem \ref{thm:existence}}
	\label{sec:proof-main}
	
	Take $\varepsilon_*>0$ sufficiently small for the preceding
	results to apply. By \eqref{eq:RH-mass}--\eqref{eq:RH-energy}
	and Lemma \ref{lem:Hugoniot}, the prescribed right state is
	$Y_+(\varepsilon)$ with speed $s(\varepsilon)$.
	Proposition \ref{prop:heteroclinic} and the construction following
	its proof give the required smooth traveling-wave profile.
	Lemmas \ref{lem:monotonicity}--\ref{lem:decay} yield
	\eqref{eq:qualitative}--\eqref{eq:decay}, while
	Lemma \ref{lem:uniqueness} gives uniqueness up to translation
	in $\mathcal N$. The uniformity assertions follow from these
	results; in particular, Lemma \ref{lem:decay} provides a common
	strength bound and decay rate for all finite derivative orders.
	This completes the proof. \hfill$\square$

\medskip
To conclude this section, we list three restrictions implicit in Theorem~\ref{thm:existence}.

	\begin{remark}
		The assumption $\theta_0>0$ is used in the physical state space and
		the uniform lower bound for temperature. A cold left state with $\theta_0=0$  is not
		covered by Theorem \ref{thm:existence}.
	\end{remark}
	
	\begin{remark}
		Both the construction and uniqueness are local in the full phase space
		$(\rho,\theta,\phi,W)$. No large-amplitude existence or global
		uniqueness assertion is made.
	\end{remark}
	
	\begin{remark}
		The Debye length $\la>0$ is fixed. Although the leading scalar
		coefficient $\Gamma/D$ is independent of $\la$, the neighborhoods
		and remainder bounds need not be. A uniform quasineutral limit requires
		additional estimates.
	\end{remark}
	
	\section{Asymptotic stability for the shock profile}\label{sec:AC}
	
	This section proves Theorem~\ref{thm:AC}. In Section~\ref{sec:reference} the system is written in Lagrangian mass coordinates and the shock profile of Theorem~\ref{thm:existence} is transferred to those coordinates. Section~\ref{sec:shift} introduces the time-dependent shift $X(t)$ of the reference profile. The a priori framework is stated in Section~\ref{sec:apriori}. Section~\ref{sec:relative} derives the weighted relative-entropy identity, including the electric-energy correction. Section~\ref{sec:decomp} decomposes the identity into good terms, remainder terms and shift terms, and fixes the shift ODE. The leading quadratic contributions of the hyperbolic flux, the shift dissipation and the viscous-thermal diffusion associated with $P_{\mathrm{eff}}$ are estimated in Section~\ref{sec:leading}. Section~\ref{sec:zeroth} controls the remaining lower-order terms and yields the zeroth-order energy inequality. Poisson estimates in Section~\ref{sec:poisson} absorb the electric remainders. First- and second-order fluid energies are obtained in Section~\ref{sec:derivatives}. Section~\ref{sec:closure} closes the a priori estimate and concludes global existence together with the orbital stability of the profile.
	
	\subsection{Lagrangian formulation and shock profiles}\label{sec:reference}
	Starting from \eqref{eq:NSP-1}, we introduce the moving coordinate
	$\xi=x-\sigma t$ and retain the same notation for the transformed unknowns.
	Then $\partial_t$ becomes $\partial_t-\sigma\partial_\xi$ and
	$\partial_x$ becomes $\partial_\xi$, so the system reads
	\begin{equation}\label{eq:NSP}
		\left\{\begin{aligned}
			&v_t-\sigma v_\xi-u_\xi=0,\\
			&u_t-\sigma u_\xi+p_\xi=\mu\left(\frac{u_\xi}{v}\right)_\xi
			+\frac{\phi_\xi}{v},\qquad p=\frac\theta v,\\
			&\cv(\theta_t-\sigma\theta_\xi)+p u_\xi
			=\kappa\left(\frac{\theta_\xi}{v}\right)_\xi+\mu\frac{u_\xi^2}{v},\\
			&\la^2\left(\frac{\phi_\xi}{v}\right)_\xi=1-ve^{-\phi}.
		\end{aligned}\right.
	\end{equation}
	Due to the fact that
	\begin{align}\notag
		&\frac{\phi_\xi}{v}
		=\left[\frac{\la^2}{2}\left(\frac{\phi_\xi}{v}\right)^2-e^{-\phi}\right]_\xi,\qquad
		\pp(v,\theta):=\Peff(v^{-1},\theta)=\frac{\theta+1}{v},
	\end{align}
	the momentum equation in \eqref{eq:NSP}$_2$ can be written as 
	\begin{align}
		u_t-\sigma u_\xi+(\pp)_\xi
		&=\mu\left(\frac{u_\xi}{v}\right)_\xi+F_\xi,
		\label{eq:effective}
	\end{align}
	where
	\begin{align}       
		F=\frac1v-e^{-\phi}+\frac{\la^2\phi_\xi^2}{2v^2}
		=\la^2\left(\frac{\phi_{\xi\xi}}{v^2}
		-\frac{v_\xi\phi_\xi}{v^3}+\frac{\phi_\xi^2}{2v^2}\right).\label{eq:stress}
	\end{align}
	The mass equation implies that $(s-u^S)/v^S=\sigma$ along the shock.
	The Lagrangian mass transformation gives the following relations between
	$\xi$ and the Eulerian traveling coordinate $\zeta$:
	\begin{align}
		&\sigma=\frac{s-u_-}{v_-}=\frac{s-u_+}{v_+},\qquad
		\frac{\dd\xi}{\dd\zeta}=\frac1{v^S},\qquad
		\frac{\dd}{\dd\xi}=v^S\frac{\dd}{\dd\zeta}.\label{eq:speed}
	\end{align}
	Without loss of generality, the midpoint condition fixes the translation of the profile:
	\begin{align}
		&(v^S,u^S,\theta^S,\phi^S)(-\infty)=(v_-,u_-,\theta_-,\log v_-),\quad
		(v^S,u^S,\theta^S,\phi^S)(+\infty)=(v_+,u_+,\theta_+,\log v_+),\notag
		\\
		&\delta_S=v_+-v_->0,\quad \sigma>0,\quad
		(v^S)' >0,\quad (u^S)'=-\sigma(v^S)',\quad
		v^S(0)=\frac{v_-+v_+}{2}.\label{eq:shock}
	\end{align}
	The profiles in \eqref{eq:shock} are supplied by Theorem \ref{thm:existence}:
	\begin{align}\label{eq:profile-identification}
		&v^S(\xi):=\frac{1}{\rho^S(\zeta(\xi))},\qquad
		(u^S,\theta^S,\phi^S)(\xi):=(u^S,\theta^S,\phi^S)(\zeta(\xi)),\notag\\
		&(v_-,u_-,\theta_-,\phi_-)=(1,0,\theta_0,0),\qquad
		\delta_S=v_+-v_-=\frac{\varepsilon}{1-\varepsilon},\qquad
		P(v,\theta)=\Peff(v^{-1},\theta).
	\end{align}
	For the profile estimates below, set
	\begin{align}
		&
		\sigma^*=\frac{\sqrt{\gamma\theta_-+1}}{v_-},\qquad
		\alpha^*=\frac{\gamma(\gamma+1)\theta_-+2}{2(v_-)^3\sigma^*},\label{eq:constants}
	\end{align}
	where \(\sigma^*\) is the limiting wave speed in mass coordinates, while \(\alpha^*\) measures the quadratic variation of the pressure.
	The fixed smooth center manifold in Proposition~\ref{prop:center},
	together with Lemma \ref{lem:decay} and \eqref{eq:speed}, gives
	\begin{align}
		&|(v^S,\theta^S)-(v_-,\theta_-)|+|\sigma-\sigma^*|\le C\delta_S,
		\qquad 0<(v^S)'\le C\delta_S^2e^{-c\delta_S|\xi|},\label{eq:profilebounds}\\
		&|\partial_\xi^j(v^S,u^S,\theta^S,\phi^S)|
		\le C\delta_S^{j-1}(v^S)',\quad 1\le j\le3,
		\label{eq:profilederivatives}\\
		&\left|\frac{(\theta^S)'}{(v^S)'}
		+\frac{(\gamma-1)\theta_-}{v_-}\right|\le C\delta_S,
		\quad |v^S-e^{\phi^S}|\le C\delta_S(v^S)',
		\quad |(v^S-e^{\phi^S})'|\le C\delta_S^2(v^S)'.
		\label{eq:profileslopes}
	\end{align}
	The sharp diffusion estimates used in the stability proof are established in Lemma \ref{lem:sharp} in Appendix~\ref{app:sharp}.
	
	We study perturbations of the  shock profile in \eqref{eq:profile-identification} translated by $X(t)$ to be defined in \eqref{eq:shiftODE} later:
	\begin{equation}\label{eq:shiftedprofile}
		(\bar v,\bar u,\bar\theta,\bar\phi)(t,\xi)
		=(v^S,u^S,\theta^S,\phi^S)(\xi-X(t)).
	\end{equation}
	With $\bar p=\bar\theta/\bar v$, $\bar\pp=(\bar\theta+1)/\bar v$
	and $\bar F=F(\bar v,\bar\phi)$, the profile \eqref{eq:shiftedprofile} satisfies
	\begin{equation}\label{eq:shiftedNSP}
		\left\{\begin{aligned}
			&\bar v_t-\sigma\bar v_\xi+\dot X\bar v_\xi-\bar u_\xi=0,\\
			&\bar u_t-\sigma\bar u_\xi+\dot X\bar u_\xi+\bar p_\xi
			=\mu\left(\frac{\bar u_\xi}{\bar v}\right)_\xi+\frac{\bar\phi_\xi}{\bar v},\\
			&\cv(\bar\theta_t-\sigma\bar\theta_\xi+\dot X\bar\theta_\xi)
			+\bar p\bar u_\xi
			=\kappa\left(\frac{\bar\theta_\xi}{\bar v}\right)_\xi
			+\mu\frac{\bar u_\xi^2}{\bar v},\\
			&\la^2\left(\frac{\bar\phi_\xi}{\bar v}\right)_\xi=1-\bar v e^{-\bar\phi}.
		\end{aligned}\right.
	\end{equation}
	The perturbations are
	\begin{equation}\label{eq:perturbations}
		\tv=v-\bar v,\qquad \tu=u-\bar u,\qquad
		\tz=\theta-\bar\theta,\qquad \tp=\phi-\bar\phi.
	\end{equation}
	The standard local theory \cite{KKS,KVW} gives a unique strong solution of \eqref{eq:NSP}
	for compatible $H^2$ perturbations with positive $v,\theta$, which continues
	as long as $v,\theta$ stay bounded away from zero and the fluid perturbation
	remains bounded in $H^2$ on finite time intervals.
	
	\subsection{Construction of the shift}\label{sec:shift}
	The shift in \eqref{eq:shiftedprofile} is chosen as in
	Subsection~\ref{sec:shiftchoice} and solves
	\begin{equation}\label{eq:shiftODE}
		\left\{\begin{aligned}
			\dot X(t)&=-\frac M{\delta_S}\intR a(\xi-X(t))\left[
			\bar u_\xi\tu
			+\frac{\bar\theta+1}{\bar v^2}\bar v_\xi\tilde v
			+\frac{\bar\theta_\xi}{(\gamma-1)\bar\theta} \tilde \theta
			\right]\dd\xi,\\
			X(0)&=0,
		\end{aligned}\right.
	\end{equation}
	where the weight $a$ is defined in \eqref{eq:weight}, and
	\begin{equation}\label{eq:M}
		M=\frac{3\alpha^*}{2(\sigma^*)^2}
		\frac{\mu(\gamma\theta_-+1)+2\kappa(\gamma-1)^2\theta_-}
		{\mu(\gamma\theta_-+1)+\kappa(\gamma-1)^2\theta_-}>0.
	\end{equation}
	The weight satisfies $1\le a\le1+\sqrt{\delta_S}\le2$.
	The choice \eqref{eq:shiftODE} extracts the linear shift terms in
	\eqref{eq:Y123}; its dissipative contribution is computed in
	\eqref{eq:shiftidentity}. The existence and bounds for $X(t)$ are also
	given in Subsection~\ref{sec:shiftchoice}.
	
	\subsection{Statement of the a priori estimates}\label{sec:apriori}
	The key estimate for Theorem \ref{thm:AC} is stated below.
	\begin{proposition}\label{thm:closure}
		For the left state as in
		Subsection~\ref{sec:reference}, there exist $\delta_0,\varepsilon_1,C>0$,
		independent of $T$, such that the following holds. Let $(v,u,\theta,\phi)$
		be a strong solution of \eqref{eq:NSP} on $[0,T]$ with compatible Poisson
		data, and let $(\bar v,\bar u,\bar\theta,\bar\phi)$ be the profile
		\eqref{eq:shiftedprofile} with the shift $X(t)$ and being
		the absolutely continuous solution to 
		\eqref{eq:shiftODE} with weight function $a$ defined in \eqref{eq:weight}. Assume that the
		perturbations \eqref{eq:perturbations} satisfy
		\[
		\begin{aligned}
			\tv&\in C([0,T];H^2(\R)),\quad
			(\tu,\tz)\in C([0,T];H^2(\R))\cap L^2(0,T;H^3(\R)),\quad
			\tp\in C([0,T];H^3(\R)),
		\end{aligned}
		\]
		and
		\begin{equation}\label{eq:smallness}
			\begin{gathered}
				0<\delta_S\le\delta_0,\qquad
				2\sqrt{\delta_S}\le\delta_0,\\
				\sup_{0\le t\le T}\norm{(\tv,\tu ,\tz,\tp)(t)}_{H^2(\R)}\le\varepsilon_1,
				\qquad \delta_0,\varepsilon_1\ll1.
			\end{gathered}
		\end{equation}
		Then, for $0\le t\le T$,
		\begin{equation}\label{eq:closedestimate}
			{\begin{aligned}
					&\sup_{0\le s\le t}
					\left[\norm{(\tv,\tu ,\tz)(s)}_{H^2}^2+\norm{\tp(s)}_{H^3}^2\right]\\
					&\quad+\int_0^t\left[\delta_S|\dot{X}|^2+\Gs+\G_1+\G_2
					+\norm{\tv_\xi}_{H^1}^2+\norm{\tu _\xi,\tz_\xi}_{H^2}^2
					+\norm{\tp_\xi}_{H^2}^2+\norm\chi_{H^2}^2\right]\dd s\\
					&\le C\norm{(\tv,\tu ,\tz)(0)}_{H^2}^2,
			\end{aligned}}
		\end{equation}
		where $\Gs,\G_1,\G_2$ and $\chi$ are defined in
		\eqref{eq:gooddef}, \eqref{eq:goodsquares} and \eqref{eq:chi}, respectively.
	\end{proposition}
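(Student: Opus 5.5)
The plan is to combine a weighted relative-entropy ($a$-contraction) estimate at zeroth order with standard higher-order energy estimates and elliptic Poisson estimates, and then close by smallness of $\delta_S$ and $\varepsilon_1$.

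\emph{Step 1: zeroth-order identity.} Build the functional $\mathcal E(t)=\intR a(H_0+\eta_\phi)\dd\xi$, where $H_0$ is the relative entropy with effective pressure $\pp=(\theta+1)/v$. The correction $\eta_\phi$ is chosen so that the electric work $\phi_\xi/v$, written as $F_\xi$ through \eqref{eq:effective}--\eqref{eq:stress}, becomes a time derivative plus a flux plus quadratic remainders. Coercivity of $H_0+\eta_\phi$ follows for small perturbations, since the mixed term $\tv\,\tp_{\xi\xi}$ is dominated by $\Phi$ and $\la^2|\tp_{\xi\xi}|^2$. Differentiating in time and using \eqref{eq:NSP} together with \eqref{eq:shiftedNSP} gives
\[
\frac{\dd}{\dd t}\mathcal E=\dot X\,\mathcal Y+\mathcal J^{\mathrm{bad}}-\mathcal J^{\mathrm{good}}.
\]
Here $\mathcal Y$ contains the linear terms $\Y_1+\Y_2+\Y_3$. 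With the shift ODE \eqref{eq:shiftODE}, $\dot X\,\mathcal Y$ produces $-\frac{\delta_S}{M}|\dot X|^2$ plus harmless pieces.

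\emph{Step 2: the leading quadratic terms.} This is the main obstacle. The hyperbolic bad term $\B_1$, coming from $a_\xi\,|\pp-\bar\pp|^2$, and the cross term $\B_2$ must be absorbed by the shift dissipation, the weight term $\G\sim\intR a_\xi|\tu-(\pp-\bar\pp)/\sigma|^2$, and the diffusion $\D\sim\intR|\partial_\xi(\pp-\bar\pp)|^2$. I would localize in the normalized coordinate $y=(v^S(\xi-X)-v_-)/\delta_S\in(0,1)$ and set $w=\pp-\bar\pp$ restricted there. The sharp diffusion estimate of Lemma~\ref{lem:sharp} identifies the diffusion coefficient as $\frac{\mu(\gamma\theta_-+1)+\kappa(\gamma-1)^2\theta_-}{\cdots}$ times $y(1-y)/\bar v_\xi$. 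This coefficient is exactly what enters $M$ in \eqref{eq:M}. The weighted Poincaré inequality $\int_0^1|f-\bar f|^2\le\frac12\int_0^1 y(1-y)|f'|^2$ then yields $-\frac{\delta_S}{2M}|\dot X|^2+\B_1+\B_2-\G-\frac34\D\le -c\,\Gs$ (Lemma~\ref{lem:leading}). The temperature part must be matched by splitting $\tz$ along the profile slope \eqref{eq:profileslopes} and controlling the orthogonal part with the heat dissipation $\intR|\tz_\xi|^2$. I expect the exact constant bookkeeping in this step to be the delicate point.

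\emph{Step 3: remainders and the Poisson part.} Cubic and profile-derivative terms are bounded by $C(\delta_S+\varepsilon_1)$ times the good terms, using \eqref{eq:profilederivatives}. The electric remainders $\Rp$ require elliptic estimates. Linearize $\la^2(\phi_\xi/v)_\xi=1-ve^{-\phi}$ about the profile and test it with $\tp,\tp_{\xi\xi}$ to obtain $\|\tp\|_{H^{k+2}}\lesssim\|\tv\|_{H^k}$. Differentiating in $t$ and using $v_t=\sigma v_\xi+u_\xi$ gives $\|\tp_t\|_{H^{k+2}}\lesssim\|\tu_\xi\|_{H^k}+|\dot X|\delta_S+\cdots$. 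Together these give the $\tp$ dissipation integrals. The quantity $\chi$, which combines $\tv$ and $\tp$ quasineutrally, supplies $\tv$ dissipation through the momentum equation.

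\emph{Step 4: higher derivatives and closing.} Obtain the dissipation of $\tv_\xi$ by testing the momentum equation with $\tv_\xi$, in the standard effective-velocity way. Then take $\xi$-derivatives of the perturbed system up to order two and run $L^2$ energy estimates. In these, viscosity and heat conduction control $\|(\tu_\xi,\tz_\xi)\|_{H^2}$, the Poisson bounds treat the electric forcing, and products are handled by Sobolev inequalities and the smallness \eqref{eq:smallness}. Summing the estimates with suitable small weights and integrating in time gives \eqref{eq:closedestimate}. The initial Poisson norm is controlled by $\|\tv(0)\|_{H^2}$ through the elliptic estimate.
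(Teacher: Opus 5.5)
Your overall architecture matches the paper's: a weighted relative entropy with an electric correction, the shift ODE, a leading-order estimate in $y$ via Lemma~\ref{lem:sharp} and weighted Poincar\'e, spatial and time-differentiated Poisson estimates, a mixed energy for $\tv_\xi$, parabolic derivative energies, and a weighted sum. The gap is in Step~2, which you rightly call the crux. You describe it with the barotropic effective-velocity mechanism, and that mechanism is not available in the formulation you fix in Step~1.

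With $H_0$ built on $\tu$ and viscosity acting on $u$, the zeroth-order dissipation is only $\D=\intR a[\frac{\mu}{v}\tu_\xi^2+\frac{\kappa}{v\theta}\tz_\xi^2]\dd\xi$. Since $v$ is merely transported, there is no term $\intR|\partial_\xi(\pp-\bar\pp)|^2$, and no Poincar\'e inequality can be run in $w=\pp-\bar\pp$. An effective velocity $u-\mu(\ln v)_\xi$ would diffuse $\ln v$, not $\pp-\bar\pp$, and is not what your $H_0$ uses.

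Nor is there a bad term $\intR a_\xi|\pp-\bar\pp|^2$ to absorb. Completing squares in $\B_2-\G=\intR a_\xi[\tu(\pp-\bar\pp)-\sigma H_0]\dd\xi$, including the $\Phi$-parts of $H_0$, gives $-\G_1-\G_2$ up to $O(\sqrt{\delta_S}+\varepsilon_1)$ errors. Here $\G_1,\G_2$ are the $a_\xi$-weighted squares of $\tv+\tu/\sigma^*$ and $\tz-\frac{(\gamma-1)\theta_-}{v_-\sigma^*}\tu$. Both are needed; a single square in $\tu-(\pp-\bar\pp)/\sigma$ does not control the temperature.

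The genuinely bad term is the $\bar v_\xi$-weighted profile term $\B_1$. After replacing $\tv,\tz$ by multiples of $\tu$ modulo $\G_1,\G_2$, it is at most $\alpha^*(1+o(1))\delta_S\int_0^1\tu^2\dd y+C\delta_S^{1/4}(\G_1+\G_2)$. So the principal variable must be $\tu$.

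Heat conduction also enters the other way round from your sketch. The transverse part of $\tz$ is controlled by $\G_2$. The dissipation $\D_2$, through Poincar\'e for $\tz$ and its component parallel to $\tu$, must instead supply extra control of $\tu$. This is indispensable: $\frac34\D_1$ alone yields only $\frac32\frac{\mu(\gamma\theta_-+1)}{\mu(\gamma\theta_-+1)+\kappa(\gamma-1)^2\theta_-}\alpha^*$, which falls below $\alpha^*$ when heat conduction dominates. With both pieces, $\frac34\D\ge\frac32\alpha^*(1-o(1))\delta_S\int_0^1\tu^2\dd y$, minus multiples of $\delta_S\bar w^2$ (with $\bar w=\int_0^1\tu\dd y$) and small multiples of $\G_2$. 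The mean term is then removed by the shift dissipation, since $\dot X\approx2M\sigma^*\bar w$, exactly through the choice of $M$ in \eqref{eq:M}.

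Three smaller points.

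First, in Step~4 the electric force cannot be treated as a remainder via elliptic bounds. Testing the momentum equation with $-v\tv_\xi$ produces $\intR\tv_\xi\tp_\xi\dd\xi$, and the elliptic estimate only gives $\norm{\tp_\xi}_2\le C\norm{\tv_\xi}_2+\cdots$ with $C$ of order one. An absolute-value bound therefore competes with $\intR\frac{\theta}{v}\tv_\xi^2\dd\xi$ and fails for general $\theta_0>0$. You need the sign of the pairing, $\intR\partial_\xi^j\tv\,\partial_\xi^j\tp\dd\xi\ge c\norm{\partial_\xi^j\tp}_{H^1}^2-C(\delta_0+\varepsilon_1)\bigl[\norm{\tv_\xi}_{H^{j-1}}^2+\Gs\bigr]$. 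This comes from testing the differentiated Poisson equation with $\partial_\xi^j\tp$, and it is where the $+1$ of $\Peff$ reappears.

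Second, $\chi$ in \eqref{eq:chi} is not a quasineutral combination of $\tv$ and $\tp$, and it provides no $\tv$-dissipation; that comes from your mixed energy. It is $(\partial_t-\sigma\partial_\xi)\phi+\sigma\bar\phi_\xi$, the time derivative of $\tp$ with the shift contribution $\dot X\bar\phi_\xi$ removed. It solves a coercive elliptic equation whose forcing is controlled by $\tu_\xi$, which gives $\norm{\chi}_{H^2}^2\le C\D+C(\delta_0+\varepsilon_1)^2\bigl(\norm{\tv_\xi}_2^2+\norm{\tu_{\xi\xi}}_2^2+\Gs\bigr)$. This is essentially your Step~3 time-derivative bound, and it is what absorbs the $\chi_\xi,\chi_{\xi\xi}$ terms in $\Rp$.

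Third, the spatial elliptic gain is $\norm{\tp}_{H^{k+1}}\lesssim\norm{\tv}_{H^k}$, not $H^{k+2}$, because of the divergence-form term $\la^2[\bar\phi_\xi(\frac1v-\frac1{\bar v})]_\xi$. The $H^{k+1}$ bound suffices.
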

	
	The proof of Proposition~\ref{thm:closure} is completed in
	Subsection~\ref{sec:closure} by combining the relative-entropy and
	derivative estimates. We first establish the basic energy estimate in the following subsection.
	
	\subsection{Relative-entropy estimates}\label{sec:relative}	
	\begin{lemma}\label{lem:basicenergy}
		Under the hypotheses of Proposition~\ref{thm:closure}, there is a constant
		$C>0$, independent of $\delta_0,\varepsilon_1,T$, such that for $0\le t\le T$,
		\begin{equation}\label{eq:basicenergy}
			\begin{aligned}
				&\sup_{0\le s\le t}\left[
				\norm{(\tv,\tu,\tz)(s)}_2^2
				+\la^2\norm{\tp_\xi(s)}_2^2+\la^4\norm{\tp_{\xi\xi}(s)}_2^2\right]\\
				&\quad+\int_0^t\left[\delta_S|\dot X|^2+\Gs+\G_1+\G_2
				+\norm{(\tu_\xi,\tz_\xi)}_2^2\right]\dd s\\
				&\le C\left[\norm{(\tv,\tu,\tz)(0)}_2^2
				+\la^2\norm{\tp_\xi(0)}_2^2+\la^4\norm{\tp_{\xi\xi}(0)}_2^2\right]\\
				&\quad+C(\sqrt{\delta_0}+\varepsilon_1)\int_0^t
				\left(\norm{\tv_\xi}_2^2+\norm{\tu_{\xi\xi}}_2^2\right)\dd s,
			\end{aligned}
		\end{equation}
		where $\Gs,\G_1,\G_2$ are defined in
		\eqref{eq:gooddef}--\eqref{eq:goodsquares}.
	\end{lemma}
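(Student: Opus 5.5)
The plan follows the $a$-contraction scheme sketched in the introduction. The estimate comes from the weighted relative-entropy functional
\[
\mathcal E(t)=\intR a(\xi-X(t))\bigl(H_0+\eta_\phi\bigr)\dd\xi,
\]
with the weight $a$ from \eqref{eq:weight}, $H_0$ the relative entropy built from $\Peff$ (through $\pp=(\theta+1)/v$) and $\eta_\phi$ the electric-energy correction.

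\textbf{Coercivity.} I would first establish coercivity: $\mathcal E\sim\norm{(\tv,\tu,\tz)}_2^2+\la^2\norm{\tp_\xi}_2^2+\la^4\norm{\tp_{\xi\xi}}_2^2$. The linearized Poisson equation $\la^2(\tp_\xi/\bar v)_\xi\approx-\tv+\bar v\tp+\dots$ lets us trade $\tv$ against $\tp$ and $\tp_{\xi\xi}$. The mixed term $(\la^2/\bar v^2)\tv\tp_{\xi\xi}$ in $\eta_\phi$ is handled by completing the square, using smallness of $\varepsilon_1$ and $\delta_S$ for the nonlinear and profile-gradient errors.

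\textbf{Energy identity.} Next I differentiate $\mathcal E$ in time using \eqref{eq:NSP} minus \eqref{eq:shiftedNSP}. The electric force is written in the conservative form \eqref{eq:effective}--\eqref{eq:stress}, with Lemma~\ref{lem:momentum} handling the electric work. This produces the identity $\frac{\dd}{\dd t}\mathcal E=\dot X\,\mathcal Y+\mathcal J^{\rm bad}-\mathcal J^{\rm good}$. I then sort the terms as follows:
\begin{itemize}[leftmargin=*]
\item $\mathcal J^{\rm good}$ contains the weight dissipation $\G^S$, i.e.\ $\sigma\int a_\xi H_0$ (positive since $a_\xi>0$).
\item It also contains the profile-compression terms $\G_1,\G_2$ (coming from $\bar v_\xi>0$ and the genuinely nonlinear $\pp$) and the diffusion $\D\sim\int a(\mu\tu_\xi^2+\kappa\tz_\xi^2)$.
\item The linear-in-perturbation part of $\mathcal Y$ is exactly what the ODE \eqref{eq:shiftODE} cancels. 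It yields $-\frac{\delta_S}{M}|\dot X|^2$, with the remainder of $\mathcal Y$ bounded by $C\varepsilon_1|\dot X|$.
\end{itemize}

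\textbf{Leading terms (main obstacle).} The hard step is controlling the leading combination $-\frac{\delta_S}{2M}|\dot X|^2+\B_1+\B_2-\G-\frac34\D$. Here $\B_1,\B_2$ are the hyperbolic bad terms $\int a_\xi(\pp-\bar\pp)\tu$ and their companion. I would do the following:
\begin{itemize}[leftmargin=*]
\item Pass to the normalized shock variable $y=(\bar v-v_-)/\delta_S\in(0,1)$ and project onto the weak variable $w\approx \pp-\bar\pp$.
\item Use the sharp diffusion estimate of Lemma~\ref{lem:sharp}, derived from the factorization \eqref{eq:factorisation} with leading coefficient $\Gamma/D$. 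It converts $\D$ into $\int_0^1 y(1-y)|\partial_y w|^2\dd y$ with an explicit constant.
\item Apply the weighted Poincaré inequality $\int_0^1|w-\bar w|^2\le\frac12\int_0^1y(1-y)|\partial_yw|^2$. The constant $M$ in \eqref{eq:M} is tuned so that the mean part $\bar w$ is absorbed by the $|\dot X|^2$ term.
\item Control the thermal contribution, which mixes $\mu$ and $\kappa$, by a Young/Cauchy--Schwarz splitting matching the ratio in $M$.
\end{itemize}
I expect the constant bookkeeping here to be delicate.

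\textbf{Remainders and closure.} The remaining terms are handled as follows:
\begin{itemize}[leftmargin=*]
\item Cubic and profile-small terms are bounded by $C(\sqrt{\delta_0}+\varepsilon_1)(\G^S+\D+\delta_S|\dot X|^2)$, using \eqref{eq:profilebounds}--\eqref{eq:profileslopes} and Sobolev embedding.
\item The electric remainders $\mathcal R_\phi$ are controlled through elliptic estimates for the linearized Poisson equation. These bound $\tp$ and its derivatives by $\tv$, $\tv_\xi$ and $\tu_\xi$ (for the time derivative), with the small factor $\sqrt{\delta_0}+\varepsilon_1$.
\item Terms involving $\tv_\xi$ and $\tu_{\xi\xi}$, which cannot be absorbed at this order, are left on the right-hand side of \eqref{eq:basicenergy}, as the statement allows.
\end{itemize}
Integrating in time and using coercivity then gives \eqref{eq:basicenergy}.
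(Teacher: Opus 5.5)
Your overall architecture matches the paper: the weighted relative entropy with the electric correction, the shift ODE, sharp diffusion plus the weighted Poincar\'e inequality, and elliptic control of $\mathcal R_\phi$. The leading-order step, however, rests on a misidentified mechanism.

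\textbf{The missing cancellation.} The term $\G=\sigma\intR a_\xi H_0\dd\xi$ is not $\Gs$. Since $a_\xi=\delta_S^{-1/2}\bar v_\xi$, it has exactly the size of the hyperbolic flux $\B_2=\intR a_\xi\tu(\pp-\bar\pp)\dd\xi$, and all of it is spent on $\B_2$. The idea you need is an algebraic fact: because $(\sigma^*)^2=(\gamma\theta_-+1)/v_-^2$ is the sound speed of $\Peff$, the frozen form
\[
\begin{aligned}
&\tu\Bigl(\frac{\tz}{v_-}-\frac{\theta_-+1}{v_-^2}\tv\Bigr)
-\sigma^*\Bigl(\frac{\tu^2}{2}+\frac{\theta_-+1}{2v_-^2}\tv^2+\frac{\tz^2}{2(\gamma-1)\theta_-}\Bigr)\\
&\quad=-\frac{\sigma^*(\theta_-+1)}{2v_-^2}\Bigl(\tv+\frac{\tu}{\sigma^*}\Bigr)^2
-\frac{\sigma^*}{2(\gamma-1)\theta_-}\Bigl(\tz-\frac{(\gamma-1)\theta_-}{v_-\sigma^*}\tu\Bigr)^2
\end{aligned}
\]
is only negative semidefinite. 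It degenerates along the $3$-characteristic direction. This is where $\G_1,\G_2$ come from, not from profile compression (see \eqref{eq:B2bound}). The compressive, genuinely nonlinear $\bar v_\xi$-weighted term $\B_1$ is on the bad side: by \eqref{eq:B1bound} it has size $\alpha^*\delta_S\int_0^1\tu^2\dd y$ modulo $\G_1,\G_2$. $\Gs$ reappears only at the very end, through \eqref{eq:leadingGscomparison}.

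\textbf{The Poincar\'e variable.} The Poincar\'e step cannot be run on a pressure-type weak variable $w\approx\pp-\bar\pp$. $\D$ contains no $\tv_\xi$, which is why $\norm{\tv_\xi}_2^2$ is allowed in \eqref{eq:basicenergy} only with a small factor. So Lemma~\ref{lem:sharp} converts $\D$ only into $y(1-y)$-weighted gradients of $\tu$ and $\tz$. You must instead:
\begin{itemize}
\item take $\tu$ itself as the weak variable, with $\bar w=\int_0^1\tu\dd y$;
\item use $\G_1,\G_2$ to trade $\tv$ and $\tz$ for multiples of $\tu$;
\item apply \eqref{eq:Poincare} to $\tu$ and to $\tz$ separately.
\end{itemize}
The thermal part $\D_2$ is then not something to be controlled but an indispensable extra source of $\tu$-dissipation. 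The term $\frac34\D_1$ alone gives only $\frac32\frac{\mu(\gamma\theta_-+1)}{\mu(\gamma\theta_-+1)+\kappa(\gamma-1)^2\theta_-}\alpha^*$, which can fall below the $\alpha^*$ needed against $\B_1$. By contrast, $\D_1+\D_2$ gives leading coefficient $2\alpha^*$ in \eqref{eq:Dleading}. The leftover mean terms, including the temperature mean, are what dictate $M$ in \eqref{eq:M}.

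\textbf{The shift remainders.} Bounding the remaining shift terms by $C\varepsilon_1|\dot X|$ would leave $C\varepsilon_1|\dot X|^2$. The term $\frac{\delta_S}{M}|\dot X|^2$ cannot absorb this, because $\varepsilon_1$ is not small relative to $\delta_S$. The paper instead uses Young's inequality together with $\delta_S^{-1}|\Y_i|^2\le C\varepsilon_1^2\Gs$ for $i=4,5,6$, which relies on $\norm{\bar v_\xi}_\infty\le C\delta_S^2$.
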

	We prove Lemma \ref{lem:basicenergy} by $a$-contraction method  to obtain energy estimate; the Poisson estimates in
	Subsection~\ref{sec:poisson} control the electric remainder. We define the weight by
	\begin{equation}\label{eq:weight}
		a(\xi)=1+\frac{1}{\sqrt{\delta_S}}(v^S(\xi)-v_-),\qquad
		a^{-X}(t,\xi)=a(\xi-X(t)),
	\end{equation}
	with $\delta_S$ satisfying \eqref{eq:smallness}. For convenience, we
	abbreviate $a^{-X}$ to $a$. By \eqref{eq:shock},
	\begin{equation}\label{eq:weightderivative}
		a_\xi=\frac{1}{\sqrt{\delta_S}}\bar v_\xi>0,\qquad
		a_t=-\dot{X}a_\xi,\qquad
		1\le a\le1+\sqrt{\delta_S}.
	\end{equation}
	For the convex function $\Phi(z)=z-1-\log z$, the 
	relative-entropy for fluid, the electric correction and the weighted energy are
	\begin{align}
		H_0&=\frac{\tu ^2}{2}+(\bar\theta+1)\Phi\left(\frac v{\bar v}\right)
		+\frac{\bar\theta}{\gamma-1}\Phi\left(\frac\theta{\bar\theta}\right),\label{eq:H0}\\
		\eta_\phi
		&=\frac{\la^2}{\bar v^2}\tv\tp_{\xi\xi}
		+\frac{\la^4e^{\bar\phi}}{2\bar v^3}\tp_{\xi\xi}^2
		+\frac{\la^2e^{\bar\phi}}{2\bar v^2}\tp_\xi^2,
		\qquad\E(t)=\intR a(H_0+\eta_\phi)\dd\xi.
		\label{eq:entropy}
	\end{align}
	For small $\delta_S$ and
	$\norm{(\tv,\tz)}_\infty$, direct Taylor expansion gives
	\begin{equation}\label{eq:coercive}
		C^{-1}(\tv^2+\tu ^2+\tz^2+\la^2\tp_\xi^2+\la^4\tp_{\xi\xi}^2)
		\le H_0+\eta_\phi
		\le C(\tv^2+\tu ^2+\tz^2+\la^2\tp_\xi^2+\la^4\tp_{\xi\xi}^2).
	\end{equation}

	\begin{lemma}\label{lem:identity}
		Let $a$ be given by \eqref{eq:weight}. Under the hypotheses of Proposition~\ref{thm:closure}, for a strong solution of
		\eqref{eq:NSP} and the profile \eqref{eq:shiftedprofile}, the energy $\mathcal E(t)$ \eqref{eq:entropy} satisfies
		\begin{equation}\label{eq:identity}
			{\quad\frac{\dd}{\dd t}\E
				=\dot{X}\mathcal Y+\mathcal J^{\rm bad}-\mathcal J^{\rm good},\quad}
		\end{equation}
		\begin{equation}\label{eq:Jfull}
			\mathcal Y=\mathcal Y_0+\sum_{i=7}^9\Y_i,\quad
			\mathcal J^{\rm bad}=\mathcal J_0^{\rm bad}+\sum_{j=1}^5\Pp_j,\quad
			\mathcal J^{\rm good}=\sigma\intR a_\xi(H_0+\eta_\phi)\dd\xi+\D\ge0.
		\end{equation}
		The quantities $\mathcal Y_0,\D,\mathcal J_0^{\rm bad}$, $\Pp_1,\ldots,\Pp_5$ and $\Y_7,\Y_8,\Y_9$  are defined
		in \eqref{eq:Y0}, \eqref{eq:Ddef}, \eqref{eq:Jbad}, 
		\eqref{eq:P12}--\eqref{eq:P5} and \eqref{eq:Y7}--\eqref{eq:Y89}, respectively.
		The nonnegativity of $\mathcal J^{\rm good}$ follows from \eqref{eq:coercive}.\end{lemma}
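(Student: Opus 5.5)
The identity is essentially a bookkeeping computation, so the plan is to differentiate $\E(t)=\intR a(H_0+\eta_\phi)\dd\xi$ directly and sort every resulting term by origin. We write
\[
\frac{\dd}{\dd t}\E=\intR a_t(H_0+\eta_\phi)\dd\xi+\intR a\,\partial_t(H_0+\eta_\phi)\dd\xi .
\]
By \eqref{eq:weightderivative}, the first integral equals $-\dot X\intR a_\xi(H_0+\eta_\phi)\dd\xi$, which goes into $\mathcal Y_0$. In the second integral we replace $\partial_t$ by $(\partial_t-\sigma\partial_\xi)+\sigma\partial_\xi$. Integrating $\sigma a\,\partial_\xi(H_0+\eta_\phi)$ by parts gives $-\sigma\intR a_\xi(H_0+\eta_\phi)\dd\xi$, which is exactly the first good term in $\mathcal J^{\rm good}$. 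Its nonnegativity follows from $a_\xi>0$ and \eqref{eq:coercive}.

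For the fluid part we use the standard relative-entropy computation of \cite{KVW}, now with the effective pressure $\pp=(\theta+1)/v$. We subtract the shifted profile system \eqref{eq:shiftedNSP} from \eqref{eq:NSP}, after writing the momentum equation in the form \eqref{eq:effective}, and obtain the equations for $(\tv,\tu,\tz)$. The transport terms $\dot X\bar v_\xi,\dot X\bar u_\xi,\dot X\bar\theta_\xi$ appear there. We then compute $(\partial_t-\sigma\partial_\xi)H_0$, using $\Phi'(z)=1-1/z$ and the derivative of $\bar\theta$ and $\bar v$ along the profile. The resulting terms are grouped as follows:
\begin{itemize}
\item terms proportional to $\dot X$, linear in the perturbation and weighted by profile derivatives, go into $\mathcal Y_0$;
\item the hyperbolic flux terms, of the form $a_\xi\tu(\pp-\bar\pp)$ and $\bar v_\xi\,\times$quadratic, go into $\mathcal J_0^{\rm bad}$;
\item the viscous and thermal terms $\mu a\tu_\xi^2/v$ and $\kappa a\bar\theta\tz_\xi^2/(\theta^2 v)$, together with their weighted cross terms after integration by parts, give $\D$ and further contributions to $\mathcal J_0^{\rm bad}$.
\end{itemize}
The source $\mu u_\xi^2/v$ in the temperature equation combines with $\Phi(\theta/\bar\theta)$ to produce the positive thermal dissipation.

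The electric part is handled in two stages. The term $aF_\xi\,\tu$ coming from \eqref{eq:effective} is integrated by parts. The resulting $-a\tilde F\tu_\xi$ is rewritten using $\tu_\xi=(\partial_t-\sigma\partial_\xi)\tv+\dot X\bar v_\xi$, from the mass equations. We expand $\tilde F$ through \eqref{eq:stress} as $\la^2\tp_{\xi\xi}/\bar v^2$ plus the linearization of $1/v-e^{-\phi}$. The time derivative then falls onto $\frac{\la^2}{\bar v^2}\tv\tp_{\xi\xi}$, while the $e^{\bar\phi}$-terms in $\eta_\phi$ arise after we differentiate the linearized Poisson equation,
\[
\la^2(\tp_\xi/\bar v)_\xi\approx e^{-\bar\phi}\bar v\tp-e^{-\bar\phi}\tv ,
\]
in time, multiply by $\tp_t$ and by $\la^2\tp_{\xi\xi t}$, and integrate by parts. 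This is what shows $\eta_\phi$ is the correct correction. All nonlinear, profile-derivative and commutator leftovers from this step are collected into $\Pp_1,\dots,\Pp_5$. The pieces carrying $\dot X$, namely $\dot X\bar v_\xi\tilde F$, $\dot X\bar\phi_\xi$-terms and $a_t\eta_\phi$, are collected into $\Y_7,\Y_8,\Y_9$.

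The main obstacle is the electric step. Care is needed so that the $\tp_t$-terms cancel exactly, rather than leaving uncontrolled time derivatives of $\tp$ outside the named remainders, and so that the mixed term $\tv\tp_{\xi\xi}$ has precisely the coefficient $\la^2/\bar v^2$. I would first verify this cancellation at the linear level with frozen coefficients $\bar v,\bar\phi$. After that, I would assign every variable-coefficient correction to some $\Pp_j$. The identity \eqref{eq:identity} then holds by construction.
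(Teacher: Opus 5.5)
Your skeleton (product rule with $a_t=-\dot X a_\xi$, the perturbation system in effective-pressure form, the relative entropy $H_0$, and an electric correction generated from the leading part of $-\intR a\tu_\xi(F-\bar F)\dd\xi$) is the paper's. There is, however, a genuine gap in the electric step, which you rightly single out. As you describe it, that step does not yield \eqref{eq:identity}--\eqref{eq:Jfull}, and ``holds by construction'' skips exactly the bookkeeping that constitutes the lemma.

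The time-derivative terms do \emph{not} cancel, not even at the frozen-coefficient linear level, because the weight is not constant. The paper proceeds as follows.
\begin{itemize}
\item Set $\chi=(\dt)\phi+\sigma\bar\phi_\xi$, so that $(\dt)\tp=\chi+\dot X\bar\phi_\xi$, and differentiate $\eta_\phi$ termwise.
\item The piece $\frac{\la^2}{\bar v^2}\tu_\xi\tp_{\xi\xi}$ cancels the leading part of $-a\tu_\xi(F-\bar F)$. This requires the Poisson form of $F$ in \eqref{eq:stress}, where $\la^2\tp_{\xi\xi}/\bar v^2$ already is the leading part of $(1/v-e^{-\phi})-(1/\bar v-e^{-\bar\phi})$. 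Adding the linearization of $1/v-e^{-\phi}$ on top, as you write, double counts. With the Poisson form, the remainder is exactly $R_F$ and gives $\Pp_2$.
\item In the $\chi$-terms, substitute $\tv=\mathcal N_\phi-\frac{\la^2e^{\bar\phi}}{\bar v}\tp_{\xi\xi}+e^{\bar\phi}\tp$, taken from the undifferentiated Poisson equation, see \eqref{eq:N}. These terms then collapse to $\intR\frac{a\la^2}{\bar v^2}\mathcal N_\phi\chi_{\xi\xi}\dd\xi+\intR\frac{a\la^2e^{\bar\phi}}{\bar v^2}(\tp\chi_\xi)_\xi\dd\xi=\Pp_3+\Pp_4$.
\end{itemize}
This sum is nonzero because $\mathcal N_\phi\ne0$ and $a_\xi\ne0$. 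Time derivatives of $\phi$ are therefore deliberately kept inside $\mathcal J^{\rm bad}$, and they are controlled only later through the elliptic equation for $\chi$ (Lemma~\ref{lem:time}). That later step is where the time-differentiated Poisson equation belongs, not the identity. Likewise, only $\frac{\la^2}{\bar v^2}\tp_{\xi\xi}\tu_\xi$, not all of $(F-\bar F)\tu_\xi$, is traded via $\tu_\xi=(\dt)\tv-\dot X\bar v_\xi$; your sign on $\dot X\bar v_\xi$ is wrong. Trading only this piece is what produces $\dot X\Y_9$ rather than $\dot X\intR a\bar v_\xi(F-\bar F)\dd\xi$.

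On the fluid side, your split also disagrees with the stated quantities. In \eqref{eq:Ddef} the thermal dissipation is $\kappa\tz_\xi^2/(v\theta)$. The paper subtracts the profile heat flux first and integrates $\kappa\frac{\tz}{\theta}\bigl(\frac{\theta_\xi}{v}-\frac{\bar\theta_\xi}{\bar v}\bigr)_\xi$ by parts against $a$. The resulting $\partial_\xi(1/\theta)$ contribution $\frac{\kappa\tz\theta_\xi}{\theta^2}\bigl(\frac{\theta_\xi}{v}-\frac{\bar\theta_\xi}{\bar v}\bigr)$ is left in $\mathcal J_0^{\rm bad}$, so your $\kappa\bar\theta\tz_\xi^2/(\theta^2v)$ corresponds to a different grouping. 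Viscous heating produces no dissipation at all. The terms $\frac{\mu\tz}{\theta}\bigl(\frac{u_\xi^2}{v}-\frac{\bar u_\xi^2}{\bar v}\bigr)$ and $-\frac{\tz^2}{\theta\bar\theta}\bigl[\kappa\bigl(\frac{\bar\theta_\xi}{\bar v}\bigr)_\xi+\mu\frac{\bar u_\xi^2}{\bar v}\bigr]$ stay in $\mathcal J_0^{\rm bad}$.

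Finally, $-\dot X\intR a_\xi\eta_\phi\dd\xi$ belongs to $\Y_7$, not to $\mathcal Y_0$. The rest of $\Y_7$ is the $\dot X$-part of $(\dt)$ acting on the coefficients $\la^2/\bar v^2$, $\la^4e^{\bar\phi}/\bar v^3$ and $\la^2e^{\bar\phi}/\bar v^2$. The $\sigma$-part of that same derivative is exactly $\Pp_5$.
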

	
	\begin{proof}
		By \eqref{eq:entropy} and
		\eqref{eq:weightderivative}, a direct calculation gives
		\begin{align}	\label{eq:energyproductrule}
			\frac{\dd}{\dd t}\E
			=&-(\dot X+\sigma)\intR a_\xi H_0\dd\xi+\intR a(\partial_t-\sigma\partial_\xi)H_0\dd\xi +\frac{\dd}{\dd t}\intR a\eta_\phi\dd\xi.
		\end{align}
		To compute the second integral in \eqref{eq:energyproductrule},
		subtract \eqref{eq:shiftedNSP} from \eqref{eq:NSP}, using \eqref{eq:effective}--\eqref{eq:stress} for the momentum equation.
		This gives the perturbation system
		\begin{equation}\label{eq:perturbPDE}
			\left\{\begin{aligned}
				&(\dt)\tv=\tu_\xi+\dot X\bar v_\xi,\\
				&(\dt)\tu=-(\pp-\bar\pp)_\xi
				+\mu\left(\frac{u_\xi}{v}-\frac{\bar u_\xi}{\bar v}\right)_\xi
				+(F-\bar F)_\xi+\dot X\bar u_\xi,\\
				&\cv(\dt)\tz=-p\tu_\xi-(p-\bar p)\bar u_\xi
				+\kappa\left(\frac{\theta_\xi}{v}-\frac{\bar\theta_\xi}{\bar v}\right)_\xi+\mu\left(\frac{u_\xi^2}{v}-\frac{\bar u_\xi^2}{\bar v}\right)
				+\cv\dot X\bar\theta_\xi,\\
				&\la^2\left[\frac{\tp_\xi}{v}
				+\bar\phi_\xi\left(\frac1v-\frac1{\bar v}\right)\right]_\xi
				=e^{-\bar\phi}\left[-\tv+v(1-e^{-\tp})\right].
			\end{aligned}\right.
		\end{equation}
		To combine the pressure terms in \eqref{eq:perturbPDE}, we use the
		relative effective pressure
		\begin{align}
			\pp(v,\theta\mid\bar v,\bar\theta)
			&:=\pp-\bar\pp+\frac{\bar\theta+1}{\bar v^2}\tv-\frac\tz{\bar v}
			=\frac{(\bar\theta+1)\tv^2}{v\bar v^2}-\frac{\tv\tz}{v\bar v}.\label{eq:relativepressure}
		\end{align}
		Since each background solution in \eqref{eq:shiftedprofile} depends on
		$\xi-X(t)$,  \eqref{eq:shiftedNSP}$_{1,3}$ imply
		\begin{align}
			\cv(-\sigma\bar\theta_\xi)+\bar p\bar u_\xi
			&=\kappa\left(\frac{\bar\theta_\xi}{\bar v}\right)_\xi
			+\mu\frac{\bar u_\xi^2}{\bar v},\quad
			\bar u_\xi=-\sigma\bar v_\xi.\label{eq:bartemperature}
		\end{align}
		We first compute the volume term in \eqref{eq:energyproductrule}.
		Using $\Phi'(z)=1-1/z$,  \eqref{eq:NSP}$_1$ and
		\eqref{eq:perturbPDE}, and \eqref{eq:bartemperature}, we obtain
		\begin{align}
			&(\dt)\left[(\bar\theta+1)\Phi\left(\frac v{\bar v}\right)\right]\notag
			\\
			=&-(\sigma+\dot{X})\bar\theta_\xi\Phi\left(\frac v{\bar v}\right)
			+(\bar\theta+1)\left(\frac1{\bar v}-\frac1v\right)u_\xi+(\sigma+\dot{X})\frac{\bar\theta+1}{\bar v^2}\bar v_\xi\tv\notag
			\\
			=&(\bar\theta+1)\left(\frac1{\bar v}-\frac1v\right)\tu _\xi
			-\bar u_\xi\frac{(\bar\theta+1)\tv^2}{v\bar v^2}
			-\sigma\bar\theta_\xi\Phi\left(\frac v{\bar v}\right)+\dot{X}\left[
			\frac{\bar\theta+1}{\bar v^2}\bar v_\xi\tv
			-\bar\theta_\xi\Phi\left(\frac v{\bar v}\right)\right].
			\label{eq:volumecalc}
		\end{align}
		For the temperature term in \eqref{eq:energyproductrule}, substitute
		\eqref{eq:NSP}$_3$ and \eqref{eq:perturbPDE}, and
		use \eqref{eq:bartemperature} to collect the quadratic terms:
		\begin{align}
			(\dt)\left[\cv\bar\theta\Phi\left(\frac\theta{\bar\theta}\right)\right]\notag
			&=\frac\tz\theta\left[-pu_\xi+
			\kappa\left(\frac{\theta_\xi}{v}\right)_\xi+\mu\frac{u_\xi^2}{v}\right]
			-(\sigma+\dot{X})\cv\bar\theta_\xi
			\left[\Phi\left(\frac\theta{\bar\theta}\right)-\frac\tz{\bar\theta}\right]\notag
			\\
			&=-\frac\tz\theta p \tu _\xi
			+\bar u_\xi\frac{\tv\tz}{v\bar v}
			-\sigma\cv\bar\theta_\xi\Phi\left(\frac\theta{\bar\theta}\right)
			+\kappa\frac\tz\theta\left(\frac{\theta_\xi}{v}-\frac{\bar\theta_\xi}{\bar v}\right)_\xi+\mu\frac\tz\theta\left(\frac{u_\xi^2}{v}-\frac{\bar u_\xi^2}{\bar v}\right)\notag
			\\
			&\quad
			-\frac{\tz^2}{\theta\bar\theta}\left[
			\kappa\left(\frac{\bar\theta_\xi}{\bar v}\right)_\xi+\mu\frac{\bar u_\xi^2}{\bar v}\right]+\dot{X}\cv\bar\theta_\xi
			\left[\frac\tz{\bar\theta}-\Phi\left(\frac\theta{\bar\theta}\right)\right].
			\label{eq:temperaturecalc}
		\end{align}
		The two cancellations used in \eqref{eq:temperaturecalc} are
		\begin{align*}
			\bar u_\xi\tz\left(\frac{\bar p}{\bar\theta}-\frac p\theta\right)
			&=\bar u_\xi\frac{\tv\tz}{v\bar v},\\
			\left(\frac\tz\theta-\frac\tz{\bar\theta}\right)
			\left[\kappa\left(\frac{\bar\theta_\xi}{\bar v}\right)_\xi
			+\mu\frac{\bar u_\xi^2}{\bar v}\right]
			&=-\frac{\tz^2}{\theta\bar\theta}
			\left[\kappa\left(\frac{\bar\theta_\xi}{\bar v}\right)_\xi
			+\mu\frac{\bar u_\xi^2}{\bar v}\right].
		\end{align*}
		For the velocity term in \eqref{eq:energyproductrule}, multiply the second equation of \eqref{eq:perturbPDE} by $\tu $.
		\begin{align}
			(\dt)\frac{\tu ^2}{2}
			=&-\tu (\pp-\bar\pp)_\xi
			+\mu \tu \left(\frac{u_\xi}{v}-\frac{\bar u_\xi}{\bar v}\right)_\xi
			+\tu (F-\bar F)_\xi+\dot{X}\bar u_\xi \tu .
			\label{eq:kineticcalc}
		\end{align}
		To add \eqref{eq:volumecalc}, \eqref{eq:temperaturecalc} and
		\eqref{eq:kineticcalc}, we first combine their  coefficients of $\tilde u_\xi$ in \eqref{eq:volumecalc} and \eqref{eq:temperaturecalc} to obtain
		\begin{equation}\label{eq:pressurecancel}
			(\bar\theta+1)\left(\frac1{\bar v}-\frac1v\right)-\frac\tz\theta p
			=\frac{\bar\theta+1}{\bar v}-\frac{\theta+1}{v}=\bar\pp-\pp.
		\end{equation}
		For the viscous and heat fluxes in \eqref{eq:kineticcalc} and
		\eqref{eq:temperaturecalc}, integration by parts gives
		\begin{align}
			\intR a\mu \tu \left(\frac{u_\xi}{v}-\frac{\bar u_\xi}{\bar v}\right)_\xi\dd\xi
			=&-\intR a_\xi\mu \tu \left(\frac{u_\xi}{v}-\frac{\bar u_\xi}{\bar v}\right)\dd\xi
			-\intR\frac{a\mu}{v}\tu _\xi^2\dd\xi-\intR a\mu\bar u_\xi\left(\frac1v-\frac1{\bar v}\right)\tu _\xi\dd\xi,
			\label{eq:viscIBP}\\
			\intR a\kappa\frac\tz\theta
			\left(\frac{\theta_\xi}{v}-\frac{\bar\theta_\xi}{\bar v}\right)_\xi\dd\xi
			=&-\intR a_\xi\kappa\frac\tz\theta
			\left(\frac{\theta_\xi}{v}-\frac{\bar\theta_\xi}{\bar v}\right)\dd\xi
			-\intR\frac{a\kappa}{v\theta}\tz_\xi^2\dd\xi\notag
			\\
			&-\intR\frac{a\kappa\bar\theta_\xi}{\theta}
			\left(\frac1v-\frac1{\bar v}\right)\tz_\xi\dd\xi+\intR\frac{a\kappa\tz\theta_\xi}{\theta^2}
			\left(\frac{\theta_\xi}{v}-\frac{\bar\theta_\xi}{\bar v}\right)\dd\xi.
			\label{eq:heatIBP}
		\end{align}
		Substituting  \eqref{eq:volumecalc}, 
		\eqref{eq:temperaturecalc} and \eqref{eq:kineticcalc} into the $H_0$ contribution to
		\eqref{eq:energyproductrule}, integrating the transport term by parts,
		and using \eqref{eq:pressurecancel}--\eqref{eq:heatIBP}, we obtain
		\begin{equation}\label{eq:fluididentity}
			\frac{\dd}{\dd t}\intR aH_0\dd\xi
			=\dot{X}\mathcal Y_0+\mathcal J_0^{\rm bad}
			-\sigma\intR a_\xi H_0\dd\xi-\D
			-\intR(a_\xi \tu +a\tu _\xi)(F-\bar F)\dd\xi,
		\end{equation}
		where
		\begin{align}
			&\mathcal Y_0=-\intR a_\xi H_0\dd\xi
			+\intR a\left[\bar u_\xi \tu +\frac{\bar\theta+1}{\bar v^2}\bar v_\xi\tv
			+\frac{\bar\theta_\xi\tz}{(\gamma-1)\bar\theta}\right]\dd\xi-\intR a\bar\theta_\xi\left[\Phi\left(\frac v{\bar v}\right)
			+\cv\Phi\left(\frac\theta{\bar\theta}\right)\right]\dd\xi,
			\label{eq:Y0}\\
			&\D=\intR a\left[\frac\mu v \tu _\xi^2+\frac\kappa{v\theta}\tz_\xi^2\right]\dd\xi,
			\label{eq:Ddef}\\
			&\mathcal J_0^{\rm bad}=\intR a_\xi \tu (\pp-\bar\pp)\dd\xi-\intR a_\xi\left[\mu \tu \left(\frac{u_\xi}{v}-\frac{\bar u_\xi}{\bar v}\right)
			+\frac{\kappa\tz}{\theta}\left(\frac{\theta_\xi}{v}-\frac{\bar\theta_\xi}{\bar v}\right)\right]\dd\xi\notag
			\\
			&\qquad\qquad+\intR a\left\{-\sigma\bar\theta_\xi\left[\Phi\left(\frac v{\bar v}\right)
			+\cv\Phi\left(\frac\theta{\bar\theta}\right)\right]
			-\bar u_\xi\pp(v,\theta\mid\bar v,\bar\theta)\right\}\dd\xi\notag
			\\
			&\qquad\qquad+\intR a\left\{-\mu\bar u_\xi\left(\frac1v-\frac1{\bar v}\right)\tu _\xi
			-\frac{\kappa\bar\theta_\xi}{\theta}\left(\frac1v-\frac1{\bar v}\right)\tz_\xi\right.\notag
			\\
			&\qquad\qquad\qquad+\frac{\kappa\tz\theta_\xi}{\theta^2}
			\left(\frac{\theta_\xi}{v}-\frac{\bar\theta_\xi}{\bar v}\right)
			+\frac{\mu\tz}{\theta}\left(\frac{u_\xi^2}{v}-\frac{\bar u_\xi^2}{\bar v}\right)\left.-\frac{\tz^2}{\theta\bar\theta}
			\left[\kappa\left(\frac{\bar\theta_\xi}{\bar v}\right)_\xi
			+\mu\frac{\bar u_\xi^2}{\bar v}\right]\right\}\dd\xi.
			\label{eq:Jbad}
		\end{align}
		
		We next compute the $\frac{\dd}{\dd t}\intR a\eta_\phi\dd\xi$ term in \eqref{eq:energyproductrule}.
		We introduce $\chi$ to separate the explicit shift
		contribution, $R_F$ to extract the leading part of $F-\bar F$, and
		$\mathcal N_\phi$ to collect the terms involving $\chi_{\xi\xi}$ for integration by parts:
		\begin{align}
			&\chi:=(\dt)\phi+\sigma\bar\phi_\xi,
			\qquad(\dt)\tp=\chi+\dot{X}\bar\phi_\xi,\label{eq:chi}\\
			&F-\bar F=\frac{\la^2}{\bar v^2}\tp_{\xi\xi}+R_F, \qquad
			\mathcal N_\phi:=\tv+\frac{\la^2e^{\bar\phi}}{\bar v}\tp_{\xi\xi}-e^{\bar\phi}\tp,
			\label{eq:N}\\
			&R_F=\la^2\left[\left(\frac1{v^2}-\frac1{\bar v^2}\right)\tp_{\xi\xi}
			+\bar\phi_{\xi\xi}\left(\frac1{v^2}-\frac1{\bar v^2}\right)
			-\left(\frac{v_\xi\phi_\xi}{v^3}-\frac{\bar v_\xi\bar\phi_\xi}{\bar v^3}\right)
			+\frac12\left(\frac{\phi_\xi^2}{v^2}-\frac{\bar\phi_\xi^2}{\bar v^2}\right)\right].
			\label{eq:RFexpand}
		\end{align}
		To differentiate the three terms of $\eta_\phi$ in \eqref{eq:entropy},
		use \eqref{eq:perturbPDE}$_1$ and \eqref{eq:chi}.
		The product rule gives
		\begin{align}
			(\dt)\left(\frac{\la^2}{\bar v^2}\tv\tp_{\xi\xi}\right)
			=&\frac{\la^2}{\bar v^2}\tu _\xi\tp_{\xi\xi}
			+\frac{\la^2}{\bar v^2}\tv\chi_{\xi\xi}
			-\sigma\left(\frac{\la^2}{\bar v^2}\right)_\xi\tv\tp_{\xi\xi}\notag
			\\
			&+\dot{X}\left[\frac{\la^2}{\bar v^2}\bar v_\xi\tp_{\xi\xi}
			+\frac{\la^2}{\bar v^2}\tv\bar\phi_{\xi\xi\xi}
			-\left(\frac{\la^2}{\bar v^2}\right)_\xi\tv\tp_{\xi\xi}\right],
			\label{eq:mixcalc}\\
			(\dt)\left(\frac{\la^4e^{\bar\phi}}{2\bar v^3}\tp_{\xi\xi}^2\right)
			=&\frac{\la^4e^{\bar\phi}}{\bar v^3}\tp_{\xi\xi}\chi_{\xi\xi}
			-\frac\sigma2\left(\frac{\la^4e^{\bar\phi}}{\bar v^3}\right)_\xi\tp_{\xi\xi}^2\notag
			\\
			&+\dot{X}\left[\frac{\la^4e^{\bar\phi}}{\bar v^3}\tp_{\xi\xi}\bar\phi_{\xi\xi\xi}
			-\frac12\left(\frac{\la^4e^{\bar\phi}}{\bar v^3}\right)_\xi\tp_{\xi\xi}^2\right],
			\label{eq:secondcalc}\\
			(\dt)\left(\frac{\la^2e^{\bar\phi}}{2\bar v^2}\tp_\xi^2\right)
			=&\frac{\la^2e^{\bar\phi}}{\bar v^2}\tp_\xi\chi_\xi
			-\frac\sigma2\left(\frac{\la^2e^{\bar\phi}}{\bar v^2}\right)_\xi\tp_\xi^2\notag
			\\
			&+\dot{X}\left[\frac{\la^2e^{\bar\phi}}{\bar v^2}\tp_\xi\bar\phi_{\xi\xi}
			-\frac12\left(\frac{\la^2e^{\bar\phi}}{\bar v^2}\right)_\xi\tp_\xi^2\right].
			\label{eq:firstcalc}
		\end{align}
		Adding \eqref{eq:mixcalc}--\eqref{eq:firstcalc}, multiplying by $a$,
		and integrating, we use \eqref{eq:weightderivative} to obtain
		\begin{align}\label{eq:phienergy}
			\frac{\dd}{\dd t}\intR a\eta_\phi\dd\xi
			=\intR a(\dt)\eta_\phi\dd\xi
			-(\sigma+\dot{X})\intR a_\xi\eta_\phi\dd\xi=\sum_{j=1}^{5}S_j,
		\end{align}
		where
		\begin{align*}
			S_1&=\intR\frac{a\la^2}{\bar v^2}\tu_\xi\tp_{\xi\xi}\dd\xi,\qquad
			S_2=\intR a\left[\left(\frac{\la^2}{\bar v^2}\tv
			+\frac{\la^4e^{\bar\phi}}{\bar v^3}\tp_{\xi\xi}\right)\chi_{\xi\xi}
			+\frac{\la^2e^{\bar\phi}}{\bar v^2}\tp_\xi\chi_\xi\right]\dd\xi,\\
			S_3&=-\sigma\intR a\left[\left(\frac{\la^2}{\bar v^2}\right)_\xi\tv\tp_{\xi\xi}
			+\frac12\left(\frac{\la^4e^{\bar\phi}}{\bar v^3}\right)_\xi\tp_{\xi\xi}^2
			+\frac12\left(\frac{\la^2e^{\bar\phi}}{\bar v^2}\right)_\xi\tp_\xi^2\right]\dd\xi,\\
			S_4&=-\sigma\intR a_\xi\eta_\phi\dd\xi,\qquad
			S_5=\dot{X}(\Y_7+\Y_8+\Y_9).
		\end{align*}
		Adding
		\eqref{eq:phienergy} to \eqref{eq:fluididentity} gives \eqref{eq:identity} and \eqref{eq:Jfull}, where
		the quantities $\Pp_1,\ldots,\Pp_5$ and $\Y_7,\Y_8,\Y_9$ are defined by
		\begin{align}
			\Pp_1&=-\intR a_\xi \tu (F-\bar F)\dd\xi,\qquad
			\Pp_2=-\intR a\tu _\xi R_F\dd\xi,\label{eq:P12}\\
			\Pp_3&=\intR\frac{a\la^2}{\bar v^2}\mathcal N_\phi\chi_{\xi\xi}\dd\xi,\qquad
			\Pp_4=-\intR\left(\frac{a\la^2e^{\bar\phi}}{\bar v^2}\right)_\xi\tp\chi_\xi\dd\xi,\label{eq:P34}\\
			\Pp_5&=-\sigma\intR a\left[\left(\frac{\la^2}{\bar v^2}\right)_\xi\tv\tp_{\xi\xi}
			+\frac12\left(\frac{\la^4e^{\bar\phi}}{\bar v^3}\right)_\xi\tp_{\xi\xi}^2
			+\frac12\left(\frac{\la^2e^{\bar\phi}}{\bar v^2}\right)_\xi\tp_\xi^2\right]\dd\xi,
			\label{eq:P5}\\
			\Y_7&=-\intR a_\xi\eta_\phi\dd\xi
			-\intR a\left[\left(\frac{\la^2}{\bar v^2}\right)_\xi\tv\tp_{\xi\xi}
			+\frac12\left(\frac{\la^4e^{\bar\phi}}{\bar v^3}\right)_\xi\tp_{\xi\xi}^2
			+\frac12\left(\frac{\la^2e^{\bar\phi}}{\bar v^2}\right)_\xi\tp_\xi^2\right]\dd\xi,
			\label{eq:Y7}\\
			\Y_8&=\intR a\left[\left(\frac{\la^2}{\bar v^2}\tv
			+\frac{\la^4e^{\bar\phi}}{\bar v^3}\tp_{\xi\xi}\right)\bar\phi_{\xi\xi\xi}
			+\frac{\la^2e^{\bar\phi}}{\bar v^2}\tp_\xi\bar\phi_{\xi\xi}\right]\dd\xi,
			\quad
			\Y_9=\intR\frac{a\la^2}{\bar v^2}\bar v_\xi\tp_{\xi\xi}\dd\xi.
			\label{eq:Y89}
		\end{align}
		We briefly explain how $\Pp_1,\ldots,\Pp_5$ are obtained by this procedure.
		The part of \eqref{eq:fluididentity} containing $a_\xi\tu$ is $\Pp_1$.
		By \eqref{eq:N}, combining $S_1$ with the part containing $a\tu_\xi$
		in \eqref{eq:fluididentity} yields $\Pp_2$:
		\[
		-\intR a\tu_\xi(F-\bar F)\dd\xi+S_1
		=-\intR a\tu_\xi R_F\dd\xi=\Pp_2.
		\]
		Moreover, \eqref{eq:N} gives
		\[
		\frac{\la^2}{\bar v^2}\tv
		+\frac{\la^4e^{\bar\phi}}{\bar v^3}\tp_{\xi\xi}
		=\frac{\la^2e^{\bar\phi}}{\bar v^2}\tp+\frac{\la^2}{\bar v^2}\mathcal N_\phi.
		\]
		Hence integration by parts yields
		\[
		\begin{aligned}
			S_2
			&=\intR\frac{a\la^2}{\bar v^2}\mathcal N_\phi\chi_{\xi\xi}\dd\xi
			+\intR\frac{a\la^2e^{\bar\phi}}{\bar v^2}
			(\tp\chi_{\xi\xi}+\tp_\xi\chi_\xi)\dd\xi\\
			&=\intR\frac{a\la^2}{\bar v^2}\mathcal N_\phi\chi_{\xi\xi}\dd\xi
			-\intR\left(\frac{a\la^2e^{\bar\phi}}{\bar v^2}\right)_\xi
			\tp\chi_\xi\dd\xi
			=\Pp_3+\Pp_4.
		\end{aligned}
		\]
		And $S_3=\Pp_5$ with $S_4$ included in the good term and
		$S_5$ in the shift term, gives Lemma \ref{lem:identity}. 
	\end{proof}
	
	\subsection{Decompositions and the shift ODE}\label{sec:decomp}
	Lemma~\ref{lem:identity} supplies the weighted relative-entropy identity. In this subsection we split the flux into good terms, remainder terms and shift terms, assemble the resulting energy balance, and fix the shift ODE so that the translational kernel becomes dissipative. Throughout we work under the a priori assumptions \eqref{eq:smallness}.

	\subsubsection{\texorpdfstring{The good terms and the bad terms}{The good terms and the bad terms}}
	This subsection rewrites \eqref{eq:identity} as \eqref{eq:decomposition}
	to identify the terms that provide dissipation and those that must be
	controlled.  We rewrite \eqref{eq:Jfull} as
	\begin{align}
		\mathcal J^{\rm good}
		&=\sigma\intR a_\xi H_0 d\xi+\sigma\intR a_\xi\eta_\phi\dd\xi+\D
		:=\G+\G_\phi+\D.
		\label{eq:Jgoodassembled}
	\end{align}
	The main good terms in the basic energy estimate are
	$\delta_S|\dot X|^2$, $\Gs$, $\G_1$, $\G_2$, and $\D$;
	$\Gs,\G_1,\G_2$ are defined below and come from $\mathcal J^{\rm good}$,  and $\D$ is defined in \eqref{eq:Ddef}.
	The leading bad terms are $\B_1$ and $\B_2$ in
	\eqref{eq:B1} and \eqref{eq:B2};
	$\B_3,\ldots,\B_6$ in
	\eqref{eq:B3}--\eqref{eq:B6} are the remaining terms to be estimated. We define
	\begin{align}
		\Gs&=\intR\bar v_\xi(\tv^2+\tu ^2+\tz^2)\dd\xi,
		\label{eq:gooddef}, \qquad
		\G_1=\frac{\sigma^*(\theta_-+1)}{2(v_-)^2}
		\intR a_\xi\left(\tv+\frac \tu {\sigma^*}\right)^2\dd\xi,\\
		\G_2&=\frac{\sigma^*}{2(\gamma-1)\theta_-}
		\intR a_\xi\left(\tz-\frac{(\gamma-1)\theta_-}{v_-\sigma^*}\tu \right)^2\dd\xi,
		\label{eq:goodsquares} \\
		\B_1&=\intR a\bar v_\xi\left[
		\frac{\sigma^*((\gamma+1)\theta_-+2)}{2(v_-)^3}\tv^2
		+\frac{\sigma^*}{2v_-\theta_-}\tz^2
		-\frac{\sigma^*}{(v_-)^2}\tv\tz\right]\dd\xi\label{eq:B1}.
	\end{align}
	We now explain how \eqref{eq:B1} is obtained. By \eqref{eq:relativepressure}, the terms in
	\eqref{eq:Jbad} containing one profile derivative split as
	\begin{align}
		&-\sigma\bar\theta_\xi\left[\Phi\left(\frac v{\bar v}\right)
		+\cv\Phi\left(\frac\theta{\bar\theta}\right)\right]
		-\bar u_\xi\pp(v,\theta\mid\bar v,\bar\theta)\notag
		\\
		=&-\sigma\bar\theta_\xi\Phi\left(\frac v{\bar v}\right)
		-\bar u_\xi\frac{\bar\theta+1}{v\bar v^2}\tv^2
		-\frac{\sigma\bar\theta_\xi}{\gamma-1}\Phi\left(\frac\theta{\bar\theta}\right)
		+\frac{\bar u_\xi}{v\bar v}\tv\tz.
		\label{eq:profiletermsplit}
	\end{align}
	The translated versions of \eqref{eq:profilebounds} and
	\eqref{eq:profileslopes}, together with \eqref{eq:bartemperature}, give
	\begin{gather}
		\bar u_\xi=-\sigma\bar v_\xi,\qquad \bar v_\xi>0,\notag
		\\
		|\sigma-\sigma^*|+|\bar v-v_-|+|\bar\theta-\theta_-|
		+\left|\frac{\bar\theta_\xi}{\bar v_\xi}
		+\frac{(\gamma-1)\theta_-}{v_-}\right|\le C\delta_S.
		\label{eq:decompprofilecoefficients}
	\end{gather}
	Since $\Phi(1)=\Phi'(1)=0$, $\Phi''(1)=1$, Taylor expansion yields
	\begin{equation}\label{eq:decompPhiTaylor}
		\Phi\left(\frac v{\bar v}\right)
		=\frac{\tv^2}{2\bar v^2}+O(|\tv|^3),\qquad
		\Phi\left(\frac\theta{\bar\theta}\right)
		=\frac{\tz^2}{2\bar\theta^2}+O(|\tz|^3).
	\end{equation}
	For the first two terms on the right of \eqref{eq:profiletermsplit},
	use \eqref{eq:decompprofilecoefficients} and $v=\bar v+\tv$:
	\begin{align}
		-\frac{\sigma}{2\bar v^2}\frac{\bar\theta_\xi}{\bar v_\xi}
		&=\frac{\sigma^*(\gamma-1)\theta_-}{2(v_-)^3}+O(\delta_S),\notag
		\\
		\frac{\sigma(\bar\theta+1)}{v\bar v^2}
		&=\frac{\sigma(\bar\theta+1)}{\bar v^3}
		-\frac{\sigma(\bar\theta+1)\tv}{v\bar v^3}
		=\frac{\sigma^*(\theta_-+1)}{(v_-)^3}
		+O(\delta_S+|\tv|).
		\label{eq:decompvolumecoefficients}
	\end{align}
	Consequently, \eqref{eq:decompPhiTaylor} and
	\eqref{eq:decompvolumecoefficients} imply
	\begin{align}
		-\sigma\bar\theta_\xi\Phi\left(\frac v{\bar v}\right)
		-\bar u_\xi\frac{\bar\theta+1}{v\bar v^2}\tv^2
		=&\bar v_\xi\left[-\frac{\sigma}{2\bar v^2}
		\frac{\bar\theta_\xi}{\bar v_\xi}
		+\frac{\sigma(\bar\theta+1)}{v\bar v^2}\right]\tv^2
		-\sigma\bar\theta_\xi\left[\Phi\left(\frac v{\bar v}\right)
		-\frac{\tv^2}{2\bar v^2}\right]\notag
		\\
		=&\frac{\sigma^*}{2(v_-)^3}
		\big[(\gamma-1)\theta_-+2(\theta_-+1)\big]\bar v_\xi\tv^2
		+O\big((\delta_S+|\tv|)\bar v_\xi\tv^2\big)\notag
		\\
		=&\frac{\sigma^*((\gamma+1)\theta_-+2)}{2(v_-)^3}\bar v_\xi\tv^2
		+O\big((\delta_S+\varepsilon_1)\bar v_\xi\tv^2\big).
		\label{eq:decompvolumeterms}
	\end{align}
	Here the last line uses \eqref{eq:smallness}. Similarly,
	for the last two terms of \eqref{eq:profiletermsplit}, since
	\begin{align*}
		-\frac{\sigma}{2(\gamma-1)\bar\theta^2}
		\frac{\bar\theta_\xi}{\bar v_\xi}
		=\frac{\sigma^*}{2v_-\theta_-}+O(\delta_S), \qquad
		\frac{\sigma}{v\bar v}
		=\frac{\sigma}{\bar v^2}-\frac{\sigma\tv}{v\bar v^2}
		=\frac{\sigma^*}{(v_-)^2}+O(\delta_S+|\tv|),
	\end{align*}
	we have
	\begin{align}
		-\frac{\sigma\bar\theta_\xi}{\gamma-1}
		\Phi\left(\frac\theta{\bar\theta}\right)
		+\frac{\bar u_\xi}{v\bar v}\tv\tz
		=&\bar v_\xi\left[-\frac{\sigma}{2(\gamma-1)\bar\theta^2}
		\frac{\bar\theta_\xi}{\bar v_\xi}\tz^2
		-\frac{\sigma}{v\bar v}\tv\tz\right]
		-\frac{\sigma\bar\theta_\xi}{\gamma-1}
		\left[\Phi\left(\frac\theta{\bar\theta}\right)-\frac{\tz^2}{2\bar\theta^2}\right]\notag
		\\
		=&\bar v_\xi\left[\frac{\sigma^*}{2v_-\theta_-}\tz^2
		-\frac{\sigma^*}{(v_-)^2}\tv\tz\right]
		+O\big(\bar v_\xi[\delta_S\tz^2
		+(\delta_S+|\tv|)|\tv\tz|+|\tz|^3]\big)\notag
		\\
		=&\bar v_\xi\left[\frac{\sigma^*}{2v_-\theta_-}\tz^2
		-\frac{\sigma^*}{(v_-)^2}\tv\tz\right]
		+O\big((\delta_S+\varepsilon_1)\bar v_\xi(\tv^2+\tz^2)\big),
		\label{eq:decomptemperatureterms}
	\end{align}
	the final step uses
	$2|\tv\tz|\le\tv^2+\tz^2$ and
	$\norm{(\tv,\tz)}_\infty\le C\varepsilon_1$.
	Adding \eqref{eq:decompvolumeterms} and \eqref{eq:decomptemperatureterms}
	in \eqref{eq:profiletermsplit}, we obtain
	\begin{align}
		&-\sigma\bar\theta_\xi\left[\Phi\left(\frac v{\bar v}\right)
		+\cv\Phi\left(\frac\theta{\bar\theta}\right)\right]
		-\bar u_\xi\pp(v,\theta\mid\bar v,\bar\theta)\notag
		\\
		=&\bar v_\xi\left[
		\frac{\sigma^*((\gamma+1)\theta_-+2)}{2(v_-)^3}\tv^2
		+\frac{\sigma^*}{2v_-\theta_-}\tz^2
		-\frac{\sigma^*}{(v_-)^2}\tv\tz\right]
		+O\big((\delta_S+\varepsilon_1)\bar v_\xi(\tv^2+\tz^2)\big).
		\label{eq:decompquadraticsum}
	\end{align}
	Multiplication by $a$ and integration now give
	\begin{align}
		&\left|-\intR a\left\{\sigma\bar\theta_\xi
		\left[\Phi\left(\frac v{\bar v}\right)
		+\cv\Phi\left(\frac\theta{\bar\theta}\right)\right]
		+\bar u_\xi\pp(v,\theta\mid\bar v,\bar\theta)\right\}\dd\xi-\B_1\right|\notag
		\\
		&\qquad\le C(\delta_S+\varepsilon_1)
		\intR a\bar v_\xi(\tv^2+\tz^2)\dd\xi
		\le C(\delta_S+\varepsilon_1)\Gs.
		\label{eq:decompB5bound}
	\end{align}
	Using \eqref{eq:decompquadraticsum}--\eqref{eq:decompB5bound}
	and retaining the sign of the remainder,
	we decompose \eqref{eq:Jbad} as
	\begin{equation}\label{eq:J0assembled}
		\mathcal J_0^{\rm bad}=\B_1+\B_2+\B_3+\B_4+\B_5,
	\end{equation}
	where $\B_2,\ldots,\B_5$ are defined below. We also define $\B_6$
	in \eqref{eq:B6} to combine $\sum_{j=1}^5\Pp_j$ from
	\eqref{eq:Jfull} with $-\G_\phi$ from \eqref{eq:Jgoodassembled}:
	\begin{align}
		\B_2&=\intR a_\xi \tu (\pp-\bar\pp)\dd\xi,\label{eq:B2}\\
		\B_3&=-\intR a_\xi\left[\mu \tu \left(\frac{u_\xi}{v}-\frac{\bar u_\xi}{\bar v}\right)
		+\frac{\kappa\tz}{\theta}\left(\frac{\theta_\xi}{v}-\frac{\bar\theta_\xi}{\bar v}\right)\right]\dd\xi,
		\label{eq:B3}\\
		\B_4&=\intR a\left\{-\mu\bar u_\xi\left(\frac1v-\frac1{\bar v}\right)\tu _\xi
		-\frac{\kappa\bar\theta_\xi}{\theta}\left(\frac1v-\frac1{\bar v}\right)\tz_\xi\right.\notag
		\\
		&\qquad+\frac{\kappa\tz\theta_\xi}{\theta^2}
		\left(\frac{\theta_\xi}{v}-\frac{\bar\theta_\xi}{\bar v}\right)
		+\frac{\mu\tz}{\theta}\left(\frac{u_\xi^2}{v}-\frac{\bar u_\xi^2}{\bar v}\right)\left.-\frac{\tz^2}{\theta\bar\theta}
		\left[\kappa\left(\frac{\bar\theta_\xi}{\bar v}\right)_\xi
		+\mu\frac{\bar u_\xi^2}{\bar v}\right]\right\}\dd\xi,\label{eq:B4}\\
		\B_5&=-\intR a\left\{\sigma\bar\theta_\xi
		\left[\Phi\left(\frac v{\bar v}\right)+\cv\Phi\left(\frac\theta{\bar\theta}\right)\right]
		+\bar u_\xi\pp(v,\theta\mid\bar v,\bar\theta)\right\}\dd\xi-\B_1,
		\label{eq:B5}\\
		\B_6&=\sum_{j=1}^5\Pp_j-\G_\phi.
		\label{eq:B6}
	\end{align}
	By \eqref{eq:decompB5bound} and \eqref{eq:B5}, we have 
	\begin{align}\label{eq:B5final}
		|\B_5|\le C(\delta_S+\varepsilon_1)\Gs.
	\end{align}
	\subsubsection{\texorpdfstring{The shift terms}{The shift terms}}
	To choose the shift $X(t)$ in \eqref{eq:shiftODE} and estimate
	$\dot X\mathcal Y$ in \eqref{eq:identity}, we decompose
	$\mathcal Y_0$ from \eqref{eq:Y0} into the linear terms
	$\Y_1,\Y_2,\Y_3$ and the remaining terms $\Y_4,\Y_5,\Y_6$
	\begin{align}
    &\mathcal Y_0
        =\Y_1+\Y_2+\Y_3+\Y_4+\Y_5+\Y_6,\notag\\
		&\Y_1=\intR a\bar u_\xi \tu \dd\xi,\qquad
		\Y_2=\intR a\frac{\bar\theta+1}{\bar v^2}\bar v_\xi\tv\dd\xi,\qquad
		\Y_3=\frac1{\gamma-1}\intR a\frac{\bar\theta_\xi}{\bar\theta}\tz\dd\xi,
		\label{eq:Y123}\\
		&\Y_4=-\intR a\bar\theta_\xi\Phi\left(\frac v{\bar v}\right)\dd\xi,\quad
		\Y_5=-\frac1{\gamma-1}\intR a\bar\theta_\xi\Phi\left(\frac\theta{\bar\theta}\right)\dd\xi,\quad
		\Y_6=-\intR a_\xi H_0\dd\xi.
		\label{eq:Y456}
	\end{align}
    Together with $\Y_7,\Y_8,\Y_9$ in
	\eqref{eq:Y7}--\eqref{eq:Y89}, this gives   the decomposition
	\begin{equation}\label{eq:Yassembled}
		\mathcal Y=\mathcal Y_0+\Y_7+\Y_8+\Y_9
		=\sum_{i=1}^9\Y_i.
	\end{equation}
	\subsubsection{Assembly of the energy decomposition} Substituting \eqref{eq:J0assembled} into \eqref{eq:Jfull}
	and using \eqref{eq:B6}, we obtain
	\begin{align}
		\mathcal J^{\rm bad}
		&=\mathcal J_0^{\rm bad}+\sum_{j=1}^5\Pp_j=\sum_{i=1}^5\B_i+\B_6+\G_\phi.
		\label{eq:Jbadassembled}
	\end{align}
	Finally, substitute \eqref{eq:Yassembled},
	\eqref{eq:Jbadassembled}, and \eqref{eq:Jgoodassembled} into
	\eqref{eq:identity}. The two occurrences of $\G_\phi$ cancel:
	\begin{align}\label{eq:decomposition}
			\frac{\dd}{\dd t}\E
			&=\dot{X}\mathcal Y+\mathcal J^{\rm bad}-\mathcal J^{\rm good}\notag\\
			&=\dot{X}\left(\sum_{i=1}^6\Y_i+\sum_{i=7}^9\Y_i\right)
			+\left(\sum_{i=1}^5\B_i+\B_6+\G_\phi\right)
			-\left(\G+\G_\phi+\D\right)\notag\\
			&=\dot{X}\sum_{i=1}^9\Y_i+\sum_{i=1}^6\B_i-\G-\D.
	\end{align}
	Applying \eqref{eq:decompB5bound} to \eqref{eq:B5} also gives
	the inequality:
	\[
	\frac{\dd}{\dd t}\E
	\le\dot{X}\sum_{i=1}^9\Y_i
	+\sum_{i=1}^4\B_i+\B_6
	+C(\delta_S+\varepsilon_1)\Gs-\G-\D.
	\]
	\subsubsection{\texorpdfstring{Choice and elementary bounds for the shift}{Choice and elementary bounds for the shift}}\label{sec:shiftchoice}
	With $M$ fixed by \eqref{eq:M}, substituting \eqref{eq:Y123} into
	\eqref{eq:shiftODE} gives
	\[
	\dot X=-\frac M{\delta_S}(\Y_1+\Y_2+\Y_3).
	\]
	Thus \eqref{eq:profilederivatives}, \eqref{eq:weightderivative} and
	\eqref{eq:Yassembled} yield
	\begin{align}
		|\dot{X}|&\le\frac C{\delta_S}\intR\bar v_\xi
		|(\tv,\tu ,\tz)|\dd\xi\le C\norm{(\tv,\tu ,\tz)}_\infty\le C\varepsilon_1,
		\label{eq:shiftbound}\\
		\dot{X}\mathcal Y
		&=-\frac{\delta_S}{M}|\dot{X}|^2
		+\dot{X}\sum_{i=4}^9\Y_i.\label{eq:shiftidentity}
	\end{align}
	For a fixed strong solution, the smoothness and decay of the profile
	make the right side of \eqref{eq:shiftODE} locally Lipschitz in $X$ and bounded.
	Thus the shift exists uniquely on $[0,T]$, and \eqref{eq:shiftbound} gives
	\begin{equation*}
		|X(t)|\le \int_0^t|\dot{X}(s)|\dd s
		\le C\varepsilon_1t,\qquad 0\le t\le T.
	\end{equation*}
	
	\subsection{Leading order estimates}\label{sec:leading}
	The shift ODE of Section~\ref{sec:shiftchoice} converts the translational kernel into the dissipation $-\frac{\delta_S}{M}|\dot X|^2$. What remains at leading order are the hyperbolic flux $\B_1+\B_2$, the relative-entropy production $\G$, and the viscous-thermal diffusion $\D$ associated with $P_{\mathrm{eff}}$. In this subsection we estimate these terms and obtain Lemma~\ref{lem:leading}.
	First fix $t\in[0,T]$ and write the main terms in the variables $y$ and $\tu $:
	\begin{equation}\label{11111}
		y=\frac{v^S(\xi-X(t))-v_-}{\delta_S}
		=\frac{\bar v-v_-}{\delta_S}.
	\end{equation}
	By \eqref{eq:shock} and \eqref{eq:shiftbound},
	\begin{gather*}
		|X(t)|\le C\varepsilon_1T,\qquad
		\lim_{\xi\to-\infty}y=0,\qquad \lim_{\xi\to+\infty}y=1,\\
		\frac{\dd y}{\dd\xi}
		=\frac{(v^S)'(\xi-X(t))}{\delta_S}
		=\frac{\bar v_\xi}{\delta_S}>0.
	\end{gather*}
	Thus $\xi\in\R\mapsto y\in(0,1)$ is a well-defined change of
	variable. We keep the notation $\tv,\tu ,\tz$ for the functions expressed in $y$.
	\eqref{eq:weight}--\eqref{eq:weightderivative} give
	\begin{gather}
		a=1+\sqrt{\delta_S} y,\qquad a_\xi=\sqrt{\delta_S}\frac{\dd y}{\dd\xi}>0,\qquad
		a_\xi\dd\xi=\sqrt{\delta_S}\dd y,\qquad
		\bar v_\xi\dd\xi=\delta_S\dd y,
		\label{eq:B2ymeasures}\\
		\intR\bar v_\xi\dd\xi=\delta_S,\qquad
		\bar w=\int_0^1\tu \dd y,\qquad
		\Gs=\delta_S\int_0^1(\tv^2+\tu ^2+\tz^2)\dd y.
		\label{eq:B2yGs}
	\end{gather}
	\begin{lemma}\label{lem:leading}
		Under the hypotheses of Proposition~\ref{thm:closure}, there exists constant $c>0$ such that
		\begin{equation}\label{eq:leading}
			-\frac{\delta_S}{2M}|\dot{X}|^2
			+\B_1+\B_2-\G-\frac34\D
			\le-c\Gs-c(\G_1+\G_2).
		\end{equation}
	\end{lemma}\begin{proof}\leavevmode\par
		\subsubsection{\texorpdfstring{Estimates on $\B_2-\G$}{Estimates on B2-G}}
		First, \eqref{eq:Jgoodassembled}, \eqref{eq:B2},  and
		\eqref{eq:B2ymeasures} yield
		\begin{equation}\label{eq:B2integrand}
			\B_2-\G
			=\intR a_\xi\big[\tu (\pp-\bar\pp)-\sigma H_0\big]\dd\xi
			=\sqrt{\delta_S}\int_0^1\big[\tu (\pp-\bar\pp)-\sigma H_0\big]\dd y.
		\end{equation}
		By \eqref{eq:H0}, the integrand in \eqref{eq:B2integrand} is
		\begin{equation}\label{eq:B2density}
			\tu (\pp-\bar\pp)-\sigma H_0
			=\tu (\pp-\bar\pp)
			-\sigma\left[(\bar\theta+1)\Phi\left(\frac v{\bar v}\right)
			+\frac{\bar\theta}{\gamma-1}\Phi\left(\frac\theta{\bar\theta}\right)
			+\frac{\tu ^2}{2}\right].
		\end{equation}
		We use the exact pressure identity
		\begin{equation}\label{eq:B2pressureidentity}
			\pp-\bar\pp
			=\frac{\theta+1}{v}-\frac{\bar\theta+1}{\bar v}
			=\frac{\theta-\bar\theta}{v}
			-\frac{\bar\theta+1}{v\bar v}(v-\bar v)
			=\frac\tz v-\frac{\bar\pp}{v}\tv.
		\end{equation}
        For the entropy terms, we use the following identity and estimates
		\begin{align*}
			\Big|\Phi(1+s)-\frac{s^2}{2}\Big|=\Big|\int_0^s\frac r{1+r}\dd r-\frac{s^2}{2}\Big|
			=\Big|s^3\int_0^1\frac{r^2}{1+rs}\dd r\Big|\le\frac23|s|^3\qquad\left(|s|\le\frac12\right),
            \end{align*}
            thus, we obtain
            \begin{align}
			\left|\Phi\left(\frac v{\bar v}\right)-\frac{\tv^2}{2\bar v^2}\right|
			\le C|\tv|^3,\qquad
			\left|\Phi\left(\frac\theta{\bar\theta}\right)-\frac{\tz^2}{2\bar\theta^2}\right|
			\le C|\tz|^3.
			\label{eq:B2Phierrors}
		\end{align}
		The profile bounds \eqref{eq:profilebounds} and the smallness
		assumption \eqref{eq:smallness} imply
		\begin{gather}
			|\bar v-v_-|+|\bar\theta-\theta_-|+|\sigma-\sigma^*|\le C\delta_S,
			\qquad \norm{(\tv,\tu ,\tz)}_\infty\le C\varepsilon_1,\notag
			\\
			\frac{v_-}{2}\le v,\bar v\le2v_-,\qquad
			\frac{\theta_-}{2}\le\theta,\bar\theta\le2\theta_-.
			\label{eq:B2coefficientrange}
		\end{gather}
		In particular, the errors in replacing the coefficients by
		their left-state values satisfy
		\begin{align}
			&\frac1v-\frac1{v_-}
			=-\frac{\tv+\bar v-v_-}{vv_-},\qquad
			\frac{\bar\pp}{v}-\frac{\theta_-+1}{(v_-)^2}
			=\frac{\bar\theta-\theta_-}{v\bar v}
			-\frac{(\theta_-+1)\big[\bar v\tv+(\bar v+v_-)(\bar v-v_-)\big]}
			{v\bar v(v_-)^2},\notag
			\\
			&\left|\frac1v-\frac1{v_-}\right|
			+\left|\frac{\bar\pp}{v}-\frac{\theta_-+1}{(v_-)^2}\right|
			\le C(\delta_S+|\tv|),\notag
			\\
			&\left|\frac{\sigma(\bar\theta+1)}{2\bar v^2}
			-\frac{\sigma^*(\theta_-+1)}{2(v_-)^2}\right|
			+\left|\frac{\sigma}{2(\gamma-1)\bar\theta}
			-\frac{\sigma^*}{2(\gamma-1)\theta_-}\right|
			\le C\delta_S.
			\label{eq:B2coefficienterrors}
		\end{align}
		Substituting \eqref{eq:B2pressureidentity} and  \eqref{eq:B2Phierrors} into
		\eqref{eq:B2density}, and then using
		\eqref{eq:B2coefficienterrors}, we obtain
		\begin{align}
			\tu (\pp-\bar\pp)-\sigma H_0
			=&\tu \left(\frac\tz v-\frac{\bar\pp}{v}\tv\right)
			-\sigma\left[(\bar\theta+1)\Phi\left(\frac v{\bar v}\right)
			+\frac{\bar\theta}{\gamma-1}\Phi\left(\frac\theta{\bar\theta}\right)
			+\frac{\tu ^2}{2}\right]\notag
			\\
			\le&\tu \left[\frac\tz{v_-}-\frac{\theta_-+1}{(v_-)^2}\tv\right]
			-\frac{\sigma^*(\theta_-+1)}{2(v_-)^2}\tv^2
			-\frac{\sigma^*}{2(\gamma-1)\theta_-}\tz^2
			-\frac{\sigma^*}{2}\tu ^2\notag
			\\
			&+C\delta_S(\tv^2+\tu ^2+\tz^2)
			+C\big(|\tv \tu \tz|+|\tu |\tv^2+|\tv|^3+|\tz|^3\big)\notag
			\\
			\le&\tu \left[\frac\tz{v_-}-\frac{\theta_-+1}{(v_-)^2}\tv\right]
			-\frac{\sigma^*(\theta_-+1)}{2(v_-)^2}\tv^2
			-\frac{\sigma^*}{2(\gamma-1)\theta_-}\tz^2
			-\frac{\sigma^*}{2}\tu ^2\notag
			\\
			&+C\delta_S|(\tv,\tu ,\tz)|^2+C|(\tv,\tu ,\tz)|^3.
			\label{eq:B2frozen}
		\end{align}
		To complete the squares in \eqref{eq:B2frozen}, we use
		\eqref{eq:constants} to obtain $\frac{\theta_-+1}{2(v_-)^2\sigma^*}
		+\frac{(\gamma-1)\theta_-}{2(v_-)^2\sigma^*}
		=\frac{\gamma\theta_-+1}{2(v_-)^2\sigma^*}
		=\frac{\sigma^*}{2}$ and $(\sigma^*)^2=\frac{\gamma\theta_-+1}{(v_-)^2}$. Thus, we have
		\begin{align*}
			&\tu \left[\frac\tz{v_-}-\frac{\theta_-+1}{(v_-)^2}\tv\right]
			-\sigma^*\left[\frac{\theta_-+1}{2(v_-)^2}\tv^2
			+\frac{\tz^2}{2(\gamma-1)\theta_-}+\frac{\tu ^2}{2}\right]\\
			=&-\frac{\sigma^*(\theta_-+1)}{2(v_-)^2}\tv^2
			-\frac{\theta_-+1}{(v_-)^2}\tv \tu 
			-\frac{\sigma^*}{2(\gamma-1)\theta_-}\tz^2
			+\frac{\tz \tu }{v_-}-\left[\frac{\theta_-+1}{2(v_-)^2\sigma^*}
			+\frac{(\gamma-1)\theta_-}{2(v_-)^2\sigma^*}\right]\tu ^2\\
			=&-\frac{\sigma^*(\theta_-+1)}{2(v_-)^2}
			\left(\tv+\frac \tu {\sigma^*}\right)^2
			-\frac{\sigma^*}{2(\gamma-1)\theta_-}
			\left(\tz-\frac{(\gamma-1)\theta_-}{v_-\sigma^*}\tu \right)^2.
		\end{align*}
		Consequently, \eqref{eq:B2frozen} becomes
		\begin{align}
			\tu (\pp-\bar\pp)-\sigma H_0
			\le&-\frac{\sigma^*(\theta_-+1)}{2(v_-)^2}
			\left(\tv+\frac \tu {\sigma^*}\right)^2-\frac{\sigma^*}{2(\gamma-1)\theta_-}
			\left(\tz-\frac{(\gamma-1)\theta_-}{v_-\sigma^*}\tu \right)^2
			\notag
			\\
			&+C\delta_S|(\tv,\tu ,\tz)|^2+C|(\tv,\tu ,\tz)|^3.
			\label{eq:B2pointwise}
		\end{align}
		Multiplying \eqref{eq:B2pointwise} by $a_\xi>0$ and integrating,
		we obtain the estimate:
		\begin{equation}\label{eq:B2withBnew}
			\B_2-\G\le-\G_1-\G_2+\B_{\rm new}.
		\end{equation}
		Here $\B_{\rm new}$ is the nonnegative bound obtained by multiplying
		the last two terms of \eqref{eq:B2pointwise} by $a_\xi$ and integrating,
		with the constant $C$ from \eqref{eq:B2pointwise}:
		\begin{equation}\label{eq:Bnewdefinition}
			\B_{\rm new}
			:=C\delta_S\intR a_\xi|(\tv,\tu ,\tz)|^2\dd\xi
			+C\intR a_\xi|(\tv,\tu ,\tz)|^3\dd\xi.
		\end{equation}
		Recall the definition of  $\G_1$ and $\G_2$  \eqref{eq:gooddef} and \eqref{eq:goodsquares}; by
		\eqref{eq:B2ymeasures}, their expressions in $y$ are
		\begin{align*}
			\G_1&=\frac{\sqrt{\delta_S}\sigma^*(\theta_-+1)}{2(v_-)^2}
			\int_0^1\left(\tv+\frac \tu {\sigma^*}\right)^2\dd y, \qquad
			\G_2=\frac{\sqrt{\delta_S}\sigma^*}{2(\gamma-1)\theta_-}
			\int_0^1\left(\tz-\frac{(\gamma-1)\theta_-}{v_-\sigma^*}\tu \right)^2\dd y.
		\end{align*}
		We now estimate the two integrals in \eqref{eq:Bnewdefinition}.
		First, \eqref{eq:B2ymeasures}--\eqref{eq:B2yGs} give
		\begin{equation}\label{eq:Bnewquadratic}
			\delta_S\intR a_\xi|(\tv,\tu ,\tz)|^2\dd\xi
			=\delta_S^{3/2}\int_0^1|(\tv,\tu ,\tz)|^2\dd y
			=\sqrt{\delta_S}\Gs.
		\end{equation}
		For the cubic integral, use
		\begin{gather}
			\tv=\left(\tv+\frac \tu {\sigma^*}\right)-\frac \tu {\sigma^*},\qquad
			\tz=\left(\tz-\frac{(\gamma-1)\theta_-}{v_-\sigma^*}\tu \right)
			+\frac{(\gamma-1)\theta_-}{v_-\sigma^*}\tu ,\notag
			\\
			\left\|\tv+\frac \tu {\sigma^*}\right\|_\infty
			+\left\|\tz-\frac{(\gamma-1)\theta_-}{v_-\sigma^*}\tu \right\|_\infty
			\le C\norm{(\tv,\tu ,\tz)}_{H^1(\R)}\le C\varepsilon_1.
			\label{eq:Bnewcubicsplit}
		\end{gather}
		It follows from \eqref{eq:Bnewcubicsplit} and
		\eqref{eq:gooddef}--\eqref{eq:goodsquares} that
		\begin{align}
			\intR a_\xi|(\tv,\tu ,\tz)|^3\dd\xi
			\le&C\intR a_\xi\left|\tv+\frac \tu {\sigma^*}\right|^3\dd\xi
			+C\intR a_\xi|\tu |^3\dd\xi+C\intR a_\xi
			\left|\tz-\frac{(\gamma-1)\theta_-}{v_-\sigma^*}\tu \right|^3\dd\xi\notag
			\\
			\le&C\varepsilon_1\intR a_\xi
			\left[\left(\tv+\frac \tu {\sigma^*}\right)^2
			+\left(\tz-\frac{(\gamma-1)\theta_-}{v_-\sigma^*}\tu \right)^2\right]\dd\xi
			+C\intR a_\xi|\tu |^3\dd\xi\notag
			\\
			\le&C\varepsilon_1(\G_1+\G_2)+C\intR a_\xi|\tu |^3\dd\xi.
			\label{eq:Bnewcubicgood}
		\end{align}
		To estimate the last integral in \eqref{eq:Bnewcubicgood},
		we use interpolation in the original spatial variable $\xi$.
		By \eqref{eq:smallness}, \eqref{eq:B2coefficientrange}, and \eqref{eq:Ddef},
		\begin{gather}
			\norm{\tu (\xi)}_{L^\infty}^2
			\le2\norm \tu _{L^2(\R)}\norm{\tu _\xi}_{L^2(\R)}\lesssim \varepsilon_1 \sqrt{\D}.
			\label{eq:B2interpolation}
		\end{gather}
		Consequently, \eqref{eq:B2ymeasures}--\eqref{eq:B2yGs},
		and \eqref{eq:B2interpolation} give
		\begin{align}
			\intR a_\xi|\tu |^3\dd\xi
			&=\sqrt{\delta_S}\int_0^1|\tu |^3\dd y
			=\frac{1}{\sqrt{\delta_S}}\intR\bar v_\xi|\tu |^3\dd\xi\notag
			\\
			&\le\frac{1}{\sqrt{\delta_S}}\norm \tu _{L^\infty(\R)}^2
			\intR\bar v_\xi|\tu |\dd\xi\le C\varepsilon_1\sqrt\D\sqrt{\Gs}.
			\label{eq:Bnewvelocitycubic}
		\end{align}
		Combining \eqref{eq:Bnewquadratic}, \eqref{eq:Bnewcubicgood},
		and \eqref{eq:Bnewvelocitycubic}, we conclude that
		\begin{align}
			\B_{\rm new}
			&\le C\sqrt{\delta_S}\Gs+C\varepsilon_1(\G_1+\G_2)
			+C\varepsilon_1\sqrt\D\sqrt{\Gs}\notag
			\\
			&\le C\varepsilon_1(\G_1+\G_2)
			+C(\sqrt{\delta_S}+\varepsilon_1)\Gs+C\varepsilon_1\D.
			\label{eq:Bnewbound}
		\end{align}
		Finally, substituting \eqref{eq:Bnewbound} into
		\eqref{eq:B2withBnew} yields
		\begin{equation}\label{eq:B2bound}
			{\quad\B_2-\G
				\le-(1-C\varepsilon_1)(\G_1+\G_2)
				+C(\sqrt{\delta_S}+\varepsilon_1)\Gs+C\varepsilon_1\D.\quad}
		\end{equation}
		\subsubsection{\texorpdfstring{Estimates on $-\frac{\delta_S}{2M}|\dot{X}|^2$}{Estimates on the shift square}}
		We first estimate $\Y_1,\Y_2,\Y_3$ in \eqref{eq:shiftODE}.
		By \eqref{eq:Y123}, $\bar u_\xi=-\sigma\bar v_\xi$, and
		\eqref{eq:B2ymeasures},
		\begin{align*}
			\Y_1
			=-\sigma\intR a\bar v_\xi \tu \dd\xi
			=-\delta_S\sigma\int_0^1a\tu \dd y.
		\end{align*}
		Using \eqref{eq:profilebounds}, $|a-1|\le\sqrt{\delta_S}$, and
		$\delta_S\le\sqrt{\delta_S}$ from \eqref{eq:smallness}, we have
		\begin{align}
			\left|\Y_1+\delta_S\sigma^*\bar w\right|
			&=\delta_S\left|\int_0^1(\sigma a-\sigma^*)\tu \dd y\right|\le\delta_S\int_0^1
			\big(a|\sigma-\sigma^*|+\sigma^*|a-1|\big)|\tu |\dd y\notag
			\\
			&\le C\delta_S(\delta_S+\sqrt{\delta_S})\int_0^1|\tu |\dd y
			\le C\delta_S^{3/2}\int_0^1|\tu |\dd y.
			\label{eq:shiftY1error}
		\end{align}
		For $\Y_2$, first use \eqref{eq:Y123} and
		\eqref{eq:B2ymeasures}, and then add and subtract $\tu /\sigma^*$:
		\begin{align}
			\Y_2
			=\delta_S\int_0^1a\frac{\bar\theta+1}{\bar v^2}\tv\dd y
			=-\frac{\delta_S}{\sigma^*}\int_0^1a\frac{\bar\theta+1}{\bar v^2}\tu \dd y
			+\delta_S\int_0^1a\frac{\bar\theta+1}{\bar v^2}
			\left(\tv+\frac \tu {\sigma^*}\right)\dd y.
			\label{eq:shiftY2change}
		\end{align}
		The coefficient error is estimated by \eqref{eq:profilebounds}:
		\begin{align}
			a\frac{\bar\theta+1}{\bar v^2}-\frac{\theta_-+1}{(v_-)^2}
			&=(a-1)\frac{\bar\theta+1}{\bar v^2}
			+\frac{\bar\theta-\theta_-}{\bar v^2}
			-\frac{(\theta_-+1)(\bar v+v_-)(\bar v-v_-)}{\bar v^2(v_-)^2},\notag
			\\
			\left|a\frac{\bar\theta+1}{\bar v^2}-\frac{\theta_-+1}{(v_-)^2}\right|
			&\le C\big(|a-1|+|\bar\theta-\theta_-|+|\bar v-v_-|\big)
			\le C(\sqrt{\delta_S}+\delta_S).
			\label{eq:shiftY2coefficient}
		\end{align}
		Consequently, \eqref{eq:shiftY2change}--\eqref{eq:shiftY2coefficient}
		and $\dd y=a_\xi\dd\xi/\sqrt{\delta_S}$ give
		\begin{align}
			\Y_2+\frac{(\theta_-+1)\delta_S}{(v_-)^2\sigma^*}\bar w
			=&-\frac{\delta_S}{\sigma^*}\int_0^1
			\left[a\frac{\bar\theta+1}{\bar v^2}-\frac{\theta_-+1}{(v_-)^2}\right]\tu \dd y+\delta_S\int_0^1a\frac{\bar\theta+1}{\bar v^2}
			\left(\tv+\frac \tu {\sigma^*}\right)\dd y,\notag
			\\
			\left|\Y_2+\frac{(\theta_-+1)\delta_S}{(v_-)^2\sigma^*}\bar w\right|
			\le&C\delta_S(\sqrt{\delta_S}+\delta_S)\int_0^1|\tu |\dd y
			+C\delta_S\int_0^1\left|\tv+\frac \tu {\sigma^*}\right|\dd y.
			\label{eq:shiftY2error}
		\end{align}
		Likewise, for $\Y_3$ we obtain from \eqref{eq:Y123} and
		\eqref{eq:B2ymeasures} that
		\begin{align}
			\Y_3
			=\frac{\delta_S}{\gamma-1}\int_0^1
			a\frac{\bar\theta_\xi}{\bar v_\xi\bar\theta}\tz\dd y
			=\frac{\theta_-\delta_S}{v_-\sigma^*}\int_0^1
			a\frac{\bar\theta_\xi}{\bar v_\xi\bar\theta}\tu \dd y+\frac{\delta_S}{\gamma-1}\int_0^1
			a\frac{\bar\theta_\xi}{\bar v_\xi\bar\theta}
			\left(\tz-\frac{(\gamma-1)\theta_-}{v_-\sigma^*}\tu \right)\dd y.
			\label{eq:shiftY3change}
		\end{align}
		Here \eqref{eq:profileslopes} and \eqref{eq:profilebounds} imply
		\begin{align}
			&a\frac{\bar\theta_\xi}{\bar v_\xi\bar\theta}+\frac{\gamma-1}{v_-}
			=\frac a{\bar\theta}
			\left[\frac{\bar\theta_\xi}{\bar v_\xi}
			+\frac{(\gamma-1)\theta_-}{v_-}\right]
			+\frac{\gamma-1}{v_-\bar\theta}
			\big[(\bar\theta-\theta_-)-(a-1)\theta_-\big],\notag\\
			&\left|a\tfrac{\bar\theta_\xi}{\bar v_\xi\bar\theta}+\tfrac{\gamma-1}{v_-}\right|
			\le C\left|\tfrac{\bar\theta_\xi}{\bar v_\xi}
			+\tfrac{(\gamma-1)\theta_-}{v_-}\right|
			+C\big(|\bar\theta-\theta_-|+|a-1|\big)
			\le C(\delta_S+\sqrt{\delta_S}).
			\label{eq:shiftY3coefficient}
		\end{align}
		Using \eqref{eq:shiftY3coefficient} in \eqref{eq:shiftY3change},
		and changing the remainder integral back by \eqref{eq:B2ymeasures}, we have
		\begin{align}
			&\Y_3+\frac{(\gamma-1)\theta_-\delta_S}{(v_-)^2\sigma^*}\bar w
			=\frac{\theta_-\delta_S}{v_-\sigma^*}\int_0^1
			\left[a\frac{\bar\theta_\xi}{\bar v_\xi\bar\theta}
			+\frac{\gamma-1}{v_-}\right]\tu \dd y+\frac{\delta_S}{\gamma-1}\int_0^1
			a\frac{\bar\theta_\xi}{\bar v_\xi\bar\theta}
			\left(\tz-\frac{(\gamma-1)\theta_-}{v_-\sigma^*}\tu \right)\dd y,\notag
			\\
			&\left|\Y_3+\frac{(\gamma-1)\theta_-\delta_S}{(v_-)^2\sigma^*}\bar w\right|\notag
			\\
			&\le C\delta_S(\sqrt{\delta_S}+\delta_S)\int_0^1|\tu |\dd y
			+C\sqrt{\delta_S}\intR a_\xi
			\left|\tz-\frac{(\gamma-1)\theta_-}{v_-\sigma^*}\tu \right|\dd\xi.
			\label{eq:shiftY3error}
		\end{align}
		We now add the three estimates. By \eqref{eq:constants},
		the coefficients of $\delta_S\bar w$ satisfy
		\begin{equation*}
			\sigma^*+\frac{\theta_-+1}{(v_-)^2\sigma^*}
			+\frac{(\gamma-1)\theta_-}{(v_-)^2\sigma^*}
			=\sigma^*+\frac{\gamma\theta_-+1}{(v_-)^2\sigma^*}
			=2\sigma^*.
		\end{equation*}
		Therefore, the ODE \eqref{eq:shiftODE} gives the identity
		\begin{align}
			&\dot{X}-2M\sigma^*\bar w
			=-\frac M{\delta_S}
			\big[\Y_1+\Y_2+\Y_3+2\sigma^*\delta_S\bar w\big]\notag\\
			&=-\frac M{\delta_S}\left[
			\big(\Y_1+\sigma^*\delta_S\bar w\big)
			+\left(\Y_2+\frac{(\theta_-+1)\delta_S}{(v_-)^2\sigma^*}\bar w\right)
			+\left(\Y_3+\frac{(\gamma-1)\theta_-\delta_S}{(v_-)^2\sigma^*}\bar w\right)
			\right].
			\label{eq:shiftODEdeviation}
		\end{align}
		Substituting \eqref{eq:shiftY1error}, \eqref{eq:shiftY2error},
		and \eqref{eq:shiftY3error} into \eqref{eq:shiftODEdeviation}, we obtain
		\begin{align}
			\left|\dot{X}-2M\sigma^*\bar w\right|
			\le&C(\sqrt{\delta_S}+\delta_0)\int_0^1|\tu |\dd y
			+\frac C{\sqrt{\delta_S}}\intR a_\xi
			\left|\tv+\frac \tu {\sigma^*}\right|\dd\xi\notag\\
			&+\frac C{\sqrt{\delta_S}}\intR a_\xi
			\left|\tz-\frac{(\gamma-1)\theta_-}{v_-\sigma^*}\tu \right|\dd\xi.
			\label{eq:shiftODEerror}
		\end{align}
		Next, Cauchy--Schwarz, \eqref{eq:B2ymeasures}, and
		\eqref{eq:gooddef}--\eqref{eq:goodsquares}  yield
		\begin{gather}
			\begin{aligned}
				\left(\intR a_\xi\left|\tv+\frac \tu {\sigma^*}\right|\dd\xi\right)^2
				&\le\left(\intR a_\xi\dd\xi\right)
				\intR a_\xi\left(\tv+\frac \tu {\sigma^*}\right)^2\dd\xi
				\\&=\frac{2(v_-)^2\sqrt{\delta_S}}{\sigma^*(\theta_-+1)}\G_1,\\
				\left(\intR a_\xi
				\left|\tz-\frac{(\gamma-1)\theta_-}{v_-\sigma^*}\tu \right|\dd\xi\right)^2
				&\le\left(\intR a_\xi\dd\xi\right)
				\intR a_\xi
				\left(\tz-\frac{(\gamma-1)\theta_-}{v_-\sigma^*}\tu \right)^2\dd\xi\\&=\frac{2(\gamma-1)\theta_-\sqrt{\delta_S}}{\sigma^*}\G_2.
			\end{aligned}
			\label{eq:shiftgoodCS}
		\end{gather}
		The reverse triangle inequality, followed by the square of
		\eqref{eq:shiftODEerror} and \eqref{eq:shiftgoodCS}, gives
		\begin{align}
			\left(\left|2M\sigma^*\bar w\right|-|\dot{X}|\right)^2
			&\le C(\sqrt{\delta_S}+\delta_0)^2\int_0^1\tu ^2\dd y
			+\frac C{\sqrt{\delta_S}}(\G_1+\G_2).
			\label{eq:shiftreversetriangle}
		\end{align}
		Finally, combining the elementary identity
		\begin{align*}
			\frac12\left|2M\sigma^*\bar w\right|^2-|\dot{X}|^2
			&=\left(\left|2M\sigma^*\bar w\right|-|\dot{X}|\right)^2
			-\frac12\left(\left|2M\sigma^*\bar w\right|-2|\dot{X}|\right)^2\le\left(\left|2M\sigma^*\bar w\right|-|\dot{X}|\right)^2,
		\end{align*}
		with \eqref{eq:shiftreversetriangle}  gives
		\begin{equation}\label{eq:shiftlowerbound}
			2M^2(\sigma^*)^2\bar w^2-|\dot{X}|^2
			\le C(\sqrt{\delta_S}+\delta_0)^2\int_0^1\tu ^2\dd y
			+\frac C{\sqrt{\delta_S}}(\G_1+\G_2).
		\end{equation}
		Multiplying \eqref{eq:shiftlowerbound} by $\delta_S/(2M)>0$
		and moving the mean term to the right, we conclude that
		\begin{equation}\label{eq:shiftleading}
			{\begin{aligned}
					-\frac{\delta_S}{2M}|\dot{X}|^2
					\le&-(\sigma^*)^2M\delta_S\bar w^2
					+C\delta_S(\sqrt{\delta_S}+\delta_0)^2\int_0^1\tu ^2\dd y
					+C\sqrt{\delta_S}(\G_1+\G_2).
			\end{aligned}}
		\end{equation}
		\subsubsection{\texorpdfstring{Estimates on $\B_1$}{Estimates on B1}}
		First write \eqref{eq:B1} as
		\begin{equation}\label{eq:B1threeintegrals}
			\begin{aligned}
				\B_1&=\B_{11}+\B_{12}+\B_{13},\\
				\B_{11}&:=\frac{\sigma^*((\gamma+1)\theta_-+2)}{2(v_-)^3}
				\intR a\bar v_\xi\tilde v^2\dd\xi,\\
				\B_{12}&:=\frac{\sigma^*}{2v_-\theta_-}
				\intR a\bar v_\xi\tilde\theta^2\dd\xi,\qquad
				\B_{13}:=-\frac{\sigma^*}{(v_-)^2}
				\intR a\bar v_\xi\tilde v\tilde\theta\dd\xi.
			\end{aligned}
		\end{equation}
		First, Young's inequality and $1\le a\le1+\sqrt{\delta_S}$ give
		\begin{align}
			\B_{11}
			=&\frac{\sigma^*((\gamma+1)\theta_-+2)}{2(v_-)^3}
			\intR a\bar v_\xi
			\left|\left(\tv+\frac \tu {\sigma^*}\right)-\frac \tu {\sigma^*}\right|^2\dd\xi\notag
			\\
			\le&\frac{\sigma^*((\gamma+1)\theta_-+2)}{2(v_-)^3}
			(1+\sqrt{\delta_S})\left[1+\delta_S^{1/4}\right]
			\intR\bar v_\xi\frac{\tu ^2}{(\sigma^*)^2}\dd\xi\notag
			\\
			&+\frac{\sigma^*((\gamma+1)\theta_-+2)}{2(v_-)^3}
			(1+\sqrt{\delta_S})\left[1+\delta_S^{-1/4}\right]
			\intR\bar v_\xi\left(\tv+\frac \tu {\sigma^*}\right)^2\dd\xi.
			\label{eq:B11Young}
		\end{align}
		By \eqref{eq:B2ymeasures} and \eqref{eq:gooddef}--\eqref{eq:goodsquares}, the two integrals are
		\begin{align}
			&\intR\bar v_\xi \tu ^2\dd\xi
			=\delta_S\int_0^1\tu ^2\dd y,\notag
			\\
			&\intR\bar v_\xi\left(\tv+\frac \tu {\sigma^*}\right)^2\dd\xi
			=\sqrt{\delta_S}\intR a_\xi
			\left(\tv+\frac \tu {\sigma^*}\right)^2\dd\xi
			=\frac{2(v_-)^2}{\sigma^*(\theta_-+1)}\sqrt{\delta_S}\G_1.
			\label{eq:B11measures}
		\end{align}
		Consequently, $\left[1+\delta_S^{-1/4}\right]
		\sqrt{\delta_S}
		\le2\delta_S^{1/4}$ and \eqref{eq:B11Young}--\eqref{eq:B11measures}  yield
		\begin{align}
			\B_{11}\le\frac{(\gamma+1)\theta_-+2}{2(v_-)^3\sigma^*}
			\left[1+C\sqrt{\delta_S}+C\delta_S^{1/4}\right]
			\delta_S\int_0^1\tu ^2\dd y
			+C\delta_S^{1/4}\G_1.
			\label{eq:B11final}
		\end{align}
		Similarly, for the second integral in \eqref{eq:B1threeintegrals},
		\begin{align}
			\B_{12}
			=&\frac{\sigma^*}{2v_-\theta_-}\intR a\bar v_\xi
			\left|\left(\tz-\frac{(\gamma-1)\theta_-}{v_-\sigma^*}\tu \right)
			+\frac{(\gamma-1)\theta_-}{v_-\sigma^*}\tu \right|^2\dd\xi\notag
			\\
			\le&\frac{\sigma^*}{2v_-\theta_-}(1+\sqrt{\delta_S})
			\left[1+\delta_S^{1/4}\right]
			\frac{(\gamma-1)^2(\theta_-)^2}{(v_-)^2(\sigma^*)^2}
			\intR\bar v_\xi \tu ^2\dd\xi\notag
			\\
			&+\frac{\sigma^*}{2v_-\theta_-}(1+\sqrt{\delta_S})
			\left[1+\delta_S^{-1/4}\right]
			\intR\bar v_\xi
			\left(\tz-\frac{(\gamma-1)\theta_-}{v_-\sigma^*}\tu \right)^2\dd\xi,
			\label{eq:B12Young}
		\end{align}
		and \eqref{eq:goodsquares} gives
		\begin{align}
			&\intR\bar v_\xi
			\left(\tz-\frac{(\gamma-1)\theta_-}{v_-\sigma^*}\tu \right)^2\dd\xi=\sqrt{\delta_S}\intR a_\xi
			\left(\tz-\frac{(\gamma-1)\theta_-}{v_-\sigma^*}\tu \right)^2\dd\xi
			=\frac{2(\gamma-1)\theta_-}{\sigma^*}\sqrt{\delta_S}\G_2.
			\label{eq:B12measure}
		\end{align}
		Substituting \eqref{eq:B12measure} into \eqref{eq:B12Young}, together with
		\eqref{eq:B11measures}, yields
		\begin{equation}\label{eq:B12final}
			\B_{12}\le\frac{(\gamma-1)^2\theta_-}{2(v_-)^3\sigma^*}
			\left[1+C\sqrt{\delta_S}+C\delta_S^{1/4}\right]
			\delta_S\int_0^1\tu ^2\dd y
			+C\delta_S^{1/4}\G_2.
		\end{equation}
		For the mixed integral, expand the two factors:
		\begin{align}
			\B_{13}
			=&-\frac{\sigma^*}{(v_-)^2}\intR a\bar v_\xi\tv\tz\dd\xi\notag
			\\
			=&\frac{\sigma^*}{(v_-)^2}\intR a\bar v_\xi
			\left\{\frac{(\gamma-1)\theta_-}{v_-(\sigma^*)^2}\tu ^2
			-\left(\tv+\frac \tu {\sigma^*}\right)
			\left(\tz-\frac{(\gamma-1)\theta_-}{v_-\sigma^*}\tu \right)\right.\notag
			\\
			&\left.-\frac{(\gamma-1)\theta_-}{v_-\sigma^*}\tu 
			\left(\tv+\frac \tu {\sigma^*}\right)
			+\frac \tu {\sigma^*}
			\left(\tz-\frac{(\gamma-1)\theta_-}{v_-\sigma^*}\tu \right)\right\}\dd\xi.
			\label{eq:B13expand}
		\end{align}
		Young's inequality gives, for the three mixed products,
		\begin{align}
			&\left|\left(\tv+\frac \tu {\sigma^*}\right)
			\left(\tz-\frac{(\gamma-1)\theta_-}{v_-\sigma^*}\tu \right)\right|\le\frac12\left(\tv+\frac \tu {\sigma^*}\right)^2
			+\frac12\left(\tz-\frac{(\gamma-1)\theta_-}{v_-\sigma^*}\tu \right)^2,\notag
			\\
			&|\tu |\left(\left|\tv+\frac \tu {\sigma^*}\right|
			+\left|\tz-\frac{(\gamma-1)\theta_-}{v_-\sigma^*}\tu \right|\right)\notag
			\\
			&\le\delta_S^{1/4}\tu ^2
			+\frac12\delta_S^{-1/4}
			\left[\left(\tv+\frac \tu {\sigma^*}\right)^2
			+\left(\tz-\frac{(\gamma-1)\theta_-}{v_-\sigma^*}\tu \right)^2\right].
			\label{eq:B13Young}
		\end{align}
		Using \eqref{eq:B13Young} in \eqref{eq:B13expand}, and then
		\eqref{eq:B11measures} and \eqref{eq:B12measure}, we obtain
		\begin{align}
			\B_{13}
			\le&\frac{(\gamma-1)\theta_-}{(v_-)^3\sigma^*}
			(1+\sqrt{\delta_S})\left[1+C\delta_S^{1/4}\right]
			\intR\bar v_\xi \tu ^2\dd\xi\notag
			\\
			&+C\delta_S^{-1/4}\intR\bar v_\xi
			\left[\left(\tv+\frac \tu {\sigma^*}\right)^2
			+\left(\tz-\frac{(\gamma-1)\theta_-}{v_-\sigma^*}\tu \right)^2\right]\dd\xi\notag
			\\
			\le&\frac{(\gamma-1)\theta_-}{(v_-)^3\sigma^*}
			\left[1+C\sqrt{\delta_S}+C\delta_S^{1/4}\right]
			\delta_S\int_0^1\tu ^2\dd y
			+C\delta_S^{1/4}(\G_1+\G_2).
			\label{eq:B13final}
		\end{align}
		Adding the three leading coefficients of \(\delta_S\int_0^1\widetilde u^2\,dy\) in \eqref{eq:B11final}, \eqref{eq:B12final}, and \eqref{eq:B13final}, and using \eqref{eq:constants}, we obtain
		\begin{align*}
			&\frac{((\gamma+1)\theta_-+2)+(\gamma-1)^2\theta_-+2(\gamma-1)\theta_-}
			{2(v_-)^3\sigma^*}
			=\frac{\gamma(\gamma+1)\theta_-+2}{2(v_-)^3\sigma^*}=\alpha^*.
		\end{align*}
		Substituting \eqref{eq:B11final}, \eqref{eq:B12final}, and
		\eqref{eq:B13final} into \eqref{eq:B1threeintegrals} therefore gives
		\begin{equation}\label{eq:B1bound}
			{\begin{aligned}
					\B_1\le&\alpha^*\left[1+C\sqrt{\delta_S}+C\delta_S^{1/4}\right]
					\delta_S\int_0^1\tu ^2\dd y
					+C\delta_S^{1/4}(\G_1+\G_2).
			\end{aligned}}
		\end{equation}
		\subsubsection{\texorpdfstring{Estimates on $\D$}{Estimates on D}}
		Set
		\begin{equation}\label{eq:Dtwodef}
			\begin{aligned}
				\D&=\intR a\left[\frac\mu v \tu _\xi^2
				+\frac\kappa{v\theta}\tz_\xi^2\right]\dd\xi=\D_1+\D_2,\\
				\D_1&:=\intR\frac{a\mu}{v}\tu _\xi^2\dd\xi,\qquad
				\D_2:=\intR\frac{a\kappa}{v\theta}\tz_\xi^2\dd\xi.
			\end{aligned}
		\end{equation}
		First, $a\ge1$ and \eqref{eq:B2ymeasures} imply
		\begin{align}
			\tu _\xi=\tu _y\frac{\dd y}{\dd\xi},\qquad
			\D_1\ge\intR\frac\mu v \tu _\xi^2\dd\xi
			=\int_0^1\frac\mu v|\tu _y|^2\frac{\dd y}{\dd\xi}\dd y.
			\label{eq:D1change}
		\end{align}
		To estimate the coefficient in \eqref{eq:D1change}, 
		by \eqref{eq:sharpdiffusion} in Appendix \ref{app:sharp}, we have
		\begin{align}
			\left|\frac1{y(1-y)}\frac\mu{\bar v}\frac{\dd y}{\dd\xi}
			-\alpha^*\frac{\mu(\gamma\theta_-+1)}
			{\mu(\gamma\theta_-+1)+\kappa(\gamma-1)^2\theta_-}\delta_S\right|
			\le C\delta_S^2.
			\label{eq:Djacobiansharp}
		\end{align}
		By \eqref{eq:smallness}, we have 
		\begin{equation}\label{eq:Dvolumeerror}
			\frac{\bar v}{v}-1=-\frac{\tv}{v},\qquad
			\left|\frac{\bar v}{v}-1\right|\le C\varepsilon_1.
		\end{equation}
		Consequently, \eqref{eq:D1change}, \eqref{eq:Djacobiansharp}, and
		\eqref{eq:Dvolumeerror} yield
		\begin{align}
			\D_1\ge&\int_0^1\frac\mu{\bar v}\frac{\dd y}{\dd\xi}|\tu _y|^2\dd y
			+\int_0^1\left(\frac{\bar v}{v}-1\right)
			\frac\mu{\bar v}\frac{\dd y}{\dd\xi}|\tu _y|^2\dd y
			\ge(1-C\varepsilon_1)
			\int_0^1\frac\mu{\bar v}\frac{\dd y}{\dd\xi}|\tu _y|^2\dd y\notag
			\\
			\ge&(1-C\varepsilon_1)\left[
			\alpha^*\frac{\mu(\gamma\theta_-+1)}
			{\mu(\gamma\theta_-+1)+\kappa(\gamma-1)^2\theta_-}\delta_S-C\delta_S^2\right]
			\int_0^1y(1-y)|\tu _y|^2\dd y\notag
			\\
			\ge&\alpha^*\frac{\mu(\gamma\theta_-+1)}
			{\mu(\gamma\theta_-+1)+\kappa(\gamma-1)^2\theta_-}
			(1-C(\delta_0+\varepsilon_1))\delta_S
			\int_0^1y(1-y)|\tu _y|^2\dd y.
			\label{eq:D1final}
		\end{align}
		Likewise, for the temperature integral in \eqref{eq:Dtwodef},
		\begin{align}
			\D_2
			\ge&\intR\frac\kappa{v\theta}\tz_\xi^2\dd\xi
			=\int_0^1\frac\kappa{v\theta}|\tz_y|^2\frac{\dd y}{\dd\xi}\dd y
			=\int_0^1\frac{\kappa\bar v}{\mu v\theta}|\tz_y|^2
			\frac\mu{\bar v}\frac{\dd y}{\dd\xi}\dd y\notag
			\\
			=&\int_0^1\frac\kappa{\mu\theta_-}|\tz_y|^2
			\frac\mu{\bar v}\frac{\dd y}{\dd\xi}\dd y
			+\int_0^1\frac\kappa\mu
			\left(\frac{\bar v}{v\theta}-\frac1{\theta_-}\right)|\tz_y|^2
			\frac\mu{\bar v}\frac{\dd y}{\dd\xi}\dd y.
			\label{eq:D2change}
		\end{align}
		The coefficient error is explicit:
		\begin{align}
			\frac{\bar v}{v\theta}-\frac1{\theta_-}
			&=\frac{\bar v-v}{v\theta}+\frac1\theta-\frac1{\theta_-}
			=-\frac{\tv}{v\theta}
			-\frac{\tz+(\bar\theta-\theta_-)}{\theta\theta_-},\notag
			\\
			\left|\frac{\bar v}{v\theta}-\frac1{\theta_-}\right|
			&\le C\big(|\tv|+|\tz|+|\bar\theta-\theta_-|\big)
			\le C(\varepsilon_1+\delta_S).
			\label{eq:D2coefficient}
		\end{align}
		Using \eqref{eq:D2coefficient} and \eqref{eq:Djacobiansharp} in
		\eqref{eq:D2change}, we obtain
		\begin{align}
			\D_2
			\ge&\frac\kappa{\mu\theta_-}(1-C(\delta_S+\varepsilon_1))
			\int_0^1\frac\mu{\bar v}\frac{\dd y}{\dd\xi}|\tz_y|^2\dd y\notag
			\\
			\ge&\alpha^*\frac{\mu(\gamma\theta_-+1)}
			{\mu(\gamma\theta_-+1)+\kappa(\gamma-1)^2\theta_-}
			\frac\kappa{\mu\theta_-}(1-C(\delta_0+\varepsilon_1))\delta_S
			\int_0^1y(1-y)|\tz_y|^2\dd y.
			\label{eq:D2final}
		\end{align}
		We now use the weighted Poincar\'e inequality
		\cite[Lemma 2.9]{KV-JEMS}: for $f\in L^2(0,1)$ with a weak derivative and
		finite right-hand side,
		\begin{equation}\label{eq:Poincare}
			\int_0^1\left|f-\int_0^1 f\dd y\right|^2\dd y
			\le\frac12\int_0^1y(1-y)|f_y|^2\dd y.
		\end{equation}
		The bound supplied by \eqref{eq:D1final} alone does not absorb
		\eqref{eq:B1bound}. 
		Indeed, its
		leading coefficient after multiplication by $3/4$, relative to $\alpha^*$, is
		\[
		\frac32\frac{\mu(\gamma\theta_-+1)}
		{\mu(\gamma\theta_-+1)+\kappa(\gamma-1)^2\theta_-}
		\longrightarrow0\qquad(\gamma\longrightarrow\infty).
		\]
		We therefore extract an additional $\tu $ term from $\D_2$.
		First, by the definition of $\bar w$ in \eqref{eq:B2yGs},
		\begin{equation}\label{eq:Poincareproof}
			\begin{aligned}
				\int_0^1|\tu -\bar w|^2\dd y
				&=\int_0^1\tu ^2\dd y-\bar w^2,\\
				\int_0^1\left|\tz-\int_0^1\tz\dd y\right|^2\dd y
				&=\int_0^1\tz^2\dd y-\left(\int_0^1\tz\dd y\right)^2.
			\end{aligned}
		\end{equation}
		Combining \eqref{eq:D1final}, \eqref{eq:D2final},
		\eqref{eq:Poincare}, and \eqref{eq:Poincareproof}, we have
		\begin{align}
			\D\ge&2\alpha^*\frac{\mu(\gamma\theta_-+1)}
			{\mu(\gamma\theta_-+1)+\kappa(\gamma-1)^2\theta_-}
			(1-C(\delta_0+\varepsilon_1))\delta_S\notag
			\\
			&\quad\times\left[\int_0^1\tu ^2\dd y-\bar w^2
			+\frac\kappa{\mu\theta_-}
			\left(\int_0^1\tz^2\dd y-\left(\int_0^1\tz\dd y\right)^2\right)\right].
			\label{eq:Dvariance}
		\end{align}
		For the temperature square in \eqref{eq:Dvariance}, expand
		along the square in $\G_2$:
		\begin{align*}
			\int_0^1\tz^2\dd y
			=&\int_0^1\left|\left(\tz-\frac{(\gamma-1)\theta_-}{v_-\sigma^*}\tu \right)
			+\frac{(\gamma-1)\theta_-}{v_-\sigma^*}\tu \right|^2\dd y\\
			=&\left(\frac{(\gamma-1)\theta_-}{v_-\sigma^*}\right)^2\int_0^1\tu ^2\dd y
			+\frac{2(\gamma-1)\theta_-}{v_-\sigma^*}\int_0^1\tu 
			\left(\tz-\frac{(\gamma-1)\theta_-}{v_-\sigma^*}\tu \right)\dd y\\
			&+\int_0^1\left(\tz-\frac{(\gamma-1)\theta_-}{v_-\sigma^*}\tu \right)^2\dd y.
		\end{align*}
		Young's inequality applied to the middle term gives
		\begin{align*}
			&\frac{2(\gamma-1)\theta_-}{v_-\sigma^*}\int_0^1\tu 
			\left(\tz-\frac{(\gamma-1)\theta_-}{v_-\sigma^*}\tu \right)\dd y\notag
			\\
			\ge&-\delta_S^{1/4}
			\left(\frac{(\gamma-1)\theta_-}{v_-\sigma^*}\right)^2\int_0^1\tu ^2\dd y
			-\delta_S^{-1/4}
			\int_0^1\left(\tz-\frac{(\gamma-1)\theta_-}{v_-\sigma^*}\tu \right)^2\dd y.
		\end{align*}
		Thus, we have
		\begin{align}
			\int_0^1\tz^2\dd y
			\ge\left(\frac{(\gamma-1)\theta_-}{v_-\sigma^*}\right)^2
			\left[1-\delta_S^{1/4}\right]\int_0^1\tu ^2\dd y
			-\delta_S^{-1/4}
			\int_0^1\left(\tz-\frac{(\gamma-1)\theta_-}{v_-\sigma^*}\tu \right)^2\dd y.
			\label{eq:temperaturesquarelower}
		\end{align}
		For the temperature mean, the same splitting and Cauchy--Schwarz yield
		\begin{align}
			\left(\int_0^1\tz\dd y\right)^2
			&=\left[\frac{(\gamma-1)\theta_-}{v_-\sigma^*}\bar w
			+\int_0^1\left(\tz-\frac{(\gamma-1)\theta_-}{v_-\sigma^*}\tu \right)\dd y\right]^2\notag
			\\
			&\le2\left(\frac{(\gamma-1)\theta_-}{v_-\sigma^*}\right)^2\bar w^2
			+2\int_0^1\left(\tz-\frac{(\gamma-1)\theta_-}{v_-\sigma^*}\tu \right)^2\dd y.
			\label{eq:temperaturemeanupper}
		\end{align}
		Subtract \eqref{eq:temperaturemeanupper} from
		\eqref{eq:temperaturesquarelower}:
		\begin{align}
			\int_0^1\tz^2\dd y-\left(\int_0^1\tz\dd y\right)^2&\ge\left(\frac{(\gamma-1)\theta_-}{v_-\sigma^*}\right)^2
			\left[1-\delta_S^{1/4}\right]\int_0^1\tu ^2\dd y
			-2\left(\frac{(\gamma-1)\theta_-}{v_-\sigma^*}\right)^2\bar w^2\notag
			\\
			&\qquad-\left[\delta_S^{-1/4}+2\right]
			\int_0^1\left(\tz-\frac{(\gamma-1)\theta_-}{v_-\sigma^*}\tu \right)^2\dd y.
			\label{eq:temperaturevariancelower}
		\end{align}
		The last integral is converted to $\G_2$ by
		\eqref{eq:B2ymeasures} and \eqref{eq:goodsquares}:
		\begin{equation}\label{eq:temperatureG2error}
			\begin{aligned}
				&\delta_S\left[
				\delta_S^{-1/4}+2
				\right]
				\int_0^1
				\left(
				\tz-\frac{(\gamma-1)\theta_-}{v_-\sigma^*}\tu
				\right)^2\dd y
				\\
				&\quad=
				\sqrt{\delta_S}
				\left[
				\delta_S^{-1/4}+2
				\right]
				\intR a_\xi
				\left(
				\tz-\frac{(\gamma-1)\theta_-}{v_-\sigma^*}\tu
				\right)^2\dd\xi
				\\
				&\quad=
				\frac{2(\gamma-1)\theta_-}{\sigma^*}
				\left[
				\delta_S^{1/4}
				+2\sqrt{\delta_S}
				\right]\G_2
				\le
				C\delta_S^{1/4}\G_2.
			\end{aligned}
		\end{equation}
		Before substituting into \eqref{eq:Dvariance}, use
		$(v_-)^2(\sigma^*)^2=\gamma\theta_-+1$ to compute the coefficients:
		\begin{align}
			\frac\kappa{\mu\theta_-}
			\left(\frac{(\gamma-1)\theta_-}{v_-\sigma^*}\right)^2
			=\frac{\kappa(\gamma-1)^2\theta_-}{\mu(\gamma\theta_-+1)},\quad
			\frac{\mu(\gamma\theta_-+1)}
			{\mu(\gamma\theta_-+1)+\kappa(\gamma-1)^2\theta_-}
			\left[1+\frac{\kappa(\gamma-1)^2\theta_-}{\mu(\gamma\theta_-+1)}\right]
			&=1,
			\label{eq:Dcoefficientidentity}
		\end{align}
		and
		\begin{align}
			&\frac{\mu(\gamma\theta_-+1)}
			{\mu(\gamma\theta_-+1)+\kappa(\gamma-1)^2\theta_-}
			\left[1+\frac{\kappa(\gamma-1)^2\theta_-}{\mu(\gamma\theta_-+1)}
			\left(1-\delta_S^{1/4}\right)\right]\notag
			\\
			=&1-\frac{\kappa(\gamma-1)^2\theta_-}
			{\mu(\gamma\theta_-+1)+\kappa(\gamma-1)^2\theta_-}
			\delta_S^{1/4}
			\ge1-\delta_S^{1/4},\notag
			\\
			&\frac{\mu(\gamma\theta_-+1)}
			{\mu(\gamma\theta_-+1)+\kappa(\gamma-1)^2\theta_-}
			\left[1+\frac{2\kappa(\gamma-1)^2\theta_-}{\mu(\gamma\theta_-+1)}\right]=\frac{\mu(\gamma\theta_-+1)+2\kappa(\gamma-1)^2\theta_-}
			{\mu(\gamma\theta_-+1)+\kappa(\gamma-1)^2\theta_-}.
			\label{eq:Dpositiveandmeancoefficients}
		\end{align}
		Substituting \eqref{eq:temperaturevariancelower}--\eqref{eq:temperatureG2error}
		into \eqref{eq:Dvariance}, and using \eqref{eq:Dcoefficientidentity}--\eqref{eq:Dpositiveandmeancoefficients}, gives
		\begin{align}
			\D\ge&2\alpha^*(1-C(\delta_0+\varepsilon_1))\delta_S
			\frac{\mu(\gamma\theta_-+1)}
			{\mu(\gamma\theta_-+1)+\kappa(\gamma-1)^2\theta_-}\notag\\
			&\quad\times\left[1+\frac{\kappa(\gamma-1)^2\theta_-}{\mu(\gamma\theta_-+1)}
			\left(1-\delta_S^{1/4}\right)\right]
			\int_0^1\tu ^2\dd y-C\delta_S^{1/4}\G_2\notag\\
			&-2\alpha^*(1-C(\delta_0+\varepsilon_1))\delta_S
			\frac{\mu(\gamma\theta_-+1)}
			{\mu(\gamma\theta_-+1)+\kappa(\gamma-1)^2\theta_-}
			\left[1+\frac{2\kappa(\gamma-1)^2\theta_-}{\mu(\gamma\theta_-+1)}\right]\bar w^2\notag\\
			\ge&2\alpha^*(1-C(\delta_0+\varepsilon_1))
			\left[1-\delta_S^{1/4}\right]
			\delta_S\int_0^1\tu ^2\dd y\notag\\
			&-2\alpha^*\frac{\mu(\gamma\theta_-+1)+2\kappa(\gamma-1)^2\theta_-}
			{\mu(\gamma\theta_-+1)+\kappa(\gamma-1)^2\theta_-}\delta_S\bar w^2
			-C\delta_S^{1/4}\G_2\notag\\
            \ge&2\alpha^*\left[1-C(\delta_0+\varepsilon_1)
					-C\delta_S^{1/4}\right]
					\delta_S\int_0^1\tu ^2\dd y\notag\\
					&-2\alpha^*\frac{\mu(\gamma\theta_-+1)+2\kappa(\gamma-1)^2\theta_-}
					{\mu(\gamma\theta_-+1)+\kappa(\gamma-1)^2\theta_-}\delta_S\bar w^2
					-C\delta_S^{1/4}\G_2.\label{eq:Dleading}
		\end{align}
		
		\subsubsection{\texorpdfstring{Combination of the leading terms}{Combination of the leading terms}}
		Combine \eqref{eq:B2bound}, \eqref{eq:shiftleading}, 
		\eqref{eq:B1bound} and \eqref{eq:Dleading}, we have
		\begin{align}
			&-\frac{\delta_S}{2M}|\dot{X}|^2+\B_1+\B_2-\G-\frac34\D\notag
			\\
			&\le C(\sqrt{\delta_S}+\varepsilon_1)\Gs
			+\left\{\alpha^*\left[1+C\sqrt{\delta_S}+C\delta_S^{1/4}\right]
			+C(\sqrt{\delta_S}+\delta_0)^2\right.\notag
			\\
			&\qquad\left.-2\alpha^*\left(\frac34-C\varepsilon_1\right)
			\left[1-C(\delta_0+\varepsilon_1)-C\delta_S^{1/4}\right]
			\right\}\delta_S\int_0^1\tu ^2\dd y\notag
			\\
			&\qquad+\left[-1+C\varepsilon_1+C\delta_S^{1/4}
			+C\sqrt{\delta_S}\right](\G_1+\G_2)
			-(\sigma^*)^2M\delta_S\bar w^2\notag
			\\
			&\qquad+2\alpha^*\left(\frac34-C\varepsilon_1\right)
			\frac{\mu(\gamma\theta_-+1)+2\kappa(\gamma-1)^2\theta_-}
			{\mu(\gamma\theta_-+1)+\kappa(\gamma-1)^2\theta_-}\delta_S\bar w^2.
			\label{eq:leadingcoefficients}
		\end{align}
		Under \eqref{eq:smallness}, the coefficients in
		\eqref{eq:leadingcoefficients} satisfy
		\begin{align*}
			&\begin{aligned}
				&\alpha^*\left[1+C\sqrt{\delta_S}+C\delta_S^{1/4}\right]
				+C(\sqrt{\delta_S}+\delta_0)^2\\
				&\quad-2\alpha^*\left(\frac34-C\varepsilon_1\right)
				\left[1-C(\delta_0+\varepsilon_1)-C\delta_S^{1/4}\right]
				\le-\frac{\alpha^*}{4},
			\end{aligned}\\
			&-1+C\varepsilon_1+C\delta_S^{1/4}
			+C\sqrt{\delta_S}\le-\frac12,\qquad
			0<\frac34-C\varepsilon_1\le\frac34.
		\end{align*}
		Hence \eqref{eq:leadingcoefficients} becomes
		\begin{align*}
			&-\frac{\delta_S}{2M}|\dot{X}|^2+\B_1+\B_2-\G-\frac34\D\\
			&\le C(\sqrt{\delta_S}+\varepsilon_1)\Gs
			-\frac{\alpha^*}{4}\delta_S\int_0^1\tu ^2\dd y-\frac12(\G_1+\G_2)
			-(\sigma^*)^2M\delta_S\bar w^2\\
			&\qquad+\frac32\alpha^*
			\frac{\mu(\gamma\theta_-+1)+2\kappa(\gamma-1)^2\theta_-}
			{\mu(\gamma\theta_-+1)+\kappa(\gamma-1)^2\theta_-}\delta_S\bar w^2.
		\end{align*}
		The choice \eqref{eq:M} cancels the last two terms, thus we have
		\begin{align}
			&-\frac{\delta_S}{2M}|\dot{X}|^2+\B_1+\B_2-\G-\frac34\D
			\le C(\sqrt{\delta_S}+\varepsilon_1)\Gs
			-\frac{\alpha^*}{4}\delta_S\int_0^1\tu ^2\dd y-\frac12(\G_1+\G_2).
			\label{eq:leadingaftermean}
		\end{align}
		Finally, recalling the first identity in \eqref{eq:B11measures},
	 using \eqref{eq:B2ymeasures} and \eqref{eq:gooddef}--\eqref{eq:goodsquares}, we obtain
		\begin{align}
			\intR\bar v_\xi\tv^2\dd\xi
			&=\intR\bar v_\xi\left[\left(\tv+\frac \tu {\sigma^*}\right)
			-\frac \tu {\sigma^*}\right]^2\dd\xi\notag
			\\
			&\le2\intR\bar v_\xi\left(\tv+\frac \tu {\sigma^*}\right)^2\dd\xi
			+\frac2{(\sigma^*)^2}\intR\bar v_\xi \tu ^2\dd\xi
			\le C\sqrt{\delta_S}\G_1
			+C\delta_S\int_0^1\tu ^2\dd y,\label{eq:leadingvolumeweight}\\
			\intR\bar v_\xi\tz^2\dd\xi
			&=\intR\bar v_\xi\left[\left(\tz-\frac{(\gamma-1)\theta_-}{v_-\sigma^*}\tu \right)
			+\frac{(\gamma-1)\theta_-}{v_-\sigma^*}\tu \right]^2\dd\xi\notag
			\\
			&\le2\intR\bar v_\xi\left(\tz-\frac{(\gamma-1)\theta_-}{v_-\sigma^*}\tu \right)^2\dd\xi
			+2\left(\frac{(\gamma-1)\theta_-}{v_-\sigma^*}\right)^2\intR\bar v_\xi \tu ^2\dd\xi\notag
			\\
			&\le C\sqrt{\delta_S}\G_2
			+C\delta_S\int_0^1\tu ^2\dd y.\label{eq:leadingtemperatureweight}
		\end{align}
		Combining \eqref{eq:B11measures},
		\eqref{eq:leadingvolumeweight}, and
		\eqref{eq:leadingtemperatureweight},
		and using \eqref{eq:gooddef}, we obtain
		\begin{equation}\label{eq:leadingGscomparison}
			\Gs\le C\left[\delta_S\int_0^1\tu ^2\dd y
			+\sqrt{\delta_S}(\G_1+\G_2)\right].
		\end{equation}
		Since $\sqrt{\delta_S}\le1$ by \eqref{eq:smallness},
		\eqref{eq:leadingGscomparison} gives, for a fixed $0<c\le1/4$,
		\begin{align}
			-\frac{\alpha^*}{4}\delta_S\int_0^1\tu ^2\dd y-\frac12(\G_1+\G_2)
			\le-2c\Gs-\frac14(\G_1+\G_2).
			\label{eq:leadingnegativecontrol}
		\end{align}
		Finally, insert \eqref{eq:leadingnegativecontrol} into
		\eqref{eq:leadingaftermean} and take $C(\sqrt{\delta_S}+\varepsilon_1)\le c$:
		\begin{align*}
			&-\frac{\delta_S}{2M}|\dot{X}|^2+\B_1+\B_2-\G-\frac34\D\\
			&\le-\big[2c-C(\sqrt{\delta_S}+\varepsilon_1)\big]\Gs-\frac14(\G_1+\G_2)
			\le-c\Gs-c(\G_1+\G_2).
		\end{align*}
		This proves \eqref{eq:leading} in Lemma \ref{lem:leading}.\end{proof}
	
	\subsection{Remaining terms and the zeroth order estimate}\label{sec:zeroth}
	Lemma~\ref{lem:leading} absorbs the leading hyperbolic, shift and viscous-thermal contributions. The energy identity still contains the remainder terms $\Y_i$, $\B_i$ and the electric fluxes $\Pp_j$. In this subsection we estimate these terms, including a weighted Poisson bound, and close the zeroth-order inequality.
	First, \eqref{eq:decomposition} and \eqref{eq:shiftidentity} give
	\begin{align*}
		\frac{\dd}{\dd t}\E
		=&-\frac{\delta_S}{2M}|\dot{X}|^2+\B_1+\B_2-\G-\frac34\D\\
		&-\frac{\delta_S}{2M}|\dot{X}|^2
		+\dot{X}\sum_{i=4}^6\Y_i+\sum_{i=3}^5\B_i-\frac14\D
		+\dot{X}\sum_{i=7}^9\Y_i+\B_6.
	\end{align*}
	By Lemma \ref{lem:leading} and Young's inequality,
	\begin{align}
		&\left|\dot{X}\sum_{i=4}^6\Y_i\right|
		\le\frac{\delta_S}{8M}|\dot{X}|^2
		+\frac{2M}{\delta_S}\left|\sum_{i=4}^6\Y_i\right|^2
		\le\frac{\delta_S}{8M}|\dot{X}|^2
		+\frac{6M}{\delta_S}\sum_{i=4}^6|\Y_i|^2,\label{eq:remainingYoung}\\
		&\begin{aligned}
			\frac{\dd}{\dd t}\E
			\le&-c\Gs-c(\G_1+\G_2)-\frac{3\delta_S}{8M}|\dot{X}|^2\\
			&+\frac{6M}{\delta_S}\sum_{i=4}^6|\Y_i|^2
			+\sum_{i=3}^5\B_i-\frac14\D
			+\dot{X}\sum_{i=7}^9\Y_i+\B_6.
		\end{aligned}\label{eq:remainingenergy}
	\end{align}
	We estimate the remaining terms in \eqref{eq:remainingenergy}.
	
	\subsubsection{\texorpdfstring{Estimates on $\Y_i$, $i=4,5,6$}{Estimates on Yi, i=4,5,6}}
	For $\Y_4,\Y_5$ defined in \eqref{eq:Y456}, the quadratic bound
	on $\Phi$, \eqref{eq:profilederivatives}, and \eqref{eq:gooddef} yield
	\begin{align*}
		|\Y_4|+|\Y_5|
		&\le\intR a|\bar\theta_\xi|
		\left[\Phi\left(\frac v{\bar v}\right)
		+\frac1{\gamma-1}\Phi\left(\frac\theta{\bar\theta}\right)\right]\dd\xi\le C\intR\bar v_\xi(\tv^2+\tz^2)\dd\xi\le C\Gs.
	\end{align*}
	In addition, \eqref{eq:profilebounds} and \eqref{eq:smallness} give
	\begin{align}
		|\Y_4|+|\Y_5|
		&\le C\norm{\bar v_\xi}_\infty\norm{(\tv,\tz)}_2^2
		\le C\delta_S^2\varepsilon_1^2,\notag
		\\
		\frac1{\delta_S}(|\Y_4|^2+|\Y_5|^2)
		&\le\frac1{\delta_S}(|\Y_4|+|\Y_5|)^2
		\le C\delta_S\varepsilon_1^2\Gs.
		\label{eq:Y45squared}
	\end{align}
	Similarly, for $\Y_6=-\intR a_\xi H_0\dd\xi$ in
	\eqref{eq:Y456}, use \eqref{eq:smallness}, \eqref{eq:H0}, \eqref{eq:weightderivative},
	and $\norm{\bar v_\xi}_\infty\le C\delta_S^2$:
	\begin{align}
		\frac1{\delta_S}|\Y_6|^2
		&\le\frac C{\delta_S}
		\left(\intR a_\xi(\tv^2+\tu ^2+\tz^2)\dd\xi\right)^2=\frac{C}{\delta_S^2}
		\left(\intR\bar v_\xi(\tv^2+\tu ^2+\tz^2)\dd\xi\right)^2\notag
		\\
		&\le\frac{C}{\delta_S^2}\norm{\bar v_\xi}_\infty
		\norm{(\tv,\tu ,\tz)}_2^2
		\intR\bar v_\xi(\tv^2+\tu ^2+\tz^2)\dd\xi\notag
		\\
		&\le C\norm{(\tv,\tu ,\tz)}_2^2
		\intR\bar v_\xi(\tv^2+\tu ^2+\tz^2)\dd\xi\notag
		\\
		&\le C\varepsilon_1^2
		\intR\bar v_\xi(\tv^2+\tu ^2+\tz^2)\dd\xi
		=C\varepsilon_1^2\Gs.
		\label{eq:Y6bound}
	\end{align}
	Thus, \eqref{eq:Y45squared} and \eqref{eq:Y6bound}
	together with \eqref{eq:remainingYoung} imply
	\begin{equation}\label{eq:Y456bound}
		\begin{gathered}
			\frac{6M}{\delta_S}\sum_{i=4}^6|\Y_i|^2\le C\varepsilon_1^2\Gs,\qquad
			\left|\dot{X}\sum_{i=4}^6\Y_i\right|
			\le\frac{\delta_S}{8M}|\dot{X}|^2+C\varepsilon_1^2\Gs.
		\end{gathered}
	\end{equation}
	\subsubsection{\texorpdfstring{Estimates on $\B_i$, $i=3,4,5$}{Estimates on Bi, i=3,4,5}}
	
	For $\B_3$ in \eqref{eq:B3}, first observe that
	\begin{gather}
		\frac{u_\xi}{v}-\frac{\bar u_\xi}{\bar v}
		=\frac{\tu _\xi}{v}-\frac{\bar u_\xi\tv}{v\bar v},\qquad
		\frac{\theta_\xi}{v}-\frac{\bar\theta_\xi}{\bar v}
		=\frac{\tz_\xi}{v}-\frac{\bar\theta_\xi\tv}{v\bar v}.
		\label{eq:B3fluxes}
	\end{gather}
	Young's inequality and \eqref{eq:Ddef} give
	\begin{align}
		|\B_3|
		\le&C\intR a_\xi|\tu |\big(|\tu _\xi|+|\bar u_\xi||\tv|\big)\dd\xi+C\intR a_\xi|\tz|\big(|\tz_\xi|+|\bar\theta_\xi||\tv|\big)\dd\xi\notag
		\\
		\le&\frac1{80}\D+C\intR a_\xi^2(\tu ^2+\tz^2)\dd\xi
		+C\intR(\bar u_\xi^2+\bar\theta_\xi^2)\tv^2\dd\xi.
		\label{eq:B3Young}
	\end{align}
	By \eqref{eq:weightderivative}, \eqref{eq:profilebounds},
	\eqref{eq:profilederivatives}, and \eqref{eq:gooddef},
	\begin{align}
		\intR a_\xi^2(\tu ^2+\tz^2)\dd\xi
		&=\frac{1}{\delta_S}\intR\bar v_\xi^2(\tu ^2+\tz^2)\dd\xi\le\frac{1}{\delta_S}\norm{\bar v_\xi}_\infty
		\intR\bar v_\xi(\tu ^2+\tz^2)\dd\xi
		\le C\delta_S\Gs,\label{eq:B3weightproduct}\\
		\intR(\bar u_\xi^2+\bar\theta_\xi^2)\tv^2\dd\xi
		&\le C\intR\bar v_\xi^2\tv^2\dd\xi
		\le C\delta_S^2\intR\bar v_\xi\tv^2\dd\xi
		\le C\delta_S^2\Gs.\label{eq:B3profileproduct}
	\end{align}
	Using \eqref{eq:B3weightproduct}--\eqref{eq:B3profileproduct}
	in \eqref{eq:B3Young} gives
	\begin{equation}\label{eq:B3final}
		|\B_3|\le\frac1{80}\D+C(\delta_S+\delta_S^2)\Gs
		\le\frac1{80}\D+C\delta_S\Gs.
	\end{equation}
	
	For $\B_4$ in \eqref{eq:B4}, we have
	\begin{align}
		\frac{\kappa\tz\theta_\xi}{\theta^2}
		\left(\frac{\theta_\xi}{v}-\frac{\bar\theta_\xi}{\bar v}\right)
		=&\frac{\kappa\tz}{v\theta^2}\tz_\xi^2
		+\frac{\kappa\tz\bar\theta_\xi}{v\theta^2}\tz_\xi-\frac{\kappa\tz\bar\theta_\xi\tv}{v\bar v\theta^2}\tz_\xi
		-\frac{\kappa\tz\bar\theta_\xi^2\tv}{v\bar v\theta^2},\label{eq:B4heatexpand}\\
		\frac{\mu\tz}{\theta}\left(\frac{u_\xi^2}{v}-\frac{\bar u_\xi^2}{\bar v}\right)
		=&\frac{\mu\tz}{v\theta}\tu _\xi^2
		+\frac{2\mu\tz\bar u_\xi}{v\theta}\tu _\xi
		-\frac{\mu\tz\bar u_\xi^2\tv}{v\bar v\theta},\label{eq:B4viscexpand}\\
		\left|\kappa\left(\frac{\bar\theta_\xi}{\bar v}\right)_\xi
		+\mu\frac{\bar u_\xi^2}{\bar v}\right|
		\le&C\big(|\bar\theta_{\xi\xi}|+\bar v_\xi^2\big)
		\le C\delta_S\bar v_\xi.\label{eq:B4profile}
	\end{align}
	Consequently, \eqref{eq:B3fluxes} and
	\eqref{eq:B4heatexpand}--\eqref{eq:B4profile} yield
	\begin{align*}
		|\B_4|\le&C\intR\Big[|\bar u_\xi||\tv||\tu _\xi|
		+|\tz|\big(|\tz_\xi|+|\bar\theta_\xi|\big)
		\big(|\tz_\xi|+|\bar\theta_\xi||\tv|\big)\Big]\dd\xi\\
		&+C\intR\Big[|\bar\theta_\xi||\tv||\tz_\xi|
		+|\tz|\big(\tu _\xi^2+|\bar u_\xi||\tu _\xi|+\bar u_\xi^2|\tv|\big)\Big]\dd\xi+C\intR\tz^2\big(|\bar\theta_{\xi\xi}|+\bar v_\xi^2
		+\bar\theta_\xi^2+\bar u_\xi^2\big)\dd\xi.
	\end{align*}
	By Young's inequality, \eqref{eq:smallness}, and
	\eqref{eq:profilederivatives},
	\begin{align}
		|\B_4|
		\le&\left(C\varepsilon_1+\frac1{80}\right)\D
		+C\intR\big(|\bar\theta_{\xi\xi}|+\bar v_\xi^2
		+\bar\theta_\xi^2+\bar u_\xi^2\big)(\tv^2+\tz^2)\dd\xi\notag
		\\
		\le&\left(C\varepsilon_1+\frac1{80}\right)\D
		+C(\delta_S+\delta_S^2)\intR\bar v_\xi(\tv^2+\tz^2)\dd\xi
		\le\frac1{40}\D+C\delta_S\Gs,
		\label{eq:B4final}
	\end{align}
	where the last inequality holds for $C\varepsilon_1\le1/80$.

	$\B_5$ is estimated in \eqref{eq:B5final}. Thus,
	combining \eqref{eq:B3final}, \eqref{eq:B4final}, and
	\eqref{eq:B5final}, we have
	\begin{equation}\label{eq:B345bound}
		\sum_{i=3}^5\B_i\le\sum_{i=3}^5|\B_i|
		\le\frac1{20}\D+C(\delta_S+\varepsilon_1)\Gs.
	\end{equation}
	Insert \eqref{eq:Y456bound} and \eqref{eq:B345bound} into
	\eqref{eq:remainingenergy}. Taking $\delta_0,\varepsilon_1$ small absorbs
	$C(\delta_S+\varepsilon_1+\varepsilon_1^2)\Gs$;  we obtain
	\begin{equation}\label{eq:beforeelectric}
		\frac{\dd}{\dd t}\E
		\le-\frac{3\delta_S}{8M}|\dot{X}|^2-\frac c 2\Gs-c(\G_1+\G_2)-\frac3{16}\D
		+\dot{X}\sum_{i=7}^9\Y_i+\B_6.
	\end{equation}
	\subsubsection{\texorpdfstring{A weighted Poisson estimate}{A weighted Poisson estimate}}
	Multiplication of \eqref{eq:perturbPDE}$_4$ by $e^{\bar\phi}$ yields
	\begin{equation}\label{eq:poissondifference}
		\la^2e^{\bar\phi}\left[\frac{\tp_\xi}{v}
		+\bar\phi_\xi\left(\frac1v-\frac1{\bar v}\right)\right]_\xi
		=-\tv+v(1-e^{-\tp}).
	\end{equation}
	Rewriting \eqref{eq:poissondifference}, we obtain
	\begin{equation}\label{eq:positivepoisson}
		-\la^2\left(\frac{\tp_\xi}{v}\right)_\xi
		+ve^{-\bar\phi}\left(\int_0^1e^{-s\tp}\dd s\right)\tp
		=e^{-\bar\phi}\tv
		+\la^2\left[\bar\phi_\xi\left(\frac1v-\frac1{\bar v}\right)\right]_\xi.
	\end{equation}
	Multiply \eqref{eq:positivepoisson} by $\bar v_\xi\tp$ and integrate.
	Using \eqref{eq:profilederivatives}, we obtain
	\begin{align}
		&\la^2\intR\frac{\bar v_\xi}{v}\tp_\xi^2\dd\xi
		+\intR\bar v_\xi ve^{-\bar\phi}
		\left(\int_0^1e^{-s\tp}\dd s\right)\tp^2\dd\xi\notag
		\\
		&=\intR\bar v_\xi e^{-\bar\phi}\tv\tp\dd\xi
		-\la^2\intR\frac{\bar v_{\xi\xi}}v\tp\tp_\xi\dd\xi-\la^2\intR(\bar v_{\xi\xi}\tp+\bar v_\xi\tp_\xi)
		\bar\phi_\xi\left(\frac1v-\frac1{\bar v}\right)\dd\xi\notag
		\\
		&\le C\intR\bar v_\xi|\tv\tp|\dd\xi
		+C\delta_S\intR\bar v_\xi|\tp\tp_\xi|\dd\xi
		+C\delta_S^2\intR\bar v_\xi|\tv|(|\tp|+|\tp_\xi|)\dd\xi.
		\label{eq:weightedpoissonIBP}
	\end{align}
	Under \eqref{eq:smallness}, for a fixed $c>0$,
	\begin{equation*}
		\frac{\la^2}{v}\ge c,\qquad
		ve^{-\bar\phi}\int_0^1e^{-s\tp}\dd s\ge c.
	\end{equation*}
	Applying Young's inequality to \eqref{eq:weightedpoissonIBP},
	we  obtain
	\begin{align}
		c\intR\bar v_\xi(\tp^2+\tp_\xi^2)\dd\xi
		&\le C\intR\bar v_\xi\tv^2\dd\xi
		+\left(\frac c2+C\delta_S\right)
		\intR\bar v_\xi(\tp^2+\tp_\xi^2)\dd\xi.
		\label{eq:weightedpoissonabsorb}
	\end{align}
	By \eqref{eq:gooddef} and \eqref{eq:weightedpoissonabsorb}, it holds
	\begin{equation}\label{eq:weightedpoisson}
		{\quad\intR\bar v_\xi(\tp^2+\tp_\xi^2)\dd\xi
			\le C\intR\bar v_\xi\tv^2\dd\xi\le C\Gs.\quad}
	\end{equation}
	For the estimates below, set
	\begin{equation}\label{eq:Rp}
		\Rp=\intR\left(
		\tv_\xi^2+\tp_\xi^2+\tp_{\xi\xi}^2
		+\chi_\xi^2+\chi_{\xi\xi}^2
		\right)\dd\xi,
	\end{equation}
	where $\chi$ is defined in \eqref{eq:chi}.
	We will also use the following consequences 
	for any functions $f,g$
	\begin{align}
		\intR a_\xi|f g|\dd\xi
		&\le\frac{1}{\sqrt{\delta_S}}\left(\intR\bar v_\xi f^2\dd\xi\right)^{1/2}
		\left(\intR\bar v_\xi g^2\dd\xi\right)^{1/2}\notag
		\\
		&\le\frac{1}{\sqrt{\delta_S}}\norm{\bar v_\xi}_\infty^{1/2}
		\left(\intR\bar v_\xi f^2\dd\xi\right)^{1/2}\norm g_2
		\le C\sqrt{\delta_S}\left(\intR\bar v_\xi f^2\dd\xi\right)^{1/2}\norm g_2,
		\label{eq:aweightedproduct}\\
		\intR\bar v_\xi|f g|\dd\xi
		&\le\norm{\bar v_\xi}_\infty^{1/2}
		\left(\intR\bar v_\xi f^2\dd\xi\right)^{1/2}\norm g_2
		\le C\delta_S\left(\intR\bar v_\xi f^2\dd\xi\right)^{1/2}\norm g_2,
		\label{eq:vweightedproduct}\\
		\norm{\bar v_\xi}_2^2
		&\le\norm{\bar v_\xi}_\infty\intR\bar v_\xi\dd\xi
		=\delta_S\norm{\bar v_\xi}_\infty\le C\delta_S^3.
		\label{eq:profileLtwo}
	\end{align}
	\subsubsection{\texorpdfstring{Estimates on $\Pp_1$ and $\Pp_2$}{Estimates on P1 and P2}} 
	The terms $\Pp_1,\Pp_2$ are defined in \eqref{eq:P12},
	and $R_F$ in  \eqref{eq:RFexpand}. First, we calculate
	\begin{align}
		&\frac1{v^2}-\frac1{\bar v^2}
		=-\frac{v+\bar v}{v^2\bar v^2}\tv,\notag
		\\
		&\frac{v_\xi\phi_\xi}{v^3}-\frac{\bar v_\xi\bar\phi_\xi}{\bar v^3}
		=\frac{\tv_\xi\tp_\xi}{v^3}
		+\frac{\bar v_\xi\tp_\xi+\bar\phi_\xi\tv_\xi}{v^3}
		+\bar v_\xi\bar\phi_\xi\left(\frac1{v^3}-\frac1{\bar v^3}\right),\notag
		\\
		&\frac{\phi_\xi^2}{v^2}-\frac{\bar\phi_\xi^2}{\bar v^2}
		=\frac{\tp_\xi^2+2\bar\phi_\xi\tp_\xi}{v^2}
		+\bar\phi_\xi^2\left(\frac1{v^2}-\frac1{\bar v^2}\right).
		\label{eq:RFproducts}
	\end{align}
	Inserting \eqref{eq:RFproducts} into \eqref{eq:RFexpand},
	then using \eqref{eq:profilederivatives} and \eqref{eq:smallness}, yields
	\begin{align}
		|R_F|\le&C\big[|\tv||\tp_{\xi\xi}|+|\bar\phi_{\xi\xi}||\tv|
		+|\tv_\xi\tp_\xi|+\bar v_\xi(|\tv_\xi|+|\tp_\xi|)
		+\bar v_\xi^2|\tv|+\tp_\xi^2\big]\notag
		\\
		\le&C\varepsilon_1(|\tp_{\xi\xi}|+|\tv_\xi|+|\tp_\xi|)
		+C\bar v_\xi(|\tv_\xi|+|\tp_\xi|)+C\delta_S\bar v_\xi|\tv|.
		\label{eq:RFbound}
	\end{align}
	By \eqref{eq:N}, \eqref{eq:RFbound},
	\eqref{eq:aweightedproduct}--\eqref{eq:profileLtwo}, and \eqref{eq:Rp}, one has
	\begin{align}
		|\Pp_1|
		&=\left|\intR a_\xi \tu \left(\frac{\la^2}{\bar v^2}\tp_{\xi\xi}+R_F\right)\dd\xi\right|\notag
		\\
		&\le C\intR a_\xi|\tu |\big[|\tp_{\xi\xi}|+|\tv_\xi|+|\tp_\xi|
		+\delta_S\bar v_\xi|\tv|\big]\dd\xi\notag
		\\
		&\le C\sqrt{\delta_S}\left(\intR\bar v_\xi \tu ^2\dd\xi\right)^{1/2}
		\norm{(\tp_{\xi\xi},\tv_\xi,\tp_\xi)}_2
		+C\delta_S^{5/2}\intR\bar v_\xi|\tu \tv|\dd\xi\notag
		\\
		&\le C\sqrt{\delta_S}\sqrt{\Gs}\sqrt{\Rp}+C\delta_S^{5/2}\Gs
		\le C\sqrt{\delta_S}(\Gs+\Rp).
		\label{eq:P1bound}
	\end{align}
	Likewise, using \eqref{eq:Ddef} and \eqref{eq:RFbound}, we have
	\begin{align}
		|\Pp_2|
		&=\left|\intR a\tu _\xi R_F\dd\xi\right|\le C\varepsilon_1\intR|\tu _\xi|(|\tp_{\xi\xi}|+|\tv_\xi|+|\tp_\xi|)\dd\xi\notag
		\\
		&\quad\qquad\qquad\qquad\qquad+C\intR\bar v_\xi|\tu _\xi|(|\tv_\xi|+|\tp_\xi|)\dd\xi
		+C\delta_S\intR\bar v_\xi|\tu _\xi\tv|\dd\xi\notag
		\\
		&\le C(\varepsilon_1+\delta_S^2)\norm{\tu _\xi}_2
		\norm{(\tp_{\xi\xi},\tv_\xi,\tp_\xi)}_2
		+C\delta_S^2\left(\intR\bar v_\xi\tv^2\dd\xi\right)^{1/2}\norm{\tu _\xi}_2\notag
		\\
		&\le C(\varepsilon_1+\delta_S^2)(\D+\Rp)
		+C\delta_S^2\sqrt{\D}\sqrt{\Gs}
		\le C(\varepsilon_1+\delta_S^2)(\D+\Gs+\Rp).
		\label{eq:P2bound}
	\end{align}
	\subsubsection{\texorpdfstring{Estimates on $\Pp_3$}{Estimates on P3}}
	Substituting \eqref{eq:poissondifference} into the definition of $\mathcal N_\phi$
	in \eqref{eq:N}, we have
	\begin{align}
		\mathcal N_\phi=&(v-e^{\bar\phi})\tp+v(1-e^{-\tp}-\tp)\notag
		\\
		&+\la^2e^{\bar\phi}\left[\left(\frac1{\bar v}-\frac1v\right)\tp_{\xi\xi}
		+\frac{v_\xi}{v^2}\tp_\xi
		-\left\{\bar\phi_\xi\left(\frac1v-\frac1{\bar v}\right)\right\}_\xi\right].
		\label{eq:Nexpand}
	\end{align}
	For $\Pp_3$ defined in \eqref{eq:P34}, insert
	\eqref{eq:Nexpand} and integrate the first two terms by parts:
	\begin{align}
		\Pp_3=&-\intR\left[\frac{a\la^2}{\bar v^2}
		\big\{(v-e^{\bar\phi})\tp+v(1-e^{-\tp}-\tp)\big\}\right]_\xi\chi_\xi\dd\xi\notag
		\\
		&+\intR\frac{a\la^4e^{\bar\phi}}{\bar v^2}
		\left[\left(\frac1{\bar v}-\frac1v\right)\tp_{\xi\xi}
		+\frac{v_\xi}{v^2}\tp_\xi
		-\left\{\bar\phi_\xi\left(\frac1v-\frac1{\bar v}\right)\right\}_\xi\right]
		\chi_{\xi\xi}\dd\xi.
		\label{eq:P3IBP}
	\end{align}
	For the profile \eqref{eq:shock}, Poisson's equation
	\eqref{eq:NSP} and \eqref{eq:profilederivatives} give
	\begin{gather}
		\bar v-e^{\bar\phi}
		=-\la^2e^{\bar\phi}\left(\frac{\bar\phi_\xi}{\bar v}\right)_\xi,\qquad
		|\bar v-e^{\bar\phi}|\le C\delta_S\bar v_\xi,
		\qquad |(\bar v-e^{\bar\phi})_\xi|\le C\delta_S^2\bar v_\xi,
		\label{eq:P3profiledefect}
	\end{gather}
	and
	\begin{align}
		&\big[(v-e^{\bar\phi})\tp+v(1-e^{-\tp}-\tp)\big]_\xi\notag
		\\
		&=\big[\tv_\xi+(\bar v-e^{\bar\phi})_\xi\big]\tp
		+(v-e^{\bar\phi})\tp_\xi
		+v_\xi(1-e^{-\tp}-\tp)+v(e^{-\tp}-1)\tp_\xi,
		\label{eq:P3productderivative}\\
		&|1-e^{-\tp}-\tp|\le C\tp^2,
		\qquad |e^{-\tp}-1|\le C|\tp|.\notag
	\end{align}
	For the derivative terms in \eqref{eq:P3IBP},
	\eqref{eq:profilederivatives} and \eqref{eq:smallness} yield
	\begin{align}
		&\left|\left(\frac1{\bar v}-\frac1v\right)\tp_{\xi\xi}
		+\frac{v_\xi}{v^2}\tp_\xi
		-\left\{\bar\phi_\xi\left(\frac1v-\frac1{\bar v}\right)\right\}_\xi\right|\notag
		\\
		&\le C\varepsilon_1(|\tp_{\xi\xi}|+|\tv_\xi|)
		+C\bar v_\xi(|\tp_\xi|+|\tv_\xi|)
		+C\delta_S\bar v_\xi|\tv|.
		\label{eq:P3derivativebound}
	\end{align}
	Therefore, combining \eqref{eq:P3IBP}--\eqref{eq:P3derivativebound},
	and then applying \eqref{eq:weightedpoisson},
	\eqref{eq:aweightedproduct}--\eqref{eq:profileLtwo}, together with
	the definition of $\Rp$ in \eqref{eq:Rp}, we obtain
	\begin{align}
		|\Pp_3|\le&C\varepsilon_1\intR
		(|\tv_\xi|+|\tp_\xi|)|\chi_\xi|\dd\xi+C\intR(a_\xi+\bar v_\xi)
		\big[\varepsilon_1(|\tv|+|\tp|)+\delta_S\bar v_\xi|\tp|\big]|\chi_\xi|\dd\xi\notag
		\\
		&+C\intR\big[(\delta_S^2+\varepsilon_1)\bar v_\xi|\tp|
		+\delta_S\bar v_\xi|\tp_\xi|\big]|\chi_\xi|\dd\xi\notag
		\\
		&+C\intR\big[\varepsilon_1(|\tp_{\xi\xi}|+|\tv_\xi|)
		+\bar v_\xi(|\tp_\xi|+|\tv_\xi|)
		+\delta_S\bar v_\xi|\tv|\big]|\chi_{\xi\xi}|\dd\xi\notag
		\\
		\le&C\varepsilon_1\Rp
		+C\big[(\sqrt{\delta_S}+\delta_S)\varepsilon_1+\delta_S^2\big]\sqrt{\Gs}\sqrt{\Rp}
		+C\delta_S^2\Rp\notag
		\\
		\le&C(\varepsilon_1+\delta_S)(\Gs+\Rp).
		\label{eq:P3bound}
	\end{align}
	\subsubsection{\texorpdfstring{Estimates on $\Pp_4$, $\Pp_5$, and $\G_\phi$}{Estimates on P4, P5, and G phi}}
	Recall $\Pp_4$ from \eqref{eq:P34}, $\Pp_5$ from
	\eqref{eq:P5}, and $\G_\phi$ from \eqref{eq:Jgoodassembled}, with
	$\eta_\phi$ given in \eqref{eq:entropy}. The coefficient derivatives are
	\begin{align}
		&\left(\frac{\la^2}{\bar v^2}\right)_\xi
		=-\frac{2\la^2}{\bar v^3}\bar v_\xi,\qquad
		\left(\frac{\la^4e^{\bar\phi}}{\bar v^3}\right)_\xi
		=\frac{\la^4e^{\bar\phi}}{\bar v^3}
		\left(\bar\phi_\xi-3\frac{\bar v_\xi}{\bar v}\right),\notag
		\\
		&\left(\frac{\la^2e^{\bar\phi}}{\bar v^2}\right)_\xi
		=\frac{\la^2e^{\bar\phi}}{\bar v^2}
		\left(\bar\phi_\xi-2\frac{\bar v_\xi}{\bar v}\right).
		\label{eq:electriccoefficientderivatives}
	\end{align}
	Using \eqref{eq:profilederivatives},
	\eqref{eq:weightderivative},
	\eqref{eq:weightedpoisson}, \eqref{eq:aweightedproduct},
	\eqref{eq:vweightedproduct}, and the definition of $\Rp$ in
	\eqref{eq:Rp}, together with
	$\sqrt{\delta_S}+\delta_S\le C\sqrt{\delta_S}$, we obtain
	\begin{align}
		&\left|\left(\frac{a\la^2e^{\bar\phi}}{\bar v^2}\right)_\xi\right|
		=\frac{\la^2e^{\bar\phi}}{\bar v^2}
		\left|a_\xi+a\left(\bar\phi_\xi-2\frac{\bar v_\xi}{\bar v}\right)\right|
		\le C(a_\xi+\bar v_\xi),\notag
		\\
		&|\Pp_4|
		=\left|\intR\left(\frac{a\la^2e^{\bar\phi}}{\bar v^2}\right)_\xi
		\tp\chi_\xi\dd\xi\right|
		\le C\intR(a_\xi+\bar v_\xi)|\tp\chi_\xi|\dd\xi\notag
		\\
		&\qquad\le C(\sqrt{\delta_S}+\delta_S)\sqrt{\Gs}\sqrt{\Rp}
		\le C\sqrt{\delta_S}(\Gs+\Rp).
		\label{eq:P4final}
	\end{align}
	Likewise,
	\eqref{eq:P5} and \eqref{eq:electriccoefficientderivatives} give
	\begin{align}
		|\Pp_5|
		&\le C\intR\bar v_\xi
		\big(|\tv\tp_{\xi\xi}|+\tp_{\xi\xi}^2+\tp_\xi^2\big)\dd\xi\notag
		\\
		&\le C\delta_S\left(\intR\bar v_\xi\tv^2\dd\xi\right)^{1/2}
		\norm{\tp_{\xi\xi}}_2
		+C\norm{\bar v_\xi}_\infty\norm{(\tp_\xi,\tp_{\xi\xi})}_2^2\notag
		\\
		&\le C\delta_S\sqrt{\Gs}\norm{\tp_{\xi\xi}}_2
		+C\delta_S^2\norm{(\tp_\xi,\tp_{\xi\xi})}_2^2
		\le C\delta_S(\Gs+\Rp).
		\label{eq:P5bound}
	\end{align}
	Finally, the expression \eqref{eq:entropy} for $\eta_\phi$
	and \eqref{eq:Jgoodassembled} imply
	\begin{align}
		|\G_\phi|
		&=\sigma\left|\intR a_\xi\left[
		\frac{\la^2}{\bar v^2}\tv\tp_{\xi\xi}
		+\frac{\la^4e^{\bar\phi}}{2\bar v^3}\tp_{\xi\xi}^2
		+\frac{\la^2e^{\bar\phi}}{2\bar v^2}\tp_\xi^2\right]\dd\xi\right|\notag
		\\
		&\le C\intR a_\xi
		\big(|\tv\tp_{\xi\xi}|+\tp_{\xi\xi}^2+\tp_\xi^2\big)\dd\xi\notag
		\\
		&\le C\sqrt{\delta_S}\sqrt{\Gs}\norm{\tp_{\xi\xi}}_2
		+C\norm{a_\xi}_\infty\norm{(\tp_\xi,\tp_{\xi\xi})}_2^2\notag
		\\
		&\le C\sqrt{\delta_S}\sqrt{\Gs}\sqrt{\Rp}+C\delta_S^{3/2}\Rp
		\le C\sqrt{\delta_S}(\Gs+\Rp).
		\label{eq:Gphibound}
	\end{align}
	\subsubsection{\texorpdfstring{Estimates on $\dot{X}\Y_i$, $i=7,8,9$}{Estimates on XdotYi, i=7,8,9}}
	For $\Y_7$ in
	\eqref{eq:Y7}, by \eqref{eq:P5},
	\eqref{eq:Y7}, \eqref{eq:Jgoodassembled}, one has
	\begin{align*}
		\Y_7
		=&-\intR a_\xi\eta_\phi\dd\xi
		-\intR a\left[\left(\frac{\la^2}{\bar v^2}\right)_\xi\tv\tp_{\xi\xi}
		+\frac12\left(\frac{\la^4e^{\bar\phi}}{\bar v^3}\right)_\xi\tp_{\xi\xi}^2
		+\frac12\left(\frac{\la^2e^{\bar\phi}}{\bar v^2}\right)_\xi\tp_\xi^2\right]\dd\xi\\
		=&\frac1\sigma(\Pp_5-\G_\phi).
	\end{align*}
	Using $|\dot{X}|\le C\varepsilon_1$ from
	\eqref{eq:shiftbound}, together with \eqref{eq:P5bound}--\eqref{eq:Gphibound},
	we obtain
	\begin{align}
		|\dot{X}\Y_7|
		&\le\frac{|\dot{X}|}{\sigma}(|\Pp_5|+|\G_\phi|)\le C\varepsilon_1\intR(a_\xi+\bar v_\xi)
		\big(|\tv\tp_{\xi\xi}|+\tp_{\xi\xi}^2+\tp_\xi^2\big)\dd\xi\notag
		\\
		&\le C\varepsilon_1\left[(\sqrt{\delta_S}+\delta_S)\sqrt{\Gs}\sqrt{\Rp}
		+(\delta_S^{3/2}+\delta_S^2)\Rp\right]
		\le C\varepsilon_1(\Gs+\Rp).
		\label{eq:Y7final}
	\end{align}
	For $\Y_8$ in \eqref{eq:Y89}, first use
	\eqref{eq:profilederivatives}, \eqref{eq:gooddef}, and
	\eqref{eq:profileLtwo} to obtain
	\begin{gather}
		|\bar\phi_{\xi\xi}|\le C\delta_S\bar v_\xi,
		\qquad |\bar\phi_{\xi\xi\xi}|\le C\delta_S^2\bar v_\xi,
		\label{eq:Y8profilederivatives}\\
		\begin{aligned}
			\intR\bar v_\xi|\tv|\dd\xi
			&\le\left(\intR\bar v_\xi\dd\xi\right)^{1/2}
			\left(\intR\bar v_\xi\tv^2\dd\xi\right)^{1/2}
			\le\delta_S^{1/2}\sqrt{\Gs},\\
			\intR\bar v_\xi|\tp_\xi|\dd\xi
			&\le\norm{\bar v_\xi}_2\norm{\tp_\xi}_2
			\le C\delta_S^{3/2}\norm{\tp_\xi}_2,\\
			\intR\bar v_\xi|\tp_{\xi\xi}|\dd\xi
			&\le\norm{\bar v_\xi}_2\norm{\tp_{\xi\xi}}_2
			\le C\delta_S^{3/2}\norm{\tp_{\xi\xi}}_2.
		\end{aligned}\label{eq:Y8weightedLone}
	\end{gather}
	Substitution into \eqref{eq:Y89} and Young's inequality give
	\begin{align}
		|\dot{X}\Y_8|
		=&|\dot{X}|\left|\intR a\left[
		\left(\frac{\la^2}{\bar v^2}\tv
		+\frac{\la^4e^{\bar\phi}}{\bar v^3}\tp_{\xi\xi}\right)\bar\phi_{\xi\xi\xi}
		+\frac{\la^2e^{\bar\phi}}{\bar v^2}\tp_\xi\bar\phi_{\xi\xi}
		\right]\dd\xi\right|\notag
		\\
		\le&C|\dot{X}|\left[\delta_S^2\intR\bar v_\xi
		\big(|\tv|+|\tp_{\xi\xi}|\big)\dd\xi
		+\delta_S\intR\bar v_\xi|\tp_\xi|\dd\xi\right]\notag
		\\
		\le&C|\dot{X}|\left[\delta_S^{5/2}\sqrt{\Gs}
		+\delta_S^{7/2}\norm{\tp_{\xi\xi}}_2
		+\delta_S^{5/2}\norm{\tp_\xi}_2\right]\notag
		\\
		\le&C\delta_S^2|\dot{X}|^2
		+C\delta_S^3\Gs+C\delta_S^5\norm{\tp_{\xi\xi}}_2^2
		+C\delta_S^3\norm{\tp_\xi}_2^2\notag
		\\
		\le&C\delta_S^2|\dot{X}|^2+C\delta_S^3(\Gs+\Rp).
		\label{eq:Y8final}
	\end{align}
	For $\Y_9$ in \eqref{eq:Y89}, the same argument using
	\eqref{eq:profileLtwo} yields
	\begin{align}
		|\dot{X}\Y_9|
		&=|\dot{X}|\left|\intR\frac{a\la^2}{\bar v^2}
		\bar v_\xi\tp_{\xi\xi}\dd\xi\right|
		\le C|\dot{X}|\norm{\bar v_\xi}_2\norm{\tp_{\xi\xi}}_2\notag
		\\
		&\le C\big(\delta_S|\dot{X}|\big)
		\big(\delta_S^{1/2}\norm{\tp_{\xi\xi}}_2\big)
		\le C\delta_S^2|\dot{X}|^2+C\delta_S\norm{\tp_{\xi\xi}}_2^2.
		\label{eq:Y9bound}
	\end{align}
	Finally, use $\B_6=\sum_{j=1}^5\Pp_j-\G_\phi$ from
	\eqref{eq:B6}. Adding \eqref{eq:P1bound}--\eqref{eq:P3bound},
	\eqref{eq:P4final}--\eqref{eq:Gphibound}, and
	\eqref{eq:Y7final}--\eqref{eq:Y9bound}, we obtain
	\begin{align}
		\left|\dot{X}\sum_{i=7}^9\Y_i+\B_6\right|
		&\le\sum_{i=7}^9|\dot{X}\Y_i|+\sum_{j=1}^5|\Pp_j|+|\G_\phi|\notag
		\\
		&\le C\delta_S^2|\dot{X}|^2
		+C(\sqrt{\delta_S}+\varepsilon_1)(\Gs+\D+\Rp).
		\label{eq:electricassembled}
	\end{align}
	Since $\delta_S^2\le\sqrt{\delta_S}\,\delta_S$, this implies
	\begin{equation}\label{eq:electricfinal}
		{\quad
			\left|\dot{X}\sum_{i=7}^9\Y_i+\B_6\right|
			\le C(\sqrt{\delta_S}+\varepsilon_1)
			\left[\delta_S|\dot{X}|^2+\Gs+\D+\Rp\right].\quad}
	\end{equation}
	\subsubsection{\texorpdfstring{The zeroth order inequality}{The zeroth order inequality}}
	
	\begin{proposition}\label{prop:final}
		Under the hypotheses of Proposition~\ref{thm:closure},	it holds
		\begin{equation}\label{eq:final}
			{\begin{aligned}
					\frac{\dd}{\dd t}\E
					\le&-\frac{\delta_S}{4M}|\dot{X}|^2-c\Gs-c(\G_1+\G_2)-\frac18\D\\
					&+C(\sqrt{\delta_S}+\varepsilon_1)
					\intR\left[\tv_\xi^2+\tp_\xi^2+\tp_{\xi\xi}^2
					+\chi_\xi^2+\chi_{\xi\xi}^2\right]\dd\xi.
			\end{aligned}}
		\end{equation}
	\end{proposition}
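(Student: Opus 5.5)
The plan is to assemble the bounds already derived in this subsection; no new estimate is needed. Start from \eqref{eq:beforeelectric}, which already uses Lemma~\ref{lem:leading}, the shift-remainder bound \eqref{eq:Y456bound} and the bound \eqref{eq:B345bound} on $\B_3,\B_4,\B_5$:
\[
\frac{\dd}{\dd t}\E\le-\frac{3\delta_S}{8M}|\dot X|^2-\frac c2\Gs-c(\G_1+\G_2)-\frac3{16}\D+\dot X\sum_{i=7}^9\Y_i+\B_6 .
\]
The only remaining terms are the electric flux $\B_6$ and the electric shift terms $\dot X\Y_7,\dot X\Y_8,\dot X\Y_9$. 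These are controlled together by \eqref{eq:electricfinal}:
\[
\Bigl|\dot X\sum_{i=7}^9\Y_i+\B_6\Bigr|\le C(\sqrt{\delta_S}+\varepsilon_1)\bigl[\delta_S|\dot X|^2+\Gs+\D+\Rp\bigr].
\]

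Inserting this bound, we absorb each new term in turn, using the smallness \eqref{eq:smallness} and shrinking $\delta_0,\varepsilon_1$ as needed.
\begin{enumerate}
\item Choose $C(\sqrt{\delta_S}+\varepsilon_1)\le 1/(8M)$. The shift dissipation then stays at least $\frac{\delta_S}{4M}|\dot X|^2$.
\item Choose $C(\sqrt{\delta_S}+\varepsilon_1)\le 1/16$. The diffusion then stays at least $\frac18\D$.
\item Choose $C(\sqrt{\delta_S}+\varepsilon_1)\le c/2$, so the $\Gs$ term is absorbed into $-\frac c2\Gs$. This leaves a negative multiple of $\Gs$; after renaming the constant $c$ (it only needs to be some fixed positive number, and one may keep $c(\G_1+\G_2)$ with the original $c$ by taking the minimum), this is the $-c\Gs$ in \eqref{eq:final}.
\end{enumerate}
The leftover term $C(\sqrt{\delta_S}+\varepsilon_1)\Rp$ cannot be absorbed at zeroth order. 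By definition \eqref{eq:Rp}, it is exactly the last integral on the right of \eqref{eq:final}, so it stays on the right-hand side.

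The substantive difficulty was controlling the electric fluxes without any electric dissipation at this level. That is already resolved in \eqref{eq:P1bound}--\eqref{eq:Y9bound}, through the weighted Poisson estimate \eqref{eq:weightedpoisson}, which places the localized $\tp$ terms under $\Gs$. What remains here is bookkeeping: checking that one choice of $\delta_0,\varepsilon_1$, independent of $T$, serves all three absorptions simultaneously. This works because all the constants $C$ and $c$ depend only on the fixed physical parameters and the left state.
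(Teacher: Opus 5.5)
Your proposal is correct and follows the paper's argument exactly: insert \eqref{eq:electricfinal} into \eqref{eq:beforeelectric}, absorb the $C(\sqrt{\delta_S}+\varepsilon_1)[\delta_S|\dot X|^2+\Gs+\D]$ contributions using \eqref{eq:smallness}, and leave the $\Rp$ term on the right-hand side. One small fix in your step 3: the requirement $C(\sqrt{\delta_S}+\varepsilon_1)\le c/2$ could cancel the $-\frac c2\Gs$ term entirely, so take e.g.\ $C(\sqrt{\delta_S}+\varepsilon_1)\le c/4$ to keep a strictly negative multiple of $\Gs$ before renaming $c$.
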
\begin{proof}
		Insert \eqref{eq:electricassembled}--\eqref{eq:electricfinal} into \eqref{eq:beforeelectric}.
		The terms $C(\sqrt{\delta_S}+\varepsilon_1)
		[\delta_S|\dot{X}|^2+\Gs+\D]$ are absorbed. This gives \eqref{eq:final} and completes the proof of Proposition \ref{prop:final}. 
		\end{proof}
	Moreover, 
	integrating \eqref{eq:final} over \([0,t]\), we obtain
	\begin{align}
		\E(t)+\int_0^t\left[\frac{\delta_S}{4M}|\dot{X}|^2
		+c\Gs+c(\G_1+\G_2)+\frac18\D\right]\dd s 
		\le\E(0)+C(\sqrt{\delta_S}+\varepsilon_1)\int_0^t\Rp(s)\dd s.
		\label{eq:integratedfinal}
	\end{align}
	
	We next estimate the spatial and time derivatives in $\Rp(s)$ \eqref{eq:Rp} and
	combine them with the differentiated fluid equations. This is the step
	that closes \eqref{eq:integratedfinal}.
	
	\subsection{Spatial and time-differentiated Poisson estimates}
	\label{sec:poisson}
	The zeroth-order inequality \eqref{eq:integratedfinal} still contains the remainder $\Rp$, which includes spatial and time derivatives of the potential perturbation. In this subsection we use the coercive Poisson equation in \eqref{eq:NSP} to control these electric terms: spatial estimates give the sign of the electric energy, and time-differentiated estimates absorb the corresponding remainder.
	For this section and the next one, we set
	\begin{align}
		&\mathfrak q=\frac1v-\frac1{\bar v}=-\frac{\tv}{v\bar v},\qquad
		\mathfrak q_\xi=-\frac{\tv_\xi}{v^2}
		+\bar v_\xi\left(\frac1{\bar v^2}-\frac1{v^2}\right),\notag
		\\
		&\mathfrak q_{\xi\xi}=-\frac{\tv_{\xi\xi}}{v^2}
		+\frac{2v_\xi\tv_\xi}{v^3}
		+\bar v_{\xi\xi}\left(\frac1{\bar v^2}-\frac1{v^2}\right)
		+2\bar v_\xi\left(\frac{v_\xi}{v^3}-\frac{\bar v_\xi}{\bar v^3}\right),\notag
		\\
		&|\mathfrak q|\le C|\tv|,\quad
		|\mathfrak q_\xi|\le C(|\tv_\xi|+\bar v_\xi|\tv|),\quad
		|\mathfrak q_{\xi\xi}|\le C\left[|\tv_{\xi\xi}|
		+(\varepsilon_1+\delta_S^2)|\tv_\xi|
		+\delta_S\bar v_\xi|\tv|\right].\label{eq:Qderivatives}
	\end{align}
	\subsubsection{\texorpdfstring{Spatial estimates and the sign of the electric term}{Spatial estimates and the sign of the electric term}}
	\begin{lemma}\label{lem:spatial}
		Under the hypotheses of Proposition~\ref{thm:closure}, for $k=0,1,2$ and $j=1,2$, it holds
		\begin{align}
			&\norm{\tp}_{H^{k+1}}^2\le C\norm{\tv}_{H^k}^2,\label{eq:ellipticHk}\\
			&\norm{\tp_\xi}_{H^1}^2
			\le C\norm{\tv_\xi}_2^2+C(\delta_0+\varepsilon_1)\Gs,
			\label{eq:ellipticgradient}\\
			&\intR(\partial_\xi^j\tv)(\partial_\xi^j\tp)\dd\xi
			\ge c\norm{\partial_\xi^j\tp}_{H^1}^2
			-C(\delta_0+\varepsilon_1)
			\left[\norm{\tv_\xi}_{H^{j-1}}^2+\intR\bar v_\xi\tv^2\dd\xi\right].
			\label{eq:electricpair}
		\end{align}
	\end{lemma}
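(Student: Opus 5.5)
The plan is to treat the perturbed Poisson equation in the coercive form \eqref{eq:positivepoisson},
\[
-\la^2\Bigl(\frac{\tp_\xi}{v}\Bigr)_\xi+\mathfrak m\,\tp
=e^{-\bar\phi}\tv+\la^2\bigl(\bar\phi_\xi\mathfrak q\bigr)_\xi,\qquad
\mathfrak m:=ve^{-\bar\phi}\int_0^1e^{-s\tp}\dd s,
\]
as a uniformly elliptic problem. Here $\mathfrak m\ge c>0$ and $\la^2/v\ge c$ under \eqref{eq:smallness}, and $\mathfrak q$ is as in \eqref{eq:Qderivatives}. Each of the three claims then comes from a standard energy identity.

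\emph{Estimate \eqref{eq:ellipticHk}.} For $k=0$, multiply by $\tp$ and integrate. The forcing term $\la^2(\bar\phi_\xi\mathfrak q)_\xi$ is integrated by parts onto $\tp_\xi$. Since $|\bar\phi_\xi\mathfrak q|\le C\delta_S|\tv|$, Young's inequality gives $\norm{\tp}_{H^1}\le C\norm{\tv}_2$. For $k=1,2$, apply $\partial_\xi^k$ to the equation and test with $\partial_\xi^k\tp$; equivalently, solve for $\tp_{\xi\xi}$ and bootstrap. The commutators contain $v_\xi$ and $\partial_\xi^k\mathfrak m$. Their nonprofile parts are small by $\norm{\tv}_{H^2}\le\varepsilon_1$, Sobolev embedding, and the lower-order bound. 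Their profile parts are bounded by \eqref{eq:profilederivatives}.

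\emph{Estimate \eqref{eq:ellipticgradient}.} Differentiate once and test with $\tp_\xi$. The leading right-hand side is $e^{-\bar\phi}\tv_\xi$, which is bounded by $\norm{\tv_\xi}_2$. All terms where a derivative falls on a profile coefficient carry a factor $\bar v_\xi$ or $\bar\phi_\xi$ multiplying $\tv$ or $\tp$. These are bounded by $C\delta_S\intR\bar v_\xi(\tv^2+\tp^2)\dd\xi$, and the weighted Poisson estimate \eqref{eq:weightedpoisson} reduces this to $C\delta_0\Gs$. The term $\mathfrak m_\xi\tp$ contains $\tp_\xi\tp$, which is cubic and absorbed using $\varepsilon_1$. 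The term $v_\xi\tp_{\xi\xi}/v^2$ is handled the same way.

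\emph{Estimate \eqref{eq:electricpair}.} This is the signed estimate, and I expect it to be the main obstacle. For $j=1,2$, apply $\partial_\xi^j$ to the equation, solve for $\partial_\xi^j\tv$,
\[
e^{-\bar\phi}\partial_\xi^j\tv=-\la^2\partial_\xi^j\Bigl(\frac{\tp_\xi}{v}\Bigr)_\xi+\mathfrak m\,\partial_\xi^j\tp+\text{(commutators)},
\]
then multiply by $e^{\bar\phi}\partial_\xi^j\tp$ and integrate. Integrating the top-order term by parts produces the positive quantity $\la^2\intR e^{\bar\phi}|\partial_\xi^{j+1}\tp|^2/v$, plus terms in which a derivative falls on $e^{\bar\phi}/v$. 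The zeroth-order term gives $\intR e^{\bar\phi}\mathfrak m|\partial_\xi^j\tp|^2\ge c\norm{\partial_\xi^j\tp}_2^2$. Together these give $c\norm{\partial_\xi^j\tp}_{H^1}^2$.

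The difficulty is that the weight $e^{\bar\phi}$ is not constant, so the commutators must be kept from destroying this sign. I would sort them into two classes. Terms carrying $\bar v_\xi$ or $\bar\phi_\xi$ times $\tv$ or $\tp$ are bounded by $C\delta_S\bigl(\intR\bar v_\xi\tv^2\dd\xi+\norm{\partial_\xi^j\tp}_{H^1}^2\bigr)$, using \eqref{eq:weightedpoisson} and \eqref{eq:profilederivatives}. Nonlinear terms are bounded by $C\varepsilon_1\bigl(\norm{\tv_\xi}_{H^{j-1}}^2+\norm{\partial_\xi^j\tp}_{H^1}^2\bigr)$, using \eqref{eq:ellipticHk} to control lower derivatives of $\tp$ by $\tv$. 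For $j=2$ one term needs extra care: $\la^2(\bar\phi_\xi\mathfrak q)_{\xi\xi\xi}$ contains $\tv_{\xi\xi\xi}$, which is not controlled. It must be integrated by parts onto $\partial_\xi^2\tp$, and it then carries the small factor $\bar\phi_\xi$. After these steps, absorbing the terms proportional to $\norm{\partial_\xi^j\tp}_{H^1}^2$ into the left-hand side gives \eqref{eq:electricpair}.
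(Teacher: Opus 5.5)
Your argument is essentially the paper's: energy estimates on the $\partial_\xi^j$-differentiated coercive Poisson equation, with the $(\bar\phi_\xi\mathfrak q)$ forcing moved onto $\partial_\xi^{j+1}\tp$ by one integration by parts. The only cosmetic difference is that the paper differentiates the $e^{\bar\phi}$-multiplied form \eqref{eq:poissondifference} and tests with $\partial_\xi^j\tp$, so no $e^{-\bar\phi}$ commutators fall on $\tv$.

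One correction is needed for $j=2$. Lower-order factors such as $\norm{\tp_\xi}_2$ do appear, for example in the cubic piece $\tp_\xi^2\tp_{\xi\xi}$ of $\mathfrak m_\xi\tp_\xi$ tested against $e^{\bar\phi}\tp_{\xi\xi}$. You bound them through \eqref{eq:ellipticHk}, and that fails, because it puts $\norm{\tv}_2^2$ on the right of \eqref{eq:electricpair}. That term is not allowed there and has no time-integrable control in \eqref{eq:closedestimate}. Bound these factors instead with your gradient estimate \eqref{eq:ellipticgradient}. Use it in the sharper form with $\intR\bar v_\xi\tv^2\dd\xi$ in place of $\Gs$, which your proof of it already gives; the paper does the same via \eqref{eq:spatialone}.
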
\begin{proof}
		First multiply \eqref{eq:positivepoisson} by $\tp$. Positivity of
		$v,\bar v$ and smallness of $\tp$ give
		\begin{align}
			c\norm{\tp}_{H^1}^2
			&\le\intR e^{-\bar\phi}\tv\tp\dd\xi
			-\la^2\intR\bar\phi_\xi \mathfrak q\tp_\xi\dd\xi\le\frac c2\norm{\tp}_{H^1}^2+C\norm{\tv}_2^2.
			\label{eq:spatialzero}
		\end{align}
		\medskip\noindent\textbf{1. One spatial derivative.}
		Multiplying $\partial_\xi$\eqref{eq:poissondifference} by
		$\tp_\xi$, one has
		\begin{align*}
			\intR\tv_\xi\tp_\xi\dd\xi
			=&\intR\left(ve^{-\tp}\tp_\xi^2
			+\frac{\la^2e^{\bar\phi}}v\tp_{\xi\xi}^2\right)\dd\xi+\intR v_\xi(1-e^{-\tp})\tp_\xi\dd\xi\\
			&+\la^2\intR e^{\bar\phi}\tp_{\xi\xi}
			\left[-\frac{v_\xi}{v^2}\tp_\xi+(\bar\phi_\xi \mathfrak q)_\xi\right]\dd\xi.
		\end{align*}
		Using \eqref{eq:weightedpoisson},
		\eqref{eq:aweightedproduct}--\eqref{eq:profileLtwo}, and \eqref{eq:Qderivatives}, we obtain
		\begin{align*}
			&\left|\intR v_\xi(1-e^{-\tp})\tp_\xi\dd\xi\right|
			\le C\varepsilon_1\norm{\tv_\xi}_2\norm{\tp_\xi}_2
			+C\delta_S\left(\intR\bar v_\xi\tv^2\dd\xi\right)^{1/2}\norm{\tp_\xi}_2,
			\\
			&\norm{(\bar\phi_\xi \mathfrak q)_\xi}_2
			\le C\delta_S^2\norm{\tv_\xi}_2
			+C\delta_S^2\left(\intR\bar v_\xi\tv^2\dd\xi\right)^{1/2},\\
			&\left|\intR\tv_\xi\tp_\xi\dd\xi
			-\intR\left(ve^{-\tp}\tp_\xi^2
			+\frac{\la^2e^{\bar\phi}}v\tp_{\xi\xi}^2\right)\dd\xi\right|
			\le C(\delta_0+\varepsilon_1)
			\left[\norm{\tv_\xi}_2^2+\norm{\tp_\xi}_{H^1}^2
			+\intR\bar v_\xi\tv^2\dd\xi\right].
		\end{align*}
		Absorption proves \eqref{eq:electricpair} for $j=1$.
		Applying Young's inequality to its left side gives
		\begin{equation}\label{eq:spatialone}
			\norm{\tp_\xi}_{H^1}^2
			\le C\norm{\tv_\xi}_2^2
			+C(\delta_0+\varepsilon_1)\intR\bar v_\xi\tv^2\dd\xi.
		\end{equation}
		\medskip\noindent\textbf{2. Two spatial derivatives.}
		By the same argument after two differentiations, it holds that
		\begin{align*}
			&\intR\tv_{\xi\xi}\tp_{\xi\xi}\dd\xi
			\\=&\intR\left(ve^{-\tp}\tp_{\xi\xi}^2
			+\frac{\la^2e^{\bar\phi}}v\tp_{\xi\xi\xi}^2\right)\dd\xi+\intR\left[v_{\xi\xi}(1-e^{-\tp})
			+2v_\xi e^{-\tp}\tp_\xi-ve^{-\tp}\tp_\xi^2\right]\tp_{\xi\xi}\dd\xi\\
			&+\la^2\intR e^{\bar\phi}\tp_{\xi\xi\xi}
			\left[-\frac{2v_\xi}{v^2}\tp_{\xi\xi}
			+\left(\frac{2v_\xi^2}{v^3}-\frac{v_{\xi\xi}}{v^2}\right)\tp_\xi
			+(\bar\phi_\xi \mathfrak q)_{\xi\xi}
			+\bar\phi_\xi\left(\frac{\tp_{\xi\xi}}v
			-\frac{v_\xi}{v^2}\tp_\xi+(\bar\phi_\xi \mathfrak q)_\xi\right)\right]\dd\xi.
		\end{align*}
		We take $(\bar\phi_\xi \mathfrak q)_{\xi\xi}$ as an example, the other terms can be treated in the same way. We have
		\begin{align*}
			(\bar\phi_\xi \mathfrak q)_{\xi\xi}
			=\bar\phi_{\xi\xi\xi}\mathfrak q+2\bar\phi_{\xi\xi}\mathfrak q_\xi
			+\bar\phi_\xi \mathfrak q_{\xi\xi},\qquad\norm{(\bar\phi_\xi \mathfrak q)_{\xi\xi}}_2
			\le C\delta_S^2\norm{\tv_\xi}_{H^1}
			+C\delta_S^3\left(\intR\bar v_\xi\tv^2\dd\xi\right)^{1/2}.
		\end{align*}
		Thus, we have 
		\begin{align}
			\left|\intR\tv_{\xi\xi}\tp_{\xi\xi}\dd\xi
			-\intR\left(ve^{-\tp}\tp_{\xi\xi}^2
			+\frac{\la^2e^{\bar\phi}}v\tp_{\xi\xi\xi}^2\right)\dd\xi\right|\le C(\delta_0+\varepsilon_1)
			\left[\norm{\tv_\xi}_{H^1}^2+\norm{\tp_\xi}_{H^2}^2
			+\intR\bar v_\xi\tv^2\dd\xi\right].\label{eq:pairtwoerror}
		\end{align}
		Use \eqref{eq:spatialone} for the first derivative of $\tp$ and
		absorb the remaining small terms in \eqref{eq:pairtwoerror}.
		This proves \eqref{eq:electricpair} for $j=2$. Applying Young's inequality to \eqref{eq:electricpair} gives
		$\norm{\tp_{\xi\xi}}_{H^1}^2\le C\norm{\tv}_{H^2}^2$.
		Together with \eqref{eq:spatialzero} and \eqref{eq:spatialone}, this estimate proves
		\eqref{eq:ellipticHk} and \eqref{eq:ellipticgradient}.\end{proof}
	\subsubsection{\texorpdfstring{Time derivatives and the electric remainder}{Time derivatives and the electric remainder}}
	\begin{lemma}\label{lem:time}
		For $\chi$ defined in \eqref{eq:chi}, it holds that
		\begin{align}
			&\norm\chi_{H^2}^2
			\le C\D+C(\delta_0+\varepsilon_1)^2
			\left(\norm{\tv_\xi}_2^2+\norm{\tu _{\xi\xi}}_2^2+\Gs\right),
			\label{eq:chiH2}\\
			&\Rp\le C\left(\D+\norm{\tv_\xi}_2^2\right)
			+C(\delta_0+\varepsilon_1)\left(\Gs+\norm{\tu _{\xi\xi}}_2^2\right).
			\label{eq:Rphiabsorb}
		\end{align}
	\end{lemma}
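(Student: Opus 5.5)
The plan is to derive an elliptic equation for $\chi$ by applying $\partial_t-\sigma\partial_\xi$ to the perturbed Poisson equation \eqref{eq:poissondifference}, and then to run the same energy method as in Lemma~\ref{lem:spatial}.

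\textbf{Step 1: the equation for $\chi$.} By \eqref{eq:chi}, $(\partial_t-\sigma\partial_\xi)\tp=\chi+\dot X\bar\phi_\xi$. By \eqref{eq:perturbPDE}$_1$, $(\partial_t-\sigma\partial_\xi)\tv=\tu_\xi+\dot X\bar v_\xi$, and $(\partial_t-\sigma\partial_\xi)$ acting on a profile coefficient $f(\bar v,\bar\phi)$ gives $-(\sigma+\dot X)\partial_\xi f$. Differentiating \eqref{eq:positivepoisson} in this way yields
\[
-\la^2\Bigl(\frac{\chi_\xi}{v}\Bigr)_\xi+ve^{-\bar\phi}\Bigl(\int_0^1e^{-s\tp}\dd s\Bigr)\chi
=e^{-\bar\phi}\tu_\xi+\mathcal S .
\]
The source $\mathcal S$ collects the following terms:
\begin{itemize}
\item commutators with the coefficients, which contain $v_t-\sigma v_\xi=u_\xi=\tu_\xi+\bar u_\xi$ multiplied by $\tp_\xi,\tp_{\xi\xi}$ or $\tp$;
\item derivatives of profile coefficients, each carrying a factor $\bar v_\xi$;
\item the $\dot X$ terms, namely $\dot X\bar v_\xi$ and $\dot X\bar\phi_{\xi\xi}$, which again carry profile derivatives;
\item the time derivative of $\la^2[\bar\phi_\xi\mathfrak q]_\xi$, which produces $\bar\phi_\xi(\tu_\xi+\dot X\bar v_\xi)/v^2$-type terms and $\bar v_\xi$-weighted terms in $\tv$.
\end{itemize}
With this grouping the principal source is $e^{-\bar\phi}\tu_\xi$. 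Every other term is either small, with a factor $\varepsilon_1$ from $\norm{\tp}_{W^{2,\infty}}$ via \eqref{eq:ellipticHk}, or carries $\bar v_\xi$ or $\dot X$.

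\textbf{Step 2: $H^1$ and $H^2$ estimates for $\chi$.} Testing the $\chi$ equation with $\chi$ uses the coercivity $\la^2/v\ge c$ and $ve^{-\bar\phi}\int_0^1e^{-s\tp}\dd s\ge c$, exactly as in \eqref{eq:spatialzero}. This gives
\[
\norm\chi_{H^1}^2\le C\norm{\tu_\xi}_2^2+C\norm{\mathcal S}_{H^{-1}}^2 .
\]
Testing the $\xi$-differentiated equation with $\chi_\xi$, as in Step~1 of Lemma~\ref{lem:spatial}, gives
\[
\norm{\chi_\xi}_{H^1}^2\le C\norm{\tu_{\xi\xi}}_2^2+\dots ,
\]
which is one derivative too many. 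To avoid this I would instead use the source in divergence form. Since $e^{-\bar\phi}\tu_\xi=(e^{-\bar\phi}\tu)_\xi+e^{-\bar\phi}\bar\phi_\xi\tu$, testing against $\chi_{\xi\xi}$ and integrating by parts only once costs $\norm{\tu_\xi}_2\norm{\chi_{\xi\xi}}_2$. Then the estimate for $\chi_{\xi\xi}$ follows from the equation solved for $\chi_{\xi\xi}$, in which $\tu_\xi$ appears only undifferentiated. The top-order estimate therefore reads
\[
\norm{\chi}_{H^2}^2\le C\norm{\tu_\xi}_2^2+\text{small terms}.
\]
Since $\norm{\tu_\xi}_2^2\le C\D$ by \eqref{eq:Ddef} and $a\ge1$, this gives the leading term $C\D$.

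\textbf{Step 3: the remainders.} The remainders are bounded using:
\begin{itemize}
\item \eqref{eq:shiftbound} for $\dot X$, and the Cauchy--Schwarz bounds \eqref{eq:vweightedproduct}--\eqref{eq:profileLtwo};
\item the weighted Poisson bound \eqref{eq:weightedpoisson}, which controls $\bar v_\xi$-weighted terms in $\tp,\tp_\xi$ by $\Gs$;
\item \eqref{eq:Qderivatives} for the $\mathfrak q$ derivatives;
\item \eqref{eq:ellipticgradient} to trade $\tp$ derivatives for $\norm{\tv_\xi}_2$.
\end{itemize}
Nonlinear products such as $\tu_\xi\tp_{\xi\xi}$ are bounded by $\varepsilon_1$ times $\norm{\tu_\xi}_{H^1}$, which produces the $(\delta_0+\varepsilon_1)^2\norm{\tu_{\xi\xi}}_2^2$ term. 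The terms $\dot X\bar v_\xi$ are bounded via \eqref{eq:shiftbound} by $\delta_S^{-1}\int\bar v_\xi|(\tv,\tu,\tz)|\,\dd\xi$, which contributes $C\delta_S\Gs$ in $L^2$. Squaring these bounds gives \eqref{eq:chiH2}.

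Finally, \eqref{eq:Rphiabsorb} follows by adding \eqref{eq:chiH2} to \eqref{eq:ellipticgradient}, since these control respectively the $\chi$ part and the $\tp_\xi,\tp_{\xi\xi}$ part of $\Rp$; the term $\tv_\xi^2$ in $\Rp$ is kept as is.

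\textbf{Main obstacle.} I expect the main obstacle to be the bookkeeping in Step~2: the leading $H^2$ norm of $\chi$ must be bounded by $\norm{\tu_\xi}_2$ and not by $\norm{\tu_{\xi\xi}}_2$ with an $O(1)$ constant. The way through is to exploit the divergence structure of the principal source term. Any top-order $\tu_{\xi\xi}$ that still appears must come with an $\varepsilon_1$ or $\delta_0$ factor, as allowed by \eqref{eq:chiH2}.
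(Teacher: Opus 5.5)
Your proposal is correct and follows essentially the paper's argument. You derive an elliptic equation for $\chi$ by applying $\partial_t-\sigma\partial_\xi$ to the Poisson equation; its principal source is $e^{-\phi}\tu_\xi$, and every other divergence-form source carries a factor $\varepsilon_1$ or a profile derivative. You then get the $H^1$ bound by testing with $\chi$ and the $H^2$ bound by solving the equation for $\chi_{\xi\xi}$, and obtain \eqref{eq:Rphiabsorb} by adding \eqref{eq:ellipticgradient}. The only difference is that the paper differentiates the full equation \eqref{eq:NSP}$_4$ and subtracts the $-\sigma\partial_\xi$-derivative of the profile equation, so the $\dot X$ terms you carry never appear. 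Your divergence rewriting of $e^{-\bar\phi}\tu_\xi$ is likewise unnecessary once you solve for $\chi_{\xi\xi}$.
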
\begin{proof}
		
		Applying $\partial_t-\sigma\partial_\xi$ to  \eqref{eq:NSP}$_4$, and using the first equation of the same system,
		we obtain
		\begin{equation}\label{eq:timeelliptic}
			-\la^2\left(\frac{\psi_\xi}{v}\right)_\xi+ve^{-\phi}\psi
			=e^{-\phi}u_\xi-\la^2\left(\frac{\phi_\xi u_\xi}{v^2}\right)_\xi,
			\quad \psi=(\dt)\phi,\quad\chi=\psi+\sigma\bar\phi_\xi.
		\end{equation}
		Applying $-\sigma\partial_\xi$ to 
		\eqref{eq:shiftedNSP}$_4$ and using $\bar u_\xi=-\sigma\bar v_\xi$, we obtain
		\[
		\sigma\lambda_D^2
		\left(\frac{\bar\phi_{\xi\xi}}{\bar v}\right)_\xi
		-\sigma\bar v e^{-\bar\phi}\bar\phi_\xi
		=
		e^{-\bar\phi}\bar u_\xi
		-\lambda_D^2
		\left(
		\frac{\bar\phi_\xi\bar u_\xi}{\bar v^2}
		\right)_\xi.
		\]
		Subtracting this identity from \eqref{eq:timeelliptic} and using
		\[
		\psi=\chi-\sigma\bar\phi_\xi,\qquad
		u_\xi=\widetilde u_\xi+\bar u_\xi,\qquad
		\mathfrak q=\frac1v-\frac1{\bar v},
		\]
		we have
		\begin{equation}\label{eq:chiequation}
			-\la^2\left(\frac{\chi_\xi}{v}\right)_\xi+ve^{-\phi}\chi
			=f_0+(f_1)_\xi,
		\end{equation}
		where
		\begin{align*}
			&f_0=e^{-\phi}\tu _\xi+(e^{-\phi}-e^{-\bar\phi})\bar u_\xi
			+\sigma(ve^{-\phi}-\bar v e^{-\bar\phi})\bar\phi_\xi,\\
			&f_1=-\la^2\left[\frac{\phi_\xi}{v^2}\tu _\xi
			+\left(\frac{\phi_\xi}{v^2}-\frac{\bar\phi_\xi}{\bar v^2}\right)\bar u_\xi
			+\sigma \mathfrak q\bar\phi_{\xi\xi}\right],\\
			&(f_1)_\xi=-\la^2\left\{
			\frac{\phi_\xi}{v^2}\tu _{\xi\xi}
			+\left(\frac{\phi_{\xi\xi}}{v^2}-\frac{2v_\xi\phi_\xi}{v^3}\right)\tu _\xi
			+\left(\frac{\phi_\xi}{v^2}-\frac{\bar\phi_\xi}{\bar v^2}\right)\bar u_{\xi\xi}\right.\\
			&\quad\quad\qquad\qquad\left.
			+\left(\frac{\phi_{\xi\xi}}{v^2}-\frac{2v_\xi\phi_\xi}{v^3}
			-\frac{\bar\phi_{\xi\xi}}{\bar v^2}
			+\frac{2\bar v_\xi\bar\phi_\xi}{\bar v^3}\right)\bar u_\xi
			+\sigma \mathfrak q_\xi\bar\phi_{\xi\xi}+\sigma \mathfrak q\bar\phi_{\xi\xi\xi}
			\right\}.
		\end{align*}
		Combining \eqref{eq:smallness},
		\eqref{eq:weightedpoisson}, and \eqref{eq:Qderivatives} yields
		\begin{align}
			\norm{f_0}_2^2&\le C\norm{\tu _\xi}_2^2+C\delta_S^2\Gs,\notag
			\\
			\norm{f_1}_2^2+\norm{(f_1)_\xi}_2^2
			&\le C(\delta_0+\varepsilon_1)^2
			\left[\norm{\tu _\xi}_{H^1}^2+\norm{\tv_\xi}_2^2
			+\norm{\tp_\xi}_{H^1}^2+\Gs\right].\label{eq:chiforcingbounds}
		\end{align}
		Multiplication of \eqref{eq:chiequation} by $\chi$, followed by
		solving that equation for $\chi_{\xi\xi}$, gives successively
		\begin{align}
			c\norm\chi_{H^1}^2
			&\le\intR(f_0\chi-f_1\chi_\xi)\dd\xi
			\le\frac c2\norm\chi_{H^1}^2+C(\norm{f_0}_2^2+\norm{f_1}_2^2),\notag
			\\
			\frac{\la^2}{v}\chi_{\xi\xi}
			&=\frac{\la^2v_\xi}{v^2}\chi_\xi+ve^{-\phi}\chi-f_0-(f_1)_\xi,\notag
			\\
			\norm\chi_{H^2}^2
			&\le C\big(\norm{f_0}_2^2+\norm{f_1}_2^2+\norm{(f_1)_\xi}_2^2\big).
			\label{eq:chitest}
		\end{align}
		Inserting \eqref{eq:ellipticgradient} and \eqref{eq:chiforcingbounds}
		into \eqref{eq:chitest}, this proves \eqref{eq:chiH2}. By the definition of $\mathcal{R}_\phi$
		\eqref{eq:Rp} and combining \eqref{eq:chiH2}, we prove \eqref{eq:Rphiabsorb}.\end{proof}
	Combining \eqref{eq:final} and \eqref{eq:Rphiabsorb}, and reducing
	$\delta_0,\varepsilon_1$, we have
	\begin{equation}\label{eq:zeroforclosure}
		{\quad
			\dot{\E}+c\left[\delta_S|\dot{X}|^2+\Gs+\G_1+\G_2+\D\right]
			\le C(\sqrt{\delta_0}+\varepsilon_1)
			\left(\norm{\tv_\xi}_2^2+\norm{\tu _{\xi\xi}}_2^2\right).
			\quad}
	\end{equation}
	
	\begin{proof}[Proof of Lemma \ref{lem:basicenergy}]
		Integrating \eqref{eq:zeroforclosure} on $[0,s]$ for $0\le s\le t$ gives
		\eqref{eq:basicenergy}.
	\end{proof}
	\subsection{First and second derivative estimates}
	\label{sec:derivatives}
	To close the $H^2$ hypothesis in
	\eqref{eq:smallness} and obtain \eqref{eq:closedestimate}, we carry out both derivative levels.
	In Subsections~\ref{sec:derivatives} and~\ref{sec:closure}, for the
	solution fields, the profile fields in \eqref{eq:shiftedprofile}, and
	$\mathfrak q$ defined in \eqref{eq:Qderivatives}, write
	$f_j=\partial_\xi^j f$, $f_0=f$.
	The perturbation equations in the original pressure variables are
	\begin{equation}\label{eq:originalperturbation}
		\left\{\begin{aligned}
			&(\dt)\tv=\tu _1+\dot{X}\bar v_1,\\
			&(\dt)\tu =-(p-\bar p)_1
			+\mu\left(\frac{\tu _1}{v}+\bar u_1\mathfrak q\right)_1
			+\frac{\tp_1}{v}+\bar\phi_1\mathfrak q+\dot{X}\bar u_1,\\
			&\cv(\dt)\tz=-p \tu _1-(p-\bar p)\bar u_1
			+\kappa\left(\frac{\tz_1}{v}+\bar\theta_1\mathfrak q\right)_1\\
			&\qquad\qquad\qquad\quad+\mu\left(\frac{\tu _1^2+2\bar u_1\tu _1}{v}+\bar u_1^2\mathfrak q\right)
			+\cv\dot{X}\bar\theta_1.
		\end{aligned}\right.
	\end{equation}
	The electric force in the second equation produces the pairing
	\eqref{eq:electricpair}. We therefore keep this form for the derivative
	estimates. Define, for $j=1,2$,
	\begin{align}
		\mathcal K_j&=\intR\left(\frac\mu2\tv_j^2-v\tu _{j-1}\tv_j\right)\dd\xi,
		\qquad
		\mathcal L_j=\frac12\norm{\tu _j}_2^2+\frac{\cv}{2}\norm{\tz_j}_2^2,
		\label{eq:derivativeenergies}\\
		\mathcal K_j&\ge\frac\mu4\norm{\tv_j}_2^2-C\norm{\tu _{j-1}}_2^2,
		\qquad
		|\mathcal K_j|\le C\big(\norm{\tv_j}_2^2+\norm{\tu _{j-1}}_2^2\big).
		\label{eq:Kcoercivity}
	\end{align}
	\subsubsection{\texorpdfstring{Explicit coefficient derivatives}{Explicit coefficient derivatives}}
	For use in each energy identity, we separate only the highest
	derivative. The remaining expressions are written out for both values of
	$j$:
	\begin{align}
		&\partial_\xi^j(p-\bar p)=\frac{\tz_j}{v}-\frac\theta{v^2}\tv_j+R_{p,j},\qquad\partial_\xi^j\left(\frac{\tu _1}{v}+\bar u_1\mathfrak q\right)
		=\frac{\tu _{j+1}}v+R_{u,j},\notag\\
		&\partial_\xi^j\left(\frac{\tz_1}{v}+\bar\theta_1\mathfrak q\right)
		=\frac{\tz_{j+1}}v+R_{\theta,j},\qquad\partial_\xi^{j-1}\left(\frac{\tp_1}{v}+\bar\phi_1\mathfrak q\right)
		=\frac{\tp_j}v+R_{\phi,j},\label{eq:highestderivatives}
	\end{align}
	where
	\begin{align}
		R_{p,1}=&\bar\theta_1\mathfrak q
		+\bar v_1\left(\frac{\bar\theta}{\bar v^2}-\frac\theta{v^2}\right),\notag
		\\
		R_{p,2}=&\bar\theta_2\mathfrak q
		+\bar v_2\left(\frac{\bar\theta}{\bar v^2}-\frac\theta{v^2}\right)
		-2\left(\frac{\theta_1v_1}{v^2}-\frac{\bar\theta_1\bar v_1}{\bar v^2}\right)
		+2\left(\frac{\theta v_1^2}{v^3}-\frac{\bar\theta\bar v_1^2}{\bar v^3}\right),
		\label{eq:pressurecommutators}\\
		R_{u,1}=&-\frac{v_1}{v^2}\tu _1+\bar u_2\mathfrak q+\bar u_1\mathfrak q_1,\notag
		\\
		R_{u,2}=&-\frac{2v_1}{v^2}\tu _2
		+\left(\frac{2v_1^2}{v^3}-\frac{v_2}{v^2}\right)\tu _1
		+\bar u_3\mathfrak q+2\bar u_2\mathfrak q_1+\bar u_1\mathfrak q_2,\notag
		\\
		R_{\theta,1}=&-\frac{v_1}{v^2}\tz_1+\bar\theta_2\mathfrak q+\bar\theta_1\mathfrak q_1,\notag
		\\
		R_{\theta,2}=&-\frac{2v_1}{v^2}\tz_2
		+\left(\frac{2v_1^2}{v^3}-\frac{v_2}{v^2}\right)\tz_1
		+\bar\theta_3\mathfrak q+2\bar\theta_2\mathfrak q_1+\bar\theta_1\mathfrak q_2,\notag
		\\
		R_{\phi,1}=&\bar\phi_1\mathfrak q,\qquad
		R_{\phi,2}=-\frac{v_1}{v^2}\tp_1+\bar\phi_2\mathfrak q+\bar\phi_1\mathfrak q_1.\notag
	\end{align}
	We take the product
	$\theta_1v_1/v^2-\bar\theta_1\bar v_1/\bar v^2$ in
	\eqref{eq:pressurecommutators} as a representative term.
	Expanding $\theta_1=\tz_1+\bar\theta_1$ and
	$v_1=\tv_1+\bar v_1$, we obtain
	\begin{align*}
		\frac{\theta_1v_1}{v^2}-\frac{\bar\theta_1\bar v_1}{\bar v^2}
		&=\frac{\tz_1\tv_1+\bar\theta_1\tv_1+\bar v_1\tz_1}{v^2}
		+\bar\theta_1\bar v_1\left(\frac1{v^2}-\frac1{\bar v^2}\right).
	\end{align*}
	By \eqref{eq:smallness}, \eqref{eq:profilebounds}--\eqref{eq:profilederivatives},
	\eqref{eq:Qderivatives}, and the definition of $\Gs$ in \eqref{eq:gooddef},
	H\"older's inequality and the embedding $H^2\hookrightarrow W^{1,\infty}$ give
	\begin{align*}
		\left\|\frac{\theta_1v_1}{v^2}
		-\frac{\bar\theta_1\bar v_1}{\bar v^2}\right\|_2
		&\le C\norm{\tz_1}_\infty\norm{\tv_1}_2
		+C\delta_S^2\norm{(\tv_1,\tz_1)}_2
		+C\norm{\bar v_1^2\tv}_2\\
		&\le C(\varepsilon_1+\delta_S^2)\norm{(\tv_1,\tz_1)}_2
		+C\delta_S^3(\Gs)^{1/2}\\
		&\le C(\delta_0+\varepsilon_1)
		\left[\norm{(\tv_1,\tz_1)}_2+(\Gs)^{1/2}\right].
	\end{align*}
	The remaining terms are estimated in the same way. 
	Consequently, \eqref{eq:profilederivatives},
	\eqref{eq:Qderivatives}, and the $H^2$ embedding into $W^{1,\infty}$ give
	\begin{align}
		&\norm{R_{p,1}}_2^2+\norm{R_{\phi,1}}_2^2
		\le C\delta_S^2\Gs,\qquad
		\norm{R_{p,2}}_2^2
		\le C(\delta_0+\varepsilon_1)^2
		\left(\norm{(\tv_1,\tz_1)}_2^2+\Gs\right),\notag
		\\
		&\norm{R_{u,j}}_2^2+\norm{R_{\theta,j}}_2^2
		\le C(\delta_0+\varepsilon_1)^2
		\left(\norm{(\tv_1,\tu _1,\tz_1)}_{H^{j-1}}^2+\Gs\right),\quad j=1,2,\notag
		\\
		&\norm{R_{\phi,2}}_2^2
		\le C(\delta_0+\varepsilon_1)^2
		\left(\norm{(\tv_1,\tp_1)}_2^2+\Gs\right).
		\label{eq:commutatorbounds}
	\end{align}
	
	\subsubsection{\texorpdfstring{The density derivative energies}{The density derivative energies}}
	\begin{lemma}\label{lem:density}
		For the energies ${\mathcal K}_1,{\mathcal K}_2$ defined in \eqref{eq:derivativeenergies}, it holds
		\begin{align}
			\dot{\mathcal K}_1+c\left(\norm{\tv_1}_2^2+\norm{\tp_1}_{H^1}^2\right)
			&\le C\D+C(\delta_0+\varepsilon_1)\Gs
			+C\delta_S^2|\dot{X}|^2,\label{eq:Kone}\\
			\dot{\mathcal K}_2+c\left(\norm{\tv_2}_2^2+\norm{\tp_2}_{H^1}^2\right)
			&\le C\left(\D+\norm{(\tu _2,\tz_2)}_2^2\right)\notag
			\\
			&\quad+C(\delta_0+\varepsilon_1)\left(\norm{\tv_1}_2^2+\Gs\right)
			+C\delta_S^2|\dot{X}|^2.\label{eq:Ktwo}
		\end{align}
	\end{lemma}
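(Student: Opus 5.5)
The plan is the standard Kawashima--Matsumura effective-velocity cross estimate, carried out at derivative level $j$ on the perturbation system \eqref{eq:originalperturbation}. We differentiate the mass equation $j$ times, multiply by $\mu\tv_j$, and obtain
\[
(\partial_t-\sigma\partial_\xi)\Bigl(\frac\mu2\tv_j^2\Bigr)=\mu\tv_j\tu_{j+1}+\mu\dot X\tv_j\bar v_{j+1}.
\]
The viscous term $\mu(\tu_1/v+\bar u_1\mathfrak q)_1$ in the momentum equation, differentiated $j-1$ times and paired with $-v\tv_j$, produces $-\mu\tu_{j+1}\tv_j$ plus commutators; the highest-order cross terms therefore cancel. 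Concretely, we compute $\frac{\dd}{\dd t}\intR(-v\tu_{j-1}\tv_j)\dd\xi$. On the time derivative of $\tu_{j-1}$ we use $\partial_\xi^{j-1}$ of \eqref{eq:originalperturbation}$_2$. On the time derivative of $\tv_j$ we integrate by parts onto $\tu_{j-1}$, which gives $\intR (v\tu_{j-1})_\xi\,\partial_\xi^{j-1}(\tu_1+\dot X\bar v_1)\dd\xi$. This term is bounded by $C\norm{\tu_j}_2^2$ plus lower order, i.e.\ by $C\D$ for $j=1$ and by $C\norm{\tu_2}_2^2$ for $j=2$. The factor $v_t=u_\xi+\sigma v_\xi$ contributes only lower-order terms, and the transport term $-\sigma\partial_\xi$ integrates away up to terms carrying $\bar v_\xi$.

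We then use \eqref{eq:highestderivatives} to identify the good terms. The pressure gives $v\tv_j\,\partial_\xi^j(p-\bar p)=-\frac{\theta}{v}\tv_j^2+\tv_j\tz_j+v\tv_jR_{p,j}$, so we gain $c\norm{\tv_j}_2^2$ because $\theta/v\ge c$. The cross term $\tv_j\tz_j$ is bounded by $\frac{c}{4}\norm{\tv_j}_2^2+C\norm{\tz_j}_2^2$, and $\norm{\tz_j}_2^2$ goes into $C\D$ for $j=1$ and into $C\norm{\tz_2}_2^2$ for $j=2$. The electric force gives $-v\tv_j(\tp_j/v+R_{\phi,j})=-\tv_j\tp_j-v\tv_jR_{\phi,j}$. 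Lemma \ref{lem:spatial}, specifically \eqref{eq:electricpair}, gives $\intR\tv_j\tp_j\dd\xi\ge c\norm{\tp_j}_{H^1}^2-C(\delta_0+\varepsilon_1)[\norm{\tv_1}_{H^{j-1}}^2+\Gs]$, and this supplies the $\norm{\tp_j}_{H^1}^2$ dissipation on the left of \eqref{eq:Kone}--\eqref{eq:Ktwo}. This good sign is the reason the electric force was kept in non-conservative form.

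For the remaining terms, the commutators $R_{p,j},R_{\phi,j},R_{u,j}$ are handled by Cauchy--Schwarz with \eqref{eq:commutatorbounds}. They contribute $\frac{c}{4}\norm{\tv_j}_2^2+C(\delta_0+\varepsilon_1)(\norm{\tv_1}_2^2+\norm{(\tu_1,\tz_1)}_{H^{j-1}}^2+\Gs)$. At $j=1$ the term $\norm{\tv_1}_2^2$ is absorbed into the left side, and $\norm{(\tu_1,\tz_1)}_2^2\le C\D$. For $j=2$, the bound $\norm{R_{\phi,2}}_2^2\le C(\delta_0+\varepsilon_1)^2\norm{\tp_1}_2^2$ is handled through \eqref{eq:ellipticgradient}. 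The remaining pieces are of the following types:
\begin{itemize}
\item profile-weighted products such as $\bar v_\xi$, $\bar u_\xi\mathfrak q$, $\bar\phi_1\mathfrak q$, bounded by $C\delta_S^2\Gs$ using \eqref{eq:profilederivatives} and \eqref{eq:vweightedproduct};
\item the shift terms $\dot X\bar v_{j+1}\tv_j$ and $\dot X\bar u_j v\tv_j$, estimated by $\frac c8\norm{\tv_j}_2^2+C\norm{\bar v_{j+1}}_2^2|\dot X|^2$, where $\norm{\bar v_{j+1}}_2^2\le C\delta_S^3$ by \eqref{eq:profileLtwo} and \eqref{eq:profilederivatives}, hence $\le C\delta_S^2|\dot X|^2$;
\item the cubic terms, absorbed through $\norm{(\tv,\tu,\tz)}_{W^{1,\infty}}\le C\varepsilon_1$.
\end{itemize}

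The main obstacle will be the bookkeeping at $j=2$. There the integration-by-parts terms and commutators involve $\tu_2$, $\tz_2$, $\tv_{2}$ and third derivatives. The terms $\tu_3$ and $\tz_3$ must not appear on the right-hand side; this is guaranteed because the viscous highest-order terms cancel exactly, and the heat term does not enter $\mathcal K_j$ at all. In particular, $\tu_{j-1}$ must be paired only with $\partial_\xi^{j-1}$ of the momentum equation, never with $\partial_\xi^{j}$. We also need to check carefully that the $v_2\tu_1$-type terms in $R_{u,2}$ carry $\norm{\tv_2}_2\norm{\tu_1}_\infty\le C\varepsilon_1\norm{\tv_2}_2$, so that they are absorbable.
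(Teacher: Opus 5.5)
Your proposal is correct and follows essentially the same route as the paper. You use the same multipliers $\mu\tv_j$ and $-v\tv_j$ on $\partial_\xi^j$ of the mass equation and $\partial_\xi^{j-1}$ of the momentum equation, with exact cancellation of the top viscous terms, integration by parts of $-v\tu_{j-1}\tu_{j+1}$ and of the accompanying shift term, the good signs from $\theta\tv_j^2/v$ and the pairing \eqref{eq:electricpair}, and the commutator bounds \eqref{eq:commutatorbounds}.

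Two small corrections. The transport part $-\sigma\partial_\xi$ integrates away exactly, with no $\bar v_\xi$ residue. At $j=1$ the cubic terms $\tv_1\tu\tu_1-\tu_1\tu\tv_1$ cancel identically, so your $W^{1,\infty}$ absorption there is harmless but unnecessary.
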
\begin{proof}
		Multiply the $\partial_\xi^j$\eqref{eq:originalperturbation}$_1$ by $\mu\tv_j$ and
		the $\partial_\xi^{j-1}$\eqref{eq:originalperturbation}$_2$ by $-v\tv_j$.
		When the mixed energy is differentiated, use $(\dt)v=u_1$ and integrate
		$-v\tu _{j-1}\tu _{j+1}$ by parts. The highest viscous terms cancel, and we obtain
		\begin{align}
			&\dot{\mathcal K}_j
			+\intR\frac\theta v\tv_j^2\dd\xi+\intR\tv_j\tp_j\dd\xi\notag\\
			=&\intR\tv_j\tz_j\dd\xi+\intR v \tu _j^2\dd\xi+\intR v\tv_j(R_{p,j}-\mu R_{u,j}-R_{\phi,j})\dd\xi\notag
			\\
			&+\intR\left(v_1\tu _{j-1}\tu _j-u_1\tu _{j-1}\tv_j\right)\dd\xi+\dot{X}\intR\left[
			\mu\bar v_{j+1}\tv_j-v\bar u_j\tv_j-v\tu _{j-1}\bar v_{j+1}
			\right]\dd\xi.\label{eq:Kidentity}
		\end{align}
		\medskip\noindent\textbf{1. Pressure, viscosity, and the electric pairing.} 
		The temperature cross term is controlled by its own derivative
		energy. For any fixed sufficiently small ${\tilde\eta}>0$,
		\begin{align}
			&\left|\intR\tv_j\tz_j\dd\xi\right|
			\le{\tilde\eta}\norm{\tv_j}_2^2+C_{\tilde\eta}\norm{\tz_j}_2^2,\qquad\qquad
			\intR v \tu _j^2\dd\xi\le C\norm{\tu _j}_2^2,\notag
			\\
			&\left|\intR v\tv_1(R_{p,1}-\mu R_{u,1}-R_{\phi,1})\dd\xi\right|
			\le\big({\tilde\eta}+C(\delta_0+\varepsilon_1)\big)\norm{\tv_1}_2^2
			+C\D+C_{\tilde\eta}(\delta_0+\varepsilon_1)\Gs,\notag
			\\
			&\left|\intR v\tv_2(R_{p,2}-\mu R_{u,2}-R_{\phi,2})\dd\xi\right|
			\le C(\delta_0+\varepsilon_1)
			\left[\norm{\tv_1}_{H^1}^2+\norm{\tu _1,\tz_1}_{H^1}^2+\Gs\right].
			\label{eq:Kcoefficientestimates}
		\end{align}
		For the term $-v_1\tp_1/v^2$ in $R_{\phi,2}$, use \eqref{eq:ellipticgradient} to bound $\|\tp_1\|_2^2$ in terms of $\|\tv_1\|_2^2$ and $\Gs$. The left side of \eqref{eq:Kidentity} has the sign
		\begin{align}
			\intR\frac\theta v\tv_j^2\dd\xi+\intR\tv_j\tp_j\dd\xi
			\ge&c\left(\norm{\tv_j}_2^2+\norm{\tp_j}_{H^1}^2\right)-C(\delta_0+\varepsilon_1)
			\left(\norm{\tv_1}_{H^{j-1}}^2+\Gs\right).
			\label{eq:Kpositive}
		\end{align}
		\medskip\noindent\textbf{2. The transport products.}
		At the first derivative level, the perturbation cubic terms cancel
		exactly. At the second level, all remaining factors of first derivatives
		are small in $L^\infty$:
		\begin{align}
			&v_1\tu \tu _1-u_1\tu \tv_1
			=\bar v_1\tu \tu _1-\bar u_1\tu \tv_1,\notag
			\\
			&\left|\intR(v_1\tu \tu _1-u_1\tu \tv_1)\dd\xi\right|
			\le C\delta_S\sqrt{\Gs}\left(\norm{\tu _1}_2+\norm{\tv_1}_2\right)
			\le{\tilde\eta}\norm{\tv_1}_2^2+C\D+C_{\tilde\eta}\delta_S^2\Gs,\notag
			\\
			&\left|\intR(v_1\tu _1\tu _2-u_1\tu _1\tv_2)\dd\xi\right|
			\le C(\delta_0+\varepsilon_1)
			\left(\norm{\tv_2}_2^2+\norm{\tu _1}_2^2+\norm{\tu _2}_2^2\right).
			\label{eq:Ktransport}
		\end{align}
		\medskip\noindent\textbf{3. The shift terms.}
		In the first energy, the term with the undifferentiated $\tu $ is
		integrated once more by parts. This avoids a time integral of $\norm \tu _2^2$:
		\begin{align*}
			&-\intR v\tu \bar v_2\dd\xi
			=\intR(v_1\tu +v\tu _1)\bar v_1\dd\xi,\notag
			\\
			&\left|\intR(v_1\tu +v\tu _1)\bar v_1\dd\xi\right|
			\le C\delta_S^{3/2}
			\left(\varepsilon_1\norm{\tv_1}_2+\norm{\tu _1}_2\right)
			+C\delta_S^{5/2}\sqrt{\Gs}.
		\end{align*}
		Combining these estimates with the bounds
		$\norm{\bar v_j,\bar u_j,\bar\theta_j}_2
		\le C\delta_S^{j+1/2}$
		for $j=1,2,3$, we obtain
		\begin{align}
			& \left|\dot{X}\intR
			\left(\mu\bar v_2\tv_1-v\bar u_1\tv_1-v\tu \bar v_2\right)\dd\xi\right|
			\le C\delta_S^2|\dot{X}|^2
			+C\delta_S\left(\norm{\tv_1}_2^2+\D+\Gs\right),\notag
			\\
			&\left|\dot{X}\intR
			\left(\mu\bar v_3\tv_2-v\bar u_2\tv_2-v\tu _1\bar v_3\right)\dd\xi\right|
			\le C\delta_S^2|\dot{X}|^2
			+C\delta_S\left(\norm{\tv_2}_2^2+\D\right).
			\label{eq:Kshift}
		\end{align}
		Inserting \eqref{eq:Kcoefficientestimates}--\eqref{eq:Kshift} into
		\eqref{eq:Kidentity}, we have
		\eqref{eq:Kone} and \eqref{eq:Ktwo}.\end{proof}
	\subsubsection{\texorpdfstring{Velocity and temperature derivative energies}{Velocity and temperature derivative energies}}
	\begin{lemma}\label{lem:parabolic}
		For the energies ${\mathcal L}_1,{\mathcal L}_2$ defined in \eqref{eq:derivativeenergies}, it holds
		\begin{align}
			&\dot{\mathcal L}_1+c\norm{(\tu _2,\tz_2)}_2^2
			\le C\left(\norm{\tv_1}_2^2+\D\right)
			+C(\delta_0+\varepsilon_1)\Gs+C\delta_S^2|\dot{X}|^2,
			\label{eq:Lone}\\
			&\dot{\mathcal L}_2+c\norm{\tu _3,\tz_3}_2^2
			\le C\left(\norm{\tv_1}_{H^1}^2+\norm{(\tu _2,\tz_2)}_2^2+\D\right)+C(\delta_0+\varepsilon_1)\Gs+C\delta_S^2|\dot{X}|^2.
			\label{eq:Ltwo}
		\end{align}
	\end{lemma}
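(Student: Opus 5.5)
We apply $\partial_\xi^j$ to the momentum and temperature equations of \eqref{eq:originalperturbation}. We test the first with $-\tu_{j+2}$ and the second with $-\tz_{j+2}$, i.e.\ we multiply by $\tu_j,\tz_j$ and integrate by parts once, so that the highest-order terms become $-\norm{\tu_{j+1}}^2$-type dissipation. Using \eqref{eq:highestderivatives}, the viscous and heat terms give
\[
\intR\frac{\mu}{v}\tu_{j+2}^2+\frac{\kappa}{v}\tz_{j+2}^2\dd\xi
\]
for the $\partial_\xi^{j+1}$-level fields, up to the commutators $R_{u,j},R_{\theta,j}$. This is the $c\norm{(\tu_2,\tz_2)}_2^2$, respectively $c\norm{(\tu_3,\tz_3)}_2^2$, on the left, and the factor $1/v\ge c$ follows from \eqref{eq:smallness}. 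Concretely, for $\mathcal L_1$ we test the undifferentiated $\tu$-equation by $-\tu_{\xi\xi}$ and the $\tz$-equation by $-\tz_{\xi\xi}$. For $\mathcal L_2$ we test the once-differentiated equations by $-\tu_{\xi\xi\xi}$ and $-\tz_{\xi\xi\xi}$. The transport terms $\sigma\partial_\xi$ integrate to zero.

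We then sort the remaining terms into four groups.

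\textbf{Pressure.} The terms $-(p-\bar p)_{j+1}$ and $-p\tu_1$ are paired with the top derivative. By Cauchy--Schwarz and \eqref{eq:highestderivatives}, they are bounded by $\eta\norm{(\tu_{j+2},\tz_{j+2})}_2^2+C_\eta(\norm{\tv_{j}}^2+\norm{\tz_j}^2+\norm{R_{p,j}}^2)$. This is where $\norm{\tv_1}_2^2$ (resp.\ $\norm{\tv_1}_{H^1}^2$) enters the right-hand side. Lower-order $\tz_1,\tu_1$ terms go into $\D$, and $\tz_2,\tu_2$ terms go into the right-hand side of \eqref{eq:Ltwo}.

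\textbf{Electric force.} The term $\tp_1/v+\bar\phi_1\mathfrak q$ is handled via \eqref{eq:highestderivatives} and \eqref{eq:commutatorbounds}. By \eqref{eq:ellipticgradient}, $\norm{\tp_j}_2$ is controlled by $\norm{\tv_{j}}$ (or $\norm{\tv_1}_{H^1}$) plus $(\delta_0+\varepsilon_1)\Gs$.

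\textbf{Viscous heating.} The term $\mu(\tu_1^2+2\bar u_1\tu_1)/v$ and the profile terms $\bar u_1^2\mathfrak q$, $(p-\bar p)\bar u_1$ are quadratic with small coefficients. They are bounded using $\norm{\tu_1}_\infty\le C\varepsilon_1$, the profile bounds \eqref{eq:profilederivatives}, and weighted estimates such as \eqref{eq:vweightedproduct}, giving $C(\delta_0+\varepsilon_1)\Gs$ plus absorbable pieces.

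\textbf{Shift.} The terms $\dot X\bar u_{j+1}$ and $\cv\dot X\bar\theta_{j+1}$, paired with $\tu_{j+2},\tz_{j+2}$, give $C\delta_S^2|\dot X|^2$ after Young's inequality, using $\norm{\bar u_{j+1}}_2\le C\delta_S^{j+3/2}$.

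The main obstacle is the commutator bookkeeping at the second level: $R_{u,2},R_{\theta,2},R_{p,2}$ contain $\tu_2,\tz_2,\tv_2$ multiplied by $v_1$. We treat these with $\norm{v_1}_\infty\le C(\delta_S^2+\varepsilon_1)$ and \eqref{eq:commutatorbounds}, placing them either into $\norm{\tv_1}_{H^1}^2$ on the right or absorbing them into the left-hand dissipation. We also check that no $\norm{\tv_3}$ appears. This holds because the pressure term is only differentiated $j+1$ times and is paired, after one integration by parts, against $\tu_{j+2}$, which costs only $\tv_{j+1}$. Choosing $\eta$ small and then $\delta_0,\varepsilon_1$ small closes \eqref{eq:Lone}--\eqref{eq:Ltwo}.
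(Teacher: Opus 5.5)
Your proposal is correct and is essentially the paper's proof. Test the undifferentiated (resp.\ once-differentiated) momentum and temperature equations of \eqref{eq:originalperturbation} against $-\tu_{\xi\xi},-\tz_{\xi\xi}$ (resp.\ $-\tu_{\xi\xi\xi},-\tz_{\xi\xi\xi}$), absorb the top-order terms into the viscous and heat dissipation, and send the pressure, electric (via \eqref{eq:ellipticgradient}), commutator (via \eqref{eq:commutatorbounds}), heating and shift terms to the right exactly as you describe, with $p\tu_2$ supplying the $\norm{\tu_2}_2^2$ in \eqref{eq:Ltwo}. The only blemish is index bookkeeping: under your own convention (apply $\partial_\xi^j$, test with $-\tu_{j+2}$), the pressure bound should read $C_\eta(\norm{\tv_{j+1}}_2^2+\norm{\tz_{j+1}}_2^2+\norm{R_{p,j+1}}_2^2)$, as your last paragraph says, and the equivalent form is to multiply the $\partial_\xi^{j+1}$-differentiated equation by $\tu_{j+1}$, not by $\tu_j$.
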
\begin{proof}\leavevmode\par
		\medskip\noindent\textbf{1. The differentiated momentum equation.}
		Apply $\partial_\xi^{j-1}$ to 
		\eqref{eq:originalperturbation}$_2$, multiply by $-\tu _{j+1}$, and integrate.
		Using \eqref{eq:highestderivatives}, we get
		\begin{align*}
			\frac12\frac{\dd}{\dd t}\norm{\tu _j}_2^2
			+\mu\intR\frac{\tu _{j+1}^2}{v}\dd\xi
			=&\intR\left(\frac{\tz_j}{v}-\frac\theta{v^2}\tv_j\right)\tu _{j+1}\dd\xi-\dot{X}\intR\bar u_j\tu _{j+1}\dd\xi\\
			&+\intR(R_{p,j}-\mu R_{u,j})\tu _{j+1}\dd\xi
			-\intR\left(\frac{\tp_j}{v}+R_{\phi,j}\right)\tu _{j+1}\dd\xi.
		\end{align*}
		Each integral is estimated separately. 
		\eqref{eq:ellipticgradient} and \eqref{eq:commutatorbounds} are used for
		the electric force and coefficient terms, respectively:
		\begin{align*}
			&\left|\intR\left(\frac{\tz_j}{v}-\frac\theta{v^2}\tv_j\right)\tu _{j+1}\dd\xi\right|
			\le{\tilde\eta}\norm{\tu _{j+1}}_2^2+C_{\tilde\eta}\norm{\tv_j,\tz_j}_2^2,\\
			&\left|\intR(R_{p,j}-\mu R_{u,j})\tu _{j+1}\dd\xi\right|
			\le{\tilde\eta}\norm{\tu _{j+1}}_2^2
			+C_{\tilde\eta}(\delta_0+\varepsilon_1)^2
			\left[\norm{(\tv_1,\tu _1,\tz_1)}_{H^{j-1}}^2+\Gs\right],\\
			&\left|\intR\left(\frac{\tp_j}{v}+R_{\phi,j}\right)\tu _{j+1}\dd\xi\right|
			\le{\tilde\eta}\norm{\tu _{j+1}}_2^2
			+C_{\tilde\eta}\norm{\tv_1}_2^2+C_{\tilde\eta}(\delta_0+\varepsilon_1)\Gs,\\
			&\left|\dot{X}\intR\bar u_j\tu _{j+1}\dd\xi\right|
			\le{\tilde\eta}\norm{\tu _{j+1}}_2^2+C_{\tilde\eta}\delta_S^{2j+1}|\dot{X}|^2.
		\end{align*}
		Choose ${\tilde\eta}$ so that four such contributions are absorbed by
		the viscous term. This yields, for $j=1,2$,
		\begin{align}
			\frac12\frac{\dd}{\dd t}\norm{\tu _1}_2^2+c\norm{\tu _2}_2^2
			&\le C(\norm{\tv_1}_2^2+\D)
			+C(\delta_0+\varepsilon_1)\Gs+C\delta_S^2|\dot{X}|^2,\notag
			\\
			\frac12\frac{\dd}{\dd t}\norm{\tu _2}_2^2+c\norm{\tu _3}_2^2
			&\le C\left(\norm{\tv_1}_{H^1}^2+\norm{(\tu _2,\tz_2)}_2^2+\D\right)
			+C(\delta_0+\varepsilon_1)\Gs+C\delta_S^2|\dot{X}|^2.
			\label{eq:velocityestimates}
		\end{align}
		\medskip\noindent\textbf{2. The differentiated temperature equation.}
		The corresponding multiplier for 
		\eqref{eq:originalperturbation}$_3$ is $-\tz_{j+1}$ after $j-1$
		differentiations. This gives
		\begin{align}
			&\frac{\cv}{2}\frac{\dd}{\dd t}\norm{\tz_j}_2^2
			+\kappa\intR\frac{\tz_{j+1}^2}{v}\dd\xi \notag\\
			=&\intR\partial_\xi^{j-1}\left[p\tu _1+(p-\bar p)\bar u_1\right]
			\tz_{j+1}\dd\xi\notag
			-\kappa\intR R_{\theta,j}\tz_{j+1}\dd\xi\notag
			\\
			&-\mu\intR\partial_\xi^{j-1}
			\left[\frac{\tu _1^2+2\bar u_1\tu _1}{v}+\bar u_1^2\mathfrak q\right]\tz_{j+1}\dd\xi-\cv\dot{X}\intR\bar\theta_j\tz_{j+1}\dd\xi.
			\label{eq:thermalidentity}
		\end{align}
		A direct calculation gives
		\begin{align}
			&\norm{p\tu _1+(p-\bar p)\bar u_1}_2^2
			\le C\norm{\tu _1}_2^2+C\delta_S^2\Gs,\notag
			\\
			&\norm{\partial_\xi[p\tu _1+(p-\bar p)\bar u_1]}_2^2
			\le C\norm{\tu _2}_2^2
			+C(\delta_0+\varepsilon_1)^2
			\left(\norm{(\tv_1,\tu _1,\tz_1)}_2^2+\Gs\right),\label{eq:pressureworkbounds}\\
			&\norm{\frac{\tu _1^2+2\bar u_1\tu _1}{v}+\bar u_1^2\mathfrak q}_2^2
			\le C(\delta_0+\varepsilon_1)^2\left(\norm{\tu _1}_2^2+\Gs\right),\notag
			\\
			&\norm{\partial_\xi\left[\frac{\tu _1^2+2\bar u_1\tu _1}{v}+\bar u_1^2\mathfrak q\right]}_2^2
			\le C(\delta_0+\varepsilon_1)^2
			\left(\norm{\tu _1}_{H^1}^2+\norm{\tv_1}_2^2+\Gs\right).
			\label{eq:heatingbounds}
		\end{align}
		Applying Young's inequality to the four integrals
		in \eqref{eq:thermalidentity} with \eqref{eq:pressureworkbounds} and \eqref{eq:heatingbounds}, one has
		\begin{align*}
			&\left|\intR\partial_\xi^{j-1}[p\tu _1+(p-\bar p)\bar u_1]\tz_{j+1}\dd\xi\right|
			\\
			\le&{\tilde\eta}\norm{\tz_{j+1}}_2^2+C_{\tilde\eta}\norm{\tu _j}_2^2+C_{\tilde\eta}(\delta_0+\varepsilon_1)^2
			\left(\norm{(\tv_1,\tu _1,\tz_1)}_{H^{j-1}}^2+\Gs\right),\\
			&\left|\kappa\intR R_{\theta,j}\tz_{j+1}\dd\xi\right|
			\le{\tilde\eta}\norm{\tz_{j+1}}_2^2
			+C_{\tilde\eta}(\delta_0+\varepsilon_1)^2
			\left(\norm{(\tv_1,\tu _1,\tz_1)}_{H^{j-1}}^2+\Gs\right),\\
			&\left|\mu\intR\partial_\xi^{j-1}
			\left[\frac{\tu _1^2+2\bar u_1\tu _1}{v}+\bar u_1^2\mathfrak q\right]\tz_{j+1}\dd\xi\right|
			\le{\tilde\eta}\norm{\tz_{j+1}}_2^2
			+C_{\tilde\eta}(\delta_0+\varepsilon_1)^2
			\left(\norm{(\tv_1,\tu _1,\tz_1)}_{H^{j-1}}^2+\Gs\right),\\
			&\left|\cv\dot{X}\intR\bar\theta_j\tz_{j+1}\dd\xi\right|
			\le{\tilde\eta}\norm{\tz_{j+1}}_2^2+C_{\tilde\eta}\delta_S^{2j+1}|\dot{X}|^2.
		\end{align*}
		After absorption and using \eqref{eq:smallness}, the first
		derivative estimate is
		\begin{align}
			\frac{\cv}{2}\frac{\dd}{\dd t}\norm{\tz_1}_2^2+c\norm{\tz_2}_2^2
			\le C\D+C(\delta_0+\varepsilon_1)^2\norm{\tv_1}_2^2+C(\delta_0+\varepsilon_1)\Gs+C\delta_S^2|\dot{X}|^2.
			\label{eq:thermalone}
		\end{align}
		At the second derivative level, the term $p\tu _2$ uses the velocity
		second derivative already present in \eqref{eq:velocityestimates}. We obtain
		\begin{align}
			\frac{\cv}{2}\frac{\dd}{\dd t}\norm{\tz_2}_2^2+c\norm{\tz_3}_2^2
			\le&C\norm{\tu _2}_2^2
			+C(\delta_0+\varepsilon_1)^2
			\left[\norm{\tv_1}_{H^1}^2+\norm{(\tu _2,\tz_2)}_2^2+\D\right]\notag
			\\
			&+C(\delta_0+\varepsilon_1)\Gs+C\delta_S^2|\dot{X}|^2.
			\label{eq:thermaltwo}
		\end{align}
		Adding \eqref{eq:velocityestimates} to \eqref{eq:thermalone} and
		\eqref{eq:thermaltwo}, respectively, proves \eqref{eq:Lone} and
		\eqref{eq:Ltwo}. 
	\end{proof}
	
	\subsection{Closure and stability of the single shock}
	\label{sec:closure}
	\subsubsection{\texorpdfstring{The closed a priori estimate}{The closed a priori estimate}}
	\begin{proof}[Proof of Proposition~\ref{thm:closure}]\leavevmode\par
		\medskip\noindent\textbf{1. Choice of the energy coefficients.}
		After the constants in Lemmas~\ref{lem:density} and
		\ref{lem:parabolic} have been fixed, choose a fixed $0<{\eta_{\mathrm E}}\ll1$ and set
		\begin{equation}\notag
			\E_2=\E+{\eta_{\mathrm E}}\mathcal K_1+{\eta_{\mathrm E}}^2\mathcal L_1
			+{\eta_{\mathrm E}}^3\mathcal K_2+{\eta_{\mathrm E}}^4\mathcal L_2.
		\end{equation}
		By \eqref{eq:coercive}, \eqref{eq:Kcoercivity}, and
		\eqref{eq:ellipticHk}, reducing ${\eta_{\mathrm E}}$ if necessary gives
		\begin{align}
			C_{\eta_{\mathrm E}}^{-1}\norm{\tv,\tu ,\tz}_{H^2}^2
			\le \E_2\le C_{\eta_{\mathrm E}}\norm{\tv,\tu ,\tz}_{H^2}^2.
			\label{eq:totalcoercivity}
		\end{align}
		\medskip\noindent\textbf{2. Absorption of every derivative term.}
		Add \eqref{eq:zeroforclosure}, ${\eta_{\mathrm E}}$ times \eqref{eq:Kone},
		${\eta_{\mathrm E}}^2$ times \eqref{eq:Lone}, ${\eta_{\mathrm E}}^3$ times \eqref{eq:Ktwo},
		and ${\eta_{\mathrm E}}^4$ times \eqref{eq:Ltwo}. The resulting inequality, before
		absorption, is
		\begin{align}
			\dot{\E}_2&+c\left[\delta_S|\dot{X}|^2+\Gs+\G_1+\G_2+\D\right]
			+c{\eta_{\mathrm E}}\left(\norm{\tv_1}_2^2+\norm{\tp_1}_{H^1}^2\right)\notag
			\\
			&+c{\eta_{\mathrm E}}^2\norm{(\tu _2,\tz_2)}_2^2
			+c{\eta_{\mathrm E}}^3\left(\norm{\tv_2}_2^2+\norm{\tp_2}_{H^1}^2\right)
			+c{\eta_{\mathrm E}}^4\norm{\tu _3,\tz_3}_2^2\notag
			\\
			\le&C{\eta_{\mathrm E}}\D+C{\eta_{\mathrm E}}(\delta_0+\varepsilon_1)\Gs
			+C{\eta_{\mathrm E}}\delta_S^2|\dot{X}|^2+C\left(\sqrt{\delta_0}+\varepsilon_1+{\eta_{\mathrm E}}^3\right)\norm{(\tu _2,\tz_2)}_2^2\notag
			\\
			&+C\left(\sqrt{\delta_0}+\varepsilon_1+{\eta_{\mathrm E}}^2
			+{\eta_{\mathrm E}}^3(\delta_0+\varepsilon_1)\right)\norm{\tv_1}_2^2
			+C{\eta_{\mathrm E}}^4\norm{\tv_2}_2^2.
			\label{eq:closurebeforeabsorption}
		\end{align}
		First make $C{\eta_{\mathrm E}}$ smaller than the fixed positive constants.
		Then choose $\delta_0,\varepsilon_1$ so that
		$C(\sqrt{\delta_0}+\varepsilon_1)\ll{\eta_{\mathrm E}}^4$.
		Each term on the right of \eqref{eq:closurebeforeabsorption} is thereby
		absorbed by its corresponding term on the left. Hence
		\begin{equation}\label{eq:closeddifferential}
			\begin{aligned}
				\dot{\E}_2+c_{\eta_{\mathrm E}}\big[&\delta_S|\dot{X}|^2+\Gs+\G_1+\G_2
				+\norm{\tv_1}_{H^1}^2+\norm{\tu _1,\tz_1}_{H^2}^2
				+\norm{\tp_1}_{H^2}^2\big]\le0.
			\end{aligned}
		\end{equation}
		Integrate \eqref{eq:closeddifferential}, use
		\eqref{eq:totalcoercivity} at times $0$ and $t$, and apply
		\eqref{eq:ellipticHk} and \eqref{eq:chiH2}. This proves
		\eqref{eq:closedestimate}. 
	\end{proof}
	
	\subsubsection{\texorpdfstring{Continuation and asymptotic stability}{Continuation and asymptotic stability}} By local existence and the a priori estimates in Proposition \ref{thm:closure} and \eqref{eq:shiftbound}, following the continuation and asymptotic arguments in \cite{KKS,KVW}, we have the following result; the details of its proof are omitted for simplicity of presentation. 
	 
	\begin{proposition}\label{prop:stability}
		Under the hypotheses of Theorem \ref{thm:AC},
		the local strong solution extends globally and satisfies
		\eqref{eq:closedestimate} for all $t\ge0$.
		Moreover,
		\begin{equation}\label{eq:asymptotic}
			\norm{(v,u,\theta)(t,\cdot)-(v^S,u^S,\theta^S)(\cdot-X(t))}_\infty
			+\norm{\phi(t,\cdot)-\phi^S(\cdot-X(t))}_{W^{1,\infty}}
			\longrightarrow0,
		\end{equation}
		\begin{equation}\label{eq:shiftasymptotic}
			|\dot{X}(t)|\longrightarrow0,\qquad
			\frac{|X(t)|}t\longrightarrow0\qquad(t\to\infty).
		\end{equation}
	\end{proposition}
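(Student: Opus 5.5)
The plan is the standard continuation argument. First I choose $\varepsilon_0$ small relative to the constants $\delta_0,\varepsilon_1,C$ of Proposition~\ref{thm:closure}. The local theory recalled after \eqref{eq:perturbations} then gives a strong solution on some $[0,T_0]$. On that interval the shift ODE \eqref{eq:shiftODE} has a unique absolutely continuous solution, since its right-hand side is bounded and locally Lipschitz in $X$.

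Two preliminary facts are needed at $t=0$. Since $X(0)=0$, the initial perturbation equals the prescribed data perturbation. The compatible Poisson data obey \eqref{eq:ellipticHk}, so $\|\tp(0)\|_{H^3}\le C\|\tv(0)\|_{H^2}\le C\varepsilon_0$.

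Next define $T^*$ as the supremum of times $T$ for which the solution exists and $\sup_{[0,T]}\|(\tv,\tu,\tz,\tp)\|_{H^2}\le\varepsilon_1$. On $[0,T]$ the estimate \eqref{eq:closedestimate} gives
\[
\sup_{0\le s\le T}\bigl[\|(\tv,\tu,\tz)(s)\|_{H^2}^2+\|\tp(s)\|_{H^3}^2\bigr]\le C\varepsilon_0^2\le \varepsilon_1^2/4 .
\]
This improved bound, together with positivity of $v$ and $\theta$ (from the profile bounds and Sobolev embedding), lets the local solution be continued past $T^*$. Hence $T^*=\infty$, which gives global existence and the stated regularity classes; uniqueness comes from the local theory. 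The bound \eqref{eq:shiftbound} shows that $X$ is globally defined.

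For the asymptotics I follow \cite{KKS,KVW}. Set $g(t)=\|(\tv_\xi,\tu_\xi,\tz_\xi)(t)\|_2^2$. By \eqref{eq:closedestimate}, $g\in L^1(0,\infty)$. Next I show $\dot g\in L^1(0,\infty)$. To do this I differentiate in time and use the perturbation system \eqref{eq:originalperturbation} after integrating by parts. This bounds $|\dot g|$ by the dissipation quantities already integrable in \eqref{eq:closedestimate}, namely $\|\tv_\xi\|_{H^1}^2$, $\|(\tu_\xi,\tz_\xi)\|_{H^2}^2$, $\|\tp_\xi\|_{H^2}^2$ and $\Gs$. It also contains terms $|\dot X|\,\|\bar v_{\xi\xi}\|_2\|\tv_\xi\|_2$; these are handled by $\delta_S|\dot X|^2$ plus the $\delta_S$-weighted dissipation. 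Together, $g,\dot g\in L^1$ force $g(t)\to0$. The Gagliardo--Nirenberg inequality $\|f\|_\infty^2\le\|f\|_2\|f_\xi\|_2$, combined with uniform $L^2$ bounds, then gives the $L^\infty$ convergence of the fluid variables. For the potential, $\|\tp\|_{W^{1,\infty}}\le C\|\tp\|_{H^2}$ and \eqref{eq:ellipticHk} give $\|\tp\|_{H^2}\le C\|\tv\|_{H^1}$. This is not decaying, so instead I use the gradient version: $\|\tp_\xi\|_{H^1}$ is controlled through \eqref{eq:ellipticgradient} by $\|\tv_\xi\|_2$ and $\Gs$, and interpolating $\tp$ between $L^2$ and $\tp_\xi$ gives the convergence. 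Finally, the bound $|\dot X|\le C\delta_S^{-1}\int\bar v_\xi|(\tv,\tu,\tz)|\dd\xi\le C\|(\tv,\tu,\tz)\|_\infty$ yields $\dot X\to0$, and hence $X(t)/t\to0$.

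The main obstacle is $\dot g\in L^1$ together with the decay of the potential. The zeroth-order quantity $\Gs$ is only weighted near the shock, and $\tp$ itself is not a dissipated quantity. I expect to have to argue with $\tp_\xi$ and the $\xi$-derivative fluid quantities, and to absorb the $\chi$-terms using \eqref{eq:chiH2}.
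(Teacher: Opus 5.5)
Your proposal is correct and follows the paper's route. The paper omits the details and simply invokes local existence, the a priori bound of Proposition~\ref{thm:closure}, the shift bound \eqref{eq:shiftbound} and the continuation/asymptotic arguments of \cite{KKS,KVW}, and your bootstrap together with the $g\in W^{1,1}(0,\infty)$ argument is exactly that standard argument. For the potential step, make explicit that $\Gs(t)\le\delta_S\|(\tv,\tu,\tz)(t)\|_\infty^2\to0$, so that \eqref{eq:ellipticgradient} does give $\|\tp_\xi(t)\|_{H^1}\to0$ pointwise in time.
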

	
	From~\eqref{eq:closedestimate}, \eqref{eq:asymptotic},
	and \eqref{eq:shiftasymptotic}, we have proved
	Theorem \ref{thm:AC}.\qed

	\appendix
	
	\section{Center manifolds and small complete orbits}\label{app:center}
	The following finite-dimensional form suffices here; see \cite[Chapter 1]{Carr}.
	\begin{theorem}\label{thm:center}
		Let $r\geq2$ be finite and consider $Z'=LZ+\mathcal N_{\mathrm{cm}}(Z)$, where
		$\mathcal N_{\mathrm{cm}}\in C^r$, $\mathcal N_{\mathrm{cm}}(0)=D\mathcal N_{\mathrm{cm}}(0)=0$. Suppose
		\begin{equation}\notag
			\R^n=E^c\oplus E^s\oplus E^u,\qquad
			L|_{E^c}=0,\qquad
			\operatorname{Re}\operatorname{spec}(L|_{E^s})<0,
			\quad \operatorname{Re}\operatorname{spec}(L|_{E^u})>0.
		\end{equation}
		There is a local $C^r$ invariant graph
		\begin{equation*}
			Z_h=\Psi(Z_c),\qquad Z_h\in E^s\oplus E^u,
			\qquad \Psi(0)=D\Psi(0)=0.
		\end{equation*}
		Every solution that stays in a sufficiently small fixed neighborhood
		for all real times lies on this graph.
	\end{theorem}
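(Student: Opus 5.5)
This is the standard finite-dimensional center manifold theorem, and I would prove it by the Lyapunov--Perron fixed-point construction applied to a globally modified vector field, as in \cite[Chapter 1]{Carr}.

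\textbf{Cut-off.} Let $P_c,P_s,P_u$ be the spectral projections onto $E^c,E^s,E^u$. Choose $\beta>0$ with $\operatorname{Re}\operatorname{spec}(L|_{E^s})<-\beta$ and $\operatorname{Re}\operatorname{spec}(L|_{E^u})>\beta$. Fix a smooth cut-off $\chi$ equal to $1$ on $B_1$ and supported in $B_2$, and set $\mathcal N_\epsilon(Z)=\mathcal N_{\mathrm{cm}}(Z)\chi(Z/\epsilon)$. Because $\mathcal N_{\mathrm{cm}}(0)=D\mathcal N_{\mathrm{cm}}(0)=0$, the global Lipschitz constant of $\mathcal N_\epsilon$ is $O(\epsilon)$, and $\mathcal N_\epsilon$ agrees with $\mathcal N_{\mathrm{cm}}$ on $B_\epsilon$.

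\textbf{Fixed point.} Since $L|_{E^c}=0$, the center flow is trivial. Fix $0<\eta<\beta$ and work in the space $C_\eta$ of continuous $Z:\R\to\R^n$ with $\sup_t e^{-\eta|t|}|Z(t)|<\infty$. For $z_c\in E^c$ define
\[
\mathcal T(Z)(t)=z_c+\int_0^tP_c\mathcal N_\epsilon(Z)\dd s+\int_{-\infty}^te^{L(t-s)}P_s\mathcal N_\epsilon(Z)\dd s-\int_t^{\infty}e^{L(t-s)}P_u\mathcal N_\epsilon(Z)\dd s .
\]
The gap $\beta>\eta$ makes $\mathcal T$ a contraction on $C_\eta$ once $\epsilon$ is small. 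Call its fixed point $Z^*(\cdot;z_c)$. Define $\Psi(z_c)=(P_s+P_u)Z^*(0;z_c)$.

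\textbf{Properties.} By construction, the solutions of the modified equation lying in $C_\eta$ are exactly these fixed points. This class is time-translation invariant, so the graph of $\Psi$ is invariant. A bounded complete orbit of the original system that stays in $B_\epsilon$ solves the modified equation and lies in $C_\eta$, so it lies on the graph; this gives the containment claim. Uniqueness of the fixed point together with $\mathcal N_\epsilon(0)=0$ gives $\Psi(0)=0$. Differentiating the fixed-point identity at $z_c=0$, where $D\mathcal N_\epsilon(0)=0$, gives $D\Psi(0)=0$.

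\textbf{Main obstacle: $C^r$ regularity.} The map $Z\mapsto\mathcal N_\epsilon(Z)$ is not $C^k$ from $C_\eta$ to $C_\eta$. Instead it is $C^k$ from $C_\eta$ into $C_{k\eta}$. I would therefore use the scale of spaces $C_{j\eta}$ and apply the fiber contraction theorem (the Vanderbauwhede--van Gils argument), requiring $r\eta<\beta$ so that every derivative up to order $r$ stays within the spectral gap. Since $r$ is finite, $\eta$ can be chosen small enough, and this is exactly why the theorem is stated for finite $r$ only. For this step I would cite \cite{Carr} rather than redo the argument.
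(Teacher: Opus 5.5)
Your proof is correct, but it divides the work differently from the paper.

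The paper never constructs the manifold. It takes existence, $C^r$ regularity and tangency of a local invariant graph $Z_h=\Psi(Z_c)$ from the cited center-manifold theorem and proves only the containment claim. It straightens the graph via $z=Z_h-\Psi(Z_c)$, so that $z'=L_hz+R(Z_c,z)$ with $R(Z_c,0)=0$ and $|R|\le\ell_{\mathrm{cm}}|z|$. It then writes the stable and unstable parts of a bounded complete orbit as $\int_{-\infty}^t$ and $-\int_t^\infty$ integrals, giving $\|z\|_\infty\le(2M_{\mathrm{cm}}\ell_{\mathrm{cm}}/\omega)\|z\|_\infty$ and hence $z\equiv0$.

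You use the same two integrals, but on the full solution in the weighted space $C_\eta$ to build the graph. Containment then falls out of uniqueness of the fixed point: an orbit staying in $B_\epsilon$ solves the cut-off equation, lies in $C_\eta$, and so must equal $Z^*(\cdot;P_cZ(0))$.

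What each route buys:
\begin{itemize}
\item Your route is self-contained for existence, invariance, tangency and containment. It also characterizes the graph as the set of all solutions of the modified system growing at most like $e^{\eta|t|}$. It still has to import the regularity step, as the paper does.
\item The paper's route is shorter and independent of how the manifold was built. It shows that every $C^1$ locally invariant graph tangent to $E^c$ captures all small complete orbits. This is convenient because center manifolds are not unique, and Lemma \ref{lem:decay} later invokes different $C^r$ manifolds for different $r$.
\end{itemize}

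Two small corrections to your sketch:
\begin{itemize}
\item The substitution operator is $C^k$ from $C_\eta$ into $C_\zeta$ only for $\zeta>k\eta$ strictly, not into $C_{k\eta}$. Your condition $r\eta<\beta$ still accommodates this.
\item Getting $D\Psi(0)=0$ by differentiating the fixed-point identity presupposes the $C^1$ result. It follows directly instead from $|\mathcal N_\epsilon(Z)|\le C|Z|^2$, $\|Z^*(\cdot;z_c)\|_\eta\le C|z_c|$ and $2\eta<\beta$, which together give $|\Psi(z_c)|\le C|z_c|^2$.
\end{itemize}
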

	\begin{proof}
		The local graph, its regularity and tangency follow from the
		center-manifold theorem. We verify the containment assertion.
		Straighten the graph by $z=Z_h-\Psi(Z_c)$. Invariance gives
		\begin{equation*}
			\begin{gathered}
				z'=L_hz+R(Z_c,z),\quad R(Z_c,0)=0,
				\quad |R(Z_c,z)|\leq\ell_{\mathrm{cm}}|z|,
				\quad L_h=L|_{E^s\oplus E^u},
			\end{gathered}
		\end{equation*}
		where $\ell_{\mathrm{cm}}$ can be made arbitrarily small by shrinking the
		neighborhood. Let $P_s,P_u$ be the hyperbolic spectral projections.
		Choose $M_{\mathrm{cm}},\omega>0$ such that
		\begin{equation*}
			\|e^{L_ht}P_s\|\leq M_{\mathrm{cm}}e^{-\omega t},\qquad
			\|e^{-L_ht}P_u\|\leq M_{\mathrm{cm}}e^{-\omega t},\qquad t\geq0.
		\end{equation*}
		For a bounded complete orbit, variation of constants and the limits
		at the appropriate infinite ends give
		\begin{equation}\label{eq:bounded-orbit-integrals}
			\begin{aligned}
				z_s(t)&=\int_{-\infty}^t e^{L_h(t-a)}P_sR(Z_c(a),z(a))\dd a,\\
				z_u(t)&=-\int_t^{\infty}e^{L_h(t-a)}P_uR(Z_c(a),z(a))\dd a.
			\end{aligned}
		\end{equation}
		Thus $\|z\|_\infty\leq(2M_{\mathrm{cm}}\ell_{\mathrm{cm}}/\omega)\|z\|_\infty$.
		Choosing $2M_{\mathrm{cm}}\ell_{\mathrm{cm}}<\omega$ forces $z\equiv0$.
	\end{proof}

\section{Sharp diffusion and effective-pressure estimates}\label{app:sharp}

This appendix derives the sharp diffusion estimate used in Section \ref{sec:leading} from the scalar normal form established in Section \ref{sec:existence}. The effective pressure $\pp_{\mathrm{eff}}=(\theta+1)/v$ enters the leading diffusion coefficient through the characteristic speed $\sigma^*$ and the genuine-nonlinearity coefficient $\alpha^*$ defined in \eqref{eq:constants}. We use the normalization in \eqref{eq:profile-identification}, namely $v_-=1$ and $\theta_-=\theta_0$.
	
\begin{lemma}\label{lem:sharp}
		For the small shock $(v^S,u^S,\theta^S,\phi^S)$ in
		\eqref{eq:shock}--\eqref{eq:profile-identification}
		and  shift $X(t):[0,T]\to\R$ defined in \eqref{eq:shiftODE}, set
		\[
		\delta_S=v_+-v_-,\qquad
		\bar v(t,\xi)=v^S(\xi-X(t)),\qquad
		y=\frac{\bar v-v_-}{\delta_S}\in(0,1).
		\]
		With $\alpha^*$ defined in \eqref{eq:constants}, there exists
		$C>0$, such that
		\begin{equation}\label{eq:sharpdiffusion}
			\left|
			\frac{\mu}{\bar v}\frac{y_\xi}{y(1-y)}
			-\alpha^*\frac{\mu(\gamma\theta_-+1)}
			{\mu(\gamma\theta_-+1)+\kappa(\gamma-1)^2\theta_-}
			\delta_S
			\right|
			\le C\delta_S^2.
		\end{equation}
	\end{lemma}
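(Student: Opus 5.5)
The plan is to compute the ratio in \eqref{eq:sharpdiffusion} exactly, using the factorised scalar profile equation \eqref{eq:factorisation}. Since $y$ is evaluated at $\xi-X(t)$ and the left side involves only the profile, the shift plays no role. It therefore suffices to prove the estimate for $y(\xi)=(v^S(\xi)-v_-)/\delta_S$ with $v_-=1$.

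\textbf{Step 1: change of variables.} Recall $\rho^S=1+\eta$ with $\eta'=\widehat g(\eta,\varepsilon)=\eta(\eta+\varepsilon)b(\eta,\varepsilon)$, where $'$ is $\dd/\dd\zeta$. Recall also $v^S=1/\rho^S$ and $\dd/\dd\xi=v^S\,\dd/\dd\zeta$ by \eqref{eq:speed}. Then
\[
y_\xi=\frac{v^S}{\delta_S}\frac{\dd v^S}{\dd\zeta}=-\frac{(v^S)^3}{\delta_S}\eta'.
\]
Write $v=v^S$. From $v=1+\delta_S y$ and $v_+=1+\delta_S=1/(1-\varepsilon)$ we get the exact identities
\[
\eta=\frac1v-1=-\frac{\delta_S y}{v},\qquad \eta+\varepsilon=\frac1v-\frac1{v_+}=\frac{\delta_S(1-y)}{v\,v_+}.
\]
Hence $\eta(\eta+\varepsilon)=-\delta_S^2y(1-y)/(v^2v_+)$. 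Substituting gives the exact formula
\[
\frac{\mu}{v}\frac{y_\xi}{y(1-y)}=\frac{\mu\,\delta_S}{v_+}\,b(\eta,\varepsilon).
\]

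\textbf{Step 2: identify the leading constant.} In the proof of Lemma \ref{lem:decay}, $b(0,0)=\Gamma/D$ was obtained from \eqref{eq:small-eigenvalue} and \eqref{eq:reduced-left-slope}. With $v_-=1$, $\theta_-=\theta_0$ and $\sigma^*=c_0$, definition \eqref{eq:constants} gives $\alpha^*=\Gamma$. Since $c_v^2c_0^2=(\gamma\theta_0+1)/(\gamma-1)^2$, the definition of $D$ gives
\[
\frac{\mu}{D}=\frac{\mu(\gamma\theta_-+1)}{\mu(\gamma\theta_-+1)+\kappa(\gamma-1)^2\theta_-}.
\]
So $\mu b(0,0)$ is exactly the coefficient of $\delta_S$ in \eqref{eq:sharpdiffusion}.

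\textbf{Step 3: error bound.} We must control
\[
\Bigl|\frac{b(\eta,\varepsilon)}{v_+}-b(0,0)\Bigr|\le |b(\eta,\varepsilon)-b(0,0)|+|b(0,0)|\,\frac{\delta_S}{v_+}.
\]
On the orbit, $|\eta|\le\varepsilon\le\delta_S$. Since $b\in C^{r-2}$ with $r\ge6$, $b$ is $C^1$ on a fixed compact neighborhood of $(0,0)$, and the mean value theorem gives $|b(\eta,\varepsilon)-b(0,0)|\le C(|\eta|+\varepsilon)\le C\delta_S$. Multiplying by $\mu\delta_S$ yields the $C\delta_S^2$ bound.

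\textbf{Main obstacle.} The only delicate point is uniformity of this Lipschitz bound in $\varepsilon$. The factor $b$ comes from Hadamard's lemma applied to $\widehat g(\eta,\varepsilon)=g(\eta,s(\varepsilon))$ on one fixed center manifold chart from Proposition \ref{prop:center}. Hence $b$ is a single $C^1$ function of $(\eta,\varepsilon)$ near the origin, not a family depending on $\varepsilon$, and $C$ is independent of $\varepsilon$. For the larger range $\varepsilon\le\varepsilon_*$, the bound follows trivially after enlarging $C$, as in Lemma \ref{lem:decay}.
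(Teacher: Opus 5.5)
Your proof is correct and follows the paper's argument essentially line by line. Both use the exact identity $\frac{\mu}{\bar v}\frac{y_\xi}{y(1-y)}=\frac{\mu\delta_S}{v_+}b(\eta,\varepsilon)$ from the factorisation \eqref{eq:factorisation}, identify $\alpha^*=\Gamma$ and $\mu/D$ with the stated coefficient, and close with a Lipschitz bound on $b$ near $(0,0)$; the only cosmetic difference is that the paper writes $\delta_S/v_+=\varepsilon=\delta_S/(1+\delta_S)$ rather than splitting off the term $b(0,0)(1/v_+-1)$ as you do.
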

	
	\begin{proof}
		Recall that $\eta=\rho^S-1$ in \eqref{eq:phi-monotonicity}
		and $\rho_+=1-\varepsilon$ in Lemma~\ref{lem:Hugoniot}.
		By \eqref{eq:scalar-profile} and \eqref{eq:factorisation},
		\[
		\eta_\zeta=\eta(\eta+\varepsilon)b(\eta,\varepsilon).
		\]
		As shown in the proof of Lemma~\ref{lem:decay}, $b$ is $C^{r-2}$
		on a fixed neighborhood of the origin and $b(0,0)=\Gamma/D$, where $\Gamma$ and $D$
		are defined in \eqref{def.Gamma.ad} and \eqref{eq:small-eigenvalue}.
		Set
		\[
		\eta_X(t,\xi)
		=\eta\bigl(\zeta(\xi-X(t))\bigr).
		\]
		Since $-\varepsilon<\eta_X<0$ along the profile, it follows that
		\[
		\left|b(\eta_X,\varepsilon)-\frac{\Gamma}{D}\right|
		\le C(|\eta_X|+\varepsilon)
		\le C\varepsilon.
		\]
		By \eqref{eq:speed} and \eqref{eq:profile-identification},
		\[
		\bar v=\frac{1}{\rho^S(\zeta(\xi-X(t)))}=\frac1{1+\eta_X},\qquad
	\frac{\dd}{\dd\xi}=\bar v\frac{\dd}{\dd\zeta},\qquad
	v_+=\frac1{1-\varepsilon},\qquad
	\delta_S=\frac{\varepsilon}{1-\varepsilon}.
		\]
		Using $\bar v-v_-=-\bar v\eta_X$ and
		$v_+-\bar v=\bar v v_+(\eta_X+\varepsilon)$, we obtain
		\begin{equation}\label{eq:volumefactorisation}
			\bar v_\xi=-\bar v^3\eta_X(\eta_X+\varepsilon)b(\eta_X,\varepsilon)
			=\frac{\bar v}{v_+}b(\eta_X,\varepsilon)
			(\bar v-v_-)(v_+-\bar v).
		\end{equation}
		By the definition of $y$,
		\[
		y_\xi=\frac{\bar v_\xi}{\delta_S},\qquad
		y(1-y)=\frac{(\bar v-v_-)(v_+-\bar v)}{\delta_S^2}.
		\]
		Hence
		\[
		\frac{\mu}{\bar v}\frac{y_\xi}{y(1-y)}
		=\frac{\mu\delta_S}{v_+}b(\eta_X,\varepsilon)
		=\mu\varepsilon b(\eta_X,\varepsilon).
		\]
		Since $\varepsilon=\delta_S/(1+\delta_S)$, this yields
		\[
		\frac{\mu}{\bar v}\frac{y_\xi}{y(1-y)}
		=\frac{\mu\Gamma}{D}\delta_S+O(\delta_S^2).
		\]
		Under the normalization $v_-=1$ and $\theta_-=\theta_0$,
		\eqref{def.Gamma.ad}, \eqref{eq:small-eigenvalue}, and
		\eqref{eq:constants} give
		\[
	\alpha^*=\Gamma,\qquad
	D=\mu+\frac{\kappa(\gamma-1)^2\theta_-}{\gamma\theta_-+1},
	\qquad
	\frac{\mu\Gamma}{D}
	=\alpha^*\frac{\mu(\gamma\theta_-+1)}
	{\mu(\gamma\theta_-+1)+\kappa(\gamma-1)^2\theta_-}.
	\]
		This proves \eqref{eq:sharpdiffusion}.
	\end{proof}
	
\bigskip
\noindent {\bf Acknowledgment:}\,
The research of Renjun Duan was partially supported by the General Research Fund (Project No.~14303523) from RGC of Hong Kong and also by the grant from the National Natural Science Foundation of China (Project No.~12425109). The authors also acknowledge the use of AI tools. All mathematical statements and proofs were independently verified by the authors, who take full responsibility for the content of the manuscript.

	\medskip
	\noindent{\bf Data availability:} The manuscript contains no associated data.

	\medskip
	\noindent{\bf Conflict of Interest:} The authors declare that they have no conflict of interest.

\end{document}